\newif\ifdviwin
\newtheorem{theorem}{Theorem}[section]
\newtheorem{lemma}[theorem]{Lemma}
\newtheorem{corollary}[theorem]{Corollary}
\newtheorem{proposition}[theorem]{Proposition}
\newtheorem{definition}[theorem]{Definition}
\numberwithin{equation}{section}
\newcommand{\ms}[1]{\mathscr{#1}}
\newcommand{\mbf}[1]{\mathbf{#1}}
\newcommand{\mbb}[1]{\mathbb{#1}}
\newcommand{\mfr}[1]{\mathfrak{#1}}
\newcommand{\tn}[1]{\textnormal{#1}}
\newcommand{\fsz}[1]{\footnotesize{#1}}
\def\ee{\varepsilon}
\def\PP{\mathbb{P}}
\def\AA{\mathbb{A}}
\def\QQ{\mathbb{Q}}
\def\ZZ{\mathbb{Z}}
\def\mm{\mathfrak{m}}
\def\mm{\mathfrak{m}}
\numberwithin{equation}{subsection} 
\begin{document}

\title[Infinitesimal Structure of Chow Groups]{Infinitesimal Theory of Chow Groups via $K$-Theory, Cyclic Homology, and Relative Chern Character}

\author[Dribus, Hoffman, Yang]{Benjamin F. Dribus, J. W. Hoffman, Sen Yang}
\address{Dribus:Mathematics Department\\
William Carey University\\
Hattiesburg, Mississippi}
\address{Hoffman:Mathematics Department\\
Louisiana State University\\
Baton Rouge, Louisiana}
\address{Yang:Mathematical Sciences Center\\
Tsinghua University\\
Beijing China}
\email{BDribus@wmcarey.edu,hoffman@math.lsu.edu,syang@math.tsinghua.edu.cn}
\begin{abstract}
We examine the tangent groups at the identity, and more generally the formal completions at the identity, of the Chow groups of algebraic cycles on a nonsingular quasiprojective algebraic variety over a field of characteristic zero. We settle a question recently raised by Mark Green and Phillip Griffiths concerning the existence of Bloch-Gersten-Quillen-type resolutions of algebraic $K$-theory sheaves on infinitesimal thickenings of nonsingular varieties, and the relationships between these sequences and their ``tangent sequences," expressed in terms of absolute K\"{a}hler differentials.  More generally, we place Green and Griffiths' concrete geometric approach  to the infinitesimal theory of Chow groups in a natural and formally rigorous structural context, expressed in terms of nonconnective $K$-theory, negative cyclic homology, and the relative algebraic Chern character. \\

\smallskip
\noindent \textbf{Key Words:} Chow groups, algebraic $K$-theory, algebraic cycles, cyclic homology, Chern character, local cohomology, $K$-theory operations.\\

\smallskip
\noindent \textbf{Mathematics subject classification 2010:} 14C15, 19E15, 14C25, 19D55, 19L10, 14B15, 55S25.

\end{abstract}

%\subjclass{}
\date{\today}

\maketitle

%%%%%%%%%%%%%%%%%%%%%%%%%%%%%%%%%%%%%%%%%%%%%%%%%%%%%%%%%%%%%%%%%%%%%%%%%%
%%%%%%%%%%%%%%%%%%%%%%%%%%%%%%%%%%%%%%%%%%%%%%%%%%%%%%%%%%%%%%%%%%%%%%%%%%
%%%%%%%%%%%%%%%%%%%%%%%%%%%%%%%%%%%%%%%%%%%%%%%%%%%%%%%%%%%%%%%%%%%%%%%%%%
%%%%%%%%%%%%%%%%%%%%%%%%%%%%%%%%%%%%%%%%%%%%%%%%%%%%%%%%%%%%%%%%%%%%%%%%%%
%%%%%%%%%%%%%%%%%%%%%%%%%%%%%%%%%%%%%%%%%%%%%%%%%%%%%%%%%%%%%%%%%%%%%%%%%%
%%%%%%%%%%%%%%%%%%%%%%%%%%%%%%%%%%%%%%%%%%%%%%%%%%%%%%%%%%%%%%%%%%%%%%%%%%

\section{Introduction.}\label{Intro} 

\subsection{Chow Groups; $K$-Theory; Tangent Groups.}
\label{subsecchowK}
The Chow groups $CH^p(X)$ of codimension-$p$ algebraic cycles modulo rational equivalence on a nonsingular quasiprojective algebraic variety 
$X$ over a field of characteristic zero remain poorly understood despite intensive study by algebraic geometers over the last half-century.  Significant early progress was made in the early 1970's by Bloch, Gersten, and Quillen, who established the existence of the fundamental isomorphism
\begin{equation}\label{equblochintro}
CH^p(X)\cong H^p\big(X,\ms{K}_p(X)\big),
\end{equation}
where $\ms{K}_p(X)$ denotes the sheaf of algebraic $K$-groups of the structure sheaf $\ms{O}_X$ of $X$, and where $H^p$ denotes Zariski sheaf cohomology.  The existence of such a relationship between algebraic cycles and algebraic $K$-theory had been conjectured by Bloch and Gersten before Quillen's higher $K$-theory was available to properly express it; the case $p=1$ is ``classical," while the case $p=2$ was proven by Bloch around 1972.  The general case was established soon afterward by Quillen in his foundational paper \cite{QuillenHigherKTheoryI72}.  Despite these developments, the structural details of the Chow groups remain largely inaccessible because $K$-theory is so challenging to compute.  

In response to this challenge, a number of authors have defined and studied simpler, ``linearized" versions of the Chow groups; namely, their {\it tangent groups at the identity}, or more generally, their {\it formal completions at the identity}.   This approach replaces the intractable global theory of Chow groups with a simpler infinitesimal theory, in a manner analogous to the use of Lie algebras to study Lie groups.  Particularly relevant to our present work are the early contributions of Van der Kallen  \cite{VanderKallenEarlyTK271}, Bloch \cite{BlochTangentSpace72}, \cite{BlochK2Cycles74}, \cite{BlochKTheoryGeometry72}, and Stienstra \cite{Stienstra83}, \cite{Stienstra85}, in this direction.   The case of the second Chow group $CH^2(X)$ invited special attention during this period as the ``first nonclassical case."  The simplest result arising from this study, a result of great historical importance, is Bloch's formula for the tangent group at the identity $TCH^2(X)$:
\begin{equation}\label{equblochinf}
TCH^2(X)\cong H^2(X, \Omega^1 _{X/\ZZ}),
\end{equation}
where $\Omega^1 _{X/\ZZ}$ is the sheaf of {\it absolute K\"{a}hler differentials} on $X$.   The isomorphism in equation \hyperref[equblochinf]{\ref{equblochinf}} is highly nontrivial, involving a ``logarithmic map" admitting vast generalization.  
 
%%%%%%%%%%%%%%%%%%%%%%%%%%%%%%%%%%%%%%%%%%%%%%%%%%%%%%%%%%%%%%%%%%%%%%%%%%
%%%%%%%%%%%%%%%%%%%%%%%%%%%%%%%%%%%%%%%%%%%%%%%%%%%%%%%%%%%%%%%%%%%%%%%%%%
%%%%%%%%%%%%%%%%%%%%%%%%%%%%%%%%%%%%%%%%%%%%%%%%%%%%%%%%%%%%%%%%%%%%%%%%%%
%%%%%%%%%%%%%%%%%%%%%%%%%%%%%%%%%%%%%%%%%%%%%%%%%%%%%%%%%%%%%%%%%%%%%%%%%%
%%%%%%%%%%%%%%%%%%%%%%%%%%%%%%%%%%%%%%%%%%%%%%%%%%%%%%%%%%%%%%%%%%%%%%%%%%
%%%%%%%%%%%%%%%%%%%%%%%%%%%%%%%%%%%%%%%%%%%%%%%%%%%%%%%%%%%%%%%%%%%%%%%%%%

\subsection{Approach of Green and Griffiths; a Basic Question.}
\label{subsecBGQquestionGG}

In a recent work \cite{GreenGriffithsTangentSpaces05}, Mark Green and Phillip Griffiths have introduced an interesting avenue of investigation into the infinitesimal structure of the Chow group, particularly their tangent groups at the identity.  They adopt a concrete geometric viewpoint, in which elements of $TCH^p(X)$ are viewed as ``first-order deformations of arcs of cycle classes."  The principal technical tools for their approach are the ``na\"{i}ve" Milnor version of algebraic Bloch's formula \hyperref[equblochinf]{\ref{equblochinf}} for $TCH^2(X)$, in the case where $X$ is a nonsingular quasiprojective complex algebraic surface.  This viewpoint, with its analogies to differential geometry, has obvious conceptual advantages, but also presents formidable technical difficulties. 

Prominent in the work of Green and Griffiths are special Cousin resolutions of sheaves on the variety $X$, analogous to those studied originally by Bloch, Gersten, and Quillen.  In particular, these are the resolutions originally used by Quillen \cite{QuillenHigherKTheoryI72} to prove the existence of the isomorphism \hyperref[equblochintro]{\ref{equblochintro}}. A major technical obstacle to generalizing Green and Griffiths' results is the problem of establishing the existence of such resolutions for certain singular schemes defined by ``infinitesimal thickening" of $X$.  On page 186 of their book \cite{GreenGriffithsTangentSpaces05}, Green and Griffiths pose the following question concerning this problem:
 
\begin{quote}
{\it Can one define the Bloch-Gersten-Quillen sequence $\mathcal{G}_j$ on infinitesimal neighborhoods $X_j$ of $X$
so that} 
\begin{equation}\label{equGGquestion}
 \tn{ker}(\mathcal{G}_1 \to \mathcal{G}_0) =  \underline{\underline{T}}\mathcal{G}_0?
\end{equation}
\end{quote}
We take a brief detour to explain the notation and terminology of this question.  $X_j$ denotes the infinitesimal thickening $X \times_{ \tn{Spec}(k)} \tn{Spec}\big(k[\ee]/(\ee^{j+1})\big)$ of a nonsingular quasiprojective algebraic variety $X$ of dimension $n$ over a field of characteristic zero, and $\mathcal{G}_0$ denotes the ordinary Bloch-Gersten-Quillen resolution, i.e., the Cousin resolution of the $K$-theory sheaf $\ms{K}_p$ for some $p$.  A ``satisfactory definition" ought to make sense whether or not $p=n$.     $\underline{\underline{T}}\mathcal{G}_0$ denotes the ``tangent sequence" of $\mathcal{G}_0$, with the double-underline indicating that this is a sequence of sheaves on $X$.   We do not use this notation elsewhere in the present paper.  In the case $j=1$, the question implicitly assumes that $\underline{\underline{T}}\mathcal{G}_0$ is the Cousin resolution of the tangent sheaf $T\ms{K}_{p}$.  Further specializing to the case $p=2$, $\underline{\underline{T}}\mathcal{G}_0$ may be identified with the Cousin resolution of $\Omega_{X/\ZZ}^1$, via a result of Van der Kallen \cite{VanderKallenEarlyTK271} underlying Bloch's formula \hyperref[equblochinf]{\ref{equblochinf}}.  In the present paper, we demonstrate that ``satisfactory answer" to this question requires reframing the entire picture in terms of Bass-Thomason nonconnective $K$-theory \cite{Thomason-Trobaugh90}, the negative cyclic homology of Weibel \cite{WeibelCyclicHomologySchemes91} and Keller \cite{KellerCycHomofDGAlgebras96}, \cite{KellerCyclicHomologyofExactCat96}, \cite{KellerCyclicHomologyofSchemes98}, and the relative Chern character, as treated by Corti\~{n}as, Haesemeyer, Schlichting, and Weibel \cite{WeibelCycliccdh-CohomNegativeK06}, \cite{WeibelInfCohomChernNegCyclic08}. 

%%%%%%%%%%%%%%%%%%%%%%%%%%%%%%%%%%%%%%%%%%%%%%%%%%%%%%%%%%%%%%%%%%%%%%%%%%
%%%%%%%%%%%%%%%%%%%%%%%%%%%%%%%%%%%%%%%%%%%%%%%%%%%%%%%%%%%%%%%%%%%%%%%%%%
%%%%%%%%%%%%%%%%%%%%%%%%%%%%%%%%%%%%%%%%%%%%%%%%%%%%%%%%%%%%%%%%%%%%%%%%%%
%%%%%%%%%%%%%%%%%%%%%%%%%%%%%%%%%%%%%%%%%%%%%%%%%%%%%%%%%%%%%%%%%%%%%%%%%%
%%%%%%%%%%%%%%%%%%%%%%%%%%%%%%%%%%%%%%%%%%%%%%%%%%%%%%%%%%%%%%%%%%%%%%%%%%
%%%%%%%%%%%%%%%%%%%%%%%%%%%%%%%%%%%%%%%%%%%%%%%%%%%%%%%%%%%%%%%%%%%%%%%%%%

\subsection{Results of This Paper.}
\label{subsecConiveau}

The purpose of this paper is to give an affirmative answer to the question \hyperref[equGGquestion]{\ref{equGGquestion}} of Green and Griffiths, while placing their concrete geometric approach in a natural and formally rigorous structural context.  From a purely $K$-theoretic perspective, this is more an exercise in {\it synthesis} than creativity; for example, the desired Bloch-Gersten-Quillen-type sequences have already appeared explicitly in the work of Colliot-Th\'el\`ene, Hoobler, and Kahn \cite{CHKBloch-Ogus-Gabber97}, and their existence follows easily from basic properties of Thomason's version of $K$-theory \cite{Thomason-Trobaugh90}. However, additional results necessary for a formally satisfactory treatment of the topic as a whole have emerged only recently.  Further, much of the geometric interest inherent in the problem comes directly from the viewpoint of Green and Griffiths, which has not previously been examined in a rigorous modern context. 
 
Our main theorem  \hyperref[maintheorem]{\ref{maintheorem}} establishes the existence of special commutative diagrams of sheaves on $X$, of the form represented schematically in diagram \hyperref[figschematicconiveau]{1.3.1} below:

%%%%%%%%%%%%%%%%%%%%%%%%%%%%%%%%%%%%%%%%%%%%%%%%%%%%%%%%%%%%%
%\begin{figure}[H]
\begin{pgfpicture}{0cm}{0cm}{17cm}{3.7cm}
\pgfputat{\pgfxy(.5,1.5)}{\pgfbox[center,center]{(1.3.1)}}
\begin{pgfmagnify}{.83}{.83}
\begin{pgftranslate}{\pgfpoint{1.3cm}{-2.6cm}}
\begin{pgftranslate}{\pgfpoint{-.5cm}{0cm}}
\pgfxyline(.9,1.8)(.9,7)
\pgfxyline(.9,7)(3.5,7)
\pgfxyline(3.5,7)(3.5,1.8)
\pgfxyline(3.5,1.8)(.9,1.8)
\pgfputat{\pgfxy(2.2,5.8)}{\pgfbox[center,center]{Cousin}}
\pgfputat{\pgfxy(2.2,5.3)}{\pgfbox[center,center]{resolution }}
\pgfputat{\pgfxy(2.2,4.8)}{\pgfbox[center,center]{of $K$-theory}}
\pgfputat{\pgfxy(2.2,4.3)}{\pgfbox[center,center]{sheaf $\ms{K}_p(X)$}}
\pgfnodecircle{Node0}[stroke]{\pgfxy(2.2,6.5)}{0.3cm}
\pgfputat{\pgfxy(2.2,6.5)}{\pgfbox[center,center]{$1$}}
\end{pgftranslate}
\begin{pgftranslate}{\pgfpoint{4.05cm}{0cm}}
\pgfxyline(.8,1.8)(.8,7)
\pgfxyline(.8,7)(4,7)
\pgfxyline(4,7)(4,1.8)
\pgfxyline(4,1.8)(.8,1.8)
\pgfputat{\pgfxy(2.4,5.8)}{\pgfbox[center,center]{Cousin}}
\pgfputat{\pgfxy(2.4,5.3)}{\pgfbox[center,center]{resolution}}
\pgfputat{\pgfxy(2.4,4.8)}{\pgfbox[center,center]{of ``augmented"}}
\pgfputat{\pgfxy(2.4,4.3)}{\pgfbox[center,center]{$K$-theory }}
\pgfputat{\pgfxy(2.4,3.8)}{\pgfbox[center,center]{sheaf $\ms{K}_p(X_A)$}}
 \pgfnodecircle{Node0}[stroke]{\pgfxy(2.4,6.5)}{0.3cm}
\pgfputat{\pgfxy(2.4,6.5)}{\pgfbox[center,center]{$2$}}
\pgfsetendarrow{\pgfarrowtriangle{6pt}}
\pgfsetlinewidth{3pt}
\pgfxyline(-.8,4.3)(.4,4.3)
\pgfputat{\pgfxy(-.1,5.4)}{\pgfbox[center,center]{split}}
\pgfputat{\pgfxy(-.1,5)}{\pgfbox[center,center]{inclusion}}
\end{pgftranslate}
\begin{pgftranslate}{\pgfpoint{9.1cm}{0cm}}
\pgfxyline(1,1.8)(1,7)
\pgfxyline(1,7)(3.4,7)
\pgfxyline(3.4,7)(3.4,1.8)
\pgfxyline(3.4,1.8)(1,1.8)
\pgfputat{\pgfxy(2.2,5.8)}{\pgfbox[center,center]{Cousin}}
\pgfputat{\pgfxy(2.2,5.3)}{\pgfbox[center,center]{resolution}}
\pgfputat{\pgfxy(2.2,4.8)}{\pgfbox[center,center]{of relative}}
\pgfputat{\pgfxy(2.2,4.3)}{\pgfbox[center,center]{$K$-theory}}
\pgfputat{\pgfxy(2.2,3.8)}{\pgfbox[center,center]{sheaf}}
\pgfputat{\pgfxy(2.2,3.3)}{\pgfbox[center,center]{$\ms{K}_p\big(X_A,\mfr{m}\big)$}}
\pgfnodecircle{Node0}[stroke]{\pgfxy(2.2,6.5)}{0.3cm}
\pgfputat{\pgfxy(2.2,6.5)}{\pgfbox[center,center]{$3$}}
\pgfsetendarrow{\pgfarrowtriangle{6pt}}
\pgfsetlinewidth{3pt}
\pgfxyline(-.7,4.3)(.5,4.3)
\pgfputat{\pgfxy(-.05,5.4)}{\pgfbox[center,center]{split}}
\pgfputat{\pgfxy(-.05,5)}{\pgfbox[center,center]{projection}}
\end{pgftranslate}
\begin{pgftranslate}{\pgfpoint{13.75cm}{0cm}}
\pgfxyline(.8,1.8)(.8,7)
\pgfxyline(.8,7)(3.6,7)
\pgfxyline(3.6,7)(3.6,1.8)
\pgfxyline(3.6,1.8)(.8,1.8)
\pgfputat{\pgfxy(2.2,5.8)}{\pgfbox[center,center]{Cousin}}
\pgfputat{\pgfxy(2.2,5.3)}{\pgfbox[center,center]{resolution}}
\pgfputat{\pgfxy(2.2,4.8)}{\pgfbox[center,center]{of relative}}
\pgfputat{\pgfxy(2.2,4.3)}{\pgfbox[center,center]{negative}}
\pgfputat{\pgfxy(2.2,3.8)}{\pgfbox[center,center]{cyclic}}
\pgfputat{\pgfxy(2.2,3.3)}{\pgfbox[center,center]{homology}}
\pgfputat{\pgfxy(2.2,2.8)}{\pgfbox[center,center]{sheaf}}
\pgfputat{\pgfxy(2.2,2.3)}{\pgfbox[center,center]{$\ms{HN}_p\big(X_A,\mfr{m}\big)$}}
\pgfputat{\pgfxy(-.2,5.8)}{\pgfbox[center,center]{relative}}
\pgfputat{\pgfxy(-.2,5.4)}{\pgfbox[center,center]{Chern}}
\pgfputat{\pgfxy(-.2,5)}{\pgfbox[center,center]{character}}
\pgfputat{\pgfxy(-.2,4.3)}{\pgfbox[center,center]{\huge{$\cong$}}}
\pgfnodecircle{Node0}[stroke]{\pgfxy(2.2,6.5)}{0.3cm}
\pgfputat{\pgfxy(2.2,6.5)}{\pgfbox[center,center]{$4$}}
\end{pgftranslate}
\begin{pgftranslate}{\pgfpoint{-.2cm}{-.2cm}}
\begin{colormixin}{30!white}
\color{black}
\pgfmoveto{\pgfxy(8.4,3)}
\pgflineto{\pgfxy(13.6,3)}
\pgflineto{\pgfxy(13.6,3.3)}
\pgflineto{\pgfxy(14.6,2.7)}
\pgflineto{\pgfxy(13.6,2.1)}
\pgflineto{\pgfxy(13.6,2.4)}
\pgflineto{\pgfxy(8.4,2.4)}
\pgflineto{\pgfxy(8.4,3)}
\pgffill
\end{colormixin}
\pgfputat{\pgfxy(10.6,2.7)}{\pgfbox[center,center]{tangent}}
\pgfputat{\pgfxy(12.1,2.65)}{\pgfbox[center,center]{map}}
\end{pgftranslate}
\end{pgftranslate}
\end{pgfmagnify}
\end{pgfpicture}
\label{figschematicconiveau}
%\caption{Schematic ``coniveau machine" for analyzing deformations of cycle classes.}
%\label{figschematicconiveau}
%\end{figure}
%%%%%%%%%%%%%%%%%%%%%%%%%%%%%%%%%%%%%%%%%%%%%%%%%%%%%%%%%%%%%

Each box in the diagram represents a complex of sheaves, arranged vertically from top to bottom, and the horizontal maps are morphisms of complexes.   $X$ is assumed to be a nonsingular quasiprojective algebraic variety over a field $k$ of characteristic zero, while $A$ is an Artinian local $k$-algebra with maximal ideal $\mfr{m}$ and residue field $k$, and $X_A := X\times_{\tn{Spec}(k)}\tn{Spec}(A)$ is the infinitesimal thickening of $X$ with respect to $A$.  The question posed by Green and Griffiths pertains to the special case where $A=k[\ee]/(\ee^{j+1})$, and the desired Bloch-Gersten-Quillen-type sequences are given by the second column in the diagram. 

In \cite{DribusDissertation}, this diagram is called the {\it coniveau machine for the thickening} $X\mapsto X_A$, since the entire construction arises from the general theory of coniveau filtration of a topological space.  To be precise, the construction in \cite{DribusDissertation} is carried out at the level of the coniveau spectral sequence, so that the resulting machine covers all indices $p$ simultaneously.  Here, for simplicity, we work in terms of a single index.  The conceptual purpose of the coniveau machine is to convert information about ``arcs of cycle classes," expressed in terms of algebraic $K$-theory, into information about their ``tangent elements," expressed in terms of negative cyclic homology.  More specifically, the composition of maps of complexes between the second and fourth columns induces maps on cohomology carrying ``arcs in $CH^p(X)$" to their ``tangents."  This approach applies more generally to the formal completions at the identity of the Chow groups, since the construction makes sense for a general Artinian local $k$-algebra $A$ with residue field $k$.   

We reproduce our main theorem here for the convenience of the reader.  Diagram \hyperref[figconiveauprecise]{4.1.1}, referenced in the statement of the theorem, is the precise version of the coniveau machine appearing schematically in diagram \hyperref[figschematicconiveau]{1.3.1} above.  The notation is explained in detail in section \hyperref[nonconnective]{4} below. 

{\bf Theorem \hyperref[maintheorem]{\ref{maintheorem}}.} {\it Let $X$ be a nonsingular quasiprojective variety of dimension $n$ over a field $k$ of characteristic zero, and let $A$ be an Artinian local $k$-algebra with maximal ideal $\mfr{m}$ and residue field $k$. Let $X_A$ be the infinitesimal thickening of $X$ with respect to $A$.  Under these conditions, there exists a commutative diagram of sheaves on $X$, as shown below.  The columns are flasque resolutions of their respective initial terms. The first three columns form a split exact sequence, and the map between the last two columns is an isomorphism of complexes induced by the relative algebraic Chern character. This diagram may be referred to as the coniveau machine for the thickening $X\mapsto X_A$.} 

\begin{pgfpicture}{0cm}{0cm}{17cm}{6.2cm}
\pgfputat{\pgfxy(.5,2.7)}{\pgfbox[center,center]{(4.1.1)}}
\begin{pgfmagnify}{.8}{.8}
\begin{pgftranslate}{\pgfpoint{4.6cm}{-4.8cm}}
\pgfputat{\pgfxy(12.5,12)}{\pgfbox[center,center]{$\ms{HN}_p(X_A,\mm)$}}
\pgfputat{\pgfxy(7.5,12)}{\pgfbox[center,center]{$\ms{K}_p(X_A,\mm)$}}
\pgfputat{\pgfxy(3,12)}{\pgfbox[center,center]{$\ms{K}_p(X_A)$}}
\pgfputat{\pgfxy(-1.2,12)}{\pgfbox[center,center]{$\ms{K}_p(X)$}}
\pgfputat{\pgfxy(12.5,10)}{\pgfbox[center,center]{$\underline{HN_p(X_A, \mm\tn{ on } \eta)}$}}
\pgfputat{\pgfxy(7.5,10)}{\pgfbox[center,center]{$\underline{K_p(X_A, \mm\tn{ on } \eta)}$}}
\pgfputat{\pgfxy(3,10)}{\pgfbox[center,center]{$\underline{K_p(X_A\tn{ on } \eta)}$}}
\pgfputat{\pgfxy(-1.2,10)}{\pgfbox[center,center]{$\underline{K_p(X\tn{ on } \eta)}$}}
\pgfputat{\pgfxy(12.5,8)}{\pgfbox[center,center]{$\displaystyle\bigoplus_{x\in X^{(1)}}\underline{HN_{p-1}(X_A,\mm\tn{ on } x)}$}}
\pgfputat{\pgfxy(7.5,8)}{\pgfbox[center,center]{$\displaystyle\bigoplus_{x\in X^{(1)}}\underline{K_{p-1}(X_A,\mm\tn{ on } x)}$}}
\pgfputat{\pgfxy(3,8)}{\pgfbox[center,center]{$\displaystyle\bigoplus_{x\in X^{(1)}}\underline{K_{p-1}(X_A\tn{ on } x)}$}}
\pgfputat{\pgfxy(-1.2,8)}{\pgfbox[center,center]{$\displaystyle\bigoplus_{x\in X^{(1)}}\underline{K_{p-1}(X\tn{ on } x)}$}}
\pgfputat{\pgfxy(10,12.3)}{\pgfbox[center,center]{$\tn{ch}$}}
\pgfputat{\pgfxy(10,11.8)}{\pgfbox[center,center]{$\sim$}}
\pgfputat{\pgfxy(5.35,12.3)}{\pgfbox[center,center]{$j$}}
\pgfputat{\pgfxy(1,12.3)}{\pgfbox[center,center]{$i$}}
\pgfputat{\pgfxy(10,10.3)}{\pgfbox[center,center]{$\tn{ch}$}}
\pgfputat{\pgfxy(10,9.8)}{\pgfbox[center,center]{$\sim$}}
\pgfputat{\pgfxy(5.35,10.3)}{\pgfbox[center,center]{$j$}}
\pgfputat{\pgfxy(1,10.3)}{\pgfbox[center,center]{$i$}}
\pgfputat{\pgfxy(10,8.45)}{\pgfbox[center,center]{$\tn{ch}$}}
\pgfputat{\pgfxy(10,7.95)}{\pgfbox[center,center]{$\sim$}}
\pgfputat{\pgfxy(5.35,8.45)}{\pgfbox[center,center]{$j$}}
\pgfputat{\pgfxy(1,8.45)}{\pgfbox[center,center]{$i$}}
\pgfputat{\pgfxy(10,5.3)}{\pgfbox[center,center]{$\tn{ch}$}}
\pgfputat{\pgfxy(10,4.8)}{\pgfbox[center,center]{$\sim$}}
\pgfputat{\pgfxy(5.35,5.3)}{\pgfbox[center,center]{$j$}}
\pgfputat{\pgfxy(1,5.3)}{\pgfbox[center,center]{$i$}}
\pgfsetendarrow{\pgfarrowpointed{3pt}}
\pgfxyline(-1.2,11.5)(-1.2,10.5)
\pgfxyline(2.75,11.5)(2.75,10.5)
\pgfxyline(7.5,11.5)(7.5,10.5)
\pgfxyline(12.5,11.5)(12.5,10.5)
\pgfxyline(-1.2,9.5)(-1.2,8.5)
\pgfxyline(2.75,9.5)(2.75,8.5)
\pgfxyline(7.5,9.5)(7.5,8.5)
\pgfxyline(12.5,9.5)(12.5,8.5)
\pgfxyline(-1.2,7.7)(-1.2,7.2)
\pgfxyline(2.75,7.7)(2.75,7.2)
\pgfxyline(7.5,7.7)(7.5,7.2)
\pgfxyline(12.5,7.7)(12.5,7.2)
\pgfxyline(8.7,12)(11.2,12)
\pgfxyline(3.9,12)(6.3,12)
\pgfxyline(-.3,12)(2,12)
\pgfxyline(9,10)(10.8,10)
\pgfxyline(4.4,10)(5.9,10)
\pgfxyline(.1,10)(1.7,10)
\pgfxyline(9.7,8.15)(10.3,8.15)
\pgfxyline(5,8.15)(5.5,8.15)
\pgfxyline(.6,8.15)(1.2,8.15)
\pgfnodecircle{Node1}[fill]{\pgfxy(-1.2,7)}{0.025cm}
\pgfnodecircle{Node1}[fill]{\pgfxy(-1.2,6.9)}{0.025cm}
\pgfnodecircle{Node1}[fill]{\pgfxy(-1.2,6.8)}{0.025cm}
\pgfnodecircle{Node1}[fill]{\pgfxy(-1.2,6.5)}{0.025cm}
\pgfnodecircle{Node1}[fill]{\pgfxy(-1.2,6.4)}{0.025cm}
\pgfnodecircle{Node1}[fill]{\pgfxy(-1.2,6.3)}{0.025cm}
\pgfnodecircle{Node1}[fill]{\pgfxy(2.75,7)}{0.025cm}
\pgfnodecircle{Node1}[fill]{\pgfxy(2.75,6.9)}{0.025cm}
\pgfnodecircle{Node1}[fill]{\pgfxy(2.75,6.8)}{0.025cm}
\pgfnodecircle{Node1}[fill]{\pgfxy(2.75,6.5)}{0.025cm}
\pgfnodecircle{Node1}[fill]{\pgfxy(2.75,6.4)}{0.025cm}
\pgfnodecircle{Node1}[fill]{\pgfxy(2.75,6.3)}{0.025cm}
\pgfnodecircle{Node1}[fill]{\pgfxy(7.5,7)}{0.025cm}
\pgfnodecircle{Node1}[fill]{\pgfxy(7.5,6.9)}{0.025cm}
\pgfnodecircle{Node1}[fill]{\pgfxy(7.5,6.8)}{0.025cm}
\pgfnodecircle{Node1}[fill]{\pgfxy(7.5,6.5)}{0.025cm}
\pgfnodecircle{Node1}[fill]{\pgfxy(7.5,6.4)}{0.025cm}
\pgfnodecircle{Node1}[fill]{\pgfxy(7.5,6.3)}{0.025cm}
\pgfnodecircle{Node1}[fill]{\pgfxy(12.5,7)}{0.025cm}
\pgfnodecircle{Node1}[fill]{\pgfxy(12.5,6.9)}{0.025cm}
\pgfnodecircle{Node1}[fill]{\pgfxy(12.5,6.8)}{0.025cm}
\pgfnodecircle{Node1}[fill]{\pgfxy(12.5,6.5)}{0.025cm}
\pgfnodecircle{Node1}[fill]{\pgfxy(12.5,6.4)}{0.025cm}
\pgfnodecircle{Node1}[fill]{\pgfxy(12.5,6.3)}{0.025cm}
\begin{pgftranslate}{\pgfpoint{0cm}{-1.2cm}}
\pgfputat{\pgfxy(12.5,6)}{\pgfbox[center,center]{$\displaystyle\bigoplus_{x\in X^{(n)}}\underline{HN_{p-n}(X_A,\mm\tn{ on } x)}$}}
\pgfputat{\pgfxy(7.5,6)}{\pgfbox[center,center]{$\displaystyle\bigoplus_{x\in X^{(n)}}\underline{K_{p-n}(X_A,\mm\tn{ on } x)}$}}
\pgfputat{\pgfxy(3,6)}{\pgfbox[center,center]{$\displaystyle\bigoplus_{x\in X^{(n)}}\underline{K_{p-n}(X_A\tn{ on } x)}$}}
\pgfputat{\pgfxy(-1.2,6)}{\pgfbox[center,center]{$\displaystyle\bigoplus_{x\in X^{(n)}}\underline{K_{p-n}(X\tn{ on } x)}$}}
\pgfsetendarrow{\pgfarrowpointed{3pt}}
\pgfxyline(-1.4,7.2)(-1.4,6.7)
\pgfxyline(2.75,7.2)(2.75,6.7)
\pgfxyline(7.5,7.2)(7.5,6.7)
\pgfxyline(12.5,7.2)(12.5,6.7)
\pgfxyline(9.7,6.15)(10.3,6.15)
\pgfxyline(5,6.15)(5.5,6.15)
\pgfxyline(.7,6.15)(1.2,6.15)
\end{pgftranslate}
\end{pgftranslate}
\end{pgfmagnify}
\end{pgfpicture}
\label{figconiveauprecise}

%%%%%%%%%%%%%%%%%%%%%%%%%%%%%%%%%%%%%%%%%%%%%%%%%%%%%%%%%%%%%%%%%%%%%%%%%%
%%%%%%%%%%%%%%%%%%%%%%%%%%%%%%%%%%%%%%%%%%%%%%%%%%%%%%%%%%%%%%%%%%%%%%%%%%
%%%%%%%%%%%%%%%%%%%%%%%%%%%%%%%%%%%%%%%%%%%%%%%%%%%%%%%%%%%%%%%%%%%%%%%%%%
%%%%%%%%%%%%%%%%%%%%%%%%%%%%%%%%%%%%%%%%%%%%%%%%%%%%%%%%%%%%%%%%%%%%%%%%%%
%%%%%%%%%%%%%%%%%%%%%%%%%%%%%%%%%%%%%%%%%%%%%%%%%%%%%%%%%%%%%%%%%%%%%%%%%%
%%%%%%%%%%%%%%%%%%%%%%%%%%%%%%%%%%%%%%%%%%%%%%%%%%%%%%%%%%%%%%%%%%%%%%%%%%

\subsection{Building Blocks of Our Approach.}
\label{subsecBuildingBlocks}

Our approach proceeds by systematically replacing the technical tools of Green and Griffiths with more sophisticated modern counterparts.  The {\it nonconnective $K$-theory} of Bass and Thomason subsumes the role of Milnor $K$-theory, while the {\it negative cyclic homology} of Weibel and Keller takes the place of absolute K\"{a}hler differentials.  The {\it algebraic Chern character} connects these two theories and induces a ``tangent map,"  carrying ``arcs of cycle classes" to their ``tangent elements." In more detail, our approach is built on the following results:

\begin{itemize}

\item[1.] The spectrum-valued functor $\mbf{K}$ of Bass-Thomason nonconnective $K$-theory \cite{Thomason-Trobaugh90} and the spectrum-valued functor $\mbf{HN}$ of negative cyclic homology as described by Keller \cite{KellerCycHomofDGAlgebras96}, \cite{KellerCyclicHomologyofExactCat96}, \cite{KellerCyclicHomologyofSchemes98}, are {\it substrata for cohomology theories with supports} in the sense of Colliot-Th\'{e}l\`{e}ne, Hoobler, and Kahn \cite{CHKBloch-Ogus-Gabber97}.  Here, the source category for $\mbf{K}$ and $\mbf{HN}$ is a ``suitable category of schemes," often more general than the category of nonsingular quasiprojective varieties over a field of characteristic zero.  For a suitable scheme $Y$, $\mbf{K}$ and $\mbf{HN}$ induce {\it coniveau spectral sequences}, defined in terms of the {\it cohomology groups with supports} $K_p(Y\tn{ on } Z)$ and $HN_p(Y\tn{ on } Z)$ induced by $\mbf{K}$ and $\mbf{HN}$.  The index $p$ can assume {\it negative} as well as non-negative values, and this has geometric consequences.

\item[2.] $\mbf{K}$ and $\mbf{HN}$ satisfy technical conditions called {\it \'etale excision} and the {\it projective bundle formula,} which together imply that they are {\it effaceable functors.} The {\it effacement theorem} of \cite{CHKBloch-Ogus-Gabber97} then implies exactness, and even {\it universal exactness}, of the sheafified Cousin complexes arising from the coniveau spectral sequences induced by $\mbf{K}$ and $\mbf{HN}$ for a suitable scheme $Y$.  In the case where $Y$ is the $j$th infinitesimal thickening $X_j$ of a nonsingular quasiprojective variety $X$ over a field of characteristic zero, these Cousin complexes are precisely the Bloch-Gersten-Quillen-type sequences $\mathcal{G}_j$ sought by Green and Griffiths in \hyperref[equGGquestion]{\ref{equGGquestion}}.

\item[3.]  As demonstrated by Corti\~{n}as, Haesemeyer, Schlichting, and Weibel \cite{WeibelCycliccdh-CohomNegativeK06}, \cite{WeibelInfCohomChernNegCyclic08}, 
 there exists a natural transformation of functors
\[
 \tn{ch}: \mbf{K} \to \mbf{HN},
\]
called the {\it algebraic Chern character}, which induces morphisms between the coniveau spectral sequences induced by $\mbf{K}$ and $\mbf{HN}$ for a suitable scheme $Y$.  The relative version of the algebraic Chern character \cite{WeibelInfCohomChernNegCyclic08} induces homotopy equivalences of spectra $\tn{ch}:\mbf{K}(Y, \ms{I}) \cong \mbf{HN} (Y, \ms{I})$, where $\ms{I}$ is a sheaf of nilpotent ideals on $Y$.  These equivalences may be regarded as a ``space-level Goodwillie-type results" analogous to Goodwillie's theorem at the group level \cite{Goodwillie86}.  They induce group-level isomorphisms $\tn{ch}_p: K_p(Y,\ms{I}) \cong HN_p(Y,\ms{I})$.  Similar results hold for groups with supports.

\item[4.] The groups $HN_p(Y, \ms{I})$ can often be explicitly calculated. For instance, if $Y$ is the $j$th infinitesimal thickening $X_j$ of a smooth variety $X$, these groups may be expressed in terms of absolute K\"{a}hler differentials.  Hesselholt's theorem \cite{HesselholtTruncatedPolyAlgNA} gives such a result at the level of algebras. 

\item[5.] The structural picture may be refined in terms of the {\it Adams operations} on algebraic $K$-theory and negative cyclic homology \cite{S}, \cite{Gr-1}, \cite{Gr-2}, \cite{CathelineauLambdaStructures91}, \cite{Lodayoper89}, \cite{LodayCyclicHomology98}, which induce eigenspace decompositions of the corresponding coniveau spectral sequences and related structures. In the case of algebraic $K$-theory, the ``Milnor part" of the theory may be extracted as the Adams piece of highest weight, under suitable hypotheses. This is the piece most amenable to explicit computations. 

\end{itemize}

%%%%%%%%%%%%%%%%%%%%%%%%%%%%%%%%%%%%%%%%%%%%%%%%%%%%%%%%%%%%%%%%%%%%%%%%%%
%%%%%%%%%%%%%%%%%%%%%%%%%%%%%%%%%%%%%%%%%%%%%%%%%%%%%%%%%%%%%%%%%%%%%%%%%%
%%%%%%%%%%%%%%%%%%%%%%%%%%%%%%%%%%%%%%%%%%%%%%%%%%%%%%%%%%%%%%%%%%%%%%%%%%
%%%%%%%%%%%%%%%%%%%%%%%%%%%%%%%%%%%%%%%%%%%%%%%%%%%%%%%%%%%%%%%%%%%%%%%%%%
%%%%%%%%%%%%%%%%%%%%%%%%%%%%%%%%%%%%%%%%%%%%%%%%%%%%%%%%%%%%%%%%%%%%%%%%%%
%%%%%%%%%%%%%%%%%%%%%%%%%%%%%%%%%%%%%%%%%%%%%%%%%%%%%%%%%%%%%%%%%%%%%%%%%%

\subsection{Organization of this Paper.}
\label{subsecOrganization.}
In section \hyperref[tandef]{2}, we discuss the general problem of defining tangent groups at the identity, and more generally, formal completions at the identity, of the Chow groups.  This requires choosing an {\it extension of the Chow functor} $CH^p$ to admit a source category including certain singular schemes; in particular, the infinitesimal thickenings $X_A$ discussed above.  We choose a $K$-theoretic definition, motivated by the fundamental isomorphism \hyperref[equblochintro]{\ref{equblochintro}}.  The specific version of $K$-theory chosen is of major significance in the subsequent analysis; we choose the nonconnective $K$-theory of Bass and Thomason, due to its convenient functorial properties.  We also review the definition used by Green and Griffiths, which involves Milnor $K$-theory, and explain why this definition neglects potentially valuable information.  

In section \hyperref[ggsurface]{3}, we briefly review Green and Griffiths' study of $TCH^2(X)$ for a nonsingular quasiprojective complex algebraic surface $X$.   Here a ``rudimentary version" of the coniveau machine appears, expressed in terms of Milnor $K$-theory and  absolute K\"{a}hler differentials.  In section \hyperref[nonconnective]{4}, we state our main theorem and explain its content.  In sections \hyperref[Kcyclicschemes]{5} and \hyperref[cohsupport]{6}, we assemble the technical elements necessary to prove our main theorem.  These are nonconnective $K$-theory, negative cyclic homology, and the relative algebraic Chern character.   In section \hyperref[sectionproof]{\ref{sectionproof}}, we give the proof.  In section \hyperref[lambda]{\ref{lambda}}, we discuss the Adams operations for algebraic $K$-theory and negative cyclic homology.  This allows the coniveau machine to be decomposed into separate parts for each Adams weight.  The construction of Green and Griffiths involves only the top-weight part; the remaining parts may lead to interesting new invariants.

%%%%%%%%%%%%%%%%%%%%%%%%%%%%%%%%%%%%%%%%%%%%%%%%%%%%%%%%%%%%%%%%%%%%%%%%%%
%%%%%%%%%%%%%%%%%%%%%%%%%%%%%%%%%%%%%%%%%%%%%%%%%%%%%%%%%%%%%%%%%%%%%%%%%%
%%%%%%%%%%%%%%%%%%%%%%%%%%%%%%%%%%%%%%%%%%%%%%%%%%%%%%%%%%%%%%%%%%%%%%%%%%
%%%%%%%%%%%%%%%%%%%%%%%%%%%%%%%%%%%%%%%%%%%%%%%%%%%%%%%%%%%%%%%%%%%%%%%%%%
%%%%%%%%%%%%%%%%%%%%%%%%%%%%%%%%%%%%%%%%%%%%%%%%%%%%%%%%%%%%%%%%%%%%%%%%%%
%%%%%%%%%%%%%%%%%%%%%%%%%%%%%%%%%%%%%%%%%%%%%%%%%%%%%%%%%%%%%%%%%%%%%%%%%%

\subsection{Acknowledgements.}
\label{subsecAck}
We would like to thank Marco Schlichting for his generous assistance in this investigation.  J. W. Hoffman was supported in part by NSA grant 115-60-5012 and NSF grant OISE-1318015.

%%%%%%%%%%%%%%%%%%%%%%%%%%%%%%%%%%%%%%%%%%%%%%%%%%%%%%%%%%%%%

\section{Tangent Groups and Formal Completions of Chow Groups}
\label{secTanDef}

\subsection{Preliminary Remarks.}
\label{subsecPrelim}

Defining the tangent group at the identity $TCH^p(X)$ of the Chow group $CH^p(X)$ is a subtle problem, involving a choice of extension of the Chow functor $CH^p$ to admit a source category including infinitesimal thickenings of smooth algebraic varieties.  We begin by discussing previous work on this subject, focusing on the contributions of Bloch, Van der Kallen, Stienstra, and Green and Griffiths.  Next, we present our definition of $TCH^p(X)$, followed by our definition of the corresponding formal completion.  We explain why the problem of definition is not straightforward, compare our definition to the definition of Green and Griffiths, and mention the desirability of a universal definition.  Throughout this section, we take $X$ to be a nonsingular quasiprojective variety over a field $k$ of characteristic zero.  

%%%%%%%%%%%%%%%%%%%%%%%%%%%%%%%%%%%%%%%%%%%%%%%%%%%%%%%%%%%%%%%%%%%%%%%%%%
%%%%%%%%%%%%%%%%%%%%%%%%%%%%%%%%%%%%%%%%%%%%%%%%%%%%%%%%%%%%%%%%%%%%%%%%%%
%%%%%%%%%%%%%%%%%%%%%%%%%%%%%%%%%%%%%%%%%%%%%%%%%%%%%%%%%%%%%%%%%%%%%%%%%%
%%%%%%%%%%%%%%%%%%%%%%%%%%%%%%%%%%%%%%%%%%%%%%%%%%%%%%%%%%%%%%%%%%%%%%%%%%
%%%%%%%%%%%%%%%%%%%%%%%%%%%%%%%%%%%%%%%%%%%%%%%%%%%%%%%%%%%%%%%%%%%%%%%%%%
%%%%%%%%%%%%%%%%%%%%%%%%%%%%%%%%%%%%%%%%%%%%%%%%%%%%%%%%%%%%%%%%%%%%%%%%%%

\subsection{Early Progress: Bloch, Van der Kallen, Stienstra.}
\label{subsecTangentGroups}

Spencer Bloch \cite{BlochTangentSpace72},  \cite{BlochK2Cycles74}, \cite{BlochKTheoryGeometry72}, was the principal early advocate of ``linearizing the problem" of Chow groups by studying their {\it tangent groups at the identity} via the $K$-theoretic approach.  These tangent groups are analogous to Lie algebras in an obvious sense.  In particular, the first Chow group $CH^1(X)$ of $X$ is isomorphic to the Picard group $\tn{Pic}(X)$ of $X$, an algebraic group, so the ``classical case" of codimension-$1$ cycles falls under the rubric of algebraic Lie theory.   $\tn{Pic}(X)$ had long before been identified with the sheaf cohomology group $H^1(X,\ms{O}_X^*)$; for example, in the ``classical" theory of divisors and line bundles.  Here, $\ms{O}_X^*$ is the sheaf of multiplicative groups of the structure sheaf, which is just $\ms{K}_1(X)$ from the $K$-theoretic viewpoint.  Its tangent sheaf $T\ms{O}_X^*$ is isomorphic to the structure sheaf $\ms{O}_X$ itself; this is analogous to the fact that the Lie algebra of $GL_n$ is $\mfr{gl}_n$.  Hence, 
\[
TCH^1(X)\cong TH^1\big(X, \ms{K}_1(X)\big)= H^1\big(X, T\ms{K}_1(X)\big)\cong H^1(X, \ms{O}_X).
\]
With the case $p=1$ already well-understood, Bloch turned to the general case, focusing first on the case $p=2$.  Suppressing historical details, he used the definition
\begin{equation}\label{equblochk2}
TCH^2(X):=H^2\big(X, T\ms{K}_2(X)\big).
\end{equation}
Note that replacing the $K$-theory sheaf $\ms{K}_2(X)$ with the sheaf $\ms{K}_2^{\tn{M}}(X)$ of Milnor $K$-theory does not change this definition, since the two sheaves coincide, but the analogous statements for $\ms{K}_3$, et cetera, are false!  Equation \hyperref[equblochk2]{\ref{equblochk2}} reflects a nontrivial insight. Na\"{i}vely, one might try to define tangent spaces to Chow groups by taking kernels:
\[
TCH^p(X) \overset{?}{:=} \tn{ker}\big(CH^p(X_1)\longrightarrow CH^p(X)\big),
\]
where $X_1$ is the infinitesimal thickening $X \times _{\tn{Spec} (k)} \tn{Spec} \big(k[\ee]/(\ee^2)\big)$, and where the map is induced by the canonical surjection of $k$-algebras $k[\ee]/(\ee^2) \to k$ sending $\ee$ to zero.  However, such a definition would be devoid of content, since infinitesimal thickening of a scheme does not alter its cycle groups.  In this context, the fundamental isomorphism \hyperref[equblochintro]{\ref{equblochintro}} furnishes an {\it extension of the Chow functor}, since the right-hand-side has a distinct meaning for thickened varieties such as $X_1$.  

A key ingredient of Bloch's analysis is Van der Kallen's early work on ``linearized" versions of algebraic $K$-theory.   In particular, Van der Kallen's theorem \cite{VanderKallenEarlyTK271} states that for a suitable $k$-algebra $R$, 
\begin{equation}\label{equvanderkallen}
TK_2 (R)\cong \Omega^1 _{R/\ZZ},
\end{equation}
where $\Omega ^1_{R/\ZZ}$ is the group of absolute K\"{a}hler differentials.  Here, the tangent group $TK_2(R)$ is defined in the usual way as the kernel of the map $K_p\big(R[\ee]/(\ee^2)\big)   \longrightarrow K_p(R)$.  The appearance of absolute differentials in this context already points to essential differences between the case $p=1$ and the general case.  Sheafifying Van der Kallen's theorem and substituting into equation \hyperref[equblochk2]{\ref{equblochk2}} leads to Bloch's famous formula
\[
TCH^2(X)\cong H^2(X, \Omega^1 _{X/\ZZ}),
\]
cited in equation \hyperref[equblochinf]{\ref{equblochinf}} above. 

In the middle 1980's, Jan Stienstra defined and studied the {\it formal completions} of the Chow groups, in effect replacing the algebra of dual numbers $k[\ee]/(\ee^2)$ with the category of Artinian local $k$-algebras with residue field $k$.  This seminal work \cite{Stienstra85} focuses mostly on the case where the ground field $k$ has positive characteristic, leaving the case of characteristic zero to a brief appendix. The case of positive characteristic is indeed much richer, even for codimension-$1$ cycles, where one is concerned with the formal completions at the identity of abelian varieties. Here the whole apparatus of {\it Cartier-Dieudonn\'e theory} and its generalizations comes into play.  The case of characteristic zero is already highly nontrivial, however.  An earlier paper of Stienstra \cite{Stienstra83} makes a special study of codimension-$2$ cycles on algebraic surfaces in this context.  

%%%%%%%%%%%%%%%%%%%%%%%%%%%%%%%%%%%%%%%%%%%%%%%%%%%%%%%%%%%%%%%%%%%%%%%%%%
%%%%%%%%%%%%%%%%%%%%%%%%%%%%%%%%%%%%%%%%%%%%%%%%%%%%%%%%%%%%%%%%%%%%%%%%%%
%%%%%%%%%%%%%%%%%%%%%%%%%%%%%%%%%%%%%%%%%%%%%%%%%%%%%%%%%%%%%%%%%%%%%%%%%%
%%%%%%%%%%%%%%%%%%%%%%%%%%%%%%%%%%%%%%%%%%%%%%%%%%%%%%%%%%%%%%%%%%%%%%%%%%
%%%%%%%%%%%%%%%%%%%%%%%%%%%%%%%%%%%%%%%%%%%%%%%%%%%%%%%%%%%%%%%%%%%%%%%%%%
%%%%%%%%%%%%%%%%%%%%%%%%%%%%%%%%%%%%%%%%%%%%%%%%%%%%%%%%%%%%%%%%%%%%%%%%%%

\subsection{Green and Griffiths.}
\label{subsecGreenGriffiths}

The recent book of Green and Griffiths \cite{GreenGriffithsTangentSpaces05} adopts a concrete geometric approach to the structure of the tangent groups at the identity $TCH^p(X)$.  Aside from a few parenthetical remarks, Green and Griffiths limit consideration to cycles of maximal codimension; i.e., zero-cycles.  The main thrust of their work is more specific still, focusing on the case of $TCH^2(X)$, where $X$ is a nonsingular complex algebraic surface. Their principal contribution is to furnish a {\it geometric interpretation} of Bloch's formula
 \[
 TCH^2(X)\cong H^2(X, \Omega ^1 _{X/\ZZ}),
 \]
 appearing in equation \hyperref[equblochinf]{\ref{equblochinf}} above.  Green and Griffiths take the viewpoint that an element of $TCH^p(X)$ should resemble a ``derivative of an arc of cycle classes."  This concrete perspective presents conceptual advantages, along with technical difficulties.  In certain special cases, a clear way forward is evident without the use of exotic modern machinery.  For example, in the special case of subvarieties, one is spared complications such as ``creation-annihilation arcs," introduced by allowing negative multiplicities, and the theory can be made precise in terms of Hilbert schemes.  In the special case of zero-cycles, arcs of cycle classes may be treated via the theory of symmetric products.  For general cycles, however, the situation is subtler. 
 
A complementary viewpoint to studying ``derivatives of arcs of cycle classes" is to investigate the ``global structure" of $TCH^p(X)$, postponing consideration of individual ``arcs."  Green and Griffiths' book \cite{GreenGriffithsTangentSpaces05} also makes progress in this direction.  In the codimension-$2$ case, the key starting point is to regard the {\it Cousin resolution} of the sheaf $\Omega^1 _{X/\ZZ}$ as the ``tangent complex" of the Cousin resolution of $\ms{K}_2(X)$.  The latter resolution is also called the {\it Bloch-Gersten-Quillen resolution} \cite{QuillenHigherKTheoryI72}, \cite{GerstenSequences72}, \cite{BalmerNiveauSS00}.   Cousin resolutions of sheaves on $X$ arise under very general hypotheses, via the {\it coniveau filtration} of $X$ \cite{HartshorneResiduesDuality66}.  Bloch-Gersten-Quillen resolutions are acyclic, and may therefore be used to compute the cohomology groups of the corresponding $K$-theory sheaves, including the second Chow group $CH^2(X)\cong H^2\big(X,\ms{K}_2(X)\big)$ of the surface $X$ in the setting of Green and Griffiths \cite{GreenGriffithsTangentSpaces05}.  Similarly, the Cousin resolution of $\Omega^1 _{X/\ZZ}$ may be used to compute the tangent group $TCH^2(X)\cong H^2(X,\Omega^1 _{X/\ZZ})$.  We review this particular case in more detail in section \ref{ggsurface} below.    Combining these ``local" and ``global" perspectives, Green and Griffiths succeed in calculating explicit deformation classes in $H^2(X, \Omega ^1 _{X/\ZZ})$.  We note in passing that these calculations are partly inspired by earlier computations of Ang\'eniol and Lejeune-Jalabert \cite{ALJ89}, involving Chern classes of complexes of vector bundles.

Green and Griffiths are explicit in their recognition that their methods invite generalizations and abstractions in several different directions.  Their book contains as many questions as answers.  The question \hyperref[equGGquestion]{\ref{equGGquestion}} concerning Bloch-Gersten-Quillen-type sequences represents only one of the open topics introduced in their final chapter. 

%%%%%%%%%%%%%%%%%%%%%%%%%%%%%%%%%%%%%%%%%%%%%%%%%%%%%%%%%%%%%%%%%%%%%%%%%%
%%%%%%%%%%%%%%%%%%%%%%%%%%%%%%%%%%%%%%%%%%%%%%%%%%%%%%%%%%%%%%%%%%%%%%%%%%
%%%%%%%%%%%%%%%%%%%%%%%%%%%%%%%%%%%%%%%%%%%%%%%%%%%%%%%%%%%%%%%%%%%%%%%%%%
%%%%%%%%%%%%%%%%%%%%%%%%%%%%%%%%%%%%%%%%%%%%%%%%%%%%%%%%%%%%%%%%%%%%%%%%%%
%%%%%%%%%%%%%%%%%%%%%%%%%%%%%%%%%%%%%%%%%%%%%%%%%%%%%%%%%%%%%%%%%%%%%%%%%%
%%%%%%%%%%%%%%%%%%%%%%%%%%%%%%%%%%%%%%%%%%%%%%%%%%%%%%%%%%%%%%%%%%%%%%%%%%

\subsection{Our Definition of the Tangent Groups $TCH^p(X)$.}
\label{subsecTangentGroups}

Motivated by the fundamental isomorphism
\[\label{equbloch}CH^p(X)\cong H^p\big(X,\ms{K}_p(X)\big),\]
appearing in equation \hyperref[equblochintro]{\ref{equblochintro}} above, we propose the following $K$-theoretic definition of $TCH^p(X)$:

\begin{definition}\label{defitangent} The tangent group at the identity $TCH^p(X)$ of the Chow group $CH^p(X)$ is the $p$th Zariski sheaf cohomology group 
\begin{equation}\label{equdefitangent}TCH^p(X):=TH^p\big(X,\ms{K}_p(X)\big)=H^p\big(X,T\ms{K}_p(X)\big),\end{equation}
where $T\ms{K}_p(X):=\tn{ ker}\big(\ms{K}_p(X_1)\rightarrow\ms{K}_p(X)\big)$ is the tangent sheaf at the identity of the $K$-theory sheaf $\ms{K}_p(X)$.  
\end{definition}
The definition $T\ms{K}_p(X)$ follows the ``usual definition of a tangent functor;" see, for instance, \cite{BlochTangentSpace72} page 205.  Recall that 
$X_1$ denotes the first-order thickening $X\times_{Spec(k)}\tn{Spec} \big(k[\ee]/\ee^2\big)$, and that the map $\ms{K}_p(X_1)\rightarrow\ms{K}_p(X)$ is induced by the canonical surjection $k[\ee]/(\ee^2)\rightarrow k$ sending $\ee$ to zero.  The second equals sign in equation \hyperref[equdefitangent]{\ref{equdefitangent}}, which is equivalent to the statement that ``the tangent operation commutes with passage to cohomology," is easily justified by the fact that $H^p$ is a middle-exact functor.  Note that there exists a canonical splitting
\[
\ms{K}_p(X_1) = \ms{K}_p(X)\oplus T\ms{K}_p(X),
\]
resulting from fact that the natural surjection $\ms{O}_{X_1} \to \ms{O}_{X} $ is split by the injection $\ms{O}_{X} \to \ms{O}_{X_1}$.  This splitting, suitably generalized, is what underlies the split-exactness of the first three columns of the coniveau machine. 

%%%%%%%%%%%%%%%%%%%%%%%%%%%%%%%%%%%%%%%%%%%%%%%%%%%%%%%%%%%%%%%%%%%%%%%%%%
%%%%%%%%%%%%%%%%%%%%%%%%%%%%%%%%%%%%%%%%%%%%%%%%%%%%%%%%%%%%%%%%%%%%%%%%%%
%%%%%%%%%%%%%%%%%%%%%%%%%%%%%%%%%%%%%%%%%%%%%%%%%%%%%%%%%%%%%%%%%%%%%%%%%%
%%%%%%%%%%%%%%%%%%%%%%%%%%%%%%%%%%%%%%%%%%%%%%%%%%%%%%%%%%%%%%%%%%%%%%%%%%
%%%%%%%%%%%%%%%%%%%%%%%%%%%%%%%%%%%%%%%%%%%%%%%%%%%%%%%%%%%%%%%%%%%%%%%%%%
%%%%%%%%%%%%%%%%%%%%%%%%%%%%%%%%%%%%%%%%%%%%%%%%%%%%%%%%%%%%%%%%%%%%%%%%%%

\subsection{Generalization: Formal Completions.}
\label{subsecFormalCompletions}

More generally, let $\textsf{Art}_k$ be the category of Artinian local $k$-algebras with residue field $k$.  An object $A$ of $\textsf{Art}_k$ comes equipped with a natural $k$-augmentation $A \to k$ defined by sending nilpotent elements to zero, and this augmentation induces a map $\ms{K}_p(X_A)\rightarrow\ms{K}_p(X)$  of sheaves on $X$.  Following Stienstra \cite{Stienstra83}, we can use this data to define formal completions at the identity of Chow groups: 

\begin{definition}\label{defiformalcompletion} The formal completion at the identity $\widehat{CH}^p (X)$ of the Chow group $CH^p(X)$ is the functor
\begin{equation}
\widehat{CH}^p (X): \textsf{Art}_k \longrightarrow \textsf{Mod}_k, \quad
\widehat{CH}^p (X)(A) :=H^p\Big (X, \tn{ ker}\big(\ms{K}_p(X_A)\rightarrow\ms{K}_p(X)\big)\Big),
\end{equation}
where $\textsf{Mod}_k$ is the category of $k$-modules. 
\end{definition} 

Since the ``absolute" sheaf cohomology group $H^p\big(X,\ms{K}_p(X)\big)\cong CH^p(X)$ and the ``relative" sheaf cohomology group $H^p\Big (X, \tn{ ker}\big(\ms{K}_p(X_A)\rightarrow\ms{K}_p(X)\big)\Big):=\widehat{CH}^p (X)(A)$ play such a central role in our study, it is natural to ask if the ``augmented" sheaf cohomology group $H^p\big(X,\ms{K}_p(X_A)\big)$ is important as well.  The answer is yes; this group is the correct formal version of the ``group of arcs through the identity in $CH^p(X)$,"  from the concrete geometric viewpoint of Green and Griffiths.  

%%%%%%%%%%%%%%%%%%%%%%%%%%%%%%%%%%%%%%%%%%%%%%%%%%%%%%%%%%%%%%%%%%%%%%%%%%
%%%%%%%%%%%%%%%%%%%%%%%%%%%%%%%%%%%%%%%%%%%%%%%%%%%%%%%%%%%%%%%%%%%%%%%%%%
%%%%%%%%%%%%%%%%%%%%%%%%%%%%%%%%%%%%%%%%%%%%%%%%%%%%%%%%%%%%%%%%%%%%%%%%%%
%%%%%%%%%%%%%%%%%%%%%%%%%%%%%%%%%%%%%%%%%%%%%%%%%%%%%%%%%%%%%%%%%%%%%%%%%%
%%%%%%%%%%%%%%%%%%%%%%%%%%%%%%%%%%%%%%%%%%%%%%%%%%%%%%%%%%%%%%%%%%%%%%%%%%
%%%%%%%%%%%%%%%%%%%%%%%%%%%%%%%%%%%%%%%%%%%%%%%%%%%%%%%%%%%%%%%%%%%%%%%%%%

\subsection{Subtleties of Definition.}
\label{subsectionsubtleties}
As mentioned in section \hyperref[subsecTangentGroups]{\ref{subsecTangentGroups}}, it is futile to attempt a na\"{i}ve cycle-theoretic definition of tangent groups at the identity and formal completions at the identity of Chow groups, since infinitesimal thickenings are invisible from a topological perspective.  Instead, one must extend the Chow functor so as to capture at least some of the information involved in the thickening procedure.  This involves a nontrivial choice.  In contrast to our definition, Green and Griffiths use a different extension of the Chow functor, induced by the isomorphism
\begin{equation}\label{milnorformula}
CH^p(X)  = H^p\big(X,\ms{K}^{\tn{M}}_p(X)\big),
\end{equation}
where the superscript ``M" denotes Milnor $K$-theory.  When $p=2$, this definition is ``automatic," because $\ms{K}^{\tn{M}}_2=\ms{K}_2$.  For $p>2$, Milnor $K$-theory is only part of the story, but a recent theorem of Kerz \cite{Kerz1} ensures that the ``difference disappears upon passage to cohomology," so that equation \hyperref[milnorformula]{\ref{milnorformula}} still holds.   However, this Milnor-theoretic extension of the Chow functor produces very different results than our choice of extension when one begins to {\it deform} the picture, even infinitesimally.  To illustrate the difference, it will suffice to consider tangent groups at the identity.  Using Kerz's result, one {\it could} make the definition: 
\[
T_{\tn{GG}}CH^p (X) :=H^p\Big ( X, \tn{ ker}\big(\ms{K}^{\tn{M}}_p(X_1)\rightarrow\ms{K}^{\tn{M}}_p(X) \big)\Big),
\]
where the subscript ``GG" stands for ``Green and Griffiths."  This definition has the virtue that Milnor $K$-theory is much more accessible than general $K$-theory from a computational perspective.  However, it potentially neglects information about deformation of cycle classes, for the following subtle reason:  the validity of equation \hyperref[milnorformula]{\ref{milnorformula}}, which involves the $K$-theory sheaves $\ms{K}_p(X)$ and $\ms{K}^{\tn{M}}_p(X)$ themselves, does {\it not} imply that the cohomology groups $H^p\big(X,T\ms{K}_p(X)\big)$ and $H^p\big(X,T\ms{K}^{\tn{M}}_p(X)\big)$ of the corresponding tangent sheaves $T\ms{K}_p(X)$ and $T\ms{K}^{\tn{M}}_p(X)$ are isomorphic.  For example, it is easily shown that 
\begin{equation}\label{equTKMTK} T\ms{K}_3^{\tn{M}}(X)\cong\varOmega^2_{X/\QQ}\tn{\hspace*{.5cm} but \hspace*{.5cm}} T\ms{K}_3(X)\cong\varOmega^2_{X/\QQ}\oplus\ms{O}_X.\end{equation}
Even for deformations of zero-cycles on a $3$-fold $X$, there may be new information arising from the second factor in $T\ms{K}_3(X)$ in equation \hyperref[equTKMTK]{\ref{equTKMTK}}.  Our preference is to work with the full $K$-functor; the ``Milnor part" may be extracted {\it a posteriori} as the piece of highest Adams weight. 

%%%%%%%%%%%%%%%%%%%%%%%%%%%%%%%%%%%%%%%%%%%%%%%%%%%%%%%%%%%%%%%%%%%%%%%%%%
%%%%%%%%%%%%%%%%%%%%%%%%%%%%%%%%%%%%%%%%%%%%%%%%%%%%%%%%%%%%%%%%%%%%%%%%%%
%%%%%%%%%%%%%%%%%%%%%%%%%%%%%%%%%%%%%%%%%%%%%%%%%%%%%%%%%%%%%%%%%%%%%%%%%%
%%%%%%%%%%%%%%%%%%%%%%%%%%%%%%%%%%%%%%%%%%%%%%%%%%%%%%%%%%%%%%%%%%%%%%%%%%
%%%%%%%%%%%%%%%%%%%%%%%%%%%%%%%%%%%%%%%%%%%%%%%%%%%%%%%%%%%%%%%%%%%%%%%%%%
%%%%%%%%%%%%%%%%%%%%%%%%%%%%%%%%%%%%%%%%%%%%%%%%%%%%%%%%%%%%%%%%%%%%%%%%%%

\subsection{Desirability of Universal Extensions of Chow Functors.}
\label{subsectionUniversal}
Our definitions \hyperref[defitangent]{\ref{defitangent}} and \hyperref[defiformalcompletion]{\ref{defiformalcompletion}} of tangent groups at the identity and formal completions at the identity of Chow groups have many attractive features, deriving from the advantages of extending the Chow functors $CH^p$ in terms of modern $K$-theory.  However, other choices of extensions are possible, and we cannot claim that our choice is necessarily the ``best."  To investigate this topic more thoroughly, one would wish to compare extensions of $CH^p$ in a systematic way, and quantify how these extensions organize information about deformations of cycle classes.   For example, maps between various versions of $K$-theory, which exist under suitable hypotheses, permit comparison of $K$-theoretic extensions of $CH^p$, and these comparisons suggest that our definitions are ``better" than their Milnor-theoretic counterparts.  Ultimately, however, one would wish to identify extensions satisfying universal properties justifying them as the ``best" choices for studying deformations of cycle classes.  We leave this as an open topic.  

%%%%%%%%%%%%%%%%%%%%%%%%%%%%%%%%%%%%%%%%%%%%%%%%%%%%%%%%%%%%%%%%%%%%%%%%%%
%%%%%%%%%%%%%%%%%%%%%%%%%%%%%%%%%%%%%%%%%%%%%%%%%%%%%%%%%%%%%%%%%%%%%%%%%%
%%%%%%%%%%%%%%%%%%%%%%%%%%%%%%%%%%%%%%%%%%%%%%%%%%%%%%%%%%%%%%%%%%%%%%%%%%
%%%%%%%%%%%%%%%%%%%%%%%%%%%%%%%%%%%%%%%%%%%%%%%%%%%%%%%%%%%%%%%%%%%%%%%%%%
%%%%%%%%%%%%%%%%%%%%%%%%%%%%%%%%%%%%%%%%%%%%%%%%%%%%%%%%%%%%%%%%%%%%%%%%%%
%%%%%%%%%%%%%%%%%%%%%%%%%%%%%%%%%%%%%%%%%%%%%%%%%%%%%%%%%%%%%%%%%%%%%%%%%%

\section{$TCH^2$ of a Surface According to Green and Griffiths.}
\label{ggsurface}

\subsection{Tangent Map for Milnor $K$-Theory.}
\label{tangentmapMilnor}

Here, we summarize a special case examined in detail by Green and Griffiths \cite{GreenGriffithsTangentSpaces05}; namely, the case of $TCH^2(X)$ of a nonsingular complex algebraic surface.  This group may be described via Bloch's formula \hyperref[equblochinf]{\ref{equblochinf}} in terms of the $K$-theory sheaf $\ms{K}_2(X)$, which coincides with the corresponding sheaf $\ms{K}_2^{\tn{M}}(X)$ of Milnor $K$-theory, as noted in section \hyperref[subsectionsubtleties]{\ref{subsectionsubtleties}} above.  Hence, the description of $TCH^2(X)$ may be reduced to expressions involving the generators of Milnor $K$-theory, called {\it Steinberg symbols.} 

The canonical split surjection $k[\ee]/(\ee^2)\rightarrow k$ from the algebra of dual numbers onto the base field induces the following split exact sequence: 
\begin{equation}0\rightarrow \ms{K}_2^{\tn{M}}(X)\overset{i}{\rightarrow}\ms{K}_2^{\tn{M}}(X_1)\overset{j}{\rightarrow} T\ms{K}_2^{\tn{M}}(X)\rightarrow 0,\end{equation}
where we again remind the reader that $X_1$ denotes the first-order thickening $X\times_{Spec(k)}\tn{Spec} \big(k[\ee]/\ee^2\big)$.  By Van der Kallen's theorem \cite{VanderKallenEarlyTK271}, there exists an isomorphism $T\ms{K}_2^{\tn{M}}(X)\cong\varOmega^1_{X/\QQ}$.  This isomorphism is a special case of the relative algebraic Chern character, so we denote it by $\tn{ch}$.  Essentially, it is a ``logarithmic map;" we describe this in more detail below.  The above sequence may then be extended to a complex:
\begin{equation}\label{toprow}0\rightarrow \ms{K}_2^{\tn{M}}(X)\overset{i}{\rightarrow}\ms{K}_2^{\tn{M}}(X_1)\overset{j}{\rightarrow} T\ms{K}_2^{\tn{M}}(X)\overset{\tn{ch}}{\rightarrow} \varOmega^1_{X/\QQ}\rightarrow 0.\end{equation}
In our terminology, this complex is the top row of the coniveau machine for the thickening $X\mapsto X_1$.  The composition $T=\tn{ch}\circ j$ is 
called the {\it tangent map}; it extends to the tangent map of complexes appearing in the schematic diagram \hyperref[figschematicconiveau]{1.3.1} of the coniveau machine in section \hyperref[subsecConiveau]{\ref{subsecConiveau}} above.  

%%%%%%%%%%%%%%%%%%%%%%%%%%%%%%%%%%%%%%%%%%%%%%%%%%%%%%%%%%%%%%%%%%%%%%%%%%
%%%%%%%%%%%%%%%%%%%%%%%%%%%%%%%%%%%%%%%%%%%%%%%%%%%%%%%%%%%%%%%%%%%%%%%%%%
%%%%%%%%%%%%%%%%%%%%%%%%%%%%%%%%%%%%%%%%%%%%%%%%%%%%%%%%%%%%%%%%%%%%%%%%%%
%%%%%%%%%%%%%%%%%%%%%%%%%%%%%%%%%%%%%%%%%%%%%%%%%%%%%%%%%%%%%%%%%%%%%%%%%%
%%%%%%%%%%%%%%%%%%%%%%%%%%%%%%%%%%%%%%%%%%%%%%%%%%%%%%%%%%%%%%%%%%%%%%%%%%
%%%%%%%%%%%%%%%%%%%%%%%%%%%%%%%%%%%%%%%%%%%%%%%%%%%%%%%%%%%%%%%%%%%%%%%%%%

\subsection{Tangent Map in Terms of Steinberg Symbols.}
\label{tangentmapSteinberg}

The tangent map $T$ is simple enough in this case to describe explicitly.  Let 
\begin{equation}\label{equsteinberg1}\{f_0+f_1\ee,g_0+g_1\ee\},\end{equation}
 be a {Steinberg symbol representing an element of $\ms{K}_2^{\tn{M}}(X_1)(U)$ for some open set $U\subset X$.  Note that since $X$ and $X_1$ share the same Zariski topological space, $U$ may be regarded as a subset of $X$.  Projection to the kernel $T\ms{K}_2^{\tn{M}}(X)=\tn{ker}\big(\ms{K}_2^{\tn{M}}(X_1)\rightarrow \ms{K}_2^{\tn{M}}(X)\big)$ ``peels off" the constant symbol $\{f_0,g_0\}$, leaving the product
 \begin{equation}\label{equsteinberg2}\Big\{f_0,1+\frac{g_1}{g_0}\ee\Big\}\Big\{1+\frac{f_1}{f_0}\ee,g_0\Big\}\Big\{1+\frac{f_1}{f_0}\ee,1+\frac{g_1}{g_0}\ee\Big\}.\end{equation}
 Each symbol in the product in equation \hyperref[equsteinberg2]{\ref{equsteinberg2}} may be viewed as an ``infinitesimal arc through the identity in $\ms{K}_2^{\tn{M}}(X_1)(U)$," in the sense that replacing $\ee$ with zero yields trivial symbols.  Applying the relative algebraic Chern character $\tn{ch}$ then yields the differential
 \begin{equation}\label{equsteinberg3}\frac{g_1}{g_0}df_0-\frac{f_1}{f_0}dg_0\hspace*{.2cm}\in\hspace*{.2cm} \varOmega^1_{X/\QQ}(U).\end{equation}
 Passage from \hyperref[equsteinberg1]{\ref{equsteinberg1}} to \hyperref[equsteinberg3]{\ref{equsteinberg3}} is a special case of the logarithm formula of Maazen and Stienstra \cite{MaazenStienstra77}:
 \begin{equation}\{a,b\}\mapsto\frac{1}{a}\log(b) da,\end{equation}
which remains valid when the algebra of dual numbers is replaced with an arbitrary object of $\textsf{Art}_k$.

 %%%%%%%%%%%%%%%%%%%%%%%%%%%%%%%%%%%%%%%%%%%%%%%%%%%%%%%%%%%%%%%%%%%%%%%%%%
%%%%%%%%%%%%%%%%%%%%%%%%%%%%%%%%%%%%%%%%%%%%%%%%%%%%%%%%%%%%%%%%%%%%%%%%%%
%%%%%%%%%%%%%%%%%%%%%%%%%%%%%%%%%%%%%%%%%%%%%%%%%%%%%%%%%%%%%%%%%%%%%%%%%%
%%%%%%%%%%%%%%%%%%%%%%%%%%%%%%%%%%%%%%%%%%%%%%%%%%%%%%%%%%%%%%%%%%%%%%%%%%
%%%%%%%%%%%%%%%%%%%%%%%%%%%%%%%%%%%%%%%%%%%%%%%%%%%%%%%%%%%%%%%%%%%%%%%%%%
%%%%%%%%%%%%%%%%%%%%%%%%%%%%%%%%%%%%%%%%%%%%%%%%%%%%%%%%%%%%%%%%%%%%%%%%%%

\subsection{Green and Griffiths' Version of the Coniveau Machine.}
 \label{subsecGGconiveau}
  
 Telescoping the last two terms of the complex \hyperref[toprow]{\ref{toprow}} above via the composition $T=\tn{ch}\circ j$ yields the split exact sequence
 \begin{equation}\label{toprow2}0\rightarrow \ms{K}_2^{\tn{M}}(X)\overset{i}{\rightarrow}\ms{K}_2^{\tn{M}}(X_1)\overset{T}{\rightarrow} \varOmega^1_{X/\QQ}\rightarrow 0.\end{equation}
Green and Griffiths extend this sequence to obtain diagram \hyperref[figGGconiveau]{3.3.2} below.  More precisely, this diagram may be {\it inferred} in its entirely from Green and Griffiths \cite{GreenGriffithsTangentSpaces05}, though only pieces of it appear explicitly there. 

%%%%%%%%%%%%%%%%%%%%%%%%%%%%%%%%%%%%%%%%%%%%%%%%%%%%%%%%%%%%%%%%%%%%%%%%%%
 \begin{pgfpicture}{0cm}{0cm}{17cm}{7.5cm}
\pgfputat{\pgfxy(0,3.75)}{\pgfbox[left,center]{(3.3.2)}}
\begin{pgftranslate}{\pgfpoint{.5cm}{-5cm}}
\pgfputat{\pgfxy(12,12)}{\pgfbox[center,center]{$\varOmega^1_{X/\QQ}$}}
\pgfputat{\pgfxy(7.5,12)}{\pgfbox[center,center]{$\ms{K}_2^{\tn{M}}\big(X_1)$}}
\pgfputat{\pgfxy(3,12)}{\pgfbox[center,center]{$\ms{K}_2^{\tn{M}}(X)$}}
\pgfputat{\pgfxy(12,10)}{\pgfbox[center,center]{$\underline{H_\eta^0(\varOmega^1_{X/\QQ})}$}}
\pgfputat{\pgfxy(7.5,10)}{\pgfbox[center,center]{$\displaystyle\frac{\tn{arcs}\big(\underline{K_2^{\tn{M}}\big(k(\eta)}\big)\big)}{\sim_1}$}}
\pgfputat{\pgfxy(3,10)}{\pgfbox[center,center]{$\underline{K_2^{\tn{M}}\big(k(\eta)\big)}$}}
\pgfputat{\pgfxy(12,8)}{\pgfbox[center,center]{$\displaystyle\bigoplus_{x\in X^{(1)}}\underline{H^1_x(\varOmega^1_{X/\QQ})}$}}
\pgfputat{\pgfxy(7.5,8)}{\pgfbox[center,center]{$\displaystyle\frac{\tn{arcs}\Big(\displaystyle\bigoplus_{x\in X^{(1)}}\underline{K_1^{\tn{M}}\big(k(x)}\big)\Big)}{\displaystyle\sim_1}$}}
\pgfputat{\pgfxy(3,8)}{\pgfbox[center,center]{$\displaystyle\bigoplus_{x\in X^{(1)}}\underline{K_1^{\tn{M}}\big(k(x)\big)}$}}
\pgfputat{\pgfxy(12,6)}{\pgfbox[center,center]{$\displaystyle\bigoplus_{x\in X^{(2)}}\underline{H^2_x(\varOmega^1_{X/\QQ})}$}}
\pgfputat{\pgfxy(7.5,6)}{\pgfbox[center,center]{$\displaystyle\frac{\tn{arcs}\Big(\displaystyle\bigoplus_{x\in X^{(2)}}\underline{K_0^{\tn{M}}\big(k(x)}\big)\Big)}{\displaystyle\sim_1}$}}
\pgfputat{\pgfxy(3,6)}{\pgfbox[center,center]{$\displaystyle\bigoplus_{x\in X^{(2)}}\underline{K_0^{\tn{M}}\big(k(x)\big)}$}}
\pgfsetendarrow{\pgfarrowpointed{3pt}}
\pgfxyline(3,11.5)(3,10.5)
\pgfxyline(3,9.5)(3,8.5)
\pgfxyline(3,7.7)(3,6.7)
\pgfxyline(12,11.5)(12,10.5)
\pgfxyline(12,9.5)(12,8.5)
\pgfxyline(12,7.7)(12,6.7)
\pgfxyline(8.5,12)(11,12)
\pgfxyline(3.75,12)(6.5,12)
\pgfxyline(7.5,11.5)(7.5,10.7)
\pgfxyline(7.5,9.3)(7.5,8.7)
\pgfxyline(7.5,7.2)(7.5,6.7)
\pgfxyline(9.5,10)(10.7,10)
\pgfxyline(4.3,10)(5.5,10)
\pgfxyline(9.8,8)(10.4,8)
\pgfxyline(4.6,8)(5.2,8)
\pgfxyline(9.8,6)(10.4,6)
\pgfxyline(4.6,6)(5.2,6)
%\pgfputat{\pgfxy(2.6,11)}{\pgfbox[center,center]{\small{$d_0$}}}
%\pgfputat{\pgfxy(2.6,9)}{\pgfbox[center,center]{\small{$d_1$}}}
%\pgfputat{\pgfxy(2.6,7)}{\pgfbox[center,center]{\small{$d_2$}}}
%\pgfputat{\pgfxy(12.4,11)}{\pgfbox[center,center]{\small{$Td_0$}}}
%\pgfputat{\pgfxy(12.4,9)}{\pgfbox[center,center]{\small{$Td_1$}}}
%\pgfputat{\pgfxy(12.4,7)}{\pgfbox[center,center]{\small{$Td_2$}}}
\pgfputat{\pgfxy(10,12.3)}{\pgfbox[center,center]{\small{$T$}}}
\pgfputat{\pgfxy(5.2,12.3)}{\pgfbox[center,center]{\small{$i$}}}
\end{pgftranslate}
\end{pgfpicture}
\label{figGGconiveau}
%%%%%%%%%%%%%%%%%%%%%%%%%%%%%%%%%%%%%%%%%%%%%%%%%%%%%%%%%%%%%%%%%%%%%%%%%%

Diagram \hyperref[figGGconiveau]{3.3.2} may be viewed as a ``rudimentary version of the coniveau machine."  Comparing this diagram to the schematic diagram \hyperref[figschematicconiveau]{1.3.1} above, the principal differences are that the third and fourth columns of \hyperref[figschematicconiveau]{1.3.1} are telescoped into a single column in \hyperref[figGGconiveau]{3.3.2}, $K$-theory is replaced with Milnor $K$-theory, and the second column of \hyperref[figGGconiveau]{3.3.2} is expressed only informally, in terms of ``arc objects."  

We pause to examine diagram \hyperref[figGGconiveau]{3.3.2} in more detail.  The left column is the Cousin resolution of $\ms{K}_2^{\tn{M}}(X)$, which is the familiar Bloch-Gersten-Quillen resolution.  The right column is the Cousin resolution of $\Omega _X ^1$.  Here, $\eta$ is the generic point of $X$.  For each $q\ge0$, $\underline{K_{2-q}^{\tn{M}}\big(k(x)\big)}$ denotes the skyscraper sheaf at a codimension-$q$ point $x$ of $X$ corresponding to the group $K_{2-q}^{\tn{M}}\big(k(x)\big)$, where $k(x)$ is the residue field of $x$.  Similarly,  $\underline{H_x^{2-q}(\varOmega^1_{X/\QQ})}$ denotes the skyscraper sheaf at $x$ corresponding to the local cohomology group $H_x^{2-q}(\varOmega^1_{X/\QQ})$.  The term $\underline{K_2^{\tn{M}}\big(k(\eta)\big)}$ is the constant sheaf on $X$ corresponding to the group $K_2^{\tn{M}}(k(\eta))$, where $k(\eta)=k(X)$ is the rational function field of $X$, and the term $\underline{H_\eta^0(\varOmega^1_{X/\QQ})}$ is the constant sheaf on $X$ corresponding to the  module $\Omega^1_{k(X)/\QQ}$.  Both the right and left columns are flasque resolutions of their respective initial terms.  Note that for Cousin complexes, one ordinarily proves exactness for {\it locally free sheaves of finite rank;} the sheaf $\Omega _X ^1$ of absolute K\"{a}hler differentials is not locally free of finite rank, but is a filtered inductive limit of such sheaves.  

The middle column in diagram \hyperref[figGGconiveau]{3.3.2} is only implicitly defined in the work of Green and Griffiths.  Intuitively, the ``arcs'' appearing here represent deformations, and the equivalence relation $\sim_1$ in the ``denominators" indicates that these ``arcs"  are to be considered ``modulo first-order equivalence."  Heuristically, a ``local element" of the arc object $\displaystyle\tn{arcs}\big(\underline{K_2^{\tn{M}}\big(k(\eta)}\big)\big)$ may be expressed as a product of ``variable Steinberg symbols" of the form $\{f_0+f_1\ee,g_0+g_1\ee\}$.  Imposing first-order equivalence $\sim_1$ simply means taking $\ee^2=0$.  A ``local element" of the arc object $\tn{arcs}\Big(\displaystyle\bigoplus_{x\in X^{(1)}}\underline{K_1^{\tn{M}}\big(k(x)\big)}\Big)$ may be represented by a formal sum of expressions $\big(\tn{div}(f_0+f_{1}\ee),g_0+g_{1}\ee\big)$, where $\tn{div}(f_0)$ is a local expression for a divisor $Z$ on $X$, and where $g_0 \in k(Z)^*$. We think of $\tn{div}(f_0+\ee f_{1})$ as a local first-order deformation of $Y$, and $g_0+\ee g_{1}$ as a deformation of $g_0$. A ``local element" of the arc object $\tn{arcs}\Big(\displaystyle\bigoplus_{x\in X^{(0)}}\underline{K_0^{\tn{M}}\big(k(x)\big)}\Big)$ is an ``arc of zero-cycles" $Z_{\ee}=V(f_0+f_1\ee,g+g_1\ee)$, where $V(-,-)$ denotes the common vanishing locus of two functions, and where $Z=V(f_0,g_0)$ is supported on a zero-dimensional point $x$ of $X$, since we are working locally.  

Green and Griffiths show how to calculate the ``tangent elements" of such ``arcs," and prove that the above diagram commutes. They also show that these tangent elements ``fill out their target groups" in a suitable sense.  In definition \hyperref[defigendefgroupChow]{\ref{defigendefgroupChow}} below, we give a rigorous definition of these arc objects.  This definition is obtained in the process of showing that the coniveau machine exists under much more general circumstances.   

 %%%%%%%%%%%%%%%%%%%%%%%%%%%%%%%%%%%%%%%%%%%%%%%%%%%%%%%%%%%%%%%%%%%%%%%%%%
%%%%%%%%%%%%%%%%%%%%%%%%%%%%%%%%%%%%%%%%%%%%%%%%%%%%%%%%%%%%%%%%%%%%%%%%%%
%%%%%%%%%%%%%%%%%%%%%%%%%%%%%%%%%%%%%%%%%%%%%%%%%%%%%%%%%%%%%%%%%%%%%%%%%%
%%%%%%%%%%%%%%%%%%%%%%%%%%%%%%%%%%%%%%%%%%%%%%%%%%%%%%%%%%%%%%%%%%%%%%%%%%
%%%%%%%%%%%%%%%%%%%%%%%%%%%%%%%%%%%%%%%%%%%%%%%%%%%%%%%%%%%%%%%%%%%%%%%%%%
%%%%%%%%%%%%%%%%%%%%%%%%%%%%%%%%%%%%%%%%%%%%%%%%%%%%%%%%%%%%%%%%%%%%%%%%%%

\section{Our Main Theorem.}
\label{sectiontheorem}

\subsection{Statement of the Theorem.}
 \label{subsecGGconiveau}
 
In this section, we restate our main theorem, and explain its notation and content.   The technical machinery referenced in the theorem is developed in subsequent sections.

\begin{theorem}\label{maintheorem} Let $X$ be a nonsingular quasiprojective variety of dimension $n$ over a field $k$ of characteristic zero, and let $A$ be an Artinian local $k$-algebra with maximal ideal $\mfr{m}$ and residue field $k$. Let $X_A$ be the infinitesimal thickening of $X$ with respect to $A$.  Under these conditions, there exists a commutative diagram of sheaves on $X$, as shown below.  The columns are flasque resolutions of their respective initial terms. The first three columns form a split exact sequence, and the map between the last two columns is an isomorphism of complexes induced by the relative algebraic Chern character. This diagram may be referred to as the coniveau machine for the thickening $X\mapsto X_A$. 

\begin{pgfpicture}{0cm}{0cm}{17cm}{6.2cm}
\pgfputat{\pgfxy(.5,2.7)}{\pgfbox[center,center]{(4.1.1)}} 
\begin{pgfmagnify}{.8}{.8}
\begin{pgftranslate}{\pgfpoint{4.6cm}{-4.8cm}}
\pgfputat{\pgfxy(12.5,12)}{\pgfbox[center,center]{$\ms{HN}_p(X_A,\mm)$}}
\pgfputat{\pgfxy(7.5,12)}{\pgfbox[center,center]{$\ms{K}_p(X_A,\mm)$}}
\pgfputat{\pgfxy(3,12)}{\pgfbox[center,center]{$\ms{K}_p(X_A)$}}
\pgfputat{\pgfxy(-1.2,12)}{\pgfbox[center,center]{$\ms{K}_p(X)$}}
\pgfputat{\pgfxy(12.5,10)}{\pgfbox[center,center]{$\underline{HN_p(X_A, \mm\tn{ on } \eta)}$}}
\pgfputat{\pgfxy(7.5,10)}{\pgfbox[center,center]{$\underline{K_p(X_A, \mm\tn{ on } \eta)}$}}
\pgfputat{\pgfxy(3,10)}{\pgfbox[center,center]{$\underline{K_p(X_A\tn{ on } \eta)}$}}
\pgfputat{\pgfxy(-1.2,10)}{\pgfbox[center,center]{$\underline{K_p(X\tn{ on } \eta)}$}}
\pgfputat{\pgfxy(12.5,8)}{\pgfbox[center,center]{$\displaystyle\bigoplus_{x\in X^{(1)}}\underline{HN_{p-1}(X_A,\mm\tn{ on } x)}$}}
\pgfputat{\pgfxy(7.5,8)}{\pgfbox[center,center]{$\displaystyle\bigoplus_{x\in X^{(1)}}\underline{K_{p-1}(X_A,\mm\tn{ on } x)}$}}
\pgfputat{\pgfxy(3,8)}{\pgfbox[center,center]{$\displaystyle\bigoplus_{x\in X^{(1)}}\underline{K_{p-1}(X_A\tn{ on } x)}$}}
\pgfputat{\pgfxy(-1.2,8)}{\pgfbox[center,center]{$\displaystyle\bigoplus_{x\in X^{(1)}}\underline{K_{p-1}(X\tn{ on } x)}$}}
\pgfputat{\pgfxy(10,12.3)}{\pgfbox[center,center]{$\tn{ch}$}}
\pgfputat{\pgfxy(10,11.8)}{\pgfbox[center,center]{$\sim$}}
\pgfputat{\pgfxy(5.35,12.3)}{\pgfbox[center,center]{$j$}}
\pgfputat{\pgfxy(1,12.3)}{\pgfbox[center,center]{$i$}}
\pgfputat{\pgfxy(10,10.3)}{\pgfbox[center,center]{$\tn{ch}$}}
\pgfputat{\pgfxy(10,9.8)}{\pgfbox[center,center]{$\sim$}}
\pgfputat{\pgfxy(5.35,10.3)}{\pgfbox[center,center]{$j$}}
\pgfputat{\pgfxy(1,10.3)}{\pgfbox[center,center]{$i$}}
\pgfputat{\pgfxy(10,8.45)}{\pgfbox[center,center]{$\tn{ch}$}}
\pgfputat{\pgfxy(10,7.95)}{\pgfbox[center,center]{$\sim$}}
\pgfputat{\pgfxy(5.35,8.45)}{\pgfbox[center,center]{$j$}}
\pgfputat{\pgfxy(1,8.45)}{\pgfbox[center,center]{$i$}}
\pgfputat{\pgfxy(10,5.3)}{\pgfbox[center,center]{$\tn{ch}$}}
\pgfputat{\pgfxy(10,4.8)}{\pgfbox[center,center]{$\sim$}}
\pgfputat{\pgfxy(5.35,5.3)}{\pgfbox[center,center]{$j$}}
\pgfputat{\pgfxy(1,5.3)}{\pgfbox[center,center]{$i$}}
\pgfsetendarrow{\pgfarrowpointed{3pt}}
\pgfxyline(-1.2,11.5)(-1.2,10.5)
\pgfxyline(2.75,11.5)(2.75,10.5)
\pgfxyline(7.5,11.5)(7.5,10.5)
\pgfxyline(12.5,11.5)(12.5,10.5)
\pgfxyline(-1.2,9.5)(-1.2,8.5)
\pgfxyline(2.75,9.5)(2.75,8.5)
\pgfxyline(7.5,9.5)(7.5,8.5)
\pgfxyline(12.5,9.5)(12.5,8.5)
\pgfxyline(-1.2,7.7)(-1.2,7.2)
\pgfxyline(2.75,7.7)(2.75,7.2)
\pgfxyline(7.5,7.7)(7.5,7.2)
\pgfxyline(12.5,7.7)(12.5,7.2)
\pgfxyline(8.7,12)(11.2,12)
\pgfxyline(3.9,12)(6.3,12)
\pgfxyline(-.3,12)(2,12)
\pgfxyline(9,10)(10.8,10)
\pgfxyline(4.4,10)(5.9,10)
\pgfxyline(.1,10)(1.7,10)
\pgfxyline(9.7,8.15)(10.3,8.15)
\pgfxyline(5,8.15)(5.5,8.15)
\pgfxyline(.6,8.15)(1.2,8.15)
\pgfnodecircle{Node1}[fill]{\pgfxy(-1.2,7)}{0.025cm}
\pgfnodecircle{Node1}[fill]{\pgfxy(-1.2,6.9)}{0.025cm}
\pgfnodecircle{Node1}[fill]{\pgfxy(-1.2,6.8)}{0.025cm}
\pgfnodecircle{Node1}[fill]{\pgfxy(-1.2,6.5)}{0.025cm}
\pgfnodecircle{Node1}[fill]{\pgfxy(-1.2,6.4)}{0.025cm}
\pgfnodecircle{Node1}[fill]{\pgfxy(-1.2,6.3)}{0.025cm}
\pgfnodecircle{Node1}[fill]{\pgfxy(2.75,7)}{0.025cm}
\pgfnodecircle{Node1}[fill]{\pgfxy(2.75,6.9)}{0.025cm}
\pgfnodecircle{Node1}[fill]{\pgfxy(2.75,6.8)}{0.025cm}
\pgfnodecircle{Node1}[fill]{\pgfxy(2.75,6.5)}{0.025cm}
\pgfnodecircle{Node1}[fill]{\pgfxy(2.75,6.4)}{0.025cm}
\pgfnodecircle{Node1}[fill]{\pgfxy(2.75,6.3)}{0.025cm}
\pgfnodecircle{Node1}[fill]{\pgfxy(7.5,7)}{0.025cm}
\pgfnodecircle{Node1}[fill]{\pgfxy(7.5,6.9)}{0.025cm}
\pgfnodecircle{Node1}[fill]{\pgfxy(7.5,6.8)}{0.025cm}
\pgfnodecircle{Node1}[fill]{\pgfxy(7.5,6.5)}{0.025cm}
\pgfnodecircle{Node1}[fill]{\pgfxy(7.5,6.4)}{0.025cm}
\pgfnodecircle{Node1}[fill]{\pgfxy(7.5,6.3)}{0.025cm}
\pgfnodecircle{Node1}[fill]{\pgfxy(12.5,7)}{0.025cm}
\pgfnodecircle{Node1}[fill]{\pgfxy(12.5,6.9)}{0.025cm}
\pgfnodecircle{Node1}[fill]{\pgfxy(12.5,6.8)}{0.025cm}
\pgfnodecircle{Node1}[fill]{\pgfxy(12.5,6.5)}{0.025cm}
\pgfnodecircle{Node1}[fill]{\pgfxy(12.5,6.4)}{0.025cm}
\pgfnodecircle{Node1}[fill]{\pgfxy(12.5,6.3)}{0.025cm}
\begin{pgftranslate}{\pgfpoint{0cm}{-1.2cm}}
\pgfputat{\pgfxy(12.5,6)}{\pgfbox[center,center]{$\displaystyle\bigoplus_{x\in X^{(n)}}\underline{HN_{p-n}(X_A,\mm\tn{ on } x)}$}}
\pgfputat{\pgfxy(7.5,6)}{\pgfbox[center,center]{$\displaystyle\bigoplus_{x\in X^{(n)}}\underline{K_{p-n}(X_A,\mm\tn{ on } x)}$}}
\pgfputat{\pgfxy(3,6)}{\pgfbox[center,center]{$\displaystyle\bigoplus_{x\in X^{(n)}}\underline{K_{p-n}(X_A\tn{ on } x)}$}}
\pgfputat{\pgfxy(-1.2,6)}{\pgfbox[center,center]{$\displaystyle\bigoplus_{x\in X^{(n)}}\underline{K_{p-n}(X\tn{ on } x)}$}}
\pgfsetendarrow{\pgfarrowpointed{3pt}}
\pgfxyline(-1.4,7.2)(-1.4,6.7)
\pgfxyline(2.75,7.2)(2.75,6.7)
\pgfxyline(7.5,7.2)(7.5,6.7)
\pgfxyline(12.5,7.2)(12.5,6.7)
\pgfxyline(9.7,6.15)(10.3,6.15)
\pgfxyline(5,6.15)(5.5,6.15)
\pgfxyline(.7,6.15)(1.2,6.15)
\end{pgftranslate}
\end{pgftranslate}
\end{pgfmagnify}
\end{pgfpicture}
\label{figconiveauprecise2}
\end{theorem}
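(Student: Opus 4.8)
The plan is to realize the four columns of diagram~(4.1.1) as the sheafified coniveau (Cousin) complexes attached to four spectrum-valued functors, each a substratum for a cohomology theory with supports in the sense of Colliot-Th\'el\`ene, Hoobler, and Kahn~\cite{CHKBloch-Ogus-Gabber97}, and to realize each horizontal map as the morphism of coniveau complexes induced by a natural transformation of such substrata. Concretely, for $U$ ranging over the open subschemes of $X$ --- equivalently of $X_A$, since the two share a Zariski space --- I would take
\[F_1(U)=\mbf{K}(U),\qquad F_2(U)=\mbf{K}(U_A),\qquad F_3(U)=\tn{hofib}\big(\mbf{K}(U_A)\xrightarrow{s^*}\mbf{K}(U)\big),\qquad F_4(U)=\mbf{HN}(U_A,\mfr{m}),\]
where $s\colon U\hookrightarrow U_A$ is the closed immersion induced by the section $k\to A$ and $s^*$ is restriction along it; thus $\pi_p F_3(U)=K_p(U_A,\mfr{m})$ and $\pi_p F_4(U)=HN_p(U_A,\mfr{m})$. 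By building block~1 of section~\ref{subsecBuildingBlocks}, $\mbf{K}$ and $\mbf{HN}$ are substrata for cohomology theories with supports; since \'etale excision and the projective bundle formula are homotopy-limit conditions preserved by homotopy fibres of morphisms, $F_3$ and $F_4$ are again effaceable substrata (and $F_4$ may alternatively be taken as relative negative cyclic homology directly). Each $F_i$ therefore carries a coniveau spectral sequence on $X$, filtered by codimension of support, whose weight-$p$ row is a Cousin complex of sheaves on $X$, and this row is the $i$th column of the diagram.

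For the columns themselves I would invoke building block~2: $\mbf{K}$ and $\mbf{HN}$, hence each $F_i$, satisfy \'etale excision and the projective bundle formula, so the effacement theorem of~\cite{CHKBloch-Ogus-Gabber97} shows that the Cousin complex of $F_i$ is a flasque resolution of the Zariski sheaf $U\mapsto\pi_p F_i(U)$ --- namely of $\ms{K}_p(X)$, $\ms{K}_p(X_A)$, $\ms{K}_p(X_A,\mfr{m})$, and $\ms{HN}_p(X_A,\mfr{m})$ respectively --- with terms the direct sums $\bigoplus_{x\in X^{(q)}}\underline{\pi_{p-q}F_i(-\tn{ on }x)}$ displayed in the diagram, together with a constant sheaf on the irreducible scheme $X$ in codimension zero, all flasque. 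For the split-exactness of the first three columns I would observe that the homotopy fibre sequence $F_3(U)\to F_2(U)\xrightarrow{s^*}F_1(U)$ is split by pullback $p^*\colon\mbf{K}(U)\to\mbf{K}(U_A)$ along the projection $p\colon U_A\to U$: indeed $p\circ s=\tn{id}_U$, so $s^*\circ p^*=\tn{id}$, whence $F_2\simeq F_1\oplus F_3$ naturally in $U$ and compatibly with supports, which yields a termwise split short exact sequence $0\to\ms{K}_p(X)\xrightarrow{i}\ms{K}_p(X_A)\xrightarrow{j}\ms{K}_p(X_A,\mfr{m})\to 0$ of Cousin complexes. For the isomorphism of the last two columns I would use that $\mfr{m}$ is nilpotent (as $A$ is Artinian local), so $\mfr{m}\ms{O}_{X_A}$ is a sheaf of nilpotent ideals on $X_A$; the space-level Goodwillie-type theorem of Corti\~{n}as, Haesemeyer, Schlichting, and Weibel (building block~3) then makes the relative Chern character a homotopy equivalence $\tn{ch}\colon\mbf{K}(U_A,\mfr{m})\simeq\mbf{HN}(U_A,\mfr{m})$, and likewise with supports, hence an isomorphism on every term of the two Cousin complexes. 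Commutativity of all squares is then automatic: the horizontal maps are natural transformations of substrata and the vertical maps are the coniveau differentials, which are natural in the substratum.

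The formal manipulations above are routine; I expect the real obstacle to be the verification that the two ``relative'' functors $F_3$ and $F_4$, built from the \emph{non-reduced} thickenings $U_A$, genuinely satisfy the hypotheses of the effacement theorem, and that the relative Chern character is compatible with the boundary maps of the localization sequences --- i.e.\ that it is a morphism of \emph{substrata}, not merely of the underlying presheaves of spectra. The first point is insulated by the fact that the geometric input of~\cite{CHKBloch-Ogus-Gabber97} (Gabber--Gersten presentation lemmas) only sees the smooth $k$-scheme $X$ and its coniveau filtration, whereas \'etale excision and the projective bundle formula only see the functors, and these transport across $U\mapsto U_A$ because \'etale morphisms and projective bundles base-change along $\tn{Spec}(A)\to\tn{Spec}(k)$; one still needs \'etale excision for negative cyclic homology on possibly singular schemes, which is due to Weibel and Keller. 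The second point is part of the functoriality established in~\cite{WeibelCycliccdh-CohomNegativeK06},~\cite{WeibelInfCohomChernNegCyclic08}. Both ingredients are in the literature, so the genuine work is bookkeeping: fixing a ``suitable category'' of schemes closed under all the operations (localization, Henselization, projective bundles, thickening by $A$) invoked by the effacement argument, and using Bass--Thomason $\mbf{K}$ --- rather than Milnor $K$-theory --- precisely because it supplies the localization and excision properties the argument requires. Finally, although it is not needed for the statement, I would record that Hesselholt's computation~\cite{HesselholtTruncatedPolyAlgNA} rewrites $\ms{HN}_p(X_A,\mfr{m})$ and the skyscraper terms $\underline{HN_{p-q}(X_A,\mfr{m}\tn{ on }x)}$ in terms of absolute K\"ahler differentials, recovering the Green--Griffiths picture of~\S\ref{ggsurface} in the case $A=k[\ee]/(\ee^2)$, $p=2$.
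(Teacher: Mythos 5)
Your proposal is correct and follows essentially the same route as the paper's proof in section~\ref{sectionproof}: you realize the four columns as sheafified Cousin complexes of the effaceable substrata $\mbf{K}$, its augmentation by $\tn{Spec}(A)$, the homotopy fiber over the closed-immersion pullback, and relative $\mbf{HN}$; invoke the Colliot-Th\'el\`ene--Hoobler--Kahn effacement theorem for exactness; use the section $k\to A$ (giving $p\circ s=\tn{id}$) for the termwise split exactness of the first three columns; and use the nilpotence of $\mfr{m}$ together with the Goodwillie-type equivalence of Corti\~nas--Haesemeyer--Schlichting--Weibel for the relative Chern-character isomorphism between the last two. The paper packages the same ideas as a sequence of lemmas (lemmas \ref{lemfirstcolumn}--\ref{lemthirdchernfourth}), deferring the transport of effaceability to the augmented theory to lemma~\ref{lemnewoutofold} and handling the relative Chern character via the group-level isomorphism of functors on $\textsf{Nil}$ (lemma~\ref{lemrelchernisomfunctors}) rather than the spectrum-level equivalence you cite, but these are two presentations of the same argument.
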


 %%%%%%%%%%%%%%%%%%%%%%%%%%%%%%%%%%%%%%%%%%%%%%%%%%%%%%%%%%%%%%%%%%%%%%%%%%
%%%%%%%%%%%%%%%%%%%%%%%%%%%%%%%%%%%%%%%%%%%%%%%%%%%%%%%%%%%%%%%%%%%%%%%%%%
%%%%%%%%%%%%%%%%%%%%%%%%%%%%%%%%%%%%%%%%%%%%%%%%%%%%%%%%%%%%%%%%%%%%%%%%%%
%%%%%%%%%%%%%%%%%%%%%%%%%%%%%%%%%%%%%%%%%%%%%%%%%%%%%%%%%%%%%%%%%%%%%%%%%%
%%%%%%%%%%%%%%%%%%%%%%%%%%%%%%%%%%%%%%%%%%%%%%%%%%%%%%%%%%%%%%%%%%%%%%%%%%
%%%%%%%%%%%%%%%%%%%%%%%%%%%%%%%%%%%%%%%%%%%%%%%%%%%%%%%%%%%%%%%%%%%%%%%%%%

\subsection{Notation and Content.}
 \label{subsecGGconiveau}

We pause here to interpret the statement of theorem \hyperref[maintheorem]{\ref{maintheorem}} in more detail, with particular attention to diagram\hyperref[figconiveauprecise2]{4.1.1}.  All the objects in the diagram are local in the sense that they depend only on the corresponding local rings. $X^{(d)}$ denotes the set of points of codimension $d$ in the scheme $X$. For example, the expression $\underline{K_{p-d}(X\tn{ on } x)}$, appearing in the first column on the left, denotes the skyscraper sheaf on the closed set $\overline {\{ x \}}$ corresponding to the ``absolute $K$-theory group supported at $x$;" i.e., $K_{p-d}(X\tn{ on } x)$.  This group is defined as a direct limit in section \hyperref[sectioncohomsupports]{\ref{sectioncohomsupports}} below. Since $X$ is nonsingular, this group is isomorphic to $K_{p-d}\big(k(x)\big)$ by Quillen's {\it d\'evissage} theorem \cite{QuillenHigherKTheoryI72}, where $k(x)$ is the residue field of the local ring $\ms{O}_{X, x}$.   Hence, the first column of diagram \hyperref[figconiveauprecise2]{4.1.1} is just the familiar Bloch-Gersten-Quillen resolution of the sheaf $\ms{K}_p(X)$; in particular, the left-hand column of diagram \hyperref[figGGconiveau]{3.3.2} represents the special case of $\ms{K}_2(X)$, where $X$ is a smooth algebraic surface.  The groups $K_{p-d}(X\tn{ on } x)$ vanish when $p<d$.  

The groups $K_{p-d}(X_A\tn{ on } x)$, called the {\it augmented $K$-theory groups supported at $x$,} play an analogous role in the second column.  These groups need not vanish in negative degrees, but nonetheless have a relatively simple form in this case, since their ``absolute parts" vanish.  In particular, when $p<d$, $K_{p-d}(X_A\tn{ on } x)$ is isomorphic to the corresponding groups $K_{p-d}(X_A,\mm\tn{ on } x)$, called the {\it relative $K$-theory group supported at $x$,} which appear in the third column.   These groups, in turn, are isomorphic via the relative Chern character to the groups $HN_{p-d}(X_A,\mm\tn{ on } x)$, called the {\it relative cyclic homology groups supported at $x$.}   We discuss these relationships in more detail in section \hyperref[sectionproof]{\ref{sectionproof}} below.  The second column of diagram \hyperref[figconiveauprecise2]{4.1.1} supplies rigorous and general definitions of the ``arc objects" discussed by Green and Griffiths, just as the fourth column provides rigorous and general definitions of the corresponding tangent objects.  In special cases, the terms in the right column may be expressed in terms of local cohomology invariants of absolute K\"{a}hler differentials; this occurse, for example, when $A$ is the algebra of dual numbers $k[\ee]/(\ee^2)$.  Recent work of Balmer \cite{BalmerNiveauSS00} suffices to define the second column, but theorem \hyperref[maintheorem]{\ref{maintheorem}} also requires similar results for cyclic homology, along with criteria for exactness. For this, we use the general theory developed by Colliot-Th\'el\`ene, Hoobler and Kahn \cite{CHKBloch-Ogus-Gabber97}, which includes a detailed study of coniveau spectral sequences for generalized cohomology theories with supports. Parts of this formalism were developed earlier by Thomason \cite{Thomason85}.  

 %%%%%%%%%%%%%%%%%%%%%%%%%%%%%%%%%%%%%%%%%%%%%%%%%%%%%%%%%%%%%%%%%%%%%%%%%%
%%%%%%%%%%%%%%%%%%%%%%%%%%%%%%%%%%%%%%%%%%%%%%%%%%%%%%%%%%%%%%%%%%%%%%%%%%
%%%%%%%%%%%%%%%%%%%%%%%%%%%%%%%%%%%%%%%%%%%%%%%%%%%%%%%%%%%%%%%%%%%%%%%%%%
%%%%%%%%%%%%%%%%%%%%%%%%%%%%%%%%%%%%%%%%%%%%%%%%%%%%%%%%%%%%%%%%%%%%%%%%%%
%%%%%%%%%%%%%%%%%%%%%%%%%%%%%%%%%%%%%%%%%%%%%%%%%%%%%%%%%%%%%%%%%%%%%%%%%%
%%%%%%%%%%%%%%%%%%%%%%%%%%%%%%%%%%%%%%%%%%%%%%%%%%%%%%%%%%%%%%%%%%%%%%%%%%

\section{Algebraic $K$-theory and Negative Cyclic Homology as Cohomology Theories with Supports.}
\label{sectioncohomsupports}

\subsection{Preliminary Remarks.}
\label{subsectionprelimcohom}

The top row of the coniveau machine appearing in diagram \hyperref[figconiveauprecise2]{4.1.1} above is defined in terms of the sheaves of algebraic $K$-theory $\ms{K}_p(X)$, ``augmented $K$-theory" $\ms{K}_p(X_A)$, relative $K$-theory $\ms{K}_p(X_A,\mm)$, and relative negative cyclic homology $\ms{HN}_p(X_A,\mm)$.   The remaining rows may be constructed in a functorial manner from the top row, but this requires the use of $K$-theory groups and negative cyclic homology groups {\it with supports}. This is because the lower rows depend on information associated with proper subsets of $X$.  From the perspective of Chow groups, this is not very surprising, since Chow groups are intimately related to certain closed subsets of $X$; namely the supports of algebraic cycles.  

In this section, we show that Bass-Thomason (nonconnective) $K$-theory and negative cyclic homology are cohomology theories with supports in the sense of Colliot-Th\'el\`ene, Hoobler, and Kahn \cite{CHKBloch-Ogus-Gabber97}, and therefore supply the groups necessary to build the coniveau machine.  In general, a cohomology theory with supports is a special family of contravariant functors from a category of pairs $(X,Z)$, where $X$ is a scheme and $Z$ is a closed subset,  to an abelian category.  In many important cases, such a theory may be defined in terms of a {\it substratum}, which is a special contravariant functor from a category of pairs to the category of chain complexes of objects over an abelian category, or to an appropriate category of spectra, in the sense of algebraic topology.  The substrata of principal interest for studying the infinitesimal theory of Chow groups are the spectrum-valued functor $\mathbf{K}$ of Bass-Thomason $K$-theory, and the spectrum-valued functor $\mathbf{HN}$ of negative cyclic homology.  The corresponding cohomology theories with supports yield group-valued functors $K_p$ and $HN_p$.   The material in this section comes from SGA \cite{SGA671}, Thomason \cite{Thomason-Trobaugh90}, Colliot-Th\'el\`ene, Hoobler, and Kahn \cite{CHKBloch-Ogus-Gabber97}, Corti\`{n}as, Haesemeyer, Schlichting, and Weibel \cite{WeibelCycliccdh-CohomNegativeK06}, Keller \cite{KellerCycHomofDGAlgebras96}, \cite{KellerCyclicHomologyofExactCat96}, and \cite{KellerCyclicHomologyofSchemes98}, and Schlichting \cite{SchlichtingKTheory11}.  The exposition draws heavily on Dribus \cite{DribusDissertation}.

 %%%%%%%%%%%%%%%%%%%%%%%%%%%%%%%%%%%%%%%%%%%%%%%%%%%%%%%%%%%%%%%%%%%%%%%%%%
%%%%%%%%%%%%%%%%%%%%%%%%%%%%%%%%%%%%%%%%%%%%%%%%%%%%%%%%%%%%%%%%%%%%%%%%%%
%%%%%%%%%%%%%%%%%%%%%%%%%%%%%%%%%%%%%%%%%%%%%%%%%%%%%%%%%%%%%%%%%%%%%%%%%%
%%%%%%%%%%%%%%%%%%%%%%%%%%%%%%%%%%%%%%%%%%%%%%%%%%%%%%%%%%%%%%%%%%%%%%%%%%
%%%%%%%%%%%%%%%%%%%%%%%%%%%%%%%%%%%%%%%%%%%%%%%%%%%%%%%%%%%%%%%%%%%%%%%%%%
%%%%%%%%%%%%%%%%%%%%%%%%%%%%%%%%%%%%%%%%%%%%%%%%%%%%%%%%%%%%%%%%%%%%%%%%%%

%In this section, we begin to assemble the technical ingredients necessary to prove theorem \hyperref[maintheorem]{\ref{maintheorem}}, and in the process, to rigorously define the coniveau machine, as shown in diagram \hyperref[figconiveauprecise2]{4.1.1} above.  The first ingredient is Bass-Thomason nonconnective $K$-theory, which supplies the absolute, augmented, and relative $K$-theory groups with supports appearing in the first three columns of the machine.   The second ingredient is negative cyclic homology, which plays a similar role in the fourth column. 

\subsection{Perfect Complexes.}
\label{subsectionperfect}

Both Bass-Thomason $K$-theory and negative cyclic homology of schemes may be defined in terms of the category of perfect complexes on a quasi-compact, separated scheme.  Here we briefly introduce perfect complexes, which may be viewed as ``generalized complexes of vector bundles."   

\begin{definition}\label{defiperfect} Let $Y$ be a quasi-compact, separated scheme, and let $Z$ be a closed subset of $Y$, such that $Y-Z$ is also quasi-compact.  A complex of quasicoherent $\ms{O}_Y$-modules is called perfect if there exists a covering $\bigcup_{i\in I} U_i$ of $Y$ by open affine subschemes, for some index set $I$, such that the restriction of each complex to $U_i$ is quasi-isomorphic to a bounded complex of vector bundles for each $i$. We denote by $\textsf{Perf}_Z(Y)$ the category of perfect complexes on $Y$ acyclic over $Z$. 
\end{definition}

The category $\textsf{Perf}_Z(Y)$ is a {\it complicial exact category with weak equivalences;} in this case, the weak equivalences are the quasi-isomorphisms.   For justification of this statement, we refer to Schlichting \cite{SchlichtingKTheory11}; see in particular 3.2.9 and the associated discussion.  Due to cardinality issues, Weibel et al. \cite{WeibelCycliccdh-CohomNegativeK06} choose to work with a distinguished subcategory of $\textsf{Perf}_Z(Y)$, chosen to be an {\it exact differential graded category}, and therefore admitting Keller's machinery of {\it localization pairs} \cite{KellerCycHomofDGAlgebras96}, \cite{KellerCyclicHomologyofExactCat96}, \cite{KellerCyclicHomologyofSchemes98}.  Similarly, Thomason \cite{Thomason-Trobaugh90} considers a variety of different subcategories of $\textsf{Perf}_Z(Y)$, all leading to the same $K$-theory spectra.  These issues are well-understood, and are peripheral to our development; we mention them only to alert the fastidious reader that $\textsf{Perf}_Z(Y)$ may really denote a ``suitable" category of perfect complexes in certain cases.

 %%%%%%%%%%%%%%%%%%%%%%%%%%%%%%%%%%%%%%%%%%%%%%%%%%%%%%%%%%%%%%%%%%%%%%%%%%
%%%%%%%%%%%%%%%%%%%%%%%%%%%%%%%%%%%%%%%%%%%%%%%%%%%%%%%%%%%%%%%%%%%%%%%%%%
%%%%%%%%%%%%%%%%%%%%%%%%%%%%%%%%%%%%%%%%%%%%%%%%%%%%%%%%%%%%%%%%%%%%%%%%%%
%%%%%%%%%%%%%%%%%%%%%%%%%%%%%%%%%%%%%%%%%%%%%%%%%%%%%%%%%%%%%%%%%%%%%%%%%%
%%%%%%%%%%%%%%%%%%%%%%%%%%%%%%%%%%%%%%%%%%%%%%%%%%%%%%%%%%%%%%%%%%%%%%%%%%
%%%%%%%%%%%%%%%%%%%%%%%%%%%%%%%%%%%%%%%%%%%%%%%%%%%%%%%%%%%%%%%%%%%%%%%%%%

\subsection{Algebraic $K$-Theory.}
\label{subsectionKschemes}

The first column of the coniveau machine in figure \hyperref[figconiveauprecise2]{4.1.1} above is the Cousin resolution of the sheaf $\ms{K}_p(X)$, whose $p$th cohomology group is isomorphic to the Chow group $CH^p(X)$, by the fundamental isomorphism \hyperref[equblochintro]{\ref{equblochintro}} of Bloch, Gersten, and Quillen.   Since the variety $X$ is assumed to be nonsingular, Quillen's $K$-theory is adequate to define this column; in fact, the Cousin resolution of $\ms{K}_p(X)$ may be expressed in terms of the Quillen $K$-groups $K_{p-d}\big(k(x)\big)$ of the residue fields $k(x)$, as described in section \hyperref[subsecGGconiveau]{\ref{subsecGGconiveau}} above.  To properly treat the infinitesimal theory of Chow groups, however, it is necessary to consider singular schemes; in particular, the infinitesimal thickenings $X_A$.  For this purpose, Quillen's $K$-theory is inadequate.  Instead, we use Bass-Thomason $K$-theory \cite{Thomason-Trobaugh90}, which possesses better formal properties in this more general setting.  These properties may be summarized by the statement that the spectrum-valued functor $\mbf{K}$ is an effaceable substratum for a cohomology theory with supports.  We explain the precise meaning of this statement below.  First, for the convenience of the reader, we say a few words about Bass-Thomason $K$-theory. 

It is useful in this context to work in terms of $K$-theory spectra whenever possible, obtaining the desired $K$-theory groups {\it a posteriori} as homotopy groups, in the usual way.   There are several different ways to define these spectra.  Thomason's original definition, drawing on earlier work of Bass, may be found in \cite{Thomason-Trobaugh90}, definition 6.4.  We rely instead on Schlichting \cite{SchlichtingKTheory11}, who provides a useful modern context. 

\begin{definition}\label{defiKtheory} Let $Y$ be a quasi-compact, separated scheme, and let $Z$ be a closed subset of $Y$, such that $Y-Z$ is also quasi-compact.  
\begin{enumerate}
\item The spectrum $\mbf{K}(Y\tn{ on } Z)$ of Bass-Thomason $K$-theory on $Y$ with supports in $Z$ is the nonconnective $K$-theory spectrum of the category $\textsf{Perf}_Z(Y)$, understood as a complicial exact category with quasi-isomorphisms as weak equivalences, as described by Schlichting \cite{SchlichtingKTheory11}, 3.2.26, 3.4.  For a point $y\in Y$, not necessarily closed, the spectrum $\mbf{K}(Y\tn{ on } y)$ of Bass-Thomason $K$-theory on $Y$ supported at $y$ is the direct limit $\displaystyle{\varinjlim_{U\ni y}}\ \mathbf{K}(U \tn{ on } \bar{y}\cap U)$. 
\item The $p$th Bass-Thomason $K$-theory group of $Y$ with supports in $Z$ is the $p$th homotopy group $K_p(Y\tn{ on } Z)$ of the spectrum $\mbf{K}(Y\tn{ on } Z)$.  For a point $y\in Y$, not necessarily closed,  the $p$th Bass-Thomason $K$-theory group of $Y$ supported at $y$ is the $p$th homotopy group $K_p(Y\tn{ on } y)$ of the spectrum $\mbf{K}(Y\tn{ on } y)$. 
\end{enumerate}
\end{definition}

The term {\it nonconnective} signifies that $\mbf{K}(Y\tn{ on } Z)$ generally possesses nontrivial homotopy groups in negative degrees.   The spectrum $\mbf{K}(Y\tn{ on } Y)$ is abbreviated by $\mbf{K}(Y)$; its $p$th homotopy group is the ``ordinary $K$-group" $K_p(Y)$.   We commit a slight abuse by referring to the spectra $\mbf{K}(Y\tn{ on } y)$ and the groups $K_p(Y\tn{ on } y)$ as ``supported at $y$."

 %%%%%%%%%%%%%%%%%%%%%%%%%%%%%%%%%%%%%%%%%%%%%%%%%%%%%%%%%%%%%%%%%%%%%%%%%%
%%%%%%%%%%%%%%%%%%%%%%%%%%%%%%%%%%%%%%%%%%%%%%%%%%%%%%%%%%%%%%%%%%%%%%%%%%
%%%%%%%%%%%%%%%%%%%%%%%%%%%%%%%%%%%%%%%%%%%%%%%%%%%%%%%%%%%%%%%%%%%%%%%%%%
%%%%%%%%%%%%%%%%%%%%%%%%%%%%%%%%%%%%%%%%%%%%%%%%%%%%%%%%%%%%%%%%%%%%%%%%%%
%%%%%%%%%%%%%%%%%%%%%%%%%%%%%%%%%%%%%%%%%%%%%%%%%%%%%%%%%%%%%%%%%%%%%%%%%%
%%%%%%%%%%%%%%%%%%%%%%%%%%%%%%%%%%%%%%%%%%%%%%%%%%%%%%%%%%%%%%%%%%%%%%%%%%

\subsection{Negative Cyclic Homology.}
\label{subsectionnegcyclic}

The fourth column of the coniveau machine in diagram \hyperref[figconiveauprecise2]{4.1.1} above is the Cousin resolution of the relative negative cyclic homology sheaf $\ms{HN}_p(X_A,\mm)$.  The $p$th sheaf cohomology group of this sheaf is isomorphic to the group $\widehat{CH}^p (X)(A)$, by our main theorem \hyperref[maintheorem]{\ref{maintheorem}}.  As in the case of algebraic $K$-theory, it is convenient here to work in terms of spectra, and there are again several variations for how to construct these.   The theory ultimately goes back to Weibel \cite{WeibelCyclicHomologySchemes91}, who proved uniqueness results for cyclic homology theories of schemes soon after the discovery of cyclic homology by Connes  et al.  We choose instead to follow Keller's modern approach of {\it localization pairs} \cite{KellerCycHomofDGAlgebras96}, \cite{KellerCyclicHomologyofExactCat96}, \cite{KellerCyclicHomologyofSchemes98}.  This approach produces {\it mixed complexes} $C(Y\tn{ on } Z)$, which may be used to construct negative cyclic homology spectra as described in Corti\`{n}as, Haesemeyer, Schlichting, and Weibel \cite{WeibelCycliccdh-CohomNegativeK06}, section 2.   The same paper gives a good description of the relevant localization pairs.  These are of the form $\big(\textsf{Perf}_Z(Y),\textsf{Ac}\big)$, where $\textsf{Perf}_Z(Y)$ is the category of perfect complexes defined in section \hyperref[subsectionperfect]{\ref{subsectionperfect}} above,  and $\textsf{Ac}$ is its {\it acyclic subcategory.} See \cite{WeibelCycliccdh-CohomNegativeK06}, examples 2.7 and 2.8, for more details.  

\begin{definition}\label{defiKnegcyc} Let $Y$ be a quasi-compact, separated scheme, and let $Z$ be a closed subset of $Y$ such that $Y-Z$ is also quasi-compact.  
\begin{enumerate}
\item The spectrum $\mbf{HN}(Y\tn{ on } Z)$ of negative cyclic homology on $Y$ with supports in $Z$ is the spectrum given by applying the Eilenberg-MacLane functor to the complex
\[\tn{Tot}\big(...\rightarrow0\rightarrow C(Y\tn{ on } Z)\overset{B}{\rightarrow} C(Y\tn{ on } Z)[-1]\overset{B}{\rightarrow} C(Y\tn{ on } Z)[-2]\overset{B}{\rightarrow} ...\big),\]
where $C(Y\tn{ on } Z)$ is Keller's mixed complex for the localization pair $\big(\textsf{Perf}_Z(Y),\textsf{Ac}\big)$.  For a point $y\in Y$, not necessarily closed, the spectrum $\mbf{HN}(Y\tn{ on } y)$ of negative cyclic homology on $Y$ supported at $y$ is the direct limit $\displaystyle{\varinjlim_{U\ni y}}\ \mathbf{HN}(U \tn{ on } \bar{y}\cap U)$. 
\item The $p$th negative cyclic homology group of $Y$ with supports in $Z$ is the $p$th homotopy group $HN_p(Y\tn{ on } Z)$ of the spectrum $\mbf{HN}(Y\tn{ on } Z)$.  For a point $y\in Y$, not necessarily closed,  the $p$th negative cyclic homology group of $Y$ supported at $y$ is the $p$th homotopy group $HN_p(Y\tn{ on } y)$ of the spectrum $\mbf{HN}(Y\tn{ on } y)$. 
\end{enumerate}
\end{definition}

Like the Bass-Thomason $K$-theory functor $\mbf{K}$, the spectrum-valued negative cyclic homology functor $\mbf{HN}$ is an effaceable substratum for a cohomology theory with supports, as shown below.

 %%%%%%%%%%%%%%%%%%%%%%%%%%%%%%%%%%%%%%%%%%%%%%%%%%%%%%%%%%%%%%%%%%%%%%%%%%
%%%%%%%%%%%%%%%%%%%%%%%%%%%%%%%%%%%%%%%%%%%%%%%%%%%%%%%%%%%%%%%%%%%%%%%%%%
%%%%%%%%%%%%%%%%%%%%%%%%%%%%%%%%%%%%%%%%%%%%%%%%%%%%%%%%%%%%%%%%%%%%%%%%%%
%%%%%%%%%%%%%%%%%%%%%%%%%%%%%%%%%%%%%%%%%%%%%%%%%%%%%%%%%%%%%%%%%%%%%%%%%%
%%%%%%%%%%%%%%%%%%%%%%%%%%%%%%%%%%%%%%%%%%%%%%%%%%%%%%%%%%%%%%%%%%%%%%%%%%
%%%%%%%%%%%%%%%%%%%%%%%%%%%%%%%%%%%%%%%%%%%%%%%%%%%%%%%%%%%%%%%%%%%%%%%%%%

\subsection{Cohomology Theories with Supports; Substrata.}\label{subsectioncohomsupports}

The necessity of considering cohomology theories with supports, such as algebraic $K$-theory and negative cyclic homology, in the study of Chow groups, arises ultimately from the fact that algebraic cycles define closed subsets of their ambient varieties.  Cohomology theories with supports facilitate the ``sorting" of information contained in global objects, such as sheaves, in terms of these closed subsets.   Substrata are special functors that serve as ``precursors" for cohomology theories with supports. 

We begin this subsection with a few words about the source category for a cohomology theory with supports.  Let $k$ be a field of characteristic zero, and let $\textsf{S}_k$ be a full subcategory of the category of schemes over $k$, stable under \'etale extensions.    Assume that $\textsf{S}_k$ includes the prime spectrum $\tn{Spec }k$ of $k$, and that whenever a scheme $Y$ belongs to  $\textsf{S}_k$, the projective space $\mbb{P}^1(Y):=\mbb{P}^1(\ZZ)\times_{\tn{Spec } (\ZZ)}Y$ also belongs to $\textsf{S}_k$.  Examples of such categories include the category of separated schemes over $k$ and the category of nonsingular schemes over $k$.  Given such a category $\textsf{S}_k$, let $\textsf{P}_k$ be the category of pairs $(Y, Z)$, where $Y$ belongs to $\textsf{S}_k$, and where $Z$ is a closed subset of $Y$.   A morphism of pairs $f: (Y', Z') \to (Y, Z)$ is a morphism $f: Y' \to Y$ in $\textsf{S}_k$ such that $f^{-1}(Z) \subset Z'$.  

The defining property of a cohomology theory with supports over the category of pairs $\textsf{P}_k$ is the existence of a certain long exact sequence of cohomology groups with supports for every {\it triple} $(Y,Z,W)$, where $Y$ belongs to the distinguished category of schemes $\textsf{S}_k$, and $Z$ and $W$ are closed subsets of $Y$ such that $W \subseteq Z \subseteq Y$.  The following definition makes this precise:

\begin{definition}\label{deficohomsupports} Let $k$ be a field of characteristic zero, and let $\textsf{S}_k$ be a category of schemes over $k$ satisfying the conditions given above.  A {\bf cohomology theory with supports} over a category of pairs $\textsf{P}_k$ is a family $h:=\{h^n\}_{n\in\mathbb{Z}}$ of contravariant functors
 \[h^n:(Y, Z) \mapsto h^n (Y\tn{ \footnotesize{on} } Z)\]
 from $\textsf{P}_k$ to an abelian category $\textsf{A}$, satisfying the following condition: for any triple $(Y,Z,W)$, where $W \subseteq Z \subseteq Y$, and where $Z$ and $W$ are closed in $Y$, there exists a long exact sequence:
 \[...\longrightarrow h^n(Y\tn{ \footnotesize{on} } W)\overset{i^n}{\longrightarrow}h^n(Y\tn{ \footnotesize{on} } Z)\overset{j^n}{\longrightarrow}h^n(Y-W\tn{ \footnotesize{on} } Z-W)\overset{d^n}{\longrightarrow}h^{n+1}(Y\tn{ \footnotesize{on}  } W)\longrightarrow...\]
where the maps $i^n$ and $j^n$ are induced by the morphisms of pairs $(Y,Z)\leftarrow(Y,W)$ and $(Y,Z)\leftarrow(Y-W,Z-W)$, and where $d^n$ is the $n$th connecting morphism.
\end{definition}

We may also use the notation $\mathbb{H} ^{n} _{Z}(Y, h) $ for $h^{n}(Y \tn{ on } Z)$, since these groups generalize the classical local cohomology invariants of complexes of sheaves on a scheme.   For a point $y\in Y$, not necessarily closed, we also define the ``punctual invariant" $h^{n}(Y \tn{ on } y)$ to be the limit group $\displaystyle{\varinjlim_{y \in U}}\ h^{n}\big(U \fsz{\tn{ on }} \overline{\{y\}}\cap U\big)$. 

 The basic observation that Bass-Thomason nonconnective $K$-theory and negative cyclic homology are in fact cohomology theories with supports is a consequence of the fact that the corresponding spectrum-valued functors $\mbf{K}$ and $\mbf{HN}$ are substrata, as noted below.  The family $\textsf{Co}(\textsf{P}_k)$ of all cohomology theories with supports on $\textsf{P}_k$, together with their natural transformations, is a contravariant functor category on $\textsf{P}_k$, where natural transformations between cohomology theories with supports are the morphisms in $\textsf{Co}(\textsf{P}_k)$.  An important example of such a morphism is the Chern character between Bass-Thomason nonconnective $K$-theory and negative cyclic homology.  
 
 We now turn to the subject of substrata.  A substratum is a special functor from a suitable category $\textsf{S}_k$ of schemes to a category of complexes or spectra.  Corresponding cohomology functors are given by taking cohomology groups of complexes or homotopy groups of spectra, as explained below. 

\begin{definition}\label{defisubstratum}  Let $k$ be a field of characteristic zero, and let $\textsf{S}_k$ be a category of schemes over $k$ satisfying the conditions given at the beginning of this section.  A {\bf substratum} on $\textsf{S}_k$ is a contravariant functor $C:Y \mapsto C(Y)$ from $\textsf{S}_k$ to the category $\textsf{Ch}(\textsf{A})$ of chain complexes of objects of an abelian category $\textsf{A}$, or to an appropriate category $\textsf{E}$ of spectra.   
\end{definition}

For each pair $(Y,Z)$ belonging to the category of pairs $\textsf{P}_k$ over $\textsf{S}_k$, we may define a ``complex or spectrum with supports" $C(Y\tn{ on } Z)$, by taking the homotopy fiber of the map of complexes or spectra $C(Y) \to C(Y-Z)$.   For a point $y\in Y$, not necessarily closed, we also define the ``punctual invariant" $C(Y \tn{ on } y)$ to be the limit object $\displaystyle{\varinjlim_{y \in U}}\ C\big(U \tn{ on } \overline{\{y\}}\cap U\big)$.

A substratum $C$ serves as a ``precursor" for a cohomology theory with supports, in the following sense: for any triple $W \subseteq Z \subseteq Y$, where $Z$ and $W$ are closed in $Y$, the corresponding complexes or spectra with supports fit together to give short exact sequences
\begin{equation}\label{SEScomplexorspectra}
 0 \longrightarrow C(Y\tn{ on } W) \longrightarrow C(Y\tn{ on  } Z) \longrightarrow C(Y-W\tn{ on } Z-W)\longrightarrow 0.
\end{equation}

A cohomology theory with supports may then be defined by taking cohomology groups of complexes or homotopy groups of spectra: 
\begin{definition}\label{defisubstratumtocohom} Let $C$ be a substratum on an appropriate category of schemes $\textsf{S}_k$, as in definition \hyperref[defisubstratum]{\ref{defisubstratum}}.  If the target category of $C$ is a category of complexes $\textsf{Ch}(\textsf{A})$, define functors $h^n$ from $\textsf{P}_k$ to $\textsf{A}$ by taking cohomology of complexes:
\[h^n(Y\tn{ on } Z) := H^n\big(C(Y\tn{ on } Z)\big).\]
If the target category of $C$ is a category of spectra $\textsf{E}$, define functors functors $h^n$ from $\textsf{P}_k$ to $\textsf{A}$ by taking homotopy groups of spectra:
\[h^n(Y\tn{ on } Z) := \pi_{-q}\big(C(X\tn{ on } Z)\big).\]
\end{definition}

By basic homological algebra, the short exact sequence of complexes or spectra in equation \hyperref[SEScomplexorspectra]{\ref{SEScomplexorspectra}} induces a long exact sequence of groups $h^n(Y\tn{ on } Z)$, as in definition \hyperref[deficohomsupports]{\ref{deficohomsupports}}.  Hence, the functors $h^n$ indeed define a cohomology theory with supports.   It is an easy exercise to show that taking cohomology groups of complexes or homotopy groups of spectra coincides with taking direct limits in this context, so that the punctual invariants  $h^n(Y \tn{ on } y)$ coincide with the cohomology groups or homotopy groups of the punctual invariants  $C(Y \tn{ on } y)$.  Thomason \cite{Thomason85} introduced local cohomology invariants $\mathbb{H} ^{n} _{Z}(Y, C)  := \pi _{-n} \big(\mathbb{H}  _{Z}(Y, C) \big)$, where $C$ is a presheaf of spectra on $X$.  Since the historical connection to classically defined local cohomology is important to our viewpoint, this notation is sometimes useful as an alternative to $h^n(Y\tn{ on } Z)$.   

For the sake of completeness, we include the basic observation that the spectrum-valued functors $\mbf{K}$ and $\mbf{HN}$ are in fact substrata.

\begin{lemma}\label{lemKHNsubstrat} The functors $\mbf{K}$ and $\mbf{HN}$ are substrata.  Hence, Bass-Thomason nonconnective $K$-theory and negative cyclic homology are cohomology theories with supports. 
\end{lemma}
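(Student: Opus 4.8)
The plan is to verify the two ingredients demanded by Definitions \ref{defisubstratum} and \ref{deficohomsupports}: first, that $\mbf{K}$ and $\mbf{HN}$ are genuine contravariant functors on a suitable source category $\textsf{S}_k$ of quasi-compact separated $k$-schemes; and second, that the associated ``spectra with supports" of Definitions \ref{defiKtheory} and \ref{defiKnegcyc} fit into the short exact sequences of complexes or spectra \ref{SEScomplexorspectra} (i.e.\ homotopy fiber sequences of spectra), so that Definition \ref{defisubstratumtocohom} produces cohomology theories with supports. The first item is essentially formal; the second is the substantive point, supplied by the localization theorems of Thomason--Trobaugh for $K$-theory and of Keller for cyclic homology.

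For functoriality I would take $\textsf{S}_k$ to be a full subcategory of the category of quasi-compact separated $k$-schemes of the type allowed in \ref{subsectioncohomsupports}. Given $f\colon Y'\to Y$, derived pullback $Lf^{*}$ sends perfect complexes to perfect complexes and respects the support and acyclicity conditions defining $\textsf{Perf}_Z(Y)$; after rectifying the pseudofunctoriality of $Lf^{*}$ in the standard way (\cite{Thomason-Trobaugh90}, \cite{SchlichtingKTheory11}), $Y\mapsto\mbf{K}(Y)$ becomes a strict contravariant functor to spectra. The same morphism induces a morphism of Keller localization pairs $\big(\textsf{Perf}(Y),\textsf{Ac}\big)\to\big(\textsf{Perf}(Y'),\textsf{Ac}\big)$, hence a map of mixed complexes and of the resulting negative cyclic homology spectra, so $Y\mapsto\mbf{HN}(Y)$ is contravariantly functorial as well (\cite{KellerCyclicHomologyofSchemes98}, \cite{WeibelCycliccdh-CohomNegativeK06}, section 2). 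This already exhibits both as substrata in the bare sense of Definition \ref{defisubstratum}.

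The heart of the argument is checking \ref{SEScomplexorspectra}. Fix a triple $W\subseteq Z\subseteq Y$ with $W,Z$ closed in $Y$ and all complements quasi-compact. Restriction along $Y-W\hookrightarrow Y$ gives a sequence of complicial exact categories with weak equivalences (equivalently, of derived categories up to idempotent completion)
\[
\textsf{Perf}_W(Y)\ \longrightarrow\ \textsf{Perf}_Z(Y)\ \longrightarrow\ \textsf{Perf}_{Z-W}(Y-W),
\]
which is a Verdier localization sequence up to direct summands --- this is exactly the localization input of Thomason--Trobaugh \cite{Thomason-Trobaugh90}. Nonconnective $K$-theory is designed to convert such sequences into homotopy fiber sequences, absorbing the discrepancy introduced by idempotent completion (\cite{SchlichtingKTheory11}); this yields the fiber sequence $\mbf{K}(Y\tn{ on }W)\to\mbf{K}(Y\tn{ on }Z)\to\mbf{K}(Y-W\tn{ on }Z-W)$, and specializing $W=Z$ identifies $\mbf{K}(Y\tn{ on }Z)$ with the homotopy fiber of $\mbf{K}(Y)\to\mbf{K}(Y-Z)$, reconciling Definition \ref{defiKtheory} with the homotopy-fiber prescription following Definition \ref{defisubstratum}. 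For $\mbf{HN}$ the parallel sequence of localization pairs $\big(\textsf{Perf}_W(Y),\textsf{Ac}\big)\to\big(\textsf{Perf}_Z(Y),\textsf{Ac}\big)\to\big(\textsf{Perf}_{Z-W}(Y-W),\textsf{Ac}\big)$ is exact in Keller's sense, and Keller's localization theorem for cyclic homology (\cite{KellerCycHomofDGAlgebras96}, \cite{KellerCyclicHomologyofExactCat96}, \cite{KellerCyclicHomologyofSchemes98}), applied to the mixed-complex model recalled in \cite{WeibelCycliccdh-CohomNegativeK06}, produces the analogous cofiber sequence of mixed complexes and hence the fiber sequence $\mbf{HN}(Y\tn{ on }W)\to\mbf{HN}(Y\tn{ on }Z)\to\mbf{HN}(Y-W\tn{ on }Z-W)$ --- here using that cyclic homology of differential graded categories is unaffected by idempotent completion, so no nonconnective correction is needed.

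To finish, I would apply $\pi_{-n}$ to these fiber sequences: the resulting long exact sequence of homotopy groups is precisely the long exact sequence of Definition \ref{deficohomsupports}, with $i^{n}, j^{n}, d^{n}$ the induced maps and connecting homomorphism. The punctual invariants $\mbf{K}(Y\tn{ on }y)$ and $\mbf{HN}(Y\tn{ on }y)$ agree with the filtered colimits over open neighborhoods of $y$ by the commutation of homotopy groups with filtered homotopy colimits, matching the prescription after Definition \ref{defisubstratum}. The only real obstacle is the localization input itself --- verifying that the support-conditioned categories of perfect complexes form Verdier (respectively Keller) localization sequences up to idempotent completion, and that nonconnective $K$-theory (respectively negative cyclic homology) turns these into honest homotopy fiber sequences --- and for this one invokes \cite{Thomason-Trobaugh90} and \cite{SchlichtingKTheory11} for $\mbf{K}$, and \cite{KellerCyclicHomologyofSchemes98} together with \cite{WeibelCycliccdh-CohomNegativeK06} for $\mbf{HN}$; with those results in hand, the remainder is formal.
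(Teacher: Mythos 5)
Your argument is correct, but it proves considerably more than the paper's own proof does, and the difference is worth noting.  The paper's proof consists of a single sentence: ``The only requirement is the basic condition that $\mbf{K}$ and $\mbf{HN}$ be contravariant complex-valued or spectrum-valued functors.''  This is not laziness but a reflection of how Definition \ref{defisubstratum} is set up: a substratum is \emph{by definition} just a contravariant functor from $\textsf{S}_k$ to complexes or spectra, the objects ``with supports'' $C(Y\tn{ on }Z)$ are then \emph{defined} as homotopy fibers of $C(Y)\to C(Y-Z)$, and the sequences \ref{SEScomplexorspectra} for a triple $W\subseteq Z\subseteq Y$ follow formally (via the octahedral axiom in the stable category) from the composite $C(Y)\to C(Y-W)\to C(Y-Z)$.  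No localization theorem is needed at this stage.

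What you have actually proved is the stronger and genuinely non-formal statement that the \emph{categorically defined} spectra with supports of Definitions \ref{defiKtheory}(1) and \ref{defiKnegcyc}(1) --- the nonconnective $K$-theory of $\textsf{Perf}_Z(Y)$ and the negative cyclic homology of the localization pair $\big(\textsf{Perf}_Z(Y),\textsf{Ac}\big)$ --- agree with the homotopy-fiber prescription of the substratum formalism and fit into localization fiber sequences.  For $\mbf{K}$ this is Thomason's localization theorem, which the paper quotes separately as Theorem \ref{thomasonhomotopysequence} and uses only later (to verify the effaceability criteria of section \ref{effacement}); for $\mbf{HN}$ the parallel statement via Keller's localization and the mixed-complex model is likewise invoked only when needed for effaceability.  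So your proof is sound and the localization inputs you cite are the right ones, but it collapses two logically distinct steps: the (trivial) observation that $\mbf{K}$ and $\mbf{HN}$ are substrata in the bare sense of Definition \ref{defisubstratum}, and the (deep) reconciliation of the categorical support constructions with the homotopy-fiber ones.  The paper deliberately keeps Lemma \ref{lemKHNsubstrat} trivial and defers the latter to the sections where it is actually used.
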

\begin{proof} The only requirement is the basic condition that  $\mbf{K}$ and $\mbf{HN}$ be contravariant complex-valued or spectrum-valued functors.  
\end{proof}

The family $\textsf{Sub}_{\textsf{S}_k}$ of all substrata on $\textsf{S}_k$, together with their natural transformations, is a contravariant functor category on $\textsf{S}_k$.   The Chern character between Bass-Thomason $K$-theory and negative cyclic homology may be understood at the level of substrata, by working with the spectra $\mbf{K}$ and $\mbf{HN}$. 

 %%%%%%%%%%%%%%%%%%%%%%%%%%%%%%%%%%%%%%%%%%%%%%%%%%%%%%%%%%%%%%%%%%%%%%%%%%
%%%%%%%%%%%%%%%%%%%%%%%%%%%%%%%%%%%%%%%%%%%%%%%%%%%%%%%%%%%%%%%%%%%%%%%%%%
%%%%%%%%%%%%%%%%%%%%%%%%%%%%%%%%%%%%%%%%%%%%%%%%%%%%%%%%%%%%%%%%%%%%%%%%%%
%%%%%%%%%%%%%%%%%%%%%%%%%%%%%%%%%%%%%%%%%%%%%%%%%%%%%%%%%%%%%%%%%%%%%%%%%%
%%%%%%%%%%%%%%%%%%%%%%%%%%%%%%%%%%%%%%%%%%%%%%%%%%%%%%%%%%%%%%%%%%%%%%%%%%
%%%%%%%%%%%%%%%%%%%%%%%%%%%%%%%%%%%%%%%%%%%%%%%%%%%%%%%%%%%%%%%%%%%%%%%%%%

\subsection{Augmented and Relative Variants.}\label{subsectionaugmented}

Colliot-Th\'el\`ene, Hoobler, and Kahn \cite{CHKBloch-Ogus-Gabber97} devote considerable attention to the procedure of modifying a given cohomology theory with supports $h$ to define a new such theory $h'$ satisfying specified conditions.  An analogous procedure may be applied to substrata.  The rationale for this is that useful properties of the ``old" theory $h$ may sometimes be easily transferred to the ``new" theory $h'$.  An important example of such a property is {\it effaceability,} discussed in more detail below.  In this paper, $h'$ will usually be an ``augmented" or ``relative" version of $h$.  Of particular interest is the case where $h'$ is defined by ``multiplying by a fixed separated scheme."  

\begin{definition}\label{defnewoutofoldaug} Let $k$ be a field of characteristic zero, and let $\textsf{S}_k$ be a category of schemes over $k$ satisfying the conditions given at the beginning of section \hyperref[subsectioncohomsupports]{\ref{subsectioncohomsupports}}.   Let $S$ be a separated scheme over $k$.  
\begin{enumerate} 
\item Let $h$ be a cohomology theory with supports on the category $\textsf{P}_k$ of pairs over $\textsf{S}_k$, with values in an abelian category $\textsf{A}$.   Define a family of functors $h^{S}=\{h^{S,n}\}_{n\in\ZZ}$ on the category of pairs over $\textsf{S}_k$ as follows:
\begin{equation}\label{equnewoutofoldcohom}h^{S,n}(Y\tn{ on } Z):=h^n(Y\times_k S\tn{ on } Z\times_k S ).\end{equation}
where the notation $\times_k$ for the fiber product means $\times_{\tn{Spec }(k)}$.  We call the family $h^{S}$ the {\bf augmented version of $h$ with respect to $S$.} 
\item  Let $C$ be a substratum on $\textsf{S}_k$, with values in an abelian category $\textsf{A}$.   Define a functor $C^{S}$ on $\textsf{S}_k$ as follows:
\begin{equation}\label{equnewoutofoldsubstrat}C^S(Y):=C(Y\times_k S),\end{equation}
where the notation $\times_k$ for the fiber product means $\times_{\tn{Spec }(k)}$.  We call the functor $C^{S}$ the {\bf augmented version of $C$ with respect to $S$.} 
\end{enumerate}
\end{definition}

It is an easy exercise to show that $h^S$ and $C^S$ satisfy the properties of a cohomology theory with supports and a substratum, respectively.  See  \cite{CHKBloch-Ogus-Gabber97}, section 5.5, for details.  

Given an ``absolute theory," such as $h$ or $C$, and an ``augmented theory," such as $h^S$ or $C^S$, one may define corresponding ``relative theory" by taking homotopy fibers.   However, it is logically preferable to approach relative constructions without prior reference to an augmented theory.   Hence, we use the following definition, confining our attention to the case of substrata:  

\begin{definition}\label{defnewoutofoldrel} Let $k$ be a field of characteristic zero, and let $\textsf{S}_k$ be a category of schemes over $k$ satisfying the conditions given at the beginning of section \hyperref[subsectioncohomsupports]{\ref{subsectioncohomsupports}}.   Let $Y$ be a scheme belonging to $\textsf{S}_k$, and let $\ms{I} \subset \ms{O}_Y$ be a functorially defined sheaf of ideals.  Let $X$ be the corresponding quotient scheme.  Let $C$ be a substratum on $\textsf{S}_k$.   Define $C^{\ms{I}}(Y)$ to be the homotopy fiber of the map
\begin{equation}\label{equnewoutofoldsubstrat}C(Y)\longrightarrow C(X).\end{equation}
We call the functor $C^{\ms{I}}$ the {\bf relative version of $C$ with respect to $\ms{I}$.} 
\end{definition}

When referring to the individual complexes or spectra $C^{\ms{I}}(Y)$, instead of the functor $C^{\ms{I}}$ itself, it is convenient for historical reasons to use the notation $C(Y,\ms{I})$ for $C^{\ms{I}}(Y)$.   Here, $Y$ is usually a thickening $X\times_k S$ of a variety $X$, where $S$ is the prime spectrum of an Artinian local $k$-algebra $A$ with maximal ideal $\mm$ and residue field $k$, and where we take $\ms{I}=\ms{O}_X\otimes _k\mm$.  In this case,  we use the shorthand $C(Y,\mm)$ for $C^{\ms{I}}(Y)$.  Additional variations may be obtained  by combining theories with supports and relative theories in the obvious way.  The prototypical example is the relative $K$-theoretic spectrum for $Y=X\times_k S$ with supports in a closed subset $Z$, relative to an ideal $\mm$ in the sense above, which we denote by $\mbf{K}( Y\tn{ on } Z,\mm)$.  

We now take a brief detour to examine more closely the local structure underlying the relative theories of principal interest in this paper.  In particular, we consider a ring $R$ and an ideal $I$ of $R$, which are understood to locally represent the scheme $Y$ and sheaf of ideals $\ms{I}$ of definition \hyperref[defnewoutofoldrel]{\ref{defnewoutofoldrel}}. The quotient ring $R/I=S$ locally represents the quotient scheme $X$. Since we are interested in infinitesimal structure, we focus on the case where $I$ is a nilpotent ideal.   In this context, $R$ is called a split nilpotent extension of $S$, and the pair $(R,I)$ is called a split nilpotent pair.  The functorial behavior of such pairs is important in what follows, so we need the following definition:

\begin{definition}\label{defisplitnilpotentpairs} The {\bf category of split nilpotent pairs} $\tn{\textsf{Nil}}$ is the category whose objects are pairs $(R,I)$, where $R$ is a ring and $I$ is an ideal of $R$, and whose morphisms $(R,I)\rightarrow(R',I')$ are ring homomorphisms $R\rightarrow R'$ such that $I$ maps into $I'$.  
\end{definition}

We remark that the objects $(R,I)$ of $\textsf{Nil}$ are {\it not} local versions of the objects of the category of pairs $\mbf{P}_k$ over a distinguished category of schemes $\mbf{S}_k$.   In this context, a ``local version of an object of $\mbf{P}_k$" would be a pair $(S,J)$, where $S$ is a ring locally representing a suitable scheme $X$, often a variety, and $S$ is an ideal locally representing a closed subset $Z$ of $X$.  Hence, when dealing with supports and relative constructions simultaneously at a local level, we are really working with {\it triples} consisting of a ring and two ideals. 

The principal reason for introducing the category of split nilpotent pairs $\textsf{Nil}$ is to enable a suitable treatment of the relative algebraic Chern character, introduced in section \hyperref[cherncharacter]{\ref{cherncharacter}} below.  In particular, relative algebraic $K$-theory and relative negative cyclic homology may be viewed as functors from $\textsf{Nil}$ to the category of abelian groups, and the relative Chern character is an isomorphism of functors between these two theories. 

%SSSSSSSSSSSSSSSSSSSSSSSSSSSSSSSSSSSSSSSSSSSSSSSSSSSSSSSSSSSSSSSSSSSSSSSSSSSSSSSSSS
%SSSSSSSSSSSSSSSSSSSSSSSSSSSSSSSSSSSSSSSSSSSSSSSSSSSSSSSSSSSSSSSSSSSSSSSSSSSSSSSSSS
%SSS                      SSS                       SSS                    SS                          SS           SSS                         SSS          SSSSS      SSSS
%SSS      SSSSSSSSS    SSSSSSSSSS     SSSSSSSSSSSS     SSSSSSS     SSSS    SSSSSS     SSS     S     SSSS     SSSSS
%SSS      SSSSSSSSS    SSSSSSSSSS     SSSSSSSSSSSS     SSSSSSS     SSSS    SSSSSS     SSS     SS     SSS     SSSSS
%SSS                      SSS                  SSSSS    SSSSSSSSSSSS     SSSSSSS     SSSS    SSSSSS     SSS     SSS     SS     SSSSS
%SSSSSSSSSS    SSS    SSSSSSSSSS    SSSSSSSSSSSS     SSSSSSS     SSSS    SSSSSS     SSS     SSSS     S     SSSSS
%SSSSSSSSSS    SSS    SSSSSSSSSS    SSSSSSSSSSSS     SSSSSSS     SSSS    SSSSSS     SSS     SSSSS          SSSSS
%SSS                       SSS                      SSS                       SSSSS     SSSSSS          SSS                         SSS     SSSSSS        SSSSS
%SSSSSSSSSSSSSSSSSSSSSSSSSSSSSSSSSSSSSSSSSSSSSSSSSSSSSSSSSSSSSSSSSSSSSSSSSSSSSSSSSS
%SSSSSSSSSSSSSSSSSSSSSSSSSSSSSSSSSSSSSSSSSSSSSSSSSSSSSSSSSSSSSSSSSSSSSSSSSSSSSSSSSS

\section{Effaceability}\label{effacement}

\subsection{Preliminary Remarks.}\label{subsectioneffacementprelim}

Effaceability is a technical condition involving the behavior of a cohomology theory with supports, or a substratum, with respect to open neighborhoods of finite collections of points in smooth affine schemes in a distinguished category $\textsf{S}_k$ of the type introduced at the beginning of section \hyperref[subsectioncohomsupports]{\ref{subsectioncohomsupports}}.  Effaceability plays a vital role in the construction of the coniveau machine by guaranteeing the exactness of certain sheafified Cousin complexes, which serve as flasque resolutions of $K$-theory and cyclic homology sheaves, as described below.   In particular, this provides a method of computing ``generalized deformation groups" and formal completions of Chow groups. 

\begin{definition}\label{defieffaceability} Let $k$ be a field of characteristic zero, and let $\textsf{S}_k$ be a category of schemes over $k$ satisfying the conditions given at the beginning of section \hyperref[subsectioncohomsupports]{\ref{subsectioncohomsupports}}.  Let $X\in\textsf{S}_k$ be an affine scheme, and let $\{t_1,...,t_r\}$ be a finite set of points of $X$.  
\begin{enumerate}
\item A cohomology theory with supports $h=\{h^n\}_{n\in\mathbb{Z}}$ on the category of pairs $\textsf{P}_k$ over $\textsf{S}_k$ is called {\bf effaceable}\footnotemark\footnotetext{Colliot-Th\'el\`ene, Hoobler, and Kahn \cite{CHKBloch-Ogus-Gabber97} call this condition {\bf strict effaceability} (definition 5.1.8, page 28) but I drop the adjective ``strict" since this is the only such property used here. } {\bf at} $\{t_1,...,t_r\}$ if, given any integer $p\ge0$, any open neighborhood $W$ of $\{t_1,...,t_r\}$ in $X$, and any closed subset $Z\subset W$ of codimension at least $p+1$, there exists a smaller open neighborhood $U\subseteq W$ of $\{t_1,...,t_r\}$ in $X$ and a closed subset $Z'\subseteq W$, containing $Z$, with $\tn{codim}_{W}(Z')\ge p$, such that the map 
\[h^n(U\tn{ \footnotesize{on} } Z\cap U)\rightarrow h^n(U\tn{ \footnotesize{on} } Z'\cap U)\]
is zero for all $n\in\mathbb{Z}$. The cohomology theory with supports $h$ is called {\bf effaceable} if this condition is satisfied for any nonsingular $X$ in $\textsf{S}_k$, and any choice of $\{t_1,...,t_r\}$.  
\item A substratum $C$ on $\textsf{S}_k$ is called {\bf effaceable at} $\{t_1,...,t_r\}$ if, given $p, W,$ and $Z$ as above, there exists $U$ and $Z'$ as above such that the map of substrata
\[C(U\tn{ \footnotesize{on} } Z\cap U)\rightarrow C(U\tn{ \footnotesize{on} } Z'\cap U)\]
is nullhomotopic.  The substratum $C$ is called {\bf effaceable} if this condition is satisfied for any nonsingular $X$ in $\textsf{S}_k$, and any choice of $\{t_1,...,t_r\}$.  
\end{enumerate}
\end{definition}

Colliot-Th\'el\`ene, Hoobler, and Kahn \cite{CHKBloch-Ogus-Gabber97} prove several different versions of an {\it effacement theorem} giving conditions under which cohomology theories with supports and substrata are effaceable.   These different versions involve a variety of different criterion, offering a spectrum of choices affording different balances of generality and ease of application.  Algebraic $K$-theory and negative cyclic homology satisfy somewhat stronger hypotheses than those required for the most general effacement theorems, so relatively ``easy" criterion may be used for these particular theories.   The criterion we use for cohomology theories with supports are called {\it \'etale excision} and the {\it cohomological projective bundle condition}.  The conditions we use for substrata are called the {\it \'etale Mayer-Veitoris condition} and the {\it projective bundle condition for substrata}.

%SSSSSSSSSSSSSSSSSSSSSSSSSSSSSSSSSSSSSSSSSSSSSSSSSSSSSSSSSSSSSSSSSSSSSSSSSSSSSSSSSS
%SSSSSSSSSSSSSSSSSSSSSSSSSSSSSSSSSSSSSSSSSSSSSSSSSSSSSSSSSSSSSSSSSSSSSSSSSSSSSSSSSS
%SSS                      SSS                       SSS                    SS                          SS           SSS                         SSS          SSSSS      SSSS
%SSS      SSSSSSSSS    SSSSSSSSSS     SSSSSSSSSSSS     SSSSSSS     SSSS    SSSSSS     SSS     S     SSSS     SSSSS
%SSS      SSSSSSSSS    SSSSSSSSSS     SSSSSSSSSSSS     SSSSSSS     SSSS    SSSSSS     SSS     SS     SSS     SSSSS
%SSS                      SSS                  SSSSS    SSSSSSSSSSSS     SSSSSSS     SSSS    SSSSSS     SSS     SSS     SS     SSSSS
%SSSSSSSSSS    SSS    SSSSSSSSSS    SSSSSSSSSSSS     SSSSSSS     SSSS    SSSSSS     SSS     SSSS     S     SSSSS
%SSSSSSSSSS    SSS    SSSSSSSSSS    SSSSSSSSSSSS     SSSSSSS     SSSS    SSSSSS     SSS     SSSSS          SSSSS
%SSS                       SSS                      SSS                       SSSSS     SSSSSS          SSS                         SSS     SSSSSS        SSSSS
%SSSSSSSSSSSSSSSSSSSSSSSSSSSSSSSSSSSSSSSSSSSSSSSSSSSSSSSSSSSSSSSSSSSSSSSSSSSSSSSSSS
%SSSSSSSSSSSSSSSSSSSSSSSSSSSSSSSSSSSSSSSSSSSSSSSSSSSSSSSSSSSSSSSSSSSSSSSSSSSSSSSSSS

\subsection{Effaceability Criteria for Cohomology Theories with Supports.}\label{effaceabilitycriterioncohom}

Let $h$ and $h'$ be cohomology theories with supports on the category of pairs $\textsf{P}_k$ over a distinguished category of schemes $\textsf{S}_k$ over a field $k$ of characteristic zero, satisfying the conditions given at the beginning of section \hyperref[subsectioncohomsupports]{\ref{subsectioncohomsupports}}.  Following Colliot-Th\'el\`ene, Hoobler, and Kahn \cite{CHKBloch-Ogus-Gabber97}, we specify effaceability criterion  for $h$, called \'etale excision and the projective bundle condition.  

\begin{definition}\label{defiCOH1} A cohomology theory with supports $h$ satisfies {\bf \'etale excision} if $h$ is additive, and if for any diagram of the form shown below, where $f$ is \'etale and $f^{-1}(Z)\rightarrow Z$ is an isomorphism, the induced map $f^*:h^q(Y\tn{\footnotesize{ on }} Z) \rightarrow h^q(Y'\tn{\footnotesize{ on }} Z)$ is an isomorphism for all $q$:

%&&&&&&&&&&&&&&&&&&&&&&&&&&&&&&&&&&&&&&&&&&&&&&&&&&&&&&&&&&&&&&&&&&&&&&&&&&&&&&&&&&
%&&&&&&&&&&&&&&&&&&&&&&&&&&&&&&&&&&&&&&&&&&&&&&&&&&&&&&&&&&&&&&&&&&&&&&&&&&&&&&&&&&
%&&&&                 &&&&    &&&               &&&                    &&&    &&&&    &&&              &&&&                   &&&&&&&&&&&&&&&&&&
%&&&&   &&&&   &&&&    &&&   &&&&&&&&&&&   &&&&&&    &&&&    &&&    &&&     &&&    &&&&&&&&&&&&&&&&&&&&&&&&
%&&&&   &&&&   &&&&    &&&   &&&&&&&&&&&   &&&&&&    &&&&    &&&    &&&     &&&    &&&&&&&&&&&&&&&&&&&&&&&&
%&&&&                 &&&&    &&&   &&&&&&&&&&&   &&&&&&    &&&&    &&&            &&&&&             &&&&&&&&&&&&&&&&&&&&
%&&&&    &&&&&&&&&    &&&   &&&&&&&&&&&   &&&&&&    &&&&    &&&    &&&   &&&&    &&&&&&&&&&&&&&&&&&&&&&&
%&&&&    &&&&&&&&&    &&&   &&&&&&&&&&&   &&&&&&    &&&&    &&&    &&&&   &&&    &&&&&&&&&&&&&&&&&&&&&&&
%&&&&    &&&&&&&&&    &&&                &&&&&&   &&&&&&                   &&&    &&&&&   &&                  &&&&&&&&&&&&&&&&&&
%&&&&&&&&&&&&&&&&&&&&&&&&&&&&&&&&&&&&&&&&&&&&&&&&&&&&&&&&&&&&&&&&&&&&&&&&&&&&&&&&&&
%&&&&&&&&&&&&&&&&&&&&&&&&&&&&&&&&&&&&&&&&&&&&&&&&&&&&&&&&&&&&&&&&&&&&&&&&&&&&&&&&&&
\begin{pgfpicture}{0cm}{0cm}{17cm}{1.2cm}
\begin{pgftranslate}{\pgfpoint{5.5cm}{-1.2cm}}

\pgfputat{\pgfxy(2.75,2.5)}{\pgfbox[center,center]{$X'$}}
\pgfputat{\pgfxy(1.5,1)}{\pgfbox[center,center]{$Z$}}
\pgfputat{\pgfxy(2.75,1)}{\pgfbox[center,center]{$X$}}
\pgfputat{\pgfxy(2.95,1.8)}{\pgfbox[center,center]{$f$}}
\pgfsetendarrow{\pgfarrowlargepointed{3pt}}
\pgfxyline(2.75,2.2)(2.75,1.3)
\pgfxyline(1.8,1)(2.5,1)
\pgfxyline(1.6,1.3)(2.4,2.2)
\end{pgftranslate}

\end{pgfpicture}
%&&&&&&&&&&&&&&&&&&&&&&&&&&&&&&&&&&&&&&&&&&&&&&&&&&&&&&&&&&&&&&&&&&&&&&&&&&&&&&&&&&
%&&&&&&&&&&&&&&&&&&&&&&&&&&&&&&&&&&&&&&&&&&&&&&&&&&&&&&&&&&&&&&&&&&&&&&&&&&&&&&&&&&
%&&&                    &&&       &&&&&    &&&                &&&&&&&&&&&&&&&&&&&&&&&&&&&&&&&&&&&&&&&&&&&&&&&&&&&
%&&&    &&&&&&&&&    &    &&&&   &&&    &&&&    &&&&&&&&&&&&&&&&&&&&&&&&&&&&&&&&&&&&&&&&&&&&&&&&&& 
%&&&    &&&&&&&&&    &&    &&&   &&&    &&&&&    &&&&&&&&&&&&&&&&&&&&&&&&&&&&&&&&&&&&&&&&&&&&&&&&& 
%&&&                 &&&&    &&&    &&   &&&    &&&&&    &&&&&&&&&&&&&&&&&&&&&&&&&&&&&&&&&&&&&&&&&&&&&&&&& 
%&&&    &&&&&&&&&    &&&&    &   &&&    &&&&&    &&&&&&&&&&&&&&&&&&&&&&&&&&&&&&&&&&&&&&&&&&&&&&&&& 
%&&&    &&&&&&&&&    &&&&&       &&&    &&&&     &&&&&&&&&&&&&&&&&&&&&&&&&&&&&&&&&&&&&&&&&&&&&&&&& 
%&&&                    &&&    &&&&&&    &&&                  &&&&&&&&&&&&&&&&&&&&&&&&&&&&&&&&&&&&&&&&&&&&&&&&&&
%&&&&&&&&&&&&&&&&&&&&&&&&&&&&&&&&&&&&&&&&&&&&&&&&&&&&&&&&&&&&&&&&&&&&&&&&&&&&&&&&&&
%&&&&&&&&&&&&&&&&&&&&&&&&&&&&&&&&&&&&&&&&&&&&&&&&&&&&&&&&&&&&&&&&&&&&&&&&&&&&&&&&&&

\end{definition}

The closely-related condition of {\it Zariski excision} is defined by letting $f$ run over all Zariski open immersions in the statement of definition \hyperref[defiCOH1]{\ref{defiCOH1}}.  This condition is sometimes useful as well; for example, it permits a stronger version of theorem \hyperref[theoremconiveaudescent]{\ref{theoremconiveaudescent}} below. 

Some preliminary explanation, following Colliot-Th\'el\`ene, Hoobler, and Kahn \cite{CHKBloch-Ogus-Gabber97}, section 5.4, will be helpful before stating the projective bundle condition.  Let $V$ be an open subset of the $n$-dimensional affine space $\AA_k^n$ over $k$ for some $n$, and let the diagram

%&&&&&&&&&&&&&&&&&&&&&&&&&&&&&&&&&&&&&&&&&&&&&&&&&&&&&&&&&&&&&&&&&&&&&&&&&&&&&&&&&&
%&&&&&&&&&&&&&&&&&&&&&&&&&&&&&&&&&&&&&&&&&&&&&&&&&&&&&&&&&&&&&&&&&&&&&&&&&&&&&&&&&&
%&&&&                 &&&&    &&&               &&&                    &&&    &&&&    &&&              &&&&                   &&&&&&&&&&&&&&&&&&
%&&&&   &&&&   &&&&    &&&   &&&&&&&&&&&   &&&&&&    &&&&    &&&    &&&     &&&    &&&&&&&&&&&&&&&&&&&&&&&&
%&&&&   &&&&   &&&&    &&&   &&&&&&&&&&&   &&&&&&    &&&&    &&&    &&&     &&&    &&&&&&&&&&&&&&&&&&&&&&&&
%&&&&                 &&&&    &&&   &&&&&&&&&&&   &&&&&&    &&&&    &&&            &&&&&             &&&&&&&&&&&&&&&&&&&&
%&&&&    &&&&&&&&&    &&&   &&&&&&&&&&&   &&&&&&    &&&&    &&&    &&&   &&&&    &&&&&&&&&&&&&&&&&&&&&&&
%&&&&    &&&&&&&&&    &&&   &&&&&&&&&&&   &&&&&&    &&&&    &&&    &&&&   &&&    &&&&&&&&&&&&&&&&&&&&&&&
%&&&&    &&&&&&&&&    &&&                &&&&&&   &&&&&&                   &&&    &&&&&   &&                  &&&&&&&&&&&&&&&&&&
%&&&&&&&&&&&&&&&&&&&&&&&&&&&&&&&&&&&&&&&&&&&&&&&&&&&&&&&&&&&&&&&&&&&&&&&&&&&&&&&&&&
%&&&&&&&&&&&&&&&&&&&&&&&&&&&&&&&&&&&&&&&&&&&&&&&&&&&&&&&&&&&&&&&&&&&&&&&&&&&&&&&&&&
\begin{pgfpicture}{0cm}{0cm}{17cm}{1.9cm}
\begin{pgftranslate}{\pgfpoint{5cm}{-1cm}}
\pgfputat{\pgfxy(1.5,2.5)}{\pgfbox[center,center]{$\AA_V^1$}}
\pgfputat{\pgfxy(3.25,2.5)}{\pgfbox[center,center]{$\PP_V^1$}}
\pgfputat{\pgfxy(5,2.5)}{\pgfbox[center,center]{$V$}}
\pgfputat{\pgfxy(3.25,1)}{\pgfbox[center,center]{$V$}}
\pgfputat{\pgfxy(2.3,2.75)}{\pgfbox[center,center]{$j$}}
\pgfputat{\pgfxy(4.25,2.7)}{\pgfbox[center,center]{$s_\infty$}}
\pgfputat{\pgfxy(2.1,1.65)}{\pgfbox[center,center]{$\pi$}}
\pgfputat{\pgfxy(3.5,1.9)}{\pgfbox[center,center]{$\tilde{\pi}$}}
\pgfputat{\pgfxy(4.45,1.65)}{\pgfbox[center,center]{$=$}}
\pgfsetendarrow{\pgfarrowlargepointed{3pt}}
\pgfxyline(1.85,2.5)(2.85,2.5)
\pgfxyline(4.75,2.5)(3.65,2.5)
\pgfxyline(1.7,2.2)(3,1.2)
\pgfxyline(4.7,2.2)(3.5,1.2)
\pgfxyline(3.25,2.2)(3.25,1.3)
\end{pgftranslate}
\end{pgfpicture}
%&&&&&&&&&&&&&&&&&&&&&&&&&&&&&&&&&&&&&&&&&&&&&&&&&&&&&&&&&&&&&&&&&&&&&&&&&&&&&&&&&&
%&&&&&&&&&&&&&&&&&&&&&&&&&&&&&&&&&&&&&&&&&&&&&&&&&&&&&&&&&&&&&&&&&&&&&&&&&&&&&&&&&&
%&&&                    &&&       &&&&&    &&&                &&&&&&&&&&&&&&&&&&&&&&&&&&&&&&&&&&&&&&&&&&&&&&&&&&&
%&&&    &&&&&&&&&    &    &&&&   &&&    &&&&    &&&&&&&&&&&&&&&&&&&&&&&&&&&&&&&&&&&&&&&&&&&&&&&&&& 
%&&&    &&&&&&&&&    &&    &&&   &&&    &&&&&    &&&&&&&&&&&&&&&&&&&&&&&&&&&&&&&&&&&&&&&&&&&&&&&&& 
%&&&                 &&&&    &&&    &&   &&&    &&&&&    &&&&&&&&&&&&&&&&&&&&&&&&&&&&&&&&&&&&&&&&&&&&&&&&& 
%&&&    &&&&&&&&&    &&&&    &   &&&    &&&&&    &&&&&&&&&&&&&&&&&&&&&&&&&&&&&&&&&&&&&&&&&&&&&&&&& 
%&&&    &&&&&&&&&    &&&&&       &&&    &&&&     &&&&&&&&&&&&&&&&&&&&&&&&&&&&&&&&&&&&&&&&&&&&&&&&& 
%&&&                    &&&    &&&&&&    &&&                  &&&&&&&&&&&&&&&&&&&&&&&&&&&&&&&&&&&&&&&&&&&&&&&&&&
%&&&&&&&&&&&&&&&&&&&&&&&&&&&&&&&&&&&&&&&&&&&&&&&&&&&&&&&&&&&&&&&&&&&&&&&&&&&&&&&&&&
%&&&&&&&&&&&&&&&&&&&&&&&&&&&&&&&&&&&&&&&&&&&&&&&&&&&&&&&&&&&&&&&&&&&&&&&&&&&&&&&&&&

represent the inclusion of $\AA_k^1$ and the section at infinity into the projective space $\PP_V^1$ over $V$.  Let $F$ be a closed subset of $V$. Let Assume that $h$ and $h'$ share a common target category $\textsf{A}$, and that for any pair $(X,Z)$ there exists a map
\begin{equation}\label{equprojbundlecohom1}\xymatrixcolsep{2pc}\xymatrix{\tn{Pic}(X)\ar[r]&\tn{Hom}_{\mbf{A}}\big(h'(X\tn{\footnotesize{ on }} Z),h(X\tn{\footnotesize{ on }} Z)\big),}\end{equation}
which is functorial for pairs $(X,Z)$.  Taking $X=\PP_V^1$ and $Z=\PP_F^1$, there is a homomorphism
\begin{equation}\label{equprojbundlecohom2}\xymatrixcolsep{4.5pc}\xymatrix{h'(\PP_V^1\tn{\footnotesize{ on }} \PP_F^1) \ar[r]&h(\PP_V^1\tn{\footnotesize{ on }} \PP_F^1)}.\end{equation}
Composing with $\tilde{\pi}^*$, there is a homomorphism 
\begin{equation}\label{equprojbundlecohom3}\xymatrixcolsep{4pc}\xymatrix{h'(V\tn{\footnotesize{ on }} F)  \ar[r]^{\alpha_{V,F}} &h(\PP_V^1\tn{\footnotesize{ on }} \PP_F^1)},\end{equation}
which is functorial for pairs $(V,F)$.

\begin{definition}\label{defiCOH5} The pair of cohomology theories with supports $(h,h')$ satisfies the (cohomological) {\bf projective bundle formula}, if for $V$, $F$, $\tilde{\pi}$ as given above, the natural map
\begin{equation}\label{equprojbundlecohom4}\xymatrixcolsep{4.5pc}\xymatrix{h^q(V\tn{\footnotesize{ on }} F)\oplus h^{'q}(V\tn{\footnotesize{ on }} F) \ar[r]^-{\tilde{\pi}^*,\hspace*{.05cm}\alpha_{V,F}} &h^q(\PP_V^1\tn{\footnotesize{ on }} \PP_F^1)}\end{equation}
is an isomorphism for all $q$.  In particular, if the pair $(h,h)$, satisfies the projective bundle formula, then one says that $h$ satisfies the projective bundle formula. 
\end{definition}

%SSSSSSSSSSSSSSSSSSSSSSSSSSSSSSSSSSSSSSSSSSSSSSSSSSSSSSSSSSSSSSSSSSSSSSSSSSSSSSSSSS
%SSSSSSSSSSSSSSSSSSSSSSSSSSSSSSSSSSSSSSSSSSSSSSSSSSSSSSSSSSSSSSSSSSSSSSSSSSSSSSSSSS
%SSSSSSSSSSSSSSSSSSSSSSSSSSSSSSSSSSSSSSSSSSSSSSSSSSSSSSSSSSSSSSSSSSSSSSSSSSSSSSSSSS
%SSS                      SSS                       SSS                    SS                          SS           SSS                         SSS          SSSSS      SSSS
%SSS      SSSSSSSSS    SSSSSSSSSS     SSSSSSSSSSSS     SSSSSSS     SSSS    SSSSSS     SSS     S     SSSS     SSSSS
%SSS      SSSSSSSSS    SSSSSSSSSS     SSSSSSSSSSSS     SSSSSSS     SSSS    SSSSSS     SSS     SS     SSS     SSSSS
%SSS                      SSS                  SSSSS    SSSSSSSSSSSS     SSSSSSS     SSSS    SSSSSS     SSS     SSS     SS     SSSSS
%SSSSSSSSSS    SSS    SSSSSSSSSS    SSSSSSSSSSSS     SSSSSSS     SSSS    SSSSSS     SSS     SSSS     S     SSSSS
%SSSSSSSSSS    SSS    SSSSSSSSSS    SSSSSSSSSSSS     SSSSSSS     SSSS    SSSSSS     SSS     SSSSS          SSSSS
%SSS                       SSS                      SSS                       SSSSS     SSSSSS          SSS                         SSS     SSSSSS        SSSSS
%SSSSSSSSSSSSSSSSSSSSSSSSSSSSSSSSSSSSSSSSSSSSSSSSSSSSSSSSSSSSSSSSSSSSSSSSSSSSSSSSSS
%SSSSSSSSSSSSSSSSSSSSSSSSSSSSSSSSSSSSSSSSSSSSSSSSSSSSSSSSSSSSSSSSSSSSSSSSSSSSSSSSSS

\subsection{Effaceability Criteria for Substrata.}\label{effaceabilitycriterioncohom}

Let $C$ and $C'$ be substrata on $\textsf{S}_k$.  Again following \cite{CHKBloch-Ogus-Gabber97}, we specify a condition involving the behavior of $C$ with respect to \'etale covers of $X$, and a condition involving the behavior of $C$ and $C'$ with respect to projective bundles over subsets of $X$.   

\begin{definition}\label{defiSUB1} The substratum $C$ satisfies the {\bf \'etale Mayer-Vietoris condition} if $C$ is additive, and if and for any diagram of the form shown below on the left, where $f$ is \'etale and $f^{-1}(Z)\rightarrow Z$ is an isomorphism, the commutative square shown below on the right is homotopy cartesian:

%&&&&&&&&&&&&&&&&&&&&&&&&&&&&&&&&&&&&&&&&&&&&&&&&&&&&&&&&&&&&&&&&&&&&&&&&&&&&&&&&&&
%&&&&&&&&&&&&&&&&&&&&&&&&&&&&&&&&&&&&&&&&&&&&&&&&&&&&&&&&&&&&&&&&&&&&&&&&&&&&&&&&&&
%&&&&                 &&&&    &&&               &&&                    &&&    &&&&    &&&              &&&&                   &&&&&&&&&&&&&&&&&&
%&&&&   &&&&   &&&&    &&&   &&&&&&&&&&&   &&&&&&    &&&&    &&&    &&&     &&&    &&&&&&&&&&&&&&&&&&&&&&&&
%&&&&   &&&&   &&&&    &&&   &&&&&&&&&&&   &&&&&&    &&&&    &&&    &&&     &&&    &&&&&&&&&&&&&&&&&&&&&&&&
%&&&&                 &&&&    &&&   &&&&&&&&&&&   &&&&&&    &&&&    &&&            &&&&&             &&&&&&&&&&&&&&&&&&&&
%&&&&    &&&&&&&&&    &&&   &&&&&&&&&&&   &&&&&&    &&&&    &&&    &&&   &&&&    &&&&&&&&&&&&&&&&&&&&&&&
%&&&&    &&&&&&&&&    &&&   &&&&&&&&&&&   &&&&&&    &&&&    &&&    &&&&   &&&    &&&&&&&&&&&&&&&&&&&&&&&
%&&&&    &&&&&&&&&    &&&                &&&&&&   &&&&&&                   &&&    &&&&&   &&                  &&&&&&&&&&&&&&&&&&
%&&&&&&&&&&&&&&&&&&&&&&&&&&&&&&&&&&&&&&&&&&&&&&&&&&&&&&&&&&&&&&&&&&&&&&&&&&&&&&&&&&
%&&&&&&&&&&&&&&&&&&&&&&&&&&&&&&&&&&&&&&&&&&&&&&&&&&&&&&&&&&&&&&&&&&&&&&&&&&&&&&&&&&
\begin{pgfpicture}{0cm}{0cm}{17cm}{2.25cm}
%\pgfxyline(0,0)(14.5,0)
%\pgfxyline(0,4.4)(14.5,4.4)
%\pgfxyline(0,0)(0,4.4)
%\pgfxyline(14.5,0)(14.5,4.4)
\begin{pgftranslate}{\pgfpoint{2.5cm}{-.75cm}}
\pgfputat{\pgfxy(2.75,2.5)}{\pgfbox[center,center]{$X'$}}
\pgfputat{\pgfxy(1.5,1)}{\pgfbox[center,center]{$Z$}}
\pgfputat{\pgfxy(2.75,1)}{\pgfbox[center,center]{$X$}}
\pgfputat{\pgfxy(2.95,1.8)}{\pgfbox[center,center]{$f$}}
\pgfsetendarrow{\pgfarrowlargepointed{3pt}}
\pgfxyline(2.75,2.2)(2.75,1.3)
\pgfxyline(1.8,1)(2.5,1)
\pgfxyline(1.6,1.3)(2.4,2.2)
\end{pgftranslate}
\begin{pgftranslate}{\pgfpoint{6.5cm}{-.75cm}}
\pgfputat{\pgfxy(1.5,2.5)}{\pgfbox[center,center]{$C(X')$}}
\pgfputat{\pgfxy(4,2.5)}{\pgfbox[center,center]{$C(X'-Z)$}}
\pgfputat{\pgfxy(1.5,1)}{\pgfbox[center,center]{$C(X)$}}
\pgfputat{\pgfxy(4,1)}{\pgfbox[center,center]{$C(X-Z)$}}
%\pgfputat{\pgfxy(2.55,2.7)}{\pgfbox[center,center]{$v$}}
%\pgfputat{\pgfxy(2.55,1.2)}{\pgfbox[center,center]{$u$}}
\pgfsetendarrow{\pgfarrowlargepointed{3pt}}
\pgfxyline(1.5,1.3)(1.5,2.2)
\pgfxyline(4,1.3)(4,2.2)
\pgfxyline(2.1,2.5)(2.95,2.5)
\pgfxyline(2.1,1)(2.95,1)
\end{pgftranslate}
\end{pgfpicture}
%&&&&&&&&&&&&&&&&&&&&&&&&&&&&&&&&&&&&&&&&&&&&&&&&&&&&&&&&&&&&&&&&&&&&&&&&&&&&&&&&&&
%&&&&&&&&&&&&&&&&&&&&&&&&&&&&&&&&&&&&&&&&&&&&&&&&&&&&&&&&&&&&&&&&&&&&&&&&&&&&&&&&&&
%&&&                    &&&       &&&&&    &&&                &&&&&&&&&&&&&&&&&&&&&&&&&&&&&&&&&&&&&&&&&&&&&&&&&&&
%&&&    &&&&&&&&&    &    &&&&   &&&    &&&&    &&&&&&&&&&&&&&&&&&&&&&&&&&&&&&&&&&&&&&&&&&&&&&&&&& 
%&&&    &&&&&&&&&    &&    &&&   &&&    &&&&&    &&&&&&&&&&&&&&&&&&&&&&&&&&&&&&&&&&&&&&&&&&&&&&&&& 
%&&&                 &&&&    &&&    &&   &&&    &&&&&    &&&&&&&&&&&&&&&&&&&&&&&&&&&&&&&&&&&&&&&&&&&&&&&&& 
%&&&    &&&&&&&&&    &&&&    &   &&&    &&&&&    &&&&&&&&&&&&&&&&&&&&&&&&&&&&&&&&&&&&&&&&&&&&&&&&& 
%&&&    &&&&&&&&&    &&&&&       &&&    &&&&     &&&&&&&&&&&&&&&&&&&&&&&&&&&&&&&&&&&&&&&&&&&&&&&&& 
%&&&                    &&&    &&&&&&    &&&                  &&&&&&&&&&&&&&&&&&&&&&&&&&&&&&&&&&&&&&&&&&&&&&&&&&
%&&&&&&&&&&&&&&&&&&&&&&&&&&&&&&&&&&&&&&&&&&&&&&&&&&&&&&&&&&&&&&&&&&&&&&&&&&&&&&&&&&
%&&&&&&&&&&&&&&&&&&&&&&&&&&&&&&&&&&&&&&&&&&&&&&&&&&&&&&&&&&&&&&&&&&&&&&&&&&&&&&&&&&

\end{definition}
A useful fact cited in \cite{CHKBloch-Ogus-Gabber97}, lemma 5.1.2, page 25, is that the above square is homotopy cartesian if and only if the induced map
\[\xymatrix{C(X\tn{ \footnotesize{on} } Z) \ar[r]^{f} &C(X'\tn{ \footnotesize{on} } Z)}\]
is a homotopy equivalence. 

Some preliminary explanation, again following Colliot-Th\'el\`ene, Hoobler, and Kahn \cite{CHKBloch-Ogus-Gabber97}, section 5.4, will be helpful before stating the projective bundle condition for substrata.   As in the case of cohomology theories with supports, let $V$ be an open subset of $\AA_k^n$ for some $n$, and let the diagram

%&&&&&&&&&&&&&&&&&&&&&&&&&&&&&&&&&&&&&&&&&&&&&&&&&&&&&&&&&&&&&&&&&&&&&&&&&&&&&&&&&&
%&&&&&&&&&&&&&&&&&&&&&&&&&&&&&&&&&&&&&&&&&&&&&&&&&&&&&&&&&&&&&&&&&&&&&&&&&&&&&&&&&&
%&&&&                 &&&&    &&&               &&&                    &&&    &&&&    &&&              &&&&                   &&&&&&&&&&&&&&&&&&
%&&&&   &&&&   &&&&    &&&   &&&&&&&&&&&   &&&&&&    &&&&    &&&    &&&     &&&    &&&&&&&&&&&&&&&&&&&&&&&&
%&&&&   &&&&   &&&&    &&&   &&&&&&&&&&&   &&&&&&    &&&&    &&&    &&&     &&&    &&&&&&&&&&&&&&&&&&&&&&&&
%&&&&                 &&&&    &&&   &&&&&&&&&&&   &&&&&&    &&&&    &&&            &&&&&             &&&&&&&&&&&&&&&&&&&&
%&&&&    &&&&&&&&&    &&&   &&&&&&&&&&&   &&&&&&    &&&&    &&&    &&&   &&&&    &&&&&&&&&&&&&&&&&&&&&&&
%&&&&    &&&&&&&&&    &&&   &&&&&&&&&&&   &&&&&&    &&&&    &&&    &&&&   &&&    &&&&&&&&&&&&&&&&&&&&&&&
%&&&&    &&&&&&&&&    &&&                &&&&&&   &&&&&&                   &&&    &&&&&   &&                  &&&&&&&&&&&&&&&&&&
%&&&&&&&&&&&&&&&&&&&&&&&&&&&&&&&&&&&&&&&&&&&&&&&&&&&&&&&&&&&&&&&&&&&&&&&&&&&&&&&&&&
%&&&&&&&&&&&&&&&&&&&&&&&&&&&&&&&&&&&&&&&&&&&&&&&&&&&&&&&&&&&&&&&&&&&&&&&&&&&&&&&&&&
\begin{pgfpicture}{0cm}{0cm}{17cm}{2.25cm}
\begin{pgftranslate}{\pgfpoint{5cm}{-.75cm}}
\pgfputat{\pgfxy(1.5,2.5)}{\pgfbox[center,center]{$\AA_V^1$}}
\pgfputat{\pgfxy(3.25,2.5)}{\pgfbox[center,center]{$\PP_V^1$}}
\pgfputat{\pgfxy(5,2.5)}{\pgfbox[center,center]{$V$}}
\pgfputat{\pgfxy(3.25,1)}{\pgfbox[center,center]{$V$}}
\pgfputat{\pgfxy(2.3,2.75)}{\pgfbox[center,center]{$j$}}
\pgfputat{\pgfxy(4.25,2.7)}{\pgfbox[center,center]{$s_\infty$}}
\pgfputat{\pgfxy(2.1,1.65)}{\pgfbox[center,center]{$\pi$}}
\pgfputat{\pgfxy(3.5,1.9)}{\pgfbox[center,center]{$\tilde{\pi}$}}
\pgfputat{\pgfxy(4.45,1.65)}{\pgfbox[center,center]{$=$}}
\pgfsetendarrow{\pgfarrowlargepointed{3pt}}
\pgfxyline(1.85,2.5)(2.85,2.5)
\pgfxyline(4.75,2.5)(3.65,2.5)
\pgfxyline(1.7,2.2)(3,1.2)
\pgfxyline(4.7,2.2)(3.5,1.2)
\pgfxyline(3.25,2.2)(3.25,1.3)
\end{pgftranslate}
\end{pgfpicture}
%&&&&&&&&&&&&&&&&&&&&&&&&&&&&&&&&&&&&&&&&&&&&&&&&&&&&&&&&&&&&&&&&&&&&&&&&&&&&&&&&&&
%&&&&&&&&&&&&&&&&&&&&&&&&&&&&&&&&&&&&&&&&&&&&&&&&&&&&&&&&&&&&&&&&&&&&&&&&&&&&&&&&&&
%&&&                    &&&       &&&&&    &&&                &&&&&&&&&&&&&&&&&&&&&&&&&&&&&&&&&&&&&&&&&&&&&&&&&&&
%&&&    &&&&&&&&&    &    &&&&   &&&    &&&&    &&&&&&&&&&&&&&&&&&&&&&&&&&&&&&&&&&&&&&&&&&&&&&&&&& 
%&&&    &&&&&&&&&    &&    &&&   &&&    &&&&&    &&&&&&&&&&&&&&&&&&&&&&&&&&&&&&&&&&&&&&&&&&&&&&&&& 
%&&&                 &&&&    &&&    &&   &&&    &&&&&    &&&&&&&&&&&&&&&&&&&&&&&&&&&&&&&&&&&&&&&&&&&&&&&&& 
%&&&    &&&&&&&&&    &&&&    &   &&&    &&&&&    &&&&&&&&&&&&&&&&&&&&&&&&&&&&&&&&&&&&&&&&&&&&&&&&& 
%&&&    &&&&&&&&&    &&&&&       &&&    &&&&     &&&&&&&&&&&&&&&&&&&&&&&&&&&&&&&&&&&&&&&&&&&&&&&&& 
%&&&                    &&&    &&&&&&    &&&                  &&&&&&&&&&&&&&&&&&&&&&&&&&&&&&&&&&&&&&&&&&&&&&&&&&
%&&&&&&&&&&&&&&&&&&&&&&&&&&&&&&&&&&&&&&&&&&&&&&&&&&&&&&&&&&&&&&&&&&&&&&&&&&&&&&&&&&
%&&&&&&&&&&&&&&&&&&&&&&&&&&&&&&&&&&&&&&&&&&&&&&&&&&&&&&&&&&&&&&&&&&&&&&&&&&&&&&&&&&

represent the inclusion of $\AA_k^1$ and the section at infinity into $\PP_V^1$.  Let $F$ be a closed subset of $V$.   Assume $C$ and $C'$ share a common target category $\textsf{E}$ of complexes or spectra, such that for any $X$ in $\textsf{S}_k$ there exists a map
\begin{equation}\label{equprojbundlesub1}\xymatrixcolsep{2pc}\xymatrix{\tn{Pic}(X)\ar[r]&\tn{Hom}_{\mbf{E}}\big(C(X'),C(X)\big),}\end{equation}
which is functorial for $X$.  Taking $X=\PP_V^1$, there exists a map 
\begin{equation}\label{equprojbundlesub2}\xymatrixcolsep{5pc}\xymatrix{ C'(\PP_V^1) \ar[r]&C(\PP_V^1)}.\end{equation}
Hence, composing with $\tilde{\pi}^*$, there exists a map 
\begin{equation}\label{equprojbundlesub3}\xymatrixcolsep{4pc}\xymatrix{ C'(V) \ar[r]^{\alpha_{V}} &C(\PP_V^1)},\end{equation}
functorial for $V$.  For spectra, these maps are in the {\it stable homotopy category.} 

\begin{definition}\label{defiSUB5} The pair of substrata $(C,C')$ satisfies the {\bf projective bundle formula} (for substrata), if for $V$, $\tilde{\pi}$ as given above, the natural maps
\begin{equation}\label{equprojbundlesub3}\xymatrixcolsep{4.5pc}\xymatrix{C(V)\oplus C'(V) \ar[r]^-{\tilde{\pi}^*,\alpha_{V}} &C(\PP_V^1)\hspace*{1cm}\tn{{\it (for complexes)}}}\end{equation}
\begin{equation}\label{equprojbundlesub4}\xymatrixcolsep{4.5pc}\xymatrix{C(V)\vee C'(V) \ar[r]^-{\tilde{\pi}^*,\alpha_{V}} &C(\PP_V^1)\hspace*{1cm}\tn{{\it(for spectra)}}}\end{equation}
are homotopy equivalences.  In particular, if the pair $(C,C)$, satisfies the projective bundle formula, one says that $C$ satisfies the projective bundle formula. 
\end{definition}

\subsection{Effaceability for $K$-Theory and Negative Cyclic Homology}\label{subsectioneffaceabilitynonconnectiveK}

The effaceability conditions for Bass-Thomason $K$-theory follow directly from Thomason's seminal paper \cite{Thomason-Trobaugh90}, particularly {\it Thomason's localization theorem} (\cite{Thomason-Trobaugh90}, theorem 7.4).  The corresponding results for negative cyclic homology are easier, and follow from parenthetical results in Weibel et al.  \cite{WeibelCycliccdh-CohomNegativeK06}. 

%SSSSSSSSSSSSSSSSSSSSSSSSSSSSSSSSSSSSSSSSSSSSSSSSSSSSSSSSSSSSSSSSSSSSSSSSSSSSSSSSSS
%SSSSSSSSSSSSSSSSSSSSSSSSSSSSSSSSSSSSSSSSSSSSSSSSSSSSSSSSSSSSSSSSSSSSSSSSSSSSSSSSSS
%SSS                      SSS                       SSS                    SS                          SS           SSS                         SSS          SSSSS      SSSS
%SSS      SSSSSSSSS    SSSSSSSSSS     SSSSSSSSSSSS     SSSSSSS     SSSS    SSSSSS     SSS     S     SSSS     SSSSS
%SSS      SSSSSSSSS    SSSSSSSSSS     SSSSSSSSSSSS     SSSSSSS     SSSS    SSSSSS     SSS     SS     SSS     SSSSS
%SSS                      SSS                  SSSSS    SSSSSSSSSSSS     SSSSSSS     SSSS    SSSSSS     SSS     SSS     SS     SSSSS
%SSSSSSSSSS    SSS    SSSSSSSSSS    SSSSSSSSSSSS     SSSSSSS     SSSS    SSSSSS     SSS     SSSS     S     SSSSS
%SSSSSSSSSS    SSS    SSSSSSSSSS    SSSSSSSSSSSS     SSSSSSS     SSSS    SSSSSS     SSS     SSSSS          SSSSS
%SSS                       SSS                      SSS                       SSSSS     SSSSSS          SSS                         SSS     SSSSSS        SSSSS
%SSSSSSSSSSSSSSSSSSSSSSSSSSSSSSSSSSSSSSSSSSSSSSSSSSSSSSSSSSSSSSSSSSSSSSSSSSSSSSSSSS
%SSSSSSSSSSSSSSSSSSSSSSSSSSSSSSSSSSSSSSSSSSSSSSSSSSSSSSSSSSSSSSSSSSSSSSSSSSSSSSSSSS

Let $\mathbf{K}$ be the functor assigning to a scheme $X$ the Bass-Thomason nonconnective $K$-theory spectrum $\mbf{K}(X)$. 

The following theorem is part of Thomason's localization theorem:

\begin{theorem}\label{thomasonhomotopysequence} Let $X$ be a quasi-compact and quasi-separated scheme.  Let $Z$ be a closed subspace of $X$ such that $X-Z$ is quasi-compact.  Then there is a  homotopy fiber sequence
\begin{equation}\label{equthomasonlocal1}\mathbf{K}(X\tn{ on } Z)\rightarrow \mathbf{K}(X)\rightarrow \mathbf{K}(X-Z).\end{equation}
\end{theorem}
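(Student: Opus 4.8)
The plan is to derive this as a special case of Thomason's localization theorem (\cite{Thomason-Trobaugh90}, Theorem~7.4), rephrased in the language of complicial exact categories underlying Definition~\hyperref[defiKtheory]{\ref{defiKtheory}}. First I would unwind the definitions: $\mbf{K}(X)$, $\mbf{K}(X-Z)$, and $\mbf{K}(X\tn{ on }Z)$ are, respectively, the nonconnective $K$-theory spectra (Schlichting \cite{SchlichtingKTheory11}, 3.2.26, 3.4) of the complicial exact categories with weak equivalences $\textsf{Perf}(X)$, $\textsf{Perf}(X-Z)$, and $\textsf{Perf}_Z(X)$, the weak equivalences being the quasi-isomorphisms. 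Restriction along the quasi-compact open immersion $j\colon X-Z\hookrightarrow X$ is an exact functor $j^*\colon\textsf{Perf}(X)\to\textsf{Perf}(X-Z)$, and by Definition~\hyperref[defiperfect]{\ref{defiperfect}} the subcategory $\textsf{Perf}_Z(X)$ is precisely the full subcategory of perfect complexes on $X$ sent by $j^*$ to acyclic complexes. Passing to homotopy (= derived) categories therefore produces a sequence $D_Z(X)\to D(X)\xrightarrow{j^*} D(X-Z)$ in which $D_Z(X)=\ker(j^*)$ is a thick subcategory of the compactly generated triangulated category $D(X)=D_{\tn{perf}}(X)$.

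The crux is then to show that this sequence is a \emph{Verdier localization sequence up to idempotent completion}, i.e. that (i) the induced functor $\overline{j}\colon D(X)/D_Z(X)\to D(X-Z)$ is fully faithful, and (ii) every object of $D(X-Z)$ is a direct summand of an object in the essential image of $\overline{j}$. Statement~(i) follows from the fact that $D_{\tn{qcoh}}(X)\to D_{\tn{qcoh}}(X-Z)$ is a Bousfield localization whose kernel is the subcategory of complexes with support in $Z$, combined with the compatibility of this localization with compact objects. Statement~(ii) is the genuinely substantial input: it is the Thomason--Trobaugh extension result (\cite{Thomason-Trobaugh90}, Theorem~5.2.2 and Lemma~5.5.1), which asserts that any perfect complex $E$ on the quasi-compact open $X-Z$, after being replaced by $E\oplus E[1]$ so as to annihilate the obstruction class in $K_0$, is isomorphic in $D(X-Z)$ to the restriction of a perfect complex on $X$. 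Its proof uses the quasi-compactness of $X$ and of $X-Z$ and the quasi-separatedness of $X$ in an essential way, through approximation of pseudo-coherent complexes by strict perfect ones and a gluing argument for quasi-coherent extensions.

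Finally I would invoke the localization theorem for nonconnective $K$-theory of complicial exact categories (Schlichting \cite{SchlichtingKTheory11}; equivalently \cite{Thomason-Trobaugh90}, Theorem~7.4): a short exact sequence of such categories that is exact merely up to direct factors still induces a homotopy fiber sequence on \emph{nonconnective} $K$-theory. This is exactly the point at which nonconnectivity is indispensable. The Bass--Thomason delooping makes $\mbf{K}$ invariant under idempotent completion (a cofinality phenomenon), so the difference between the Verdier quotient $D(X)/D_Z(X)$ and its idempotent completion $D(X-Z)$ is invisible to $\mbf{K}$; correspondingly, the cokernel term that obstructs exactness of the \emph{connective} localization sequence on $\pi_0$ gets absorbed into $\pi_{-1}\mbf{K}(X\tn{ on }Z)$. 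Chaining the identifications of the first paragraph with this localization theorem yields the desired homotopy fiber sequence $\mbf{K}(X\tn{ on }Z)\to\mbf{K}(X)\to\mbf{K}(X-Z)$.

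The main obstacle is thus step~(ii), the extension of perfect complexes off the open subscheme: this is the technical heart of Thomason--Trobaugh and the only place where the finiteness hypotheses on $X$ are actually consumed; the identification of the three spectra, the formal properties of Verdier quotients, and the behavior of nonconnective $K$-theory under idempotent completion are all comparatively routine once that input is available.
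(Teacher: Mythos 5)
Your proposal is correct and faithfully reconstructs the argument the paper is citing: the paper's entire proof consists of the reference ``See \cite{Thomason-Trobaugh90}, Theorem 7.4,'' which is exactly Thomason's localization theorem, and your account of its internal structure — the identification of the three spectra with $K$-theory of $\textsf{Perf}_Z(X)$, $\textsf{Perf}(X)$, $\textsf{Perf}(X-Z)$, the Verdier-quotient-up-to-idempotent-completion via the Thomason--Trobaugh extension lemma, and the cofinality invariance of nonconnective $K$-theory absorbing the $K_0$ obstruction into $\pi_{-1}$ — is an accurate summary of the proof in that reference. The only difference is one of granularity: the paper treats the statement as a black box citation, whereas you have unwound the citation into its constituent steps, correctly isolating the extension-of-perfect-complexes lemma as the place where the quasi-compactness and quasi-separatedness hypotheses are actually consumed.
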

\begin{proof} See \cite{Thomason-Trobaugh90}, Theorem 7.4.
\end{proof}

The following theorem is a preliminary result in the buildup to Thomason's localization theorem:

\begin{theorem}\label{thomasonhomotopyequ} Let $(X,Z)$ be as in the statement of theorem \hyperref[thomasonhomotopysequence]{\ref{thomasonhomotopysequence}}.  Let $f:X'\rightarrow X$ be a map of quasi-compact and quasi-separated schemes which is \'etale, and which induces an isomorphism $f^{-1}(Z)\rightarrow Z$.  Let $Z$ be a closed subspace of $X$ such that $X-Z$ is quasi-compact.  Then the map of spectra 
\begin{equation}\label{equthomasonetal2}f^*:\mathbf{K}(X\tn{ on } Z)\rightarrow \mathbf{K}(X'\tn{ on } Z)\end{equation}
is a homotopy equivalence. 
\end{theorem}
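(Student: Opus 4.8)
The plan is to deduce the statement from the way $\mbf{K}$ is built out of perfect complexes, using the isomorphism $f^{-1}(Z)\to Z$ to identify $\mbf{K}(X'\tn{ on } Z)$ with $\mbf{K}(X'\tn{ on } f^{-1}(Z))$ throughout. By Definition \ref{defiKtheory}, $\mbf{K}(X\tn{ on } Z)$ and $\mbf{K}(X'\tn{ on } f^{-1}(Z))$ are the nonconnective $K$-theory spectra of the complicial exact categories $\textsf{Perf}_Z(X)$ and $\textsf{Perf}_{f^{-1}(Z)}(X')$, and pullback along $f$ carries the first to the second. Since the nonconnective $K$-theory of a category of perfect complexes depends only on the associated derived category up to equivalence, and these derived categories are idempotent complete \cite{SchlichtingKTheory11,Thomason-Trobaugh90}, it suffices to show that $f^*$ induces an equivalence on the associated derived categories of $\textsf{Perf}_Z(X)$ and $\textsf{Perf}_{f^{-1}(Z)}(X')$. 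Equivalently, one may combine the localization fiber sequences of Theorem \ref{thomasonhomotopysequence} for the pairs $(X,Z)$ and $(X',f^{-1}(Z))$ into a map of fiber sequences and observe that the assertion becomes homotopy-cartesianness of the square formed by $\mbf{K}(X)$, $\mbf{K}(X')$, $\mbf{K}(X-Z)$ and $\mbf{K}(X'-f^{-1}(Z))$; either formulation reduces the theorem to a descent statement for perfect complexes.

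To prove the derived equivalence, the first observation is that the family $\{\,f\colon X'\to X,\ j\colon X-Z\hookrightarrow X\,\}$ is an \'etale covering: $j$ is an open immersion, $f$ is \'etale, and their images cover $X$ because $f^{-1}(Z)\to Z$ is surjective, so that the image of $f$ is an open set containing $Z$. Moreover this is an \emph{elementary distinguished square}: $f$ restricts to an isomorphism over the closed complement $Z$ of the open set $X-Z$. Now invoke \'etale descent for the derived category of quasi-coherent sheaves, which identifies the derived category on $X$ with the homotopy pullback of the derived categories on $X'$ and on $X-Z$ over that on $X'-f^{-1}(Z)$. A perfect complex on $X$ acyclic over $X-Z$ corresponds, under this identification, to a descent datum whose $(X-Z)$-component is the zero complex; what survives is exactly a perfect complex on $X'$ acyclic over $X'-f^{-1}(Z)$, and conversely such a complex glues with the zero complex on $X-Z$ along the canonical identification over the overlap $X'-f^{-1}(Z)$, where both vanish. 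The glued complex is automatically perfect, since perfectness is local for the \'etale topology, and is supported on $Z$ because its restriction to $X-Z$ vanishes. This produces the required equivalence between $\textsf{Perf}_Z(X)$ and $\textsf{Perf}_{f^{-1}(Z)}(X')$, realized by $f^*$.

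Applying nonconnective $K$-theory to this equivalence yields the desired homotopy equivalence $f^*\colon\mbf{K}(X\tn{ on } Z)\to\mbf{K}(X'\tn{ on } Z)$. The main obstacle is precisely the descent-and-gluing step for perfect complexes: it is here that one genuinely uses both that $f$ is \'etale, so that $\{f,j\}$ is a bona fide covering for which perfectness and the support condition are local, and that $f$ is an isomorphism over $Z$, so that the $(X-Z)$-component of each descent datum is forced to vanish and the homotopy pullback collapses to a single category. This is also the point at which the well-known failure of excision for ordinary modules, and for connective or naive versions of $K$-theory, is circumvented by passing to perfect complexes, where one is free to replace a complex by a quasi-isomorphic one while gluing. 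The statement is packaged together with the localization theorem in Thomason--Trobaugh \cite{Thomason-Trobaugh90}; indeed, granting Theorem \ref{thomasonhomotopysequence}, the theorem to be proved is exactly the assertion that $\mbf{K}$ sends the elementary distinguished square above to a homotopy cartesian square of spectra.
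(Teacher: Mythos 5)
The paper offers no argument of its own here: the entire proof is the citation ``See Thomason \cite{Thomason-Trobaugh90}, theorem 7.1.'' So the question is whether your sketch constitutes an independent, non-circular proof. Your opening reduction is sound and standard: by Thomason's invariance theorems, the homotopy equivalence of nonconnective $K$-theory spectra will follow once $f^*$ is shown to induce an equivalence of the triangulated (or dg-enhanced) categories of perfect complexes supported on $Z$ and on $f^{-1}(Z)$, and this in turn is equivalent to $\mbf{K}$ sending the elementary distinguished square to a homotopy cartesian square. Those are genuinely the right reformulations.

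The concern is the middle step. You invoke ``\'etale descent for the derived category of quasi-coherent sheaves'' to conclude that $D(X)$ is the homotopy pullback of $D(X')$ and $D(X-Z)$ over $D(X'-f^{-1}(Z))$. This is not really \'etale descent in the classical sense; it is Nisnevich excision for the derived category, i.e., the assertion that $D^{\mathrm{perf}}$ (or $D_{\mathrm{qcoh}}$) sends elementary distinguished squares to homotopy cartesian squares of $\infty$- or dg-categories. That statement is true, but it is not a lighter-weight input than the theorem you are proving: once one has it, excision for any localizing invariant (including $\mbf{K}$) follows formally, and conversely the proof of that categorical descent statement already contains the content of Thomason--Trobaugh 7.1. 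In other words, you have moved the difficulty rather than resolved it. Thomason and Trobaugh do not argue this way. Their proof of 7.1 establishes the derived equivalence $f^*\colon D^{\mathrm{perf}}_Z(X)\to D^{\mathrm{perf}}_{f^{-1}(Z)}(X')$ directly, using flatness of $f$ and the isomorphism over $Z$ to verify full faithfulness and essential surjectivity by hand, without appealing to a prior descent theorem for the entire derived category. Your argument is conceptually illuminating as a reorganization of the logic, and it correctly identifies the two geometric facts being exploited (that $\{f,j\}$ is a Nisnevich cover and that $f$ is an isomorphism over the closed complement), but as written it risks being circular, or at best reproves the result only by black-boxing an equivalent one. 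If you want to make this rigorous and non-circular, you should either cite an independent source for Nisnevich descent of $D_{\mathrm{qcoh}}$ as a sheaf of $\infty$-categories and check carefully that its proof does not pass through the excision statement at hand, or else replace the descent invocation with the direct fully-faithful-plus-essentially-surjective argument of Thomason--Trobaugh.

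One small further point: your parenthetical claim that the relevant derived categories are idempotent complete deserves a word of justification --- it holds because a retract in $D^{\mathrm{perf}}(X)$ of a complex supported on $Z$ is again perfect and supported on $Z$, and $D^{\mathrm{perf}}(X)$ itself is idempotent complete for $X$ quasi-compact quasi-separated --- but this is a minor matter compared to the descent issue above.
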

\begin{proof} See Thomason \cite{Thomason-Trobaugh90}, theorem 7.1. 
\end{proof}

The following theorem is a special case of another result leading up to Thomason's localization theorem: 

\begin{theorem}\label{thomasonprojbundle} Let $V$ be an open subset of $\AA_k^n$ for some $n$.  Let $\tilde{V}$ be the trivial bundle of rank $2$ over $V$, and let $\PP_V^1:=\PP(\tilde{V})$ be the corresponding projective space bundle.  Then there is a homotopy equivalence
\[\mathbf{K}(V)\oplus\mathbf{K}(V)\rightarrow \mathbf{K}(\PP_V^1).\]
\end{theorem}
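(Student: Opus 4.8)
The plan is to deduce Theorem~\ref{thomasonprojbundle} directly from the projective space bundle theorem of Thomason and Trobaugh. First I would make the asserted map explicit at the level of perfect complexes. Let $\pi:\PP_V^1\to V$ be the structure map of the bundle $\PP_V^1=\PP(\tilde V)$, and let $\mathcal{O}(-1)$ denote the tautological line bundle. Since $\pi$ is flat, $\pi^*$ is exact and carries $\textsf{Perf}(V)$ into $\textsf{Perf}(\PP_V^1)$, and so does $\pi^*(-)\otimes_{\mathcal{O}}\mathcal{O}(-1)$. These two exact functors induce maps of nonconnective $K$-theory spectra $\mbf{K}(V)\to\mbf{K}(\PP_V^1)$, and their sum is the map
\[\mbf{K}(V)\oplus\mbf{K}(V)\longrightarrow\mbf{K}(\PP_V^1)\]
appearing in the statement; concretely it is the case of rank $\tilde V=2$ of the functor $(a_0,a_1)\mapsto \pi^*a_0\,\oplus\,\pi^*a_1\otimes\mathcal{O}(-1)$, which is precisely the map $(\tilde\pi^*,\alpha_V)$ of Definition~\ref{defiSUB5} in the case $C=C'=\mbf{K}$.

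Second, I would verify the hypotheses of the cited theorem. Being an open subscheme of $\AA_k^n$, the scheme $V$ is noetherian, hence quasi-compact, and separated, hence quasi-separated; and $\PP_V^1$, being projective over $V$, is again noetherian and therefore enjoys both properties. Thomason and Trobaugh's projective space bundle theorem (\cite{Thomason-Trobaugh90}, Theorem~4.1) then applies with base $V$ and the rank-$2$ trivial bundle $\tilde V$, and asserts exactly that the map displayed above is a homotopy equivalence.

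The one point requiring a word of care is that \cite{Thomason-Trobaugh90}, Theorem~4.1, is phrased for connective Waldhausen $K$-theory, whereas $\mbf{K}$ here is the nonconnective Bass--Thomason spectrum of Definition~\ref{defiKtheory}. This is routine bookkeeping rather than a genuine obstacle: the Bass delooping that produces $\mbf{K}$ from the connective theory (\cite{Thomason-Trobaugh90}, \S6) is assembled functorially out of the connective $K$-theories of $V$ and of schemes obtained from $V$ by products with $\AA_k^1$ and passage to standard opens, all of which are again open subschemes of affine space over $k$; the projective space bundle equivalence holds compatibly at every stage of this construction and hence passes to $\mbf{K}$ itself. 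Equivalently, one may invoke the nonconnective form of the projective bundle theorem as recorded in \cite{Thomason-Trobaugh90}, \S7. Unlike Theorems~\ref{thomasonhomotopysequence} and~\ref{thomasonhomotopyequ}, which rest on the full localization theorem, the present statement is essentially a citation together with the observation that the relevant schemes satisfy the required finiteness conditions, so I anticipate no difficulty beyond matching these hypotheses and passing from the connective to the nonconnective theory.
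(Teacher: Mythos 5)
Your proof is correct and takes essentially the same approach as the paper, which simply cites Thomason--Trobaugh Theorem~7.3 (the nonconnective projective space bundle theorem in \S7 of \cite{Thomason-Trobaugh90}). The only difference is that you route through the connective Theorem~4.1 and then address the passage to the Bass nonconnective spectrum; as you yourself observe, one can short-circuit this by citing the \S7 form directly, which is exactly what the paper does.
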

\begin{proof} See Thomason \cite{Thomason-Trobaugh90} theorem 7.3.
\end{proof}

Together, these results suffice to prove the effaceability of Bass-Thomason $K$-theory:

\begin{corollary} $\mbf{K}$ is an effaceable substratum.
\end{corollary}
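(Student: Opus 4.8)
The plan is to deduce the corollary from the effacement theorem of Colliot-Th\'el\`ene, Hoobler, and Kahn \cite{CHKBloch-Ogus-Gabber97} for substrata, whose hypotheses --- the \'etale Mayer-Vietoris condition (definition \ref{defiSUB1}) and the projective bundle formula for substrata (definition \ref{defiSUB5}), applied to the pair $(\mbf{K},\mbf{K})$ --- I would verify for $\mbf{K}$ directly from Thomason's localization theorem. That $\mbf{K}$ is a substratum at all is lemma \ref{lemKHNsubstrat}: it is a contravariant spectrum-valued functor on $\textsf{S}_k$, and the ``spectra with supports'' $\mbf{K}(Y\tn{ on } Z)$ obtained as homotopy fibers of $\mbf{K}(Y)\to\mbf{K}(Y-Z)$ agree, by theorem \ref{thomasonhomotopysequence}, with the nonconnective $K$-theory spectra of the categories $\textsf{Perf}_Z(Y)$ of definition \ref{defiKtheory}, so the two descriptions may be used interchangeably.

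First I would check the \'etale Mayer-Vietoris condition. Additivity of $\mbf{K}$ --- that finite disjoint unions of schemes go to products, equivalently finite wedges, of spectra --- is immediate from the equivalence $\textsf{Perf}(Y_1\sqcup Y_2)\simeq\textsf{Perf}(Y_1)\times\textsf{Perf}(Y_2)$ of complicial exact categories. For the homotopy-cartesian square attached to an \'etale map $f\colon X'\to X$ with $f^{-1}(Z)\xrightarrow{\ \sim\ }Z$, I would invoke the reformulation cited just after definition \ref{defiSUB1} (\cite{CHKBloch-Ogus-Gabber97}, lemma 5.1.2): the square is homotopy cartesian precisely when $f^*\colon C(X\tn{ on } Z)\to C(X'\tn{ on } Z)$ is a homotopy equivalence. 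For $C=\mbf{K}$ this is exactly theorem \ref{thomasonhomotopyequ} (\cite{Thomason-Trobaugh90}, theorem 7.1), so $\mbf{K}$ satisfies the \'etale Mayer-Vietoris condition.

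Next, the projective bundle formula for $(\mbf{K},\mbf{K})$. Here one must first produce the functorial pairing $\tn{Pic}(X)\to\tn{Hom}_{\mbf{E}}\big(\mbf{K}(X),\mbf{K}(X)\big)$ required by definition \ref{defiSUB5}; for $\mbf{K}$ this is the standard ``multiply by a line bundle'' operation $\mathcal{L}\mapsto(-\otimes\mathcal{L})$ on perfect complexes, which is functorial in $X$. With this pairing, the comparison map $\alpha_V\colon\mbf{K}(V)\to\mbf{K}(\PP_V^1)$ of \ref{defiSUB5} is the composite of cup product with $[\ms{O}_{\PP_V^1}(1)]$ (or its inverse) with $\tilde\pi^*$, so that $(\tilde\pi^*,\alpha_V)\colon\mbf{K}(V)\vee\mbf{K}(V)\to\mbf{K}(\PP_V^1)$ is precisely the map of theorem \ref{thomasonprojbundle} (\cite{Thomason-Trobaugh90}, theorem 7.3), once one recalls that a finite wedge of spectra coincides up to homotopy with the corresponding finite product. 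That theorem asserts this map is a homotopy equivalence, so the projective bundle formula holds.

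Having verified both hypotheses, the effacement theorem of \cite{CHKBloch-Ogus-Gabber97} for substrata applies verbatim and yields that $\mbf{K}$ is effaceable in the sense of definition \ref{defieffaceability}(2). The only genuine points of care --- and the step I expect to absorb most of the effort --- are bookkeeping ones: confirming that the smooth affine schemes, open neighborhoods, and \'etale extensions in definition \ref{defieffaceability} all lie in $\textsf{S}_k$ and meet the quasi-compactness and quasi-separatedness hypotheses of theorems \ref{thomasonhomotopysequence}--\ref{thomasonprojbundle}, and matching the exact shape of the comparison map $\alpha_V$ demanded by \ref{defiSUB5} with the tautological-bundle decomposition Thomason actually constructs. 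Neither is a real obstacle; the substantive input is entirely Thomason's localization theorem, with the effacement theorem supplying the passage from its three constituent statements to effaceability.
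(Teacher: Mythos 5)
Your proposal follows the paper's proof exactly: it invokes Thomason's theorems 7.1, 7.3, and 7.4 (cited here as theorems \ref{thomasonhomotopysequence}, \ref{thomasonhomotopyequ}, and \ref{thomasonprojbundle}) to verify the \'etale Mayer-Vietoris condition and the projective bundle formula for the pair $(\mbf{K},\mbf{K})$, then concludes effaceability from the Colliot-Th\'el\`ene--Hoobler--Kahn effacement theorem. You supply somewhat more scaffolding than the paper's terse proof (spelling out additivity, the ``tensor by a line bundle'' pairing, and the role of \cite{CHKBloch-Ogus-Gabber97} lemma 5.1.2), but the substance and sequence of citations are identical.
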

\begin{proof}  Theorems \hyperref[thomasonhomotopysequence]{\ref{thomasonhomotopysequence}} and \hyperref[thomasonhomotopyequ]{\ref{thomasonhomotopyequ}} together imply that $\mbf{K}$ satisfies the \'etale Mayer-Vietoris condition, and theorem \hyperref[thomasonprojbundle]{\ref{thomasonprojbundle}} implies that $\mbf{K}$ satisfies the projective bundle condition for substrata.
\end{proof}

%SSSSSSSSSSSSSSSSSSSSSSSSSSSSSSSSSSSSSSSSSSSSSSSSSSSSSSSSSSSSSSSSSSSSSSSSSSSSSSSSSS
%SSSSSSSSSSSSSSSSSSSSSSSSSSSSSSSSSSSSSSSSSSSSSSSSSSSSSSSSSSSSSSSSSSSSSSSSSSSSSSSSSS
%SSS                      SSS                       SSS                    SS                          SS           SSS                         SSS          SSSSS      SSSS
%SSS      SSSSSSSSS    SSSSSSSSSS     SSSSSSSSSSSS     SSSSSSS     SSSS    SSSSSS     SSS     S     SSSS     SSSSS
%SSS      SSSSSSSSS    SSSSSSSSSS     SSSSSSSSSSSS     SSSSSSS     SSSS    SSSSSS     SSS     SS     SSS     SSSSS
%SSS                      SSS                  SSSSS    SSSSSSSSSSSS     SSSSSSS     SSSS    SSSSSS     SSS     SSS     SS     SSSSS
%SSSSSSSSSS    SSS    SSSSSSSSSS    SSSSSSSSSSSS     SSSSSSS     SSSS    SSSSSS     SSS     SSSS     S     SSSSS
%SSSSSSSSSS    SSS    SSSSSSSSSS    SSSSSSSSSSSS     SSSSSSS     SSSS    SSSSSS     SSS     SSSSS          SSSSS
%SSS                       SSS                      SSS                       SSSSS     SSSSSS          SSS                         SSS     SSSSSS        SSSSS
%SSSSSSSSSSSSSSSSSSSSSSSSSSSSSSSSSSSSSSSSSSSSSSSSSSSSSSSSSSSSSSSSSSSSSSSSSSSSSSSSSS
%SSSSSSSSSSSSSSSSSSSSSSSSSSSSSSSSSSSSSSSSSSSSSSSSSSSSSSSSSSSSSSSSSSSSSSSSSSSSSSSSSS
%SSSSSSSSSSSSSSSSSSSSSSSSSSSSSSSSSSSSSSSSSSSSSSSSSSSSSSSSSSSSSSSSSSSSSSSSSSSSSSSSSS

\label{subsectioneffaceabilitynegativecyclic}

We turn now to negative cyclic homology.  The \'etale Mayer-Vietoris condition is called {\it Nisnevich descent} by Weibel et al. \cite{WeibelCycliccdh-CohomNegativeK06}, page 3.  In more detail, following the terminology of \cite{WeibelCycliccdh-CohomNegativeK06}, an {\it elementary Nisnevich square} is a cartesian square of schemes 
\[\xymatrix{X'\ar[d]_f  &Y'\ar[l]\ar[d] \\
X&Y\ar[l]_i}\]
for which $X\leftarrow Y:i$ is an open embedding, $f:X'\rightarrow X$ is \'etale, and $(X'-Y')\rightarrow(X-Y)$ is an isomorphism.   
A functor $C$ from the category $\textsf{Fin}_k$ of schemes essentially of finite type over a field $k$ with values in a suitable category of spectra satisfies {\it Nisnevich descent,} as defined in \cite{WeibelCycliccdh-CohomNegativeK06}, if, for any such cartesian square, the square of spectra given by applying $C$ is homotopy cartesian.  Weibel et al. \cite{WeibelCycliccdh-CohomNegativeK06} remark that the term ``\'etale descent" used in Weibel and Geller is equivalent to Nisnevich descent for presheaves of $\mathbb{Q}$-modules.

 Letting $Z$ be a closed subset of $X$ and $Y=X-Z$ its complement, there is an open embedding $X\leftarrow Y:i$, which may be completed to a cartesian square.   If $C$ is contravariant, then the Nisnevich descent condition is precisely the \'etale Mayer-Vietoris condition given by Colliot-Th\'el\`ene, Hoobler and Kahn \cite{CHKBloch-Ogus-Gabber97}.  

\begin{theorem} $\mbf{HN}$ is an effaceable substratum.
\end{theorem}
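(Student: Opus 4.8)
The plan is to mimic the proof just given for $\mbf{K}$: show that the spectrum-valued functor $\mbf{HN}$ satisfies the two effaceability criteria for substrata — the \'etale Mayer--Vietoris condition of Definition \ref{defiSUB1} and the projective bundle condition of Definition \ref{defiSUB5} — and then invoke the effacement theorem of Colliot-Th\'el\`ene, Hoobler, and Kahn \cite{CHKBloch-Ogus-Gabber97} exactly as in the corollary establishing effaceability of $\mbf{K}$. That $\mbf{HN}$ is a substratum at all is Lemma \ref{lemKHNsubstrat}; one should, however, note in passing that the spectrum $\mbf{HN}(Y\tn{ on }Z)$ of Definition \ref{defiKnegcyc}, built directly from Keller's mixed complex of the localization pair $\big(\textsf{Perf}_Z(Y),\textsf{Ac}\big)$, agrees with the homotopy fiber of $\mbf{HN}(Y)\to\mbf{HN}(Y-Z)$ — this is Keller's localization theorem for the cyclic homology of localization pairs \cite{KellerCyclicHomologyofExactCat96}, \cite{KellerCyclicHomologyofSchemes98}, as transported to schemes in \cite{WeibelCycliccdh-CohomNegativeK06} \S2 — so the short exact sequences \ref{SEScomplexorspectra} hold, playing the role here that Theorem \ref{thomasonhomotopysequence} plays for $\mbf{K}$.

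For the \'etale Mayer--Vietoris condition: by the discussion preceding the statement, for the contravariant functor $\mbf{HN}$ this is precisely Nisnevich descent on schemes essentially of finite type, which is one of the ``parenthetical results'' recorded by Weibel et al. \cite{WeibelCycliccdh-CohomNegativeK06}; equivalently, for an \'etale $f\colon X'\to X$ inducing an isomorphism on a closed $Z$, the map $\mbf{HN}(X\tn{ on }Z)\to\mbf{HN}(X'\tn{ on }Z)$ is a homotopy equivalence. Additivity of $\mbf{HN}$ is immediate, since it carries finite disjoint unions to products of spectra. For the projective bundle condition of Definition \ref{defiSUB5}, I would appeal to the cyclic-homology projective bundle formula of Weibel \cite{WeibelCyclicHomologySchemes91}: Beilinson's semiorthogonal decomposition $\textsf{Perf}(\PP_V^1)=\langle\textsf{Perf}(V),\textsf{Perf}(V)\otimes\ms{O}(1)\rangle$, together with the fact that $\mbf{HN}$, being an additive invariant of $dg$ categories, sends such a decomposition to a wedge splitting, yields the homotopy equivalence $\mbf{HN}(V)\vee\mbf{HN}(V)\overset{\sim}{\longrightarrow}\mbf{HN}(\PP_V^1)$ of \ref{equprojbundlesub4}, the two summands realized by $\tilde\pi^*$ and by cup product with $\ms{O}(1)\in\tn{Pic}(\PP_V^1)$ as in \ref{equprojbundlesub1}.

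I expect the main obstacle to be not the homotopy-level statements — these are all in the literature — but the verification that the required data are functorial \emph{on the nose} at the level of Keller's mixed complexes: the Nisnevich-descent square with supports and the $\tn{Pic}$-linear pairing \ref{equprojbundlesub1} must be realized by actual maps of spectra (in the appropriate stable homotopy category), not merely be available after passing to homotopy groups, so that the hypotheses of Definitions \ref{defiSUB1} and \ref{defiSUB5} are met in the form stated. This is bookkeeping within the localization-pair formalism of \cite{KellerCycHomofDGAlgebras96} and \cite{WeibelCycliccdh-CohomNegativeK06} \S2 rather than new mathematics; once it is in place, the effacement theorem of \cite{CHKBloch-Ogus-Gabber97} applies verbatim and $\mbf{HN}$ is effaceable.
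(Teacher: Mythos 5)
Your proposal is correct and follows essentially the same route as the paper: verify the \'etale Mayer--Vietoris condition and the projective bundle formula for the substratum $\mbf{HN}$, then invoke the effacement theorem of \cite{CHKBloch-Ogus-Gabber97}. The paper's proof is simply a pair of citations to \cite{WeibelCycliccdh-CohomNegativeK06} (Theorem 2.9 for Nisnevich descent, Remark 2.11 for the projective bundle formula); your elaboration via Keller's localization theorem and the Beilinson semiorthogonal decomposition of $\textsf{Perf}(\PP_V^1)$ spells out what lies behind those references but does not diverge from the paper's argument.
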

\begin{proof}  For the \'etale Mayer-Vietoris condition, see \cite{WeibelCycliccdh-CohomNegativeK06} theorem 2.9, page 9.  For the projective bundle condition for substrata, see  \cite{WeibelCycliccdh-CohomNegativeK06}, Remark 2.11, page 9.
\end{proof}

%SSSSSSSSSSSSSSSSSSSSSSSSSSSSSSSSSSSSSSSSSSSSSSSSSSSSSSSSSSSSSSSSSSSSSSSSSSSSSSSSSS
%SSSSSSSSSSSSSSSSSSSSSSSSSSSSSSSSSSSSSSSSSSSSSSSSSSSSSSSSSSSSSSSSSSSSSSSSSSSSSSSSSS
%SSS                      SSS                       SSS                    SS                          SS           SSS                         SSS          SSSSS      SSSS
%SSS      SSSSSSSSS    SSSSSSSSSS     SSSSSSSSSSSS     SSSSSSS     SSSS    SSSSSS     SSS     S     SSSS     SSSSS
%SSS      SSSSSSSSS    SSSSSSSSSS     SSSSSSSSSSSS     SSSSSSS     SSSS    SSSSSS     SSS     SS     SSS     SSSSS
%SSS                      SSS                  SSSSS    SSSSSSSSSSSS     SSSSSSS     SSSS    SSSSSS     SSS     SSS     SS     SSSSS
%SSSSSSSSSS    SSS    SSSSSSSSSS    SSSSSSSSSSSS     SSSSSSS     SSSS    SSSSSS     SSS     SSSS     S     SSSSS
%SSSSSSSSSS    SSS    SSSSSSSSSS    SSSSSSSSSSSS     SSSSSSS     SSSS    SSSSSS     SSS     SSSSS          SSSSS
%SSS                       SSS                      SSS                       SSSSS     SSSSSS          SSS                         SSS     SSSSSS        SSSSS
%SSSSSSSSSSSSSSSSSSSSSSSSSSSSSSSSSSSSSSSSSSSSSSSSSSSSSSSSSSSSSSSSSSSSSSSSSSSSSSSSSS
%SSSSSSSSSSSSSSSSSSSSSSSSSSSSSSSSSSSSSSSSSSSSSSSSSSSSSSSSSSSSSSSSSSSSSSSSSSSSSSSSSS
%SSSSSSSSSSSSSSSSSSSSSSSSSSSSSSSSSSSSSSSSSSSSSSSSSSSSSSSSSSSSSSSSSSSSSSSSSSSSSSSSSS

\subsection{Effaceability of Augmented and Relative Theories.}\label{effaceabilityaugmented}

The proof of our main theorem requires that the property of effaceability should be stable under passage to augmented and relative variants of cohomology theories with supports. This is verified by the following lemma, adapted from Colliot-Th\'el\`ene, Hoobler, and Kahn \cite{CHKBloch-Ogus-Gabber97}:

\begin{lemma}\label{lemnewoutofold} Let $\textsf{S}_k$ a category of schemes over a field $k$ satisfying the conditions given at the beginning of section \hyperref[subsectioncohomsupports]{\ref{subsectioncohomsupports}}.   Let $S$ be a separated scheme over $k$.  
\begin{enumerate}
\item  Let $h$ be a cohomology theory with supports on the category of pairs $\textsf{P}_k$ over $\textsf{S}_k$, and let $h^S$ be the corresponding augmented theory.  Then if $h$ satisfies \'etale excision and the cohomological projective bundle formula, so does $h^S$.  
\item Let $C$ be a substratum on $\textsf{S}_k$, and let $C^S$ be the corresponding augmented substratum.  Then if $C$ satisfies the \'etale Mayer-Vietoris property and the projective bundle formula for substrata, so does $C^S$.  
\end{enumerate}
\end{lemma}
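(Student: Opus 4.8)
The plan is to verify the two named criteria separately for the augmented theories $h^S$ and $C^S$, running the cohomology-theory case of part~(1) and the substratum case of part~(2) in parallel, since the mechanism is identical in both. The guiding principle is that the base-change functor $-\times_k S$ is well enough behaved to carry all the relevant geometric hypotheses along: it commutes with finite disjoint unions, sends étale morphisms to étale morphisms and isomorphisms to isomorphisms, commutes with the formation of preimages of closed subsets, and commutes with the formation $V\mapsto\PP_V^1$ together with its section at infinity and its projection $\tilde\pi$. Throughout I write $p_S\colon Y\times_k S\to Y$ for the projection.

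First I would dispose of étale excision and the étale Mayer--Vietoris condition, which is the routine half. Additivity of $h^S$ (resp.\ $C^S$) is immediate from additivity of $h$ (resp.\ $C$) together with $(Y_1\sqcup Y_2)\times_k S=(Y_1\times_k S)\sqcup(Y_2\times_k S)$. Given an étale $f\colon X'\to X$ with $f^{-1}(Z)\xrightarrow{\sim}Z$, the base change $f\times_k\mathrm{id}_S\colon X'\times_k S\to X\times_k S$ is again étale, and $(f\times_k\mathrm{id}_S)^{-1}(Z\times_k S)=f^{-1}(Z)\times_k S\xrightarrow{\sim}Z\times_k S$; feeding this base-changed datum into the étale-excision isomorphism of $h$ yields the required isomorphism $h^{S,q}(X\text{ on }Z)\to h^{S,q}(X'\text{ on }Z)$, and feeding it into the étale Mayer--Vietoris property of $C$ --- via the reformulation of \cite{CHKBloch-Ogus-Gabber97}, Lemma~5.1.2, that homotopy-cartesianness of the square is equivalent to $C(X\text{ on }Z)\to C(X'\text{ on }Z)$ being a homotopy equivalence --- yields the corresponding statement for $C^S$.

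Next I would treat the projective bundle formula. One first manufactures the $\mathrm{Pic}$-indexed structure maps \eqref{equprojbundlecohom1} (resp.\ \eqref{equprojbundlesub1}) for the augmented theory by composing $\mathrm{Pic}(X)\xrightarrow{p_S^*}\mathrm{Pic}(X\times_k S)$ with the structure map of $(h,h')$ (resp.\ $(C,C')$) on the pair $(X\times_k S,\,Z\times_k S)$; functoriality in $(X,Z)$ is inherited from that of $p_S^*$ and of the original structure maps. Using the canonical identifications $\PP_V^1\times_k S\cong\PP_{V\times_k S}^1$ and $\PP_F^1\times_k S\cong\PP_{F\times_k S}^1$, compatible with $s_\infty$ and $\tilde\pi$, one then checks that the comparison map \eqref{equprojbundlecohom4} for $(h^S,h'^S)$ over a pair $(V,F)$ with $V\subseteq\AA_k^n$ open is, under these identifications, exactly the comparison map \eqref{equprojbundlecohom4} for $(h,h')$ over the pair $(V\times_k S,\,F\times_k S)$; the substratum assertion reduces in the same way to the comparison maps of Definition~\ref{defiSUB5} for $(C,C')$ over $V\times_k S$. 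So everything comes down to the claim that $h$ (resp.\ $C$) still satisfies the projective bundle formula over the ``fattened'' base $V\times_k S$.

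The hard part is exactly this last point: $V\times_k S$ is in general neither reduced nor an open subscheme of any $\AA_k^m$, so the hypothesis on $h$ (resp.\ $C$), phrased only over opens of affine space, is not literally available over it. To bridge this I would invoke the ``new out of old'' analysis of \cite{CHKBloch-Ogus-Gabber97}, section~5.5: the $\PP^1$-bundle over $V\times_k S$ is the base change along $V\times_k S\to V$ of the one over $V$, its section at infinity and its projection are compatible with that base change, and --- this is the real input --- the projective bundle isomorphism for $h$ over opens of $\AA_k^n$, together with étale excision, already forces the isomorphism over $V\times_k S$ for every separated $S$, the point being that the comparison may be checked Zariski-locally on $S$, where one is reduced to controlled statements over polynomial algebras that étale excision and the absolute formula govern. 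Granting this, parts~(1) and~(2) follow from the identifications above, the remaining verifications --- compatibility of connecting maps, additivity of the new structure maps, and the like --- being routine bookkeeping.
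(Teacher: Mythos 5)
The paper's own proof of this lemma is a one-line citation to section~5.5 of Colliot-Th\'el\`ene, Hoobler, and Kahn \cite{CHKBloch-Ogus-Gabber97}, so there is no internal argument to compare against. Your treatment of \'etale excision and the \'etale Mayer--Vietoris condition is correct: base change along $-\times_k S$ preserves \'etaleness, isomorphisms onto closed subsets, and preimages, so the hypothesis on $h$ may be applied directly to the base-changed datum. You also correctly pinpoint where the real difficulty is: via the canonical identification $\PP^1_V\times_k S\cong\PP^1_{V\times_k S}$ (and likewise for $F$), the comparison map \hyperref[equprojbundlecohom4]{\ref{equprojbundlecohom4}} for $h^S$ over a pair $(V,F)$ with $V\subseteq\AA_k^n$ open becomes the comparison map for $h$ over the pair $(V\times_k S,\,F\times_k S)$, and $V\times_k S$ is not an open subscheme of any $\AA_k^m$, so it is not covered by the hypothesis on $h$ as phrased in Definition~\ref{defiCOH5}.

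The problem is that your proposed bridge for that gap does not work, and you ultimately grant it rather than prove it. You claim the projective bundle isomorphism for $h$ over opens of $\AA_k^n$ together with \'etale excision ``already forces the isomorphism over $V\times_k S$,'' by checking Zariski-locally on $S$ and reducing to ``controlled statements over polynomial algebras.'' But in precisely the case the lemma is needed for --- $S=\tn{Spec}(A)$ with $A$ an Artinian local $k$-algebra --- the projection $V\times_k S\to V$ is finite flat but not \'etale, so \'etale excision gives no purchase; and $V\times_k S$ is nowhere reduced, so no Zariski-localization on $S$ lands you back among opens of affine spaces or localizations of polynomial rings. In short, the reduction ``to polynomial algebras'' you invoke does not exist for non-reduced $S$. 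The actual resolution in \cite{CHKBloch-Ogus-Gabber97} is not a deduction of this kind; rather, their projective-bundle hypothesis is imposed over a class of schemes that is by construction stable under the relevant base changes, so that the formula over $V\times_k S$ is available as a hypothesis, not derived. To make your proof correct you would either need to strengthen the stated hypothesis on $h$ accordingly, or produce a real argument for the stability you are taking on faith --- the present proposal has neither.
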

\begin{proof} See Colliot-Th\'el\`ene, Hoobler, and Kahn \cite{CHKBloch-Ogus-Gabber97} 5.5. 
\end{proof}

%%%%%%%%%%%%%%%%%%%%%%%%%%%%%%%%%%%%%%%%%%%%%%%%%%%%%%%%%%%%%%%%%%%%%%%%%%
%%%%%%%%%%%%%%%%%%%%%%%%%%%%%%%%%%%%%%%%%%%%%%%%%%%%%%%%%%%%%%%%%%%%%%%%%%
%%%%%%%%%%%%%%%%%%%%%%%%%%%%%%%%%%%%%%%%%%%%%%%%%%%%%%%%%%%%%%%%%%%%%%%%%%
%%%%%%%%%%%%%%%%%%%%%%%%%%%%%%%%%%%%%%%%%%%%%%%%%%%%%%%%%%%%%%%%%%%%%%%%%%
%%%%%%%%%%%%%%%%%%%%%%%%%%%%%%%%%%%%%%%%%%%%%%%%%%%%%%%%%%%%%%%%%%%%%%%%%%
%%%%%%%%%%%%%%%%%%%%%%%%%%%%%%%%%%%%%%%%%%%%%%%%%%%%%%%%%%%%%%%%%%%%%%%%%%

\section{Proof of the Main Theorem.}
\label{sectionproof}

\subsection{Coniveau Spectral Sequence.}  In this section we gather together and synthesize the results necessary to prove our main theorem \hyperref[maintheorem]{\ref{maintheorem}}, beginning with the existence and properties of the coniveau spectral sequence.  Let $X$ be a scheme belonging to the distinguished category $\textsf{S}_k$, and let $h$ be a cohomology theory with supports $h$ on a category of pairs $\textsf{P}_k$ over $\textsf{S}_k$.  The procedure of coniveau filtration; i.e., filtration of $X$ by the codimensions of its points, leads to an exact couple involving the cohomology objects $h^n(X \tn{ on }x)$, and thence to the coniveau spectral sequence.  Actually, this construction goes through under much more general conditions than we need for theorem \hyperref[maintheorem]{\ref{maintheorem}}.

\begin{theorem}\label{theoremconiveaudescent} Let $k$ be a field of characteristic zero, and let $\textsf{S}_k$ be a category of schemes over $k$ satisfying the conditions given at the beginning of section \hyperref[subsectioncohomsupports]{\ref{subsectioncohomsupports}}.   Let $C$ be a substratum on $\textsf{S}_k$, and let $h$ be the corresponding cohomology theory with supports on the category of pairs $\textsf{P}_k$ over $\textsf{S}_k$.  Assume that $C$ satisfies \'etale excision and the condition that  $h^n(\emptyset) = 0$ for all $n$.  Let $X\in\textsf{S}_k$ be a noetherian scheme of finite dimension, and let $\ms{H}^n(X)$ be the sheaf on $X$ associated to the presheaf $U\mapsto h^n(U)$.  Then there exists a strongly convergent spectral sequence, called the {\bf coniveau spectral sequence}:
\begin{equation}\label{equconiveauSS}
 E_{1}^{p,q} = \coprod_{x \in X^{p}}h^{p+q}(X \tn{ on }x) \Rightarrow h^{p+q}(X).
\end{equation}
\end{theorem}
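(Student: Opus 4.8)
The plan is to run the classical coniveau-filtration construction --- due in this generality to Bloch--Ogus and, in the language of cohomology theories with supports, to Colliot-Th\'el\`ene, Hoobler and Kahn \cite{CHKBloch-Ogus-Gabber97} (compare Balmer \cite{BalmerNiveauSS00}) --- and then simply to check that the two hypotheses on $C$ are precisely what the argument consumes. For each integer $p\ge 0$ let $Z^p$ be the set of closed subsets of $X$ of codimension $\ge p$, ordered by inclusion; since the union of two such subsets again has codimension $\ge p$, this is a filtered poset, and $Z^{p+1}\subseteq Z^p$. The short exact sequences of complexes or spectra attached to any substratum (equation \ref{SEScomplexorspectra}) give, for every pair $W\subseteq Z$ with $Z\in Z^p$, $W\in Z^{p+1}$, the long exact sequence of the triple $(X,Z,W)$,
\[
\cdots\to h^n(X\tn{ on }W)\to h^n(X\tn{ on }Z)\to h^n(X-W\tn{ on }Z-W)\to h^{n+1}(X\tn{ on }W)\to\cdots,
\]
functorially in $(Z,W)$. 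First I would pass to the filtered colimit over all such pairs and reindex in the standard way to obtain an exact couple, whose $D_1$-term is $\varinjlim_{Z\in Z^p}h^{p+q}(X\tn{ on }Z)$, whose abutment filtration on $h^m(X)$ has $p$-th step $\bigcup_{Z\in Z^p}\ker\big(h^m(X)\to h^m(X-Z)\big)$ (the coniveau filtration), and whose derived spectral sequence will be the one asserted.

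The substantive step, which I expect to be the main obstacle, is the identification
\[
E_1^{p,q}\;=\;\varinjlim_{\substack{W\subseteq Z\\ Z\in Z^p,\, W\in Z^{p+1}}}h^{p+q}(X-W\tn{ on }Z-W)\;\cong\;\coprod_{x\in X^{(p)}}h^{p+q}(X\tn{ on }x).
\]
To prove this I would fix $Z\in Z^p$, let $W_0\subseteq Z$ be the union of the irreducible components of $Z$ of codimension $>p$ together with the pairwise intersections of its codimension-$p$ components (so $W_0\in Z^{p+1}$), and observe that for $W\supseteq W_0$ the locally closed set $Z-W$ is a disjoint union of irreducible pieces whose generic points are exactly the codimension-$p$ points of $X$ lying on $Z$ and not in $W$. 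Additivity of $h$ --- part of the \'etale excision hypothesis, Definition \ref{defiCOH1} --- then splits $h^{p+q}(X-W\tn{ on }Z-W)$ as the corresponding direct sum; Zariski excision (a special case of \'etale excision, applied to the open immersions $U=X-W\hookrightarrow X$) identifies each summand with $h^{p+q}(U\tn{ on }\overline{\{x\}}\cap U)$; and taking the colimit over $W$, hence over shrinking neighbourhoods $U\ni x$, produces the punctual invariant $h^{p+q}(X\tn{ on }x)$. Taking the colimit over all $Z\in Z^p$ then sweeps out every $x\in X^{(p)}$, since $\overline{\{x\}}$ itself belongs to $Z^p$. The hypothesis $h^n(\emptyset)=0$ is used here to discard the contributions of the components of $Z$ (and of $Z$'s) of codimension $>p$, which would otherwise appear spuriously, and it also forces $D_1^{p,q}=E_1^{p,q}=0$ for $p<0$.

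With the exact couple and the computation of $E_1$ in place, the rest is formal: $d_1$ has bidegree $(1,0)$, so the $E_1$-page is a Cousin-type complex $\coprod_{x\in X^{(p)}}h^{p+q}(X\tn{ on }x)$ with the evident residue differentials, and the spectral sequence abuts to the coniveau filtration on $h^{p+q}(X)$. Strong convergence follows from the hypothesis that $X$ is noetherian of finite dimension $n$: then $Z^p=\{\emptyset\}$ for $p>n$, so $D_1^{p,q}=E_1^{p,q}=0$ outside $0\le p\le n$, the coniveau filtration on each $h^m(X)$ is finite, and a spectral sequence arising from a bounded filtration converges strongly. Sheafifying --- i.e.\ running the same argument over every open $U\subseteq X$ and passing to associated sheaves --- yields the complex of sheaves $U\mapsto E_1^{\bullet,q}(U)$ together with the sheaves $\ms{H}^q(X)$; the acyclicity of that complex is the separate effacement statement and is not needed for the present theorem.

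In short, the only place where any work is hidden is the $E_1$-identification: choosing cofinal systems of the auxiliary closed sets $W$ inside each $Z$, checking that the colimit genuinely collapses the localization terms onto the punctual invariants, and verifying that additivity together with \'etale excision and $h^n(\emptyset)=0$ deliver a coproduct indexed exactly by $X^{(p)}$ with no leftover terms. The exact-couple machinery, the $d_1$-bidegree, and the convergence from finiteness of the filtration are all routine.
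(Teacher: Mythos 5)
Your proposal is correct and follows the same route the paper takes; the paper's own proof simply defers to Colliot-Th\'el\`ene--Hoobler--Kahn (sections 1 and 5, remark 5.1.3(3)) and to Dribus's dissertation for the details, and what you have written is precisely the argument behind that citation: the exact couple from the coniveau filtration, the $E_1$-identification via additivity plus Zariski excision and $h^n(\emptyset)=0$, and strong convergence from the finite-dimensionality of $X$. You also correctly observe, as the paper does in its remark after the proof, that Zariski excision (rather than full \'etale excision) is all that the existence of the spectral sequence actually consumes.
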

\begin{proof}  The construction of the coniveau spectral sequence and proof of the convergence condition in the special case of \'etale cohomology, following Bloch-Ogus \cite{BlochOgus75} and Gabber \cite{GabberPfBlochOgus}, appears in See Colliot-Th\'el\`ene, Hoobler and Kahn \cite{CHKBloch-Ogus-Gabber97}, section  1.  In \cite{CHKBloch-Ogus-Gabber97}, section 5, remark 5.1.3(3), it is observed that the same proof goes through under the stated conditions for any cohomology theory with supports.  Dribus \cite{DribusDissertation}, Lemma 4.2.3.1, fills in some ``obvious" details.  We note that the theorem can be immediately strengthened; for example, by replacing \'etale excision with Zariski excision. 
\end{proof}

The rows of the $E_1$-page of the coniveau spectral sequence are {\it Cousin complexes} in the sense of Hartshorne \cite{HartshorneResiduesDuality66}.   Sheafifying these rows produces complexes of sheaves on $X$ of the following form:

%&&&&&&&&&&&&&&&&&&&&&&&&&&&&&&&&&&&&&&&&&&&&&&&&&&&&&&&&&&&&&&&&&&&&&&&&&&&&&&&&&&
%&&&&&&&&&&&&&&&&&&&&&&&&&&&&&&&&&&&&&&&&&&&&&&&&&&&&&&&&&&&&&&&&&&&&&&&&&&&&&&&&&&
%&&&&                 &&&&    &&&               &&&                    &&&    &&&&    &&&              &&&&                   &&&&&&&&&&&&&&&&&&
%&&&&   &&&&   &&&&    &&&   &&&&&&&&&&&   &&&&&&    &&&&    &&&    &&&     &&&    &&&&&&&&&&&&&&&&&&&&&&&&
%&&&&   &&&&   &&&&    &&&   &&&&&&&&&&&   &&&&&&    &&&&    &&&    &&&     &&&    &&&&&&&&&&&&&&&&&&&&&&&&
%&&&&                 &&&&    &&&   &&&&&&&&&&&   &&&&&&    &&&&    &&&            &&&&&             &&&&&&&&&&&&&&&&&&&&
%&&&&    &&&&&&&&&    &&&   &&&&&&&&&&&   &&&&&&    &&&&    &&&    &&&   &&&&    &&&&&&&&&&&&&&&&&&&&&&&
%&&&&    &&&&&&&&&    &&&   &&&&&&&&&&&   &&&&&&    &&&&    &&&    &&&&   &&&    &&&&&&&&&&&&&&&&&&&&&&&
%&&&&    &&&&&&&&&    &&&                &&&&&&   &&&&&&                   &&&    &&&&&   &&                  &&&&&&&&&&&&&&&&&&
%&&&&&&&&&&&&&&&&&&&&&&&&&&&&&&&&&&&&&&&&&&&&&&&&&&&&&&&&&&&&&&&&&&&&&&&&&&&&&&&&&&
%&&&&&&&&&&&&&&&&&&&&&&&&&&&&&&&&&&&&&&&&&&&&&&&&&&&&&&&&&&&&&&&&&&&&&&&&&&&&&&&&&&
\begin{pgfpicture}{0cm}{0cm}{17cm}{1.2cm}
\begin{pgftranslate}{\pgfpoint{1.3cm}{-.7cm}}
\pgfputat{\pgfxy(-.6,.95)}{\pgfbox[center,center]{(7.1.2)}}
\begin{pgfmagnify}{.95}{.95}
\pgfputat{\pgfxy(.7,1)}{\pgfbox[center,center]{$0$}}
\pgfputat{\pgfxy(2.7,.9)}{\pgfbox[center,center]{$\displaystyle\coprod_{x\in X^{0}} \underline{h^q(X\tn{ \footnotesize{on} } x)}$}}
\pgfputat{\pgfxy(6.3,.9)}{\pgfbox[center,center]{$\displaystyle\coprod_{x\in X^{1}} \underline{h^{q+1}(X\tn{ \footnotesize{on} } x)}$}}
\pgfputat{\pgfxy(12.4,.9)}{\pgfbox[center,center]{$\displaystyle\coprod_{x\in X^{d}} \underline{h^{q+d}(X\tn{ \footnotesize{on} } x)}$}}
\pgfputat{\pgfxy(14.9,1)}{\pgfbox[center,center]{$0$.}}
\pgfputat{\pgfxy(4.6,1.35)}{\pgfbox[center,center]{\small{$d_1^{0,q}$}}}
\pgfputat{\pgfxy(8.4,1.35)}{\pgfbox[center,center]{\small{$d_1^{1,q}$}}}
\pgfputat{\pgfxy(10.5,1.35)}{\pgfbox[center,center]{\small{$d_1^{d-1,q}$}}}
\pgfsetendarrow{\pgfarrowlargepointed{3pt}}
\pgfxyline(.9,1)(1.4,1)
\pgfxyline(4.2,1)(4.8,1)
\pgfxyline(8,1)(8.6,1)
\pgfnodecircle{Node0}[fill]{\pgfxy(8.8,1)}{0.02cm}
\pgfnodecircle{Node0}[fill]{\pgfxy(8.9,1)}{0.02cm}
\pgfnodecircle{Node0}[fill]{\pgfxy(9,1)}{0.02cm}
\pgfnodecircle{Node0}[fill]{\pgfxy(9.6,1)}{0.02cm}
\pgfnodecircle{Node0}[fill]{\pgfxy(9.7,1)}{0.02cm}
\pgfnodecircle{Node0}[fill]{\pgfxy(9.8,1)}{0.02cm}
\pgfxyline(10,1)(10.8,1)
\pgfxyline(14.1,1)(14.6,1)
\end{pgfmagnify}
\end{pgftranslate}
\end{pgfpicture}
\label{equsheafifiedcousin}
%&&&&&&&&&&&&&&&&&&&&&&&&&&&&&&&&&&&&&&&&&&&&&&&&&&&&&&&&&&&&&&&&&&&&&&&&&&&&&&&&&&
%&&&&&&&&&&&&&&&&&&&&&&&&&&&&&&&&&&&&&&&&&&&&&&&&&&&&&&&&&&&&&&&&&&&&&&&&&&&&&&&&&&
%&&&                    &&&       &&&&&    &&&                &&&&&&&&&&&&&&&&&&&&&&&&&&&&&&&&&&&&&&&&&&&&&&&&&&&
%&&&    &&&&&&&&&    &    &&&&   &&&    &&&&    &&&&&&&&&&&&&&&&&&&&&&&&&&&&&&&&&&&&&&&&&&&&&&&&&& 
%&&&    &&&&&&&&&    &&    &&&   &&&    &&&&&    &&&&&&&&&&&&&&&&&&&&&&&&&&&&&&&&&&&&&&&&&&&&&&&&& 
%&&&                 &&&&    &&&    &&   &&&    &&&&&    &&&&&&&&&&&&&&&&&&&&&&&&&&&&&&&&&&&&&&&&&&&&&&&&& 
%&&&    &&&&&&&&&    &&&&    &   &&&    &&&&&    &&&&&&&&&&&&&&&&&&&&&&&&&&&&&&&&&&&&&&&&&&&&&&&&& 
%&&&    &&&&&&&&&    &&&&&       &&&    &&&&     &&&&&&&&&&&&&&&&&&&&&&&&&&&&&&&&&&&&&&&&&&&&&&&&& 
%&&&                    &&&    &&&&&&    &&&                  &&&&&&&&&&&&&&&&&&&&&&&&&&&&&&&&&&&&&&&&&&&&&&&&&&
%&&&&&&&&&&&&&&&&&&&&&&&&&&&&&&&&&&&&&&&&&&&&&&&&&&&&&&&&&&&&&&&&&&&&&&&&&&&&&&&&&&
%&&&&&&&&&&&&&&&&&&&&&&&&&&&&&&&&&&&&&&&&&&&&&&&&&&&&&&&&&&&&&&&&&&&&&&&&&&&&&&&&&&

\addtocounter{equation}{1}

These complexes are of central importance in our approach to the infinitesimal theory of Chow groups.  In particular, they are flasque resolutions of the sheaves on $X$ induced by $h$ in the sense described below, and are therefore suitable for computing the corresponding sheaf cohomology objects.  Of course, the objects of principal interest in this context are the groups $H^p\big(X,\ms{K}_p(X)\big)\cong CH^p(X)$, along with the corresponding ``arc groups" and ``tangent groups."  The fact that these complexes are flasque resolutions is established by the Bloch-Ogus theorem:

\begin{theorem}\label{corblochogus}(Bloch-Ogus Theorem). Let $k$ be an infinite field, and let $\textsf{S}_k$ be a category of schemes over $k$ satisfying the conditions given at the beginning of section \hyperref[subsectioncohomsupports]{\ref{subsectioncohomsupports}}.  Let $h$ be a cohomology theory with supports on the category of pairs $\textsf{P}_k$ over $\textsf{S}_k$, satisfying \'etale excision and the projective bundle condition.  Then, for any nonsingular scheme $X$ belonging to $\textsf{S}_k$, the sheafified Cousin complexes appearing in equation \hyperref[equsheafifiedcousin]{7.1.2} above are flasque resolutions of the sheaves $\ms{H}_X^n$ associated to the presheaves $U\mapsto h_U^n$, and the $E_2$-terms of the coniveau spectral sequence for $H$ on $X$ are
\begin{equation}\label{equblochogus}E_2^{p,q}=H_{\tn{\footnotesize{Zar}}}^p(X,\ms{H}_X^q).\end{equation}
\end{theorem}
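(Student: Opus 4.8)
The plan is to run the Bloch-Ogus argument in the generality of \cite{CHKBloch-Ogus-Gabber97}, of which this statement is essentially the ``Bloch-Ogus-Gabber theorem'' specialized to our situation. First I would observe that the two standing hypotheses on $h$ --- \'etale excision (Definition \ref{defiCOH1}) and the projective bundle condition (Definition \ref{defiCOH5}) --- together with the assumption that $k$ is infinite, imply that $h$ is \emph{effaceable} in the sense of Definition \ref{defieffaceability}. This is the effacement theorem of Colliot-Th\'el\`ene, Hoobler, and Kahn; its geometric engine is Gabber's presentation lemma \cite{GabberPfBlochOgus}, which for a nonsingular $X$ over the infinite field $k$ provides, \'etale-locally around any finite set of points, the fibrations over open subsets of affine space needed to reduce the vanishing of the relevant transition maps to the projective bundle formula. (At the level of substrata this is precisely what the corollaries of section \ref{effacement} record for $\mbf{K}$ and $\mbf{HN}$; the passage from an effaceable substratum to an effaceable cohomology theory with supports is immediate, since nullhomotopic maps induce the zero map on homotopy groups.)

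Next I would use effaceability to prove that the sheafified Cousin complex \hyperref[equsheafifiedcousin]{7.1.2} is a resolution of $\ms{H}_X^q$. Exactness is checked on stalks: at a point $x \in X$ the stalk of \hyperref[equsheafifiedcousin]{7.1.2} in the $q$-th row is the Gersten-type complex assembled from the groups $h^{\bullet}\big(\tn{Spec}\,\ms{O}_{X,x}\tn{ on }\mfr{p}\big)$ as $\mfr{p}$ runs over $\tn{Spec}\,\ms{O}_{X,x}$, the local ring being regular since $X$ is nonsingular. Effaceability, fed through the colimit over open neighborhoods of $x$ (and of the finitely many generization points entering the exact couple), says exactly that a class supported in codimension $\ge i+1$ becomes zero after its support is enlarged to codimension $\ge i$; this is the vanishing of the cohomology of the Gersten complex in degrees $\ge 1$. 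At the generic point, $\ker\big(d_1^{0,q}\big)$ is identified with the sheaf $\ms{H}_X^q$ associated to $U \mapsto h^q(U)$: on a nonsingular $X$ this sheaf is ``unramified,'' it embeds into the constant sheaf $\underline{h^q(X\tn{ on }\eta)}$ that constitutes the $0$-th term of \hyperref[equsheafifiedcousin]{7.1.2}, and the same local analysis makes the augmented complex $0 \to \ms{H}_X^q \to \ms{E}_1^{0,q} \to \ms{E}_1^{1,q} \to \cdots$ exact.

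It remains to check flasqueness and to extract the $E_2$-terms, both of which are formal. Each summand $\underline{h^{p+q}(X\tn{ on }x)}$ of the $p$-th term of \hyperref[equsheafifiedcousin]{7.1.2} is the direct image, along the inclusion of the integral closed subscheme $\overline{\{x\}}$, of a sheaf constant on that irreducible space; a constant sheaf on an irreducible space is flasque, direct images preserve flasqueness, and a direct sum of flasque sheaves on the noetherian scheme $X$ is again flasque. Hence \hyperref[equsheafifiedcousin]{7.1.2} is a flasque resolution of $\ms{H}_X^q$. Finally, taking global sections of this resolution returns the row $E_1^{\bullet,q}$ of the coniveau spectral sequence with its differential $d_1$, so $E_2^{p,q} = H^p\big(\Gamma(X,\ms{E}_1^{\bullet,q})\big) = H^p_{\tn{Zar}}(X,\ms{H}_X^q)$, which is the desired identification.

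The main obstacle is the middle step --- deducing the exactness of the local Gersten complexes from effaceability. This is where the geometry of smooth schemes over $k$ genuinely enters (through Gabber's presentation lemma, packaged inside the effacement theorem), and where one must take care that the colimits over shrinking neighborhoods interact correctly both with the supports and with the passage to stalks; by contrast, the flasqueness of the terms and the $E_2$-computation are routine homological algebra.
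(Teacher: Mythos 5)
Your proposal is correct and follows the same route the paper takes: the paper's proof is purely a citation to Bloch--Ogus \cite{BlochOgus75}, Gabber \cite{GabberPfBlochOgus}, and Colliot-Th\'el\`ene--Hoobler--Kahn \cite{CHKBloch-Ogus-Gabber97} (specifically Corollary 5.1.11 for the $E_2$-identification). Your sketch accurately unpacks that cited argument --- effaceability from \'etale excision plus the projective bundle formula over an infinite field via Gabber's presentation lemma, exactness of the local Gersten complexes from effaceability, flasqueness of the constant-on-$\overline{\{x\}}$ terms, and recovery of $E_2^{p,q}=H^p_{\tn{Zar}}(X,\ms{H}_X^q)$ by taking global sections of the flasque resolution --- so it is a correct, more detailed version of the same proof rather than a different one.
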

\begin{proof}  The original proof, in the case where $h$ is \'etale cohomology, appears in Bloch and Ogus \cite{BlochOgus75}.   Gabber \cite{GabberPfBlochOgus} later strengthened and extended the theorem.  Colliot-Th\'el\`ene, Hoobler and Kahn \cite{CHKBloch-Ogus-Gabber97} recast the theorem in the general context of cohomology theories with supports.  In particular, for the statement about the $E_2$-terms, see \cite{CHKBloch-Ogus-Gabber97} Corollary 5.1.11.  
\end{proof}

We pause here to make a few contextual and historical remarks.  There exists an {\it a priori} different spectral sequence involving $X$ and $h$, called the {\it Brown-Gersten spectral sequence,} or {\it descent spectral sequence,} which takes the form
\begin{equation}\label{descentSS}E_{2}^{p,q} = H_{\tn{Zar}}^{p}(X,\ms{H}^q) \Rightarrow h^{p+q}(X),\end{equation}
under appropriate conditions.   The Bloch-Ogus theorem implies that the coniveau spectral sequence coincides with the descent spectral sequence from the $E_2$-page onwards, under the stated hypothesis.   The original proof by Bloch and Ogus relies on an early version of the effacement theorem, which is proven via a ``geometric presentation lemma."  Gabber's paper \cite{GabberPfBlochOgus}, appearing nearly twenty years later, supplies a different proof of the effacement theorem, in the special case of \'etale cohomology, by considering the {\it section at infinit} associated with an embedding of the affine line into the projective line, together with a computation of the cohomology of the projective line.  Colliot-Th\'el\`ene, Hoobler and Kahn \cite{CHKBloch-Ogus-Gabber97} show how to axiomatize Gabber's argument to apply to any cohomology theory with supports satisfying a few simple technical conditions.  They provide a list of such theories, but many more have come to light subsequently. 

%%%%%%%%%%%%%%%%%%%%%%%%%%%%%%%%%%%%%%%%%%%%%%%%%%%%%%%%%%%%%%%%%%%%%%%%%%
%%%%%%%%%%%%%%%%%%%%%%%%%%%%%%%%%%%%%%%%%%%%%%%%%%%%%%%%%%%%%%%%%%%%%%%%%%
%%%%%%%%%%%%%%%%%%%%%%%%%%%%%%%%%%%%%%%%%%%%%%%%%%%%%%%%%%%%%%%%%%%%%%%%%%
%%%%%%%%%%%%%%%%%%%%%%%%%%%%%%%%%%%%%%%%%%%%%%%%%%%%%%%%%%%%%%%%%%%%%%%%%%
%%%%%%%%%%%%%%%%%%%%%%%%%%%%%%%%%%%%%%%%%%%%%%%%%%%%%%%%%%%%%%%%%%%%%%%%%%
%%%%%%%%%%%%%%%%%%%%%%%%%%%%%%%%%%%%%%%%%%%%%%%%%%%%%%%%%%%%%%%%%%%%%%%%%%

\subsection{First Column of the Machine.}\label{subsectionfirstcolumn}  The first column of the coniveau machine appearing in diagram \hyperref[figconiveauprecise2]{4.1.1} above is merely the Bloch-Gersten-Quillen resolution appearing in proposition 5.6 of Quillen's first paper on higher algebraic $K$-theory \cite{QuillenHigherKTheoryI72}.   As we already remarked in section \hyperref[subsecGGconiveau]{\ref{subsecGGconiveau}} above, the nonsingularity of $X$ implies that the groups with supports $K_{p-d}\big(X \tn{ on } x\big)$ in the first column may be replaced by the groups $K_{p-d}\big(k(x)\big)$ by Quillen's {\it d\'evissage} theorem, where $k(x)$ is the residue field of the local ring $\ms{O}_{X, x}$.  We formalize the existence of the first column of the coniveau machine in the following lemma:

\begin{lemma}\label{lemfirstcolumn} Let $X$ be a nonsingular quasiprojective variety of dimension $n$ over a field $k$ of characteristic zero.  Then the first column of the coniveau machine appearing in theorem \hyperref[maintheorem]{\ref{maintheorem}} exists; that is, the Cousin complexes arising as the $E_1$-rows of the coniveau spectral sequence for algebraic $K$-theory on $X$ sheafify to yield flasque resolutions of the sheaves $\ms{K}_p$ on $X$. 
\end{lemma}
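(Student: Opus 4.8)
The plan is to recognize the assertion as the specialization to Bass--Thomason $K$-theory of the general coniveau formalism already assembled above; no new ingredient is needed once the relevant inputs are in place. First I would record that $\mbf{K}$ is a substratum (Lemma \ref{lemKHNsubstrat}), so by Definition \ref{defisubstratumtocohom} it induces a cohomology theory with supports with groups $h^n(Y\tn{ on }Z)=\pi_{-n}\mbf{K}(Y\tn{ on }Z)=K_{-n}(Y\tn{ on }Z)$; in particular the Zariski sheaf $\ms{H}^{-p}_X$ associated to $U\mapsto h^{-p}(U)=K_p(U)$ is, by definition, the $K$-theory sheaf $\ms{K}_p(X)$. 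Next I would invoke the corollary of section \ref{effacement} that $\mbf{K}$ is an effaceable substratum: Thomason's localization theorem (Theorems \ref{thomasonhomotopysequence} and \ref{thomasonhomotopyequ}) yields the \'etale Mayer--Vietoris property and Theorem \ref{thomasonprojbundle} yields the projective bundle condition for substrata, so $\mbf{K}$, and hence the associated cohomology theory with supports, satisfies \'etale excision and the cohomological projective bundle condition. Finally $\mbf{K}(\emptyset)$ is contractible, so $h^n(\emptyset)=0$ for all $n$.

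With these hypotheses checked, Theorem \ref{theoremconiveaudescent} applies to the noetherian finite-dimensional scheme $X$ (quasiprojectivity furnishes quasi-compactness and separatedness), giving the coniveau spectral sequence $E_1^{p,q}=\bigoplus_{x\in X^p}h^{p+q}(X\tn{ on }x)\Rightarrow h^{p+q}(X)$ whose $E_1$-rows are Cousin complexes. Since $k$ has characteristic zero it is infinite, and $X$ is nonsingular, so the Bloch--Ogus theorem (Theorem \ref{corblochogus}) shows that the sheafified $E_1$-rows are flasque resolutions of the sheaves $\ms{H}^q_X$. Reading off the row $q=-p$ and using $\ms{H}^{-p}_X\cong\ms{K}_p(X)$ yields the flasque resolution
\begin{equation*}
0\to\ms{K}_p(X)\to\bigoplus_{x\in X^{(0)}}\underline{K_p(X\tn{ on }x)}\to\bigoplus_{x\in X^{(1)}}\underline{K_{p-1}(X\tn{ on }x)}\to\cdots\to\bigoplus_{x\in X^{(n)}}\underline{K_{p-n}(X\tn{ on }x)}\to 0,
\end{equation*}
which is exactly the first column of the coniveau machine of Theorem \ref{maintheorem}. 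Quillen's d\'evissage theorem \cite{QuillenHigherKTheoryI72}, applied to the regular local rings $\ms{O}_{X,x}$, then replaces each $K_{p-d}(X\tn{ on }x)$ by $K_{p-d}\big(k(x)\big)$, recovering the Bloch--Gersten--Quillen resolution of Quillen's Proposition 5.6.

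There is no essential obstacle here: the content is entirely subsumed in the cited theorems, and the only points demanding care are clerical --- aligning the cohomological indexing of Definition \ref{defisubstratumtocohom} with the convention $K_p=\pi_p$, and confirming that the standing hypotheses of the effacement and Bloch--Ogus theorems (infinite ground field, nonsingular noetherian finite-dimensional $X$ lying in $\textsf{S}_k$) are satisfied in the present setting. The single substantive input, effaceability of $\mbf{K}$, has already been reduced to Thomason's localization theorem, so what remains is assembly.
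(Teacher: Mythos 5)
Your proof is correct, but it takes a genuinely different route from the paper's. The paper's proof of Lemma \ref{lemfirstcolumn} simply cites Quillen's Proposition 5.6 in \cite{QuillenHigherKTheoryI72}: the first column is the classical Bloch--Gersten--Quillen resolution, which Quillen established directly by d\'evissage and the coniveau filtration for nonsingular schemes, well before the Colliot-Th\'el\`ene--Hoobler--Kahn formalism existed. You instead derive the result uniformly from the machinery the paper assembles for the \emph{other} columns: $\mbf{K}$ is a substratum, Thomason's results give effaceability, and Theorems \ref{theoremconiveaudescent} and \ref{corblochogus} then produce the flasque Cousin resolution. Both are valid, and the indexing check ($h^{-p}=K_p$, row $q=-p$) is carried out carefully. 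What the paper's citation buys is brevity and an appeal to a classical, well-understood fact; what your route buys is uniformity --- it demonstrates that the modern framework subsumes the classical case and that the same argument runs unchanged for nonconnective $K$-theory, which is exactly the generalization the paper flags in the remark following the lemma. One small point worth keeping in mind: Quillen's original resolution is stated for connective $K$-theory, whereas the coniveau machine is phrased in Bass--Thomason $K$-theory; for nonsingular $X$ the two agree (negative $K$-groups of regular schemes vanish), so the discrepancy is harmless, but your derivation handles the nonconnective theory natively rather than by this side remark.
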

\begin{proof} This result is due to Bloch, Gersten and Quillen.  Details appear in section 5 of Quillen \cite{QuillenHigherKTheoryI72}.  
\end{proof}

Though we state lemma \hyperref[lemfirstcolumn]{\ref{lemfirstcolumn}} in the special context of our main theorem \hyperref[maintheorem]{\ref{maintheorem}}, the lemma may be generalized in an obvious way to apply to any effaceable cohomology theory with supports $h$ and any nonsingular scheme $X$ belonging to a distinguished category $\textsf{S}_k$ of schemes over $k$ satisfying the conditions given at the beginning of section \hyperref[subsectioncohomsupports]{\ref{subsectioncohomsupports}}.

%%%%%%%%%%%%%%%%%%%%%%%%%%%%%%%%%%%%%%%%%%%%%%%%%%%%%%%%%%%%%%%%%%%%%%%%%%
%%%%%%%%%%%%%%%%%%%%%%%%%%%%%%%%%%%%%%%%%%%%%%%%%%%%%%%%%%%%%%%%%%%%%%%%%%
%%%%%%%%%%%%%%%%%%%%%%%%%%%%%%%%%%%%%%%%%%%%%%%%%%%%%%%%%%%%%%%%%%%%%%%%%%
%%%%%%%%%%%%%%%%%%%%%%%%%%%%%%%%%%%%%%%%%%%%%%%%%%%%%%%%%%%%%%%%%%%%%%%%%%
%%%%%%%%%%%%%%%%%%%%%%%%%%%%%%%%%%%%%%%%%%%%%%%%%%%%%%%%%%%%%%%%%%%%%%%%%%
%%%%%%%%%%%%%%%%%%%%%%%%%%%%%%%%%%%%%%%%%%%%%%%%%%%%%%%%%%%%%%%%%%%%%%%%%%

\subsection{Second and Thirds Columns of the Machine.}\label{subsectionfirstcolumn}  The existence of the first column of the coniveau machine requires the hypothesis that $X$ is nonsingular, appearing in the statement of the Bloch-Ogus theorem \hyperref[corblochogus]{\ref{corblochogus}}.  For the second column of the machine, this hypothesis can no longer be invoked {\it a priori}, since the second column involves the singular thickened scheme $X_A$.  The obvious way around this difficulty is to use the augmented theory $h^S$ introduced in section \hyperref[subsectionaugmented]{\ref{subsectionaugmented}} above, where in this case $h$ is algebraic $K$-theory, and $S$ is the affine scheme $\tn{Spec}(A)$ corresponding to an Artinian local $k$-algebra $A$ with residue field $k$.  

 \begin{lemma}\label{lemsecondcolumn} Let $X$ be a nonsingular quasiprojective variety of dimension $n$ over a field $k$ of characteristic zero, and let $A$ be an Artinian local $k$-algebra with residue field $k$.  Then the second column of the coniveau machine appearing in theorem \hyperref[maintheorem]{\ref{maintheorem}} exists; that is, the Cousin complexes appearing as the $E_1$-rows of the coniveau spectral sequence for augmented $K$-theory on $X$ with respect to $\tn{Spec}(A)$ sheafify to yield flasque resolutions of the sheaves $\ms{K}_p(X_A)$ on $X$.
\end{lemma}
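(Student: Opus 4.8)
The plan is to obtain the second column as an instance of the Bloch-Ogus theorem \ref{corblochogus}, applied not to $K$-theory itself but to its augmented variant with respect to $S := \tn{Spec}(A)$. Recall from the corollary following theorem \ref{thomasonprojbundle} that the spectrum-valued functor $\mbf{K}$ is an effaceable substratum; equivalently, the associated cohomology theory with supports satisfies \'etale excision, the projective bundle condition, and $h^n(\emptyset)=0$ for all $n$. By part (2) of lemma \ref{lemnewoutofold}, the augmented substratum $\mbf{K}^S$, defined by $\mbf{K}^S(Y):=\mbf{K}(Y\times_k S)$, again satisfies the \'etale Mayer-Vietoris property and the projective bundle formula for substrata, hence is itself an effaceable substratum; the vanishing $K^S_n(\emptyset)=K_n(\emptyset\times_k S)=0$ is automatic. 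Write $h=h^S$ for the corresponding cohomology theory with supports, so that $h^n(Y\tn{ on } Z)=K_n(Y\times_k S\tn{ on } Z\times_k S)$.

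First I would invoke theorem \ref{theoremconiveaudescent} for this $h$: since $X$ is a nonsingular quasiprojective variety it is noetherian of finite dimension, so the coniveau spectral sequence $E_1^{p,q}=\coprod_{x\in X^{(p)}}h^{p+q}(X\tn{ on } x)\Rightarrow h^{p+q}(X)$ exists and its $E_1$-rows are Cousin complexes. Next, because $X$ is nonsingular and $h$ is effaceable and satisfies the projective bundle condition, the Bloch-Ogus theorem \ref{corblochogus} applies verbatim to the pair $(X,h)$: the sheafified Cousin complexes appearing in \hyperref[equsheafifiedcousin]{7.1.2} are flasque resolutions of the sheaves $\ms{H}^n_X$ associated to the presheaves $U\mapsto h^n(U)=K_n(U\times_k S)$. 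It then remains only to identify these coefficient sheaves. For $U$ an affine open of $X$ we have $U\times_k S=\tn{Spec}\big(\ms{O}_X(U)\otimes_k A\big)$, whose Bass-Thomason $K$-groups are the $K$-groups of the ring $\ms{O}_X(U)\otimes_k A=\ms{O}_{X_A}(U)$; since $X_A$ and $X$ share the same underlying Zariski space, the sheafification of $U\mapsto K_n(\ms{O}_{X_A}(U))$ on that space is by definition $\ms{K}_n(X_A)$. Hence $\ms{H}^p_X\cong\ms{K}_p(X_A)$, and the $p$th sheafified Cousin complex is the asserted flasque resolution.

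The only point requiring care — and it is the conceptual crux rather than a genuine obstacle — is that effaceability is a statement about the behaviour of a theory over \emph{nonsingular} affine opens, whereas the second column manifestly involves the singular scheme $X_A$. The resolution is that the coniveau filtration here is the codimension filtration of the nonsingular base $X$, not of $X_A$: the singularity is absorbed into the coefficients of the augmented theory, and lemma \ref{lemnewoutofold} guarantees that the effacement mechanism survives this absorption. Granting that lemma, together with the effaceability of $\mbf{K}$ and the Bloch-Ogus theorem, the present lemma is essentially a matter of assembly; the analogous argument with $\mbf{K}$ replaced by the relative substratum $\mbf{K}^{\ms{I}}$ (for $\ms{I}=\ms{O}_X\otimes_k\mm$), and then by $\mbf{HN}$ and $\mbf{HN}^{\ms{I}}$, will furnish the third and fourth columns in exactly the same way.
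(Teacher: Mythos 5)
Your proof is correct and follows exactly the route the paper intends: reduce to the effaceability of the augmented substratum $\mbf{K}^S$ via lemma \ref{lemnewoutofold}, then invoke the coniveau spectral sequence (theorem \ref{theoremconiveaudescent}) and the Bloch--Ogus theorem (theorem \ref{corblochogus}). The paper's own proof is the single line ``This follows immediately from lemma \ref{lemnewoutofold}''; you have simply spelled out the assembly, including the identification of the coefficient sheaves with $\ms{K}_p(X_A)$ and the remark that it is the coniveau filtration of the nonsingular base $X$, not of $X_A$, that is being used.
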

\begin{proof}This follows immediately from lemma \hyperref[lemnewoutofold]{\ref{lemnewoutofold}}.  
\end{proof}

Next, we want to show that the third column of the coniveau machine is a flasque resolution of the relative $K$-theory sheaf $\ms{K}_p(X_A,\mm)$ on $X$.  In general, the ``correct" approach to relative constructions is to carry them out at a ``high level;" e.g., at the level of spectra or complexes, rather than adopting a na\"{i}ve definition at the group level.   This is because na\"{i}ve definitions of this kind generally fail to exhibit the desired functorial properties.  Relative $K$-theory, for example, is generally defined via homotopy fibers at the level of spectra, and this generally does not lead to na\"{i}ve group-level relationships.  In particular, the na\"{i}ve relationship
\begin{equation}\label{equnaiverelative}
K_p(X\tn{ on } Y,\mm)\cong \tn{ker}\big(K_p(X_A\tn{ on } Y)\rightarrow K_p(X\tn{ on } Y)\big),
\end{equation}
holds only because the canonical surjection $A\rightarrow k$ sending $\mm$ to the zero ideal is split by the inclusion map $k\rightarrow A$.   The consequences of this splitting extend in an obvious way to the levels of sheaves and complexes, permitting a na\"{i}ve treatment of the third column of the coniveau machine, as well as the maps of complexes between the first and second columns and the second and third columns.  

\begin{lemma}\label{lemthirdcolumn} Let $X$ and $A$ satisfy the hypotheses of lemma \hyperref[lemsecondcolumn]{\ref{lemsecondcolumn}}.  Then the third column of the coniveau machine appearing in theorem \hyperref[maintheorem]{\ref{maintheorem}} exists; that is, the kernel of the map of complexes between the second and first columns induced by the canonical splitting $A\rightarrow k$ is a flasque resolution of the sheaf $\ms{K}_p(X_A,\mm)$ on $X$.  Moreover, the splitting induces sheaf morphisms $i$ between corresponding terms of the first and second columns, which assemble to yield an injective morphism of complexes, and surjective sheaf morphisms $j$ between corresponding terms of the second and third columns, which assemble to yield a surjective morphism of complexes, as shown in figure \hyperref[figconiveauprecise2]{4.4.1}.
\end{lemma}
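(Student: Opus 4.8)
The plan is to obtain the entire third column from the single structural fact that the augmentation $A\to k$ is split by the structure map $k\to A$, so that $X$ is a retract of the thickening $X_A$. First I would apply the substratum $\mbf{K}$ (Lemma~\ref{lemKHNsubstrat}), a contravariant functor, to the factorization $X\hookrightarrow X_A\to X$ of $\tn{id}_X$ induced by the split augmentation; this produces maps of spectra $\mbf{K}(X)\xrightarrow{i}\mbf{K}(X_A)\xrightarrow{r}\mbf{K}(X)$ with $r\circ i=\tn{id}$, and, because $X_A$ has the same underlying space as $X$ and these morphisms match up the closed set $\overline{\{x\}}$ in the two schemes, the same retract holds for spectra with supports: $\mbf{K}(X\tn{ on }x)$ is a retract of $\mbf{K}(X_A\tn{ on }x)$ for each point $x$ of $X$. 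Since, by Definition~\ref{defnewoutofoldrel}, $\mbf{K}(X_A,\mm\tn{ on }x)$ is the homotopy fiber of $\mbf{K}(X_A\tn{ on }x)\to\mbf{K}(X\tn{ on }x)$ and this map is a split surjection, the standard splitting of such a fiber sequence of spectra yields a natural equivalence $\mbf{K}(X_A\tn{ on }x)\simeq\mbf{K}(X\tn{ on }x)\vee\mbf{K}(X_A,\mm\tn{ on }x)$, hence on homotopy groups a natural decomposition $K_{p-d}(X_A\tn{ on }x)\cong K_{p-d}(X\tn{ on }x)\oplus K_{p-d}(X_A,\mm\tn{ on }x)$ in which the last summand is $\tn{ker}\big(K_{p-d}(X_A\tn{ on }x)\to K_{p-d}(X\tn{ on }x)\big)$; this is the punctual form of the na\"{i}ve relation~\ref{equnaiverelative}, valid here precisely because of the splitting.

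Next I would pass to Cousin complexes. By functoriality of the coniveau construction of Theorem~\ref{theoremconiveaudescent} in the substratum, the maps $\mbf{K}\xrightarrow{i}\mbf{K}^S\xrightarrow{r}\mbf{K}$ of substrata (with $S=\tn{Spec}(A)$ and $\mbf{K}^S$ the augmented substratum of Definition~\ref{defnewoutofoldaug}) induce morphisms of the sheafified Cousin complexes $C_1\xrightarrow{i_\bullet}C_2\xrightarrow{r_\bullet}C_1$ with $r_\bullet\circ i_\bullet=\tn{id}$, where $C_1$ and $C_2$ are the first and second columns of the machine. In the abelian category of complexes of sheaves on $X$ a retract splits off, so $C_2=i_\bullet(C_1)\oplus C_3$ with $C_3:=\tn{ker}(r_\bullet)$, and by the previous paragraph the $d$th term of $C_3$ is $\bigoplus_{x\in X^{(d)}}\underline{K_{p-d}(X_A,\mm\tn{ on }x)}$. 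The morphism $i_\bullet$ is a termwise split monomorphism of complexes and $j:=\tn{id}-i_\bullet\circ r_\bullet\colon C_2\to C_3$ is the termwise projection onto $C_3$, a split epimorphism; these are the injective and surjective morphisms of complexes asserted in the lemma, and they exhibit $C_3$ as the kernel of the map $r_\bullet$ between the second and first columns induced by the canonical splitting $A\to k$.

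It remains to see that $C_3$ is a flasque resolution of $\ms{K}_p(X_A,\mm)$. Each term of $C_3$ is a direct sum of skyscraper sheaves---equivalently a direct summand of the corresponding flasque term of $C_2$ supplied by Lemma~\ref{lemsecondcolumn}---hence flasque. For exactness, take the termwise split exact sequence of complexes $0\to C_3\to C_2\xrightarrow{r_\bullet}C_1\to 0$ and pass to cohomology sheaves; the splitting gives $\mc{H}^n(C_2)\cong\mc{H}^n(C_1)\oplus\mc{H}^n(C_3)$ for every $n$. By Lemmas~\ref{lemfirstcolumn} and~\ref{lemsecondcolumn}, $\mc{H}^n(C_1)=\mc{H}^n(C_2)=0$ for $n\neq0$, so $\mc{H}^n(C_3)=0$ for $n\neq0$, while the same sequence identifies $\mc{H}^0(C_3)$ with $\tn{ker}\big(\ms{K}_p(X_A)\to\ms{K}_p(X)\big)$. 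Finally, the splitting argument of the first paragraph applied to each open $U\subseteq X$ in place of a point gives $K_p(U_A,\mm)\cong\tn{ker}\big(K_p(U_A)\to K_p(U)\big)$, so the presheaf $U\mapsto K_p(U_A,\mm)$ is the presheaf kernel of $\ms{K}_p(X_A)\to\ms{K}_p(X)$; since sheafification is exact it follows that $\ms{K}_p(X_A,\mm)=\tn{ker}\big(\ms{K}_p(X_A)\to\ms{K}_p(X)\big)$, which we have just identified with $\mc{H}^0(C_3)$. Hence $C_3$ is a flasque resolution of $\ms{K}_p(X_A,\mm)$, as claimed.

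The one point that deserves genuine care---and that justifies the ``na\"{i}ve'' treatment flagged before the lemma---is that the retract structure must descend to a \emph{strict} direct-sum decomposition of the complexes of sheaves $C_1,C_2,C_3$, not merely one up to homotopy or quasi-isomorphism, so that one may form honest kernels and honest splittings at the level of the Cousin complexes. This holds because $i$ and $r$ come from genuine morphisms of schemes and hence act strictly and functorially on the $K$-theory spectra in any point-set model, and therefore strictly on the coniveau exact couples and their sheafified $E_1$-rows; once this is granted, the remainder is bookkeeping with split exact sequences and with the exactness of sheafification.
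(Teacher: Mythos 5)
Your proof is correct and takes essentially the same approach as the paper's, which compresses the entire argument to a one-line appeal to the canonical split surjection $A\to k$. You have simply unpacked that observation in full: the splitting propagates by contravariance of $\mbf{K}$ from schemes to spectra with supports and thence to the sheafified Cousin complexes, where the third column is the kernel of a termwise split retraction, inheriting flasqueness as a direct summand and exactness from the split short exact sequence of complexes together with the acyclicity of the first and second columns supplied by Lemmas~\ref{lemfirstcolumn} and~\ref{lemsecondcolumn}.
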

\begin{proof}These statements are elementary consequences of the canonical split surjection $A\rightarrow k$.   
\end{proof}

As in the case of lemma \hyperref[lemfirstcolumn]{\ref{lemfirstcolumn}} above, lemmas \hyperref[lemsecondcolumn]{\ref{lemsecondcolumn}} and \hyperref[lemthirdcolumn]{\ref{lemthirdcolumn}}  may be generalized in an obvious way to apply to any effaceable cohomology theory with supports $h$ and any nonsingular scheme $X$ belonging to an appropriate category $\textsf{S}_k$.

%%%%%%%%%%%%%%%%%%%%%%%%%%%%%%%%%%%%%%%%%%%%%%%%%%%%%%%%%%%%%%%%%%%%%%%%%%
%%%%%%%%%%%%%%%%%%%%%%%%%%%%%%%%%%%%%%%%%%%%%%%%%%%%%%%%%%%%%%%%%%%%%%%%%%
%%%%%%%%%%%%%%%%%%%%%%%%%%%%%%%%%%%%%%%%%%%%%%%%%%%%%%%%%%%%%%%%%%%%%%%%%%
%%%%%%%%%%%%%%%%%%%%%%%%%%%%%%%%%%%%%%%%%%%%%%%%%%%%%%%%%%%%%%%%%%%%%%%%%%
%%%%%%%%%%%%%%%%%%%%%%%%%%%%%%%%%%%%%%%%%%%%%%%%%%%%%%%%%%%%%%%%%%%%%%%%%%
%%%%%%%%%%%%%%%%%%%%%%%%%%%%%%%%%%%%%%%%%%%%%%%%%%%%%%%%%%%%%%%%%%%%%%%%%%

\subsection{Generalized Deformation Groups of Chow Groups}\label{subsectiongeneralizeddeftan}

In section \hyperref[subsecFormalCompletions]{\ref{subsecFormalCompletions}}, we defined formal completions of Chow groups.   Following Stienstra \cite{Stienstra83}, we adopted a viewpoint in which we fix the scheme $X$, and view the formal completion as a functor $\widehat{CH}^p (X)$ on the category $\textsf{Art}_k$ of Artinian local $k$-algebras with residue field $k$.  For a specific algebra $A$ in  $\textsf{Art}_k$, the sheaf cohomology group $H^p\big(X,\ms{K}_p(X_A,\mm)\big)\cong\widehat{CH}^p (X)(A)$  may be viewed as a ``generalized tangent group."   In this context, we raised the question of what role is played by the corresponding cohomology groups $H^p\big(X,\ms{K}_p(X_A)\big)$ involving {\it augmented} $K$-theory.   The answer is that these groups serve as suitable ``rigorous versions" of Green and Griffiths' ``groups of arcs through the identity in $CH^p(X)$."  We call these groups the {\it generalized deformation groups} of $CH^p(X)$ with respect to $\tn{Spec}(A)$.  The following definition is slightly more general:

\begin{definition}\label{defigendefgroupChow} Let $X$ be a smooth algebraic variety over a field $k$, and let $S$ be a separated $k$-scheme, not necessarily smooth.  The {\bf generalized deformation group} $D_S CH^p(X)$ of $CH^p(X)$ with respect to $S$ is the $p$th Zariski sheaf cohomology group of the augmented $K$-theory sheaf $\ms{K}_p(X\times_kS)$ on $X$:
\begin{equation}\label{equgendefchow}D_S CH^p(X):=H^p\big(X,\ms{K}_p(X\times_kS)\big).\end{equation}
\end{definition}

The word ``generalized" in definition \hyperref[defigendefgroupChow]{\ref{defigendefgroupChow}} refers to the fact that the definition allows for deformations with respect to any separated scheme $S$.  In particular, these deformations need not be infinitesimal.  However, this setup remains a very special case in an important respect, since $X\times_kS$ is a product.  One may consider deformations along subvarieties more generally, and this requires different machinery.   For example, the recent paper of Patel and Ravindra \cite{PatelRavindra14} examines the case of deformations along nonsingular ample hyperplane section $Y$ in an ``ambient variety $X$," and compares information about $CH^p(X)$ and $CH^p(Y)$ in the context of the {\it weak Lefschetz conjecture for Chow groups.}

%SSSSSSSSSSSSSSSSSSSSSSSSSSSSSSSSSSSSSSSSSSSSSSSSSSSSSSSSSSSSSSSSSSSSSSSSSSSSSSSSSS
%SSSSSSSSSSSSSSSSSSSSSSSSSSSSSSSSSSSSSSSSSSSSSSSSSSSSSSSSSSSSSSSSSSSSSSSSSSSSSSSSSS
%SSS                      SSS                       SSS                    SS                          SS           SSS                         SSS          SSSSS      SSSS
%SSS      SSSSSSSSS    SSSSSSSSSS     SSSSSSSSSSSS     SSSSSSS     SSSS    SSSSSS     SSS     S     SSSS     SSSSS
%SSS      SSSSSSSSS    SSSSSSSSSS     SSSSSSSSSSSS     SSSSSSS     SSSS    SSSSSS     SSS     SS     SSS     SSSSS
%SSS                      SSS                  SSSSS    SSSSSSSSSSSS     SSSSSSS     SSSS    SSSSSS     SSS     SSS     SS     SSSSS
%SSSSSSSSSS    SSS    SSSSSSSSSS    SSSSSSSSSSSS     SSSSSSS     SSSS    SSSSSS     SSS     SSSS     S     SSSSS
%SSSSSSSSSS    SSS    SSSSSSSSSS    SSSSSSSSSSSS     SSSSSSS     SSSS    SSSSSS     SSS     SSSSS          SSSSS
%SSS                       SSS                      SSS                       SSSSS     SSSSSS          SSS                         SSS     SSSSSS        SSSSS
%SSSSSSSSSSSSSSSSSSSSSSSSSSSSSSSSSSSSSSSSSSSSSSSSSSSSSSSSSSSSSSSSSSSSSSSSSSSSSSSSSS
%SSSSSSSSSSSSSSSSSSSSSSSSSSSSSSSSSSSSSSSSSSSSSSSSSSSSSSSSSSSSSSSSSSSSSSSSSSSSSSSSSS

\subsection{Fourth Column of the Machine}\label{subsectionfourthcolumn}

Since negative cyclic homology, like algebraic $K$-theory, is an effaceable cohomology theory with supports on an appropriate category $\textsf{S}_k$ of $k$-schemes containing the variety $X$, the obvious analogues of lemmas \hyperref[lemfirstcolumn]{\ref{lemfirstcolumn}}, \hyperref[lemsecondcolumn]{\ref{lemsecondcolumn}}, and \hyperref[lemthirdcolumn]{\ref{lemthirdcolumn}} hold for negative cyclic homology.   Hence, we may choose to recognize three additional flasque resolutions as part of the coniveau machine: the resolutions of the sheaves of absolute negative cyclic homology $\ms{HN}_p(X)$, augmented negative cyclic homology $\ms{HN}_p(X_A)$, and relative negative cyclic homology $\ms{HN}_p(X_A,\mm)$.   Dribus \cite{DribusDissertation} approaches the construction of the coniveau machine in this manner.  However, only the last of these resolutions is directly relevant to the infinitesimal theory of Chow groups.  This is because the algebraic Chern character, discussed further below, induces isomorphisms of complexes between the resolutions for $K$-theory and negative cyclic homology only in the relative case.  The existence of these isomorphisms also depends on the fact that $A$ is generated over $k$ by nilpotent elements.   Here, we streamline the construction by suppressing the resolutions of $\ms{HN}_p(X)$ and $\ms{HN}_p(X_A)$ in the statement of our main theorem, including only the resolution of $\ms{HN}_p(X_A,\mm)$ as the fourth column of the coniveau machine.   

\begin{lemma}\label{lemfourthcolumn} Let $X$ and $A$ satisfy the hypotheses of lemma \hyperref[lemsecondcolumn]{\ref{lemsecondcolumn}}.  Then the fourth column of the coniveau machine appearing in theorem \hyperref[maintheorem]{\ref{maintheorem}} exists; that is, the kernel of the map of complexes between the flasque resolutions of $\ms{HN}_p(X_A)$ and $\ms{HN}_p(X)$ arising from the coniveau filtration is a flasque resolution of the sheaf $\ms{HN}_p(X_A,\mm)$ on $X$.
\end{lemma}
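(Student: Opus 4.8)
The plan is to repeat, essentially verbatim, the arguments used for the first three columns in lemmas \ref{lemfirstcolumn}, \ref{lemsecondcolumn}, and \ref{lemthirdcolumn}, with Bass--Thomason $K$-theory replaced throughout by negative cyclic homology. The inputs required are: that $\mbf{HN}$ is an effaceable substratum, established in section \ref{effacement}; that effaceability is preserved under passage to augmented theories, which is lemma \ref{lemnewoutofold}; the coniveau spectral sequence of theorem \ref{theoremconiveaudescent}; and the Bloch--Ogus theorem \ref{corblochogus} identifying the sheafified $E_1$-rows as flasque resolutions.

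First I would produce the two auxiliary flasque resolutions --- those of $\ms{HN}_p(X)$ and $\ms{HN}_p(X_A)$ --- which do not themselves appear in the statement of theorem \ref{maintheorem}. Since $X$ is nonsingular and $\mbf{HN}$ satisfies \'etale excision and the projective bundle condition, the Bloch--Ogus theorem \ref{corblochogus} applies directly with $h = HN$, so the sheafified $E_1$-rows of the coniveau spectral sequence for negative cyclic homology on $X$ form a flasque resolution of $\ms{HN}_p(X)$. The thickened scheme $X_A$ is singular, so the Bloch--Ogus theorem cannot be invoked for it directly; instead, exactly as in the proof of lemma \ref{lemsecondcolumn}, I would pass to the augmented substratum $\mbf{HN}^{\tn{Spec}(A)}$ of definition \ref{defnewoutofoldaug}, which is effaceable by lemma \ref{lemnewoutofold}. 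Its coniveau spectral sequence on the nonsingular variety $X$ then yields, via theorem \ref{corblochogus}, a flasque resolution of the sheaf $\ms{HN}_p^{\tn{Spec}(A)}(X) = \ms{HN}_p(X_A)$.

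Next I would extract the relative column. The canonical augmentation $A \to k$ is split by the structure map $k \to A$; this splitting induces a split surjection of substrata $\mbf{HN}^{\tn{Spec}(A)} \to \mbf{HN}$ and, after passing to $E_1$-rows and sheafifying, a termwise split surjection of complexes of sheaves on $X$ from the resolution of $\ms{HN}_p(X_A)$ onto the resolution of $\ms{HN}_p(X)$. Call its kernel $\ms{G}^{\bullet}$. In each degree $\ms{G}^{\bullet}$ is a direct summand of a flasque sheaf, hence flasque; and the associated short exact sequence of complexes, together with the acyclicity in positive degrees of the outer two complexes and the surjectivity of $\ms{HN}_p(X_A) \to \ms{HN}_p(X)$, shows via the long exact cohomology sequence that $\ms{G}^{\bullet}$ is a resolution of $\ker\big(\ms{HN}_p(X_A) \to \ms{HN}_p(X)\big)$. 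Because the augmentation splits, this kernel sheaf coincides with the relative sheaf $\ms{HN}_p(X_A,\mm)$ defined as a homotopy fiber in definition \ref{defnewoutofoldrel}; this is the exact analogue, for negative cyclic homology, of the na\"{i}ve identification \ref{equnaiverelative} in the $K$-theoretic case. Hence $\ms{G}^{\bullet}$ is the required fourth column.

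The step requiring the most care is this last identification: reconciling the ``high-level'' definition of relative negative cyclic homology as a homotopy fiber of spectra with the ``na\"{i}ve'' sheaf-level kernel $\ker\big(\ms{HN}_p(X_A) \to \ms{HN}_p(X)\big)$. As emphasized in the discussion preceding lemma \ref{lemthirdcolumn}, such a reconciliation fails in general and holds here only because $A \to k$ splits; one must verify that the splitting propagates through the formation of homotopy fibers, through the exact-couple construction underlying the coniveau spectral sequence, and through sheafification, precisely as was done for $K$-theory in lemma \ref{lemthirdcolumn}. Everything else is a formal transcription of the $K$-theoretic arguments, which is why Dribus \cite{DribusDissertation} is able to treat all the columns uniformly.
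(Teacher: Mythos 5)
Your proposal is correct and takes essentially the same route as the paper: the paper's own proof is a one-line assertion that the result is ``an elementary consequence of the canonical splitting $A\rightarrow k$ and the effaceability of negative cyclic homology,'' and your argument is precisely the expansion of that line, mirroring the $K$-theoretic lemmas by invoking the Bloch--Ogus theorem for $\mbf{HN}$ and its augmented variant and then extracting the kernel via the splitting. The extra care you take in the last paragraph to reconcile the homotopy-fiber definition of $\ms{HN}_p(X_A,\mm)$ with the na\"{i}ve sheaf-level kernel is a detail the paper leaves implicit, but it is the right point to flag.
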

\begin{proof}This is an elementary consequence of the canonical splitting $A\rightarrow k$ and the effaceability of negative cyclic homology. 
\end{proof}

%SSSSSSSSSSSSSSSSSSSSSSSSSSSSSSSSSSSSSSSSSSSSSSSSSSSSSSSSSSSSSSSSSSSSSSSSSSSSSSSSSS
%SSSSSSSSSSSSSSSSSSSSSSSSSSSSSSSSSSSSSSSSSSSSSSSSSSSSSSSSSSSSSSSSSSSSSSSSSSSSSSSSSS
%SSS                      SSS                       SSS                    SS                          SS           SSS                         SSS          SSSSS      SSSS
%SSS      SSSSSSSSS    SSSSSSSSSS     SSSSSSSSSSSS     SSSSSSS     SSSS    SSSSSS     SSS     S     SSSS     SSSSS
%SSS      SSSSSSSSS    SSSSSSSSSS     SSSSSSSSSSSS     SSSSSSS     SSSS    SSSSSS     SSS     SS     SSS     SSSSS
%SSS                      SSS                  SSSSS    SSSSSSSSSSSS     SSSSSSS     SSSS    SSSSSS     SSS     SSS     SS     SSSSS
%SSSSSSSSSS    SSS    SSSSSSSSSS    SSSSSSSSSSSS     SSSSSSS     SSSS    SSSSSS     SSS     SSSS     S     SSSSS
%SSSSSSSSSS    SSS    SSSSSSSSSS    SSSSSSSSSSSS     SSSSSSS     SSSS    SSSSSS     SSS     SSSSS          SSSSS
%SSS                       SSS                      SSS                       SSSSS     SSSSSS          SSS                         SSS     SSSSSS        SSSSS
%SSSSSSSSSSSSSSSSSSSSSSSSSSSSSSSSSSSSSSSSSSSSSSSSSSSSSSSSSSSSSSSSSSSSSSSSSSSSSSSSSS
%SSSSSSSSSSSSSSSSSSSSSSSSSSSSSSSSSSSSSSSSSSSSSSSSSSSSSSSSSSSSSSSSSSSSSSSSSSSSSSSSSS

\subsection{Algebraic Chern Character}
\label{cherncharacter}

The only part of the coniveau machine remaining to construct is the isomorphism of complexes between the third and fourth columns.   As stated in our main theorem \hyperref[maintheorem]{\ref{maintheorem}}, this isomorphism is induced by the relative version of the algebraic Chern character.  Loday \cite{LodayCyclicHomology98} describes the progressive generalization of the Chern character from its original incarnation in the theory of characteristic classes in differential and algebraic geometry to its modern algebraic version.  The Chern character first appeared in the theory of complex vector bundles on manifolds in the 1940's and 1950's.  It was later abstracted and generalized, first to a map from the Grothendieck group of a ring to its cyclic homology, then to a map from higher $K$-theory to negative cyclic homology, and finally to a relative version
\begin{equation}\label{equrelChern}\tn{ch}_p:K_{p}(R,I)\rightarrow  HN_p(R,I),\end{equation}
where $R$ is a suitable ring, and $I$ is an ideal in $R$.  
A good recent treatment appears in Corti\~nas and Weibel \cite{WeibelRelativeChernNilp}.  

The relative Chern character may be understood in terms of the {\it relative Volodin construction.}   Following most of the literature, we describe this construction at the level of rings, with the obvious extensions to the levels of sheaves and complexes appearing below.  Briefly, following Loday \cite{LodayCyclicHomology98}, there exists a map of complexes
\begin{equation}\label{equloday11.4.6}C_\bullet\big(X(R,I)\big)\rightarrow\tn{ker}\big(\tn{ToT }CN_R\rightarrow \tn{ToT }CN_{R,I}\big),\end{equation}
where $X(R,I)$ is an appropriate relative Volodin-type space, $C_\bullet$ is the Eilenberg-MacLane complex, $CN_R$ is the negative cyclic bicomplex of $R$, $CN_{R,I}$ is the corresponding relative negative cyclic bicomplex, and $\tn{ToT}$ is the ``total complex" whose degree-$n$ term is $\displaystyle\prod_{p+q=n} CN_{p,q}$.   Taking homology yields homomorphisms
\begin{equation}\label{equloday11.4.7}H_p\big(X(R,I)\big)\rightarrow HN_p(R,I).\end{equation}
Composing on the left with the appropriate Hurewicz maps 
\begin{equation}\label{equlodayhurewicz}K_p(R,I)=\pi_n\big(X(R,I)^+\big)\rightarrow H_n\big(X(R,I)\big),\end{equation}
yields the desired relative Chern characters.   Here, the space $X(R,I)^+$ is given by performing the plus-construction on $X(R,I)$ with respect to an appropriate maximal perfect subgroup; see \cite{LodayCyclicHomology98}, section 11.3, page 363, for details. 

\begin{lemma}\label{lemrelchernisomfunctors} The relative algebraic Chern character maps $\tn{ch}_p:K_p(R,I)\rightarrow HN_p(R,I)$ extend to isomorphisms of functors from relative algebraic $K$-theory to relative negative cyclic homology, viewed as functors from the category of split nilpotent pairs $\tn{\textsf{Nil}}$ to the category of abelian groups.  
\end{lemma}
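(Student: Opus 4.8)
The plan is to prove the two halves of the assertion separately: first that the maps $\tn{ch}_p$ are natural in the pair $(R,I)$, and second that each of them is bijective. The naturality is essentially formal. A morphism $(R,I)\to(R',I')$ in $\textsf{Nil}$ is a ring homomorphism $R\to R'$ carrying $I$ into $I'$, and every ingredient of the relative Volodin construction recalled in equations \eqref{equloday11.4.6} through \eqref{equlodayhurewicz} --- the relative Volodin-type space $X(R,I)$, its plus-construction with respect to the canonical maximal perfect subgroup (which the relevant maps preserve), the Eilenberg--MacLane functor $C_\bullet$, the negative cyclic bicomplexes $CN_R$ and $CN_{R,I}$, and the Hurewicz maps --- is covariantly functorial in $(R,I)$. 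I would simply chase these functorialities through the composite defining $\tn{ch}_p$, recording the naturality as an immediate consequence of Loday's treatment \cite{LodayCyclicHomology98} without reproducing the diagrams. I would also note in passing that $K_p(-,-)$ and $HN_p(-,-)$ are genuine functors on $\textsf{Nil}$ with values in abelian groups: they are homotopy groups of homotopy fibers, and the split surjection $R\to R/I$ induces splittings $\mbf{K}(R)\simeq\mbf{K}(R/I)\vee\mbf{K}(R,I)$ and $\mbf{HN}(R)\simeq\mbf{HN}(R/I)\vee\mbf{HN}(R,I)$, so the relative groups are functorial retracts of honest functors.

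The substantive step is the bijectivity of each $\tn{ch}_p$, and for this I would invoke Goodwillie's theorem \cite{Goodwillie86}. The conditions packaged into $\textsf{Nil}$ supply exactly its hypotheses: the ideal $I$ is nilpotent, and in the situation of interest $R$ is a $\QQ$-algebra, every ring occurring being a $k$-algebra with $\tn{char}(k)=0$. Goodwillie's theorem is usually phrased as a natural isomorphism $K_p(R,I)\otimes\QQ\cong HC_{p-1}(R,I)$ between rational relative $K$-theory and relative cyclic homology, so to match it with the negative cyclic homology appearing in the fourth column of the coniveau machine I would include the standard translation: for a nilpotent ideal $I$ in a $\QQ$-algebra one has $HP_*(R,I)=0$, so the relative Connes $SBI$-sequence degenerates into natural isomorphisms $HN_p(R,I)\cong HC_{p-1}(R,I)$, while for a $\QQ$-algebra the relative groups $K_p(R,I)$ are already $\QQ$-vector spaces, so no rationalization is needed. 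Equivalently, one may cite directly the refined ``space-level'' form $\mbf{K}(R,I)\simeq\mbf{HN}(R,I)$ recorded as item 3 of section \ref{subsecBuildingBlocks}, referring to \cite{Goodwillie86}, \cite{WeibelInfCohomChernNegCyclic08}, and \cite{WeibelRelativeChernNilp}. Either route gives that $\tn{ch}_p\colon K_p(R,I)\to HN_p(R,I)$ is bijective for every $p$ and every object of $\textsf{Nil}$, and together with the first part these bijections constitute an isomorphism of functors.

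I expect the main obstacle to be bibliographical rather than mathematical: one must verify that the Chern character built from the relative Volodin construction in equations \eqref{equloday11.4.6}--\eqref{equlodayhurewicz} coincides, under the usual identifications, with the Chern character for which Goodwillie proves his theorem, so that his isomorphism legitimately transports to the map here denoted $\tn{ch}_p$. This compatibility is classical --- implicit in Loday \cite{LodayCyclicHomology98} and made explicit in Corti\~nas--Weibel \cite{WeibelRelativeChernNilp} --- so once it is in hand the proof amounts to assembling \cite{Goodwillie86} with the $HN$-versus-$HC$ translation above. A secondary and routine point is to confirm that naturality holds for \emph{all} morphisms of $\textsf{Nil}$, not merely for those split over the residue rings, which is clear because the $\tn{ch}_p$ are defined on all of $\textsf{Nil}$ via the Volodin construction rather than only on split quotients.
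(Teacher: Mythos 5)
Your proposal is correct and follows essentially the same route as the paper, which simply cites section 6 of Corti\~{n}as--Weibel \cite{WeibelRelativeChernNilp} without reproducing the argument. That reference contains precisely the ingredients you isolate: functoriality of the Volodin-type construction, Goodwillie's isomorphism \cite{Goodwillie86} together with the $HN$-versus-$HC$ translation via vanishing of relative periodic cyclic homology, and (the genuinely nontrivial step you flag as the main obstacle) the compatibility of the Volodin relative Chern character with the Chern character for which Goodwillie's theorem is proved.
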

\begin{proof} See Corti\~nas and Weibel \cite{WeibelRelativeChernNilp}, section 6.
\end{proof}

As mentioned in section \hyperref[effaceabilityaugmented]{\ref{effaceabilityaugmented}}, one may choose from the beginning to work with supports and relative constructions simultaneously at a local level. This leads to the consideration of triples consisting of a ring and two ideals, one locally representing a closed subset, and the other encoding infinitesimal structure.   One may define ``relative Chern character maps with supports" as functors of triples in this context. 

For each $p$, the isomorphism $\tn{ch}_p$ induces an isomorphism between the third and fourth columns of the coniveau machine:

\begin{lemma}\label{lemthirdchernfourth} The relative algebraic Chern character map $\tn{ch}_p$ induces an isomorphism of complexes between the third and fourth columns of the coniveau machine.  
\end{lemma}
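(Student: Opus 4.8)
The plan is to produce the chain map by functoriality of the coniveau construction, and then verify that it is an isomorphism one term at a time using the relative Chern character equivalence.

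First I would record that the relative substrata $\mbf{K}^{\ms{I}}$ and $\mbf{HN}^{\ms{I}}$ of Definition \ref{defnewoutofoldrel} are the homotopy fibers of the natural maps $\mbf{K}(Y)\to\mbf{K}(X)$ and $\mbf{HN}(Y)\to\mbf{HN}(X)$. Since the algebraic Chern character is a natural transformation $\tn{ch}\colon\mbf{K}\to\mbf{HN}$ of spectrum-valued functors, it induces a natural transformation of homotopy fibers $\tn{ch}\colon\mbf{K}^{\ms{I}}\to\mbf{HN}^{\ms{I}}$; with $Y=X_A$ and $\ms{I}=\ms{O}_X\otimes_k\mfr{m}$ this is a map of relative substrata $\mbf{K}(X_A,\mfr{m})\to\mbf{HN}(X_A,\mfr{m})$. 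Taking homotopy fibers along closed subsets then gives a transformation of the relative substrata with supports (the prototype $\mbf{K}(Y\tn{ on }Z,\mfr{m})$ of Section \ref{subsectionaugmented}) that is compatible with the localization homotopy fiber sequences of Theorem \ref{thomasonhomotopysequence} and its negative cyclic analogue. Because it respects this supports structure, it induces a morphism of the associated exact couples, hence of the coniveau spectral sequences of the relative theories, and in particular a morphism of their $E_1$-rows --- the Cousin complexes --- which sheafifies to a morphism of complexes of sheaves on $X$. By the splitting $A\to k$ together with Lemmas \ref{lemthirdcolumn} and \ref{lemfourthcolumn}, these sheafified Cousin complexes are precisely the third and fourth columns of the coniveau machine, so this is the desired chain map. (Equivalently, one applies $\tn{ch}$ to the whole diagram realizing the third column as the kernel of the augmented-over-absolute map of $K$-theory Cousin complexes and the fourth column analogously for $\mbf{HN}$.)

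It then remains to show this chain map is an isomorphism, and since a chain map of complexes of sheaves that is a termwise isomorphism is automatically an isomorphism of complexes, it suffices to check that for each $x\in X^{(d)}$ the induced stalk map
\[
\tn{ch}_{p-d}\colon K_{p-d}(X_A,\mfr{m}\tn{ on }x)\longrightarrow HN_{p-d}(X_A,\mfr{m}\tn{ on }x)
\]
is an isomorphism. These groups are filtered colimits over open $U\ni x$ of the corresponding groups on $(U_A,\overline{\{x\}}\cap U)$, and filtered colimits are exact, so it is enough to treat a fixed $U$. There the relative Chern character with supports is a homotopy equivalence of spectra --- the ``with supports'' form of the ``space-level Goodwillie-type'' theorem of Corti\~nas, Haesemeyer, Schlichting, and Weibel \cite{WeibelInfCohomChernNegCyclic08}, valid because $\ms{O}_X\otimes_k\mfr{m}$ is a sheaf of nilpotent ideals --- and so induces isomorphisms on all homotopy groups. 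As an alternative staying closer to the material already assembled, one can instead use the $\mbf{K}$- and $\mbf{HN}$-localization sequences together with Quillen's d\'evissage (using the nonsingularity of $X$) to reduce $K_{p-d}(X_A,\mfr{m}\tn{ on }x)$ to the relative $K$-theory of the split nilpotent pair $(k(x)\otimes_kA,\;k(x)\otimes_k\mfr{m})$, on which $\tn{ch}$ is an isomorphism by Lemma \ref{lemrelchernisomfunctors}, and then push this isomorphism up the localization sequence via the five lemma.

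The step I expect to be the main obstacle is the first one: ensuring that $\tn{ch}$ is genuinely a morphism of substrata compatible with the supports/localization structure at the spectrum level, so that it really does induce a morphism of coniveau spectral sequences and of the Cousin complexes underlying columns three and four. This is a coherence statement about the homotopy-fiber constructions rather than a computation; once it is in place, the termwise isomorphism statement is exactly the relative (with supports) Goodwillie-type theorem already cited, and no further hard input is needed.
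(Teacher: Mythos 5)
Your primary route matches the paper's own argument, which is stated in two lines: the isomorphism is induced termwise by the relative Chern character (the paper's Lemma \ref{lemrelchernisomfunctors}, which is itself the group-level shadow of the Cortiñas--Haesemeyer--Schlichting--Weibel homotopy equivalence you invoke), and the fact that it assembles into a chain map follows from the functoriality of both $\tn{ch}$ and the coniveau construction. You have simply unpacked what the paper compresses into ``this follows automatically from the functorial properties,'' and the expansion is sound.

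One caveat about your offered alternative: reducing $K_{p-d}(X_A,\mfr{m}\tn{ on }x)$ to the relative $K$-theory of the pair $(k(x)\otimes_k A,\ k(x)\otimes_k\mfr{m})$ via d\'evissage is not available as stated, since d\'evissage applies to the nonsingular $X$ (giving $K_{p-d}(X\tn{ on }x)\cong K_{p-d}(k(x))$) but fails for the thickened, singular scheme $X_A$; one does not in general have $K_{p-d}(X_A\tn{ on }x)\cong K_{p-d}(k(x)\otimes_k A)$. Stick with your primary route, which needs only the relative Chern character equivalence with supports and filtered-colimit compatibility.
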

\begin{proof} This follows automatically in the split nilpotent case from lemma \hyperref[lemrelchernisomfunctors]{\ref{lemrelchernisomfunctors}} and the functorial properties of the Chern character and the coniveau spectral sequence. 
\end{proof}

%%%%%%%%%%%%%%%%%%%%%%%%%%%%%%%%%%%%%%%%%%%%%%%%%%%%%%%%%%%%%%%%%%%%%%%%%%
%%%%%%%%%%%%%%%%%%%%%%%%%%%%%%%%%%%%%%%%%%%%%%%%%%%%%%%%%%%%%%%%%%%%%%%%%%
%%%%%%%%%%%%%%%%%%%%%%%%%%%%%%%%%%%%%%%%%%%%%%%%%%%%%%%%%%%%%%%%%%%%%%%%%%
%%%%%%%%%%%%%%%%%%%%%%%%%%%%%%%%%%%%%%%%%%%%%%%%%%%%%%%%%%%%%%%%%%%%%%%%%%
%%%%%%%%%%%%%%%%%%%%%%%%%%%%%%%%%%%%%%%%%%%%%%%%%%%%%%%%%%%%%%%%%%%%%%%%%%
%%%%%%%%%%%%%%%%%%%%%%%%%%%%%%%%%%%%%%%%%%%%%%%%%%%%%%%%%%%%%%%%%%%%%%%%%%

\subsection{End of the Proof}
\label{summaryofproof}

The proof of our main theorem \hyperref[maintheorem]{\ref{maintheorem}} is established by the existence of the four columns of the coniveau machine, proven in lemmas \hyperref[lemfirstcolumn]{\ref{lemfirstcolumn}}, \hyperref[lemsecondcolumn]{\ref{lemsecondcolumn}}, \hyperref[lemthirdcolumn]{\ref{lemthirdcolumn}}, and \hyperref[lemfourthcolumn]{\ref{lemfourthcolumn}}, together with the existence of the isomorphism of complexes between the third and fourth columns, proven in lemma \hyperref[lemthirdchernfourth]{\ref{lemthirdchernfourth}}.

%%%%%%%%%%%%%%%%%%%%%%%%%%%%%%%%%%%%%%%%%%%%%%%%%%%%%%%%%%%%%%%%%%%%%%%%%%
%%%%%%%%%%%%%%%%%%%%%%%%%%%%%%%%%%%%%%%%%%%%%%%%%%%%%%%%%%%%%%%%%%%%%%%%%%
%%%%%%%%%%%%%%%%%%%%%%%%%%%%%%%%%%%%%%%%%%%%%%%%%%%%%%%%%%%%%%%%%%%%%%%%%%
%%%%%%%%%%%%%%%%%%%%%%%%%%%%%%%%%%%%%%%%%%%%%%%%%%%%%%%%%%%%%%%%%%%%%%%%%%
%%%%%%%%%%%%%%%%%%%%%%%%%%%%%%%%%%%%%%%%%%%%%%%%%%%%%%%%%%%%%%%%%%%%%%%%%%
%%%%%%%%%%%%%%%%%%%%%%%%%%%%%%%%%%%%%%%%%%%%%%%%%%%%%%%%%%%%%%%%%%%%%%%%%%

\section{Lambda operations.}
\label{lambdaAdams}

\subsection{Decomposition of the Coniveau Machine via Cohomology Operations}  At the end of section \hyperref[subsectionsubtleties]{\ref{subsectionsubtleties}}, we remarked that our choice to define the tangent group at the identity $TCH^p(X)$ and formal completion at the identity $\widehat{CH}^p (X)$ of the Chow group $CH^p(X)$ in terms of Bass-Thomason $K$-theory, rather than Milnor $K$-theory, leads to a richer theory than the approach of Green and Griffiths, since Milnor $K$-theory corresponds only to the top Adams eigenspace of Bass-Thomason $K$-theory.  Here, we briefly discuss how this viewpoint enables a refinement of our main theorem \hyperref[maintheorem]{\ref{maintheorem}}.   

Lambda operations and Adams operations are cohomology operations that may be used, in a wide variety of contexts, to decompose cohomology objects.  Lambda operations generalize the exterior power operation, while Adams operations are polynomials in the lambda operations.  Given a natural transformation between cohomology theories, it is natural to ask whether or not this transformation respects such  operations.  In the present context, the cohomology theories of interest are $K$-theory and negative cyclic homology, and the natural transformation of interest is the Chern character.  

 The following general definition of lambda operations is adapted from Weibel \cite{WeibelKTheory11}:

\begin{definition}\label{defilambdaring} A commutative ring $K$ is called a {\bf lambda ring} if $K$ is equipped with a family of set operations 
$\lambda^i:K\rightarrow K$, for all nonnegative integers $i$, called {\bf lambda operations}, such that for every $x,y\in K$,
\begin{enumerate}
\item $\displaystyle\lambda^0(x)=1\hspace*{.3cm}\tn{ and } \hspace*{.3cm} \lambda^1(x)=x.$
\item $\displaystyle\lambda^i(x+y)=\sum_{j=0}^i\lambda^j(x)\lambda^{i-j}(y).$
\end{enumerate}
Define $\displaystyle \lambda_t(x)$ to be the formal power series $\displaystyle \sum_{i=0}^{\infty} \lambda^i(x)t^i$. 
\end{definition}

Adams operations may be defined in terms of lambda operations, under appropriate conditions.  The following definition is again adapted from Weibel \cite{WeibelKTheory11}:

\begin{definition}\label{defiAdamsop} Let $K$ be a lambda ring, and suppose that $K$ is also an augmented $k$-algebra for some commutative ring $k$, with augmentation map $\ee:K\rightarrow k$.  Define {\bf Adams operations} $\psi^i:K\rightarrow K$ by taking $\psi^i(x)$ to be the coefficient of $t^i$ in the formal power series
\[\ee(x)-t\frac{d}{dt}\log \lambda_{-t}(x)=\ee(x)+xt+\big(x^2-2\lambda^2(x)\big)t^2+...\]
\end{definition}

Note that for the zeroth Adams operation, the image $\psi^0(x)=\ee(x)$ is viewed as an element of $K$ via the map $k\rightarrow K$ defining the $k$-algebra structure of $K$.  

The Grothendieck group $K_0(X)$ of $X$ has a well-known $\lambda$-ring structure defined in terms of exterior powers; namely, $\lambda^{k}(\ms{E}) = \bigwedge^{k}\ms{E}$ for a vector bundle $\ms{E}$ over $X$.   See, for example, Weibel \cite{WeibelKTheory11}, examples 4.1.2 and 4.1.5, for details.    These operations extend to higher $K$-theory, but we will not discuss the details here.    Instead we refer the reader to the standard literature on the subject; in particular, Quillen, Hiller, Soul\'e \cite{S}, Gillet-Soul\'e \cite{GS87}, \cite{GS99}, and Grayson\cite{Gr-1}, \cite{Gr-2}.  For corresponding results involving $K$-groups with supports, see the paper of Marc Levine \cite{LevineLambdaOperationsMotivic97}.  Due to the appearance of nontrivial negative $K$-groups in our study, we also need to extend the above Adams operations  $\psi^{k}$ to the negative range.  This can be done by descending induction, as shown by Weibel in \cite{Weibel91}.  Loday \cite{LodayCyclicHomology98} is an excellent source for the corresponding operations on negative cyclic homology.  

Corti\~{n}as, Haesemeyer, and Weibel \cite{WeibelInfCohomChernNegCyclic08} recently proved that the relative Chern character, and even the absolute Chern character, preserves lamba and Adams operations.  The following theorem is a re-wording of \cite{WeibelInfCohomChernNegCyclic08}, corollary 7.2:

\begin{theorem}\label{theoremCHWlambda} Let $Y$ be a scheme of finite type over a field $k$ of characteristic 0.  Then for every choice of $p$ and $i$, the Chern character map $\tn{ch}:K_p(X)\rightarrow  HN_p(X)$ sends the $i$th Adams eigenspace $K_p^{(i)}(X)$ of algebraic $K$-theory to the corresponding $i$th Adams eigenspace $HN_p^{(i)}(X)$ of negative cyclic homology.  In particular,
\[\psi^i\circ\tn{ch}=i\tn{ch}\circ\psi^i.\]
\end{theorem}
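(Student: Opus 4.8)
The plan is to deduce the theorem from the compatibility of the Chern character with $\lambda$-structures established by Corti\~nas, Haesemeyer and Weibel \cite{WeibelInfCohomChernNegCyclic08}; on our side almost nothing needs to be proved, only set up and translated into the eigenspace form used here. First I would make explicit the two $\lambda$-ring structures involved. On the $K$-theory side, the exterior power operations make $K_0(X)$ a $\lambda$-ring in the sense of Definition~\ref{defilambdaring}, and these extend to $\lambda$-operations on the higher groups $K_p(X)$ by the work of Hiller, Soul\'e, Gillet--Soul\'e and Grayson \cite{S}, \cite{GS87}, \cite{GS99}, \cite{Gr-1}, \cite{Gr-2}; via Definition~\ref{defiAdamsop} one then gets Adams operations $\psi^i$, which over a field of characteristic zero act semisimply, producing the eigenspace decomposition $K_p(X)=\bigoplus_i K_p^{(i)}(X)$. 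On the negative cyclic side, Loday \cite{LodayCyclicHomology98} constructs the corresponding $\lambda^i$ and $\psi^i$ from the Hodge (``$\lambda$-'') decomposition of the underlying mixed complex, yielding the analogous decomposition $HN_p(X)=\bigoplus_i HN_p^{(i)}(X)$.

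The substantive input is \cite{WeibelInfCohomChernNegCyclic08}, Corollary~7.2, which I would quote: the Chern character $\tn{ch}\colon\mbf{K}(X)\to\mbf{HN}(X)$ is a morphism of $\lambda$-objects, so that on homotopy groups $\tn{ch}$ intertwines the operations $\lambda^i$, hence the operations $\psi^i$, and hence carries the $i$th Adams summand $K_p^{(i)}(X)$ into the matching summand of $HN_p(X)$. I would take the proof of this as a black box; for orientation, the argument in \cite{WeibelInfCohomChernNegCyclic08} realizes $\tn{ch}$ at the level of presheaves of spectra and of mixed complexes, where the $\lambda$-operations on source and target arise from the same universal formulas, and then concludes by naturality together with a reduction to the already-classical case of $K_0$. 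Were one to reprove the theorem from scratch rather than cite it, this compatibility of the two $\lambda$-structures is where the real work would lie.

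What is left is the bookkeeping that converts ``$\tn{ch}$ preserves the $i$th eigenspace'' into the operator identity $\psi^i\circ\tn{ch}=i\,\tn{ch}\circ\psi^i$, and this is the only step I expect to demand genuine care. The issue is that the Adams-weight grading on negative cyclic homology is shifted relative to that on algebraic $K$-theory under the normalization fixed in Definitions~\ref{defilambdaring} and \ref{defiAdamsop}: the classical Chern character carries the weight-$i$ part of $K_p(X)$ into a part of $HN_p(X)$ on which $\psi^i$ acquires an extra factor of $i$, which is exactly the discrepancy recorded by the coefficient $i$ in the displayed formula. Once this is checked, the relative version needed elsewhere in the paper follows immediately: apply the absolute statement to $X_A$ and to $X$ and pass to homotopy fibers, noting that the $\lambda$- and Adams operations are compatible with the map induced by $k\to A$ and therefore descend to the relative groups $K_p(X_A,\mm)$ and $HN_p(X_A,\mm)$.
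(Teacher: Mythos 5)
Your proposal is correct and follows essentially the same route as the paper, which gives no argument at all beyond declaring the theorem a ``re-wording of \cite{WeibelInfCohomChernNegCyclic08}, Corollary 7.2.'' Your additional discussion of the two $\lambda$-ring structures, of the weight shift behind the factor $i$ in $\psi^i\circ\tn{ch}=i\,\tn{ch}\circ\psi^i$ (the Adams eigenvalue on $HN^{(j)}_p$ is $i^{j+1}$ versus $i^{j}$ on $K^{(j)}_p$), and of the passage to relative groups is all consistent with how the paper later uses the result and is more explicit than anything the paper itself records.
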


The weaker relative version of theorem \hyperref[theoremCHWlambda]{\ref{theoremCHWlambda}} had been ``known" for some time, but its original proof, by Cathelineau \cite{CathelineauLambdaStructures91}, suffered from an error originating in an unpublished preprint of Ogle and Weibel.  This error is corrected, and the implications are carefully explained, in \cite{WeibelInfCohomChernNegCyclic08}, particularly the appendices.   These results enable the decomposition of the coniveau machine into separate pieces, one for each piece of the corresponding decompositions of $K$-theory and negative cyclic homology.

\subsection{Adams operations in negative weight}

Since the appearance of the non-zero negative K-groups in our study, we need to extend the above Adams operations  $\psi^{k}$ to negative range.
This can be done by descending induction, which was already pointed out by Weibel in \cite{Weibel91}. 
For every integer $n \in \mathbb{Z}$,  we have the following Bass fundamental exact sequence.

\[
 ... \to K_{n}(X[t,t^{-1}] \ on \ Y[t,t^{-1}]) \to  K_{n-1}(X \ on \ Y)  \to 0.
\]

%{\tiny
%\[
 %0 \to K_{n}(X \ on \ Y) \to K_{n}(X[t] \ on \ Y[t]) \oplus K_{n}(X[t^{-1}] \ on \ Y[t^{-1}])   \to K_{n}(X[t,t^{-1}] \ on \ Y[t,t^{-1}]) \to  K_{n-1}(X \ on \ Y)  \to 0.
%\]
%}
 In particular, for any $x \in K_{-1}(X \ on \ Y)$, we have $x\cdot t \in K_{0}(X[t,t^{-1}] \ on \ Y[t,t^{-1}])$, where 
$t \in K_{1}(k[t,t^{-1}])$. We have
\[
 \psi^{k}(x\cdot t ) = \psi^{k}(x)\psi^{k}(t)= \psi^{k}(x) k\cdot t.
\]

Tensoring with $\mathbb{Q}$, we have obtained Adams operations $\psi^{k}$ on $K_{-1}(X \ on \ Y)$:
\[
 \psi^{k}(x)= \dfrac{\psi^{k}(x\cdot t )}{k\cdot t}.
\]

Continuing this procedure, we obtain Adams operations on all the negative K-groups.

\subsection{Adams operations with supports.} 

 Now, let $X$ be a scheme essentially finite type over a field $k$, where $\textrm{char}(k)=0$.
For every  nilpotent sheaf of ideal $I$, we define $K(\ms{O}_X,I)$ and $HN(\ms{O}_X,I)$ as the following presheaves respectively:
\[
 U \rightarrow K(\ms{O}_X(U),I(U))
\]
and
\[
 U \rightarrow HN(\ms{O}_X(U),I(U)).
\]

We write $\mathcal{K}(\ms{O}_X,I)$ and $\mathcal{HN}(\ms{O}_X,I)$ for the presheaves of spectra whose initial spaces are 
$K(\ms{O}_X,I)$ and $HN(\ms{O}_X,I)$ respectively. 
Moreover, one define $\mathcal{K}^{(i)}(\ms{O}_X,I)$ as the homotopy fiber of the map $\psi^{k}-k^{i}$ on  
$\mathcal{K}(\ms{O}_X,I)$.
We define
 $\mathcal{HN}^{(i)}(\ms{O}_X,I)$ similarly. 

\begin{theorem}
Corti$\tilde{\textrm{n}}$as-Haesemeyer-Weibel \cite{WeibelInfCohomChernNegCyclic08}.
 The relative Chern character induces a homotopy equivalence of spectra:
\[
 ch: \mathcal{K}(\ms{O}_X,I) \simeq \mathcal{HN}(\ms{O}_X,I)
\]
and 
\[
 ch: \mathcal{K}^{(i)}(\ms{O}_X,I) \simeq \mathcal{HN}^{(i)}(\ms{O}_X,I).
\]
\end{theorem}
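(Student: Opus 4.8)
The plan is to reduce the statement to the affine, group-level case already recorded in Lemma~\ref{lemrelchernisomfunctors}, promote that to a sectionwise equivalence of presheaves of spectra, and then invoke Nisnevich descent to pass from ``sectionwise'' to ``local.'' The Adams-eigenspace refinement will then follow formally from the first equivalence together with the compatibility of $\tn{ch}$ with the operations $\psi^k$ supplied by Theorem~\ref{theoremCHWlambda}.

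\textbf{Affine case and descent.} For a commutative $k$-algebra $R$ and a nilpotent ideal $J\subset R$, Lemma~\ref{lemrelchernisomfunctors} --- i.e.\ Goodwillie's theorem \cite{Goodwillie86} in the form of Corti\~nas and Weibel \cite{WeibelRelativeChernNilp} --- gives that $\tn{ch}_p\colon K_p(R,J)\to HN_p(R,J)$ is an isomorphism for every $p\in\ZZ$; in particular both groups are $\QQ$-vector spaces since $\tn{char}(k)=0$. Because $\tn{ch}$ is realized as an honest map of spectra (via the relative Volodin construction of section~\ref{cherncharacter}), natural in the split nilpotent pair $(R,J)$, and a map of spectra inducing an isomorphism on every homotopy group is a stable equivalence, we get $\tn{ch}\colon\mbf{K}(R,J)\simeq\mbf{HN}(R,J)$ for each such pair. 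Applying this to $U\mapsto(\ms{O}_X(U),I(U))$ over the affine opens of $X$ yields a morphism of presheaves of spectra $\tn{ch}\colon\mathcal{K}(\ms{O}_X,I)\to\mathcal{HN}(\ms{O}_X,I)$ that is a stable equivalence on sections. Now both presheaves are descent-local: Bass-Thomason $K$-theory satisfies Nisnevich (indeed Zariski) excision by Thomason \cite{Thomason-Trobaugh90}, and negative cyclic homology satisfies Nisnevich descent in characteristic zero by Corti\~nas, Haesemeyer, Schlichting and Weibel \cite{WeibelCycliccdh-CohomNegativeK06}; forming homotopy fibers along the split surjection $\ms{O}_X\to\ms{O}_X/I$ preserves these descent properties. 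A morphism between fibrant presheaves of spectra that is a sectionwise stable equivalence is a local weak equivalence, which gives the first assertion $\tn{ch}\colon\mathcal{K}(\ms{O}_X,I)\simeq\mathcal{HN}(\ms{O}_X,I)$.

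\textbf{The eigenspace refinement.} By construction $\mathcal{K}^{(i)}(\ms{O}_X,I)$ and $\mathcal{HN}^{(i)}(\ms{O}_X,I)$ are the homotopy fibers of $\psi^k-k^i$ acting on $\mathcal{K}(\ms{O}_X,I)$ and $\mathcal{HN}(\ms{O}_X,I)$ respectively. The relative form of Theorem~\ref{theoremCHWlambda}, from \cite{WeibelInfCohomChernNegCyclic08}, says precisely that $\tn{ch}$ intertwines the Adams operations in the normalization under which weight-$i$ classes correspond, so the evident square relating $\psi^k-k^i$ to $\tn{ch}$ on the two theories commutes up to coherent homotopy. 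Since the vertical Chern-character maps are equivalences by the previous step, the induced map on the horizontal homotopy fibers, $\tn{ch}\colon\mathcal{K}^{(i)}(\ms{O}_X,I)\to\mathcal{HN}^{(i)}(\ms{O}_X,I)$, is an equivalence as well.

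\textbf{The main obstacle.} The delicate point is the Adams-operation bookkeeping in negative weights: on negative $K$-groups (and on $\mathcal{HN}$ in negative degrees, which genuinely occur here) the operations $\psi^k$ are only defined by descending induction via the Bass fundamental sequence, following Weibel \cite{Weibel91}. One must check that $\tn{ch}$ is compatible with that inductive definition --- i.e.\ with the transition maps $K_{n}(X[t,t^{-1}]\tn{ on }Y[t,t^{-1}])\to K_{n-1}(X\tn{ on }Y)$ and their $\mathcal{HN}$-analogues --- so that the weight decompositions on the two sides really correspond, and also that the homotopy fibers cutting out the eigenspaces can be taken functorially and coherently rather than merely at the level of homotopy groups. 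Granting the compatibility of the $\psi^k$ with the fundamental-theorem maps (which is established in \cite{WeibelInfCohomChernNegCyclic08}), the argument collapses to the formal diagram chase of the previous paragraph.
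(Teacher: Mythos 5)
The paper does not actually prove this statement; it is presented purely as a citation to Corti\~nas, Haesemeyer, and Weibel \cite{WeibelInfCohomChernNegCyclic08}, with no argument supplied. Your proposal therefore takes a genuinely different route: you reconstruct the result from other results recorded in the paper, namely the group-level isomorphism of Lemma~\ref{lemrelchernisomfunctors} (from Corti\~nas--Weibel \cite{WeibelRelativeChernNilp}) together with the Adams compatibility of Theorem~\ref{theoremCHWlambda}. That derivation is essentially sound: the relative Chern character is a map of spectra natural in the split nilpotent pair $(R,J)$, Goodwillie-type rigidity makes it a $\pi_*$-isomorphism, hence a stable equivalence, and applying this to $U\mapsto(\ms{O}_X(U),I(U))$ gives a sectionwise equivalence of presheaves of spectra, which is exactly the meaning of the first claim. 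The eigenspace equivalence then follows by the five lemma applied to the homotopy fiber squares, once the Chern character is known to intertwine $\psi^k$ with the appropriate weight shift.

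Two points deserve cleaning up. First, the Nisnevich descent paragraph is a detour that does no work: a sectionwise weak equivalence is \emph{automatically} a local weak equivalence without any fibrancy or descent hypotheses, and the implication you would actually need descent for runs in the opposite direction (from local to sectionwise), which is not relevant here. Since you have already produced a sectionwise equivalence from the ring-level statement, the argument should simply stop there. Second, the sentence asserting that $\mathcal{K}^{(i)}$ and $\mathcal{HN}^{(i)}$ are both defined as the homotopy fiber of $\psi^k-k^i$ is not consistent with the paper's own bookkeeping (and with the relation $\psi^k\circ\tn{ch}=k\cdot\tn{ch}\circ\psi^k$): the $\mathcal{HN}^{(i)}$ eigenspace is cut out by $\psi^k-k^{i+1}$, a shift you later acknowledge with the phrase ``in the normalization under which weight-$i$ classes correspond,'' but the initial statement should be corrected to avoid an apparent contradiction. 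Your identification of the negative-weight compatibility (via the Bass fundamental sequence and descending induction) as the genuine technical obstacle is exactly right, and deferring it to \cite{WeibelInfCohomChernNegCyclic08} is appropriate since that compatibility is proved there independently of the spectrum-level equivalence itself.

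Overall, your approach buys an explanation of why the cited theorem is true in terms of material the paper already has on hand, at the modest cost of the bookkeeping described above; the paper's approach (a bare citation) buys brevity at the cost of opacity.
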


Let $\mathbb{H}(X,\bullet)$ denote Thomason's hypercohomology of spectra, \cite{Thomason85}. 
Since both $\mathcal{K}$ and $\mathcal{HN}$ satisfy Zariski excision, we have the following identifications:
\[
 K(X \ on \ Y)=\mathbb{H}_{Y}(X, \mathcal{K}(O_{X})), 
\]

\[
 K(X[\ee] \ on \ Y[\ee])=\mathbb{H}_{Y}(X, \mathcal{K}(O_{X}[\ee])), 
\]
and similar identifications for $\mathcal{HN}$.

Now, let $X$ be a scheme essenitally finite type over a field $k$, where $\textrm{char}(k)=0$. 
Let $Y$ be a closed subset in a scheme $X$ and $U = X - Y$. Let $(A, \mathfrak{m})$ be an Artinian local
$k$-algebra with residue field $k$.
We have the following 
nine-diagram (each column and row is a homotopy fibration):

\[
  \begin{CD}
     \mathbb{H}_{Y}(X, \mathcal{K}(\ms{O}_{X_A}, \mathfrak{m})) @>>> 
     \mathbb{H}(X, \mathcal{K}(\ms{O}_{X}, \mm)) @>>>  \mathbb{H}(U, \mathcal{K}(\ms{O}_{X_A}, \mm)) \\
     @VVV  @VVV   @VVV  \\
     \mathbb{H}_{Y}(X, \mathcal{K}(\ms{O}_{X_A})) @>>> \mathbb{H}(X, \mathcal{K}(\ms{O}_{X_A})) @>>>  
     \mathbb{H}(U, \mathcal{K}(\ms{O}_{U_A})) \\
     @VVV  @VVV   @VVV  \\
     \mathbb{H}_{Y}(X, \mathcal{K}(\ms{O}_{X})) @>>> \mathbb{H}(X, \mathcal{K}(\ms{O}_{X})) @>>> 
      \mathbb{H}(U, \mathcal{K}(\ms{O}_{U})) \\
  \end{CD}
\]
and

\[
  \begin{CD}
     \mathbb{H}_{Y}(X, \mathcal{HN}(\ms{O}_{X_A}, \mm)) @>>> \mathbb{H}(X, \mathcal{HN}(\ms{O}_{X_A}, \mm)) @>>> 
      \mathbb{H}(U, \mathcal{HN}(\ms{O}_{X_A}, \mm)) \\
     @VVV  @VVV   @VVV  \\
     \mathbb{H}_{Y}(X, \mathcal{HN}(O_{X_A})) @>>> \mathbb{H}(X, \mathcal{HN}(O_{X_A})) @>>> 
      \mathbb{H}(U, \mathcal{HN}(O_{U_A})) \\
     @VVV  @VVV   @VVV  \\
     \mathbb{H}_{Y}(X, \mathcal{HN}(\ms{O}_{X})) @>>> \mathbb{H}(X, \mathcal{HN}(\ms{O}_{X})) @>>> 
      \mathbb{H}(U, \mathcal{HN}(\ms{O}_{U})) \\
  \end{CD}
\]

Comparison of these diagrams gives:
\begin{theorem} 
$\mathbb{H}_{Y}(X, \mathcal{K}(\ms{O}_{X_A}, \mm))$ is the homotpy fibre of 
\[
 \mathbb{H}_{Y}(X, \mathcal{K}(\ms{O}_{X_A})) \rightarrow \mathbb{H}_{Y}(X, \mathcal{K}(\ms{O}_{X})),
\]
and $\mathbb{H}_{Y}(X, \mathcal{HN}(\ms{O}_{X_A}, \mm))$ is the homotopy fiber of
\[
 \mathbb{H}_{Y}(X, \mathcal{HN}(\ms{O}_{X_A})) \rightarrow \mathbb{H}_{Y}(X, \mathcal{HN}(\ms{O}_{X})).
\]
\end{theorem}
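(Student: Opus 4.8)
The plan is to read both assertions off the two nine-diagrams displayed above; the argument is purely formal once the ingredients are identified. The cleanest route runs as follows. By Definition \ref{defnewoutofoldrel}, the presheaf of spectra $\mathcal{K}(\ms{O}_{X_A},\mm)$ is, sectionwise, the homotopy fiber of $\mathcal{K}(\ms{O}_{X_A})\to\mathcal{K}(\ms{O}_X)$, so that $\mathcal{K}(\ms{O}_{X_A},\mm)\to\mathcal{K}(\ms{O}_{X_A})\to\mathcal{K}(\ms{O}_X)$ is a homotopy fibration sequence of presheaves of spectra on $X$. Thomason's hypercohomology-with-supports functor $\mathbb{H}_Y(X,-)$ is by construction the homotopy fiber of the restriction $\mathbb{H}(X,-)\to\mathbb{H}(U,-)$ (Thomason \cite{Thomason85}), hence a homotopy limit; here one uses only that $\mbf{K}$ satisfies Zariski excision, which is part of the \'etale Mayer--Vietoris property verified in section \ref{subsectioneffaceabilitynonconnectiveK}. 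Since homotopy limits preserve homotopy fiber sequences, applying $\mathbb{H}_Y(X,-)$ to this presheaf fibration produces the homotopy fibration $\mathbb{H}_Y(X,\mathcal{K}(\ms{O}_{X_A},\mm))\to\mathbb{H}_Y(X,\mathcal{K}(\ms{O}_{X_A}))\to\mathbb{H}_Y(X,\mathcal{K}(\ms{O}_X))$, which is the first assertion. The second assertion follows by the identical argument, using that $\mbf{HN}$ is an effaceable substratum satisfying Zariski excision (section \ref{subsectioneffaceabilitynegativecyclic}) and that relative negative cyclic homology $\mathcal{HN}(\ms{O}_{X_A},\mm)$ is defined by the same homotopy-fiber recipe.

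Equivalently --- and this is the ``comparison of diagrams'' referred to above --- one argues via the $3\times 3$ lemma for homotopy fibration sequences of spectra. In each nine-diagram the rows are homotopy fibrations by the definition of $\mathbb{H}_Y(X,-)$, applied in turn to $\mathcal{K}(\ms{O}_{X_A},\mm)$, $\mathcal{K}(\ms{O}_{X_A})$ and $\mathcal{K}(\ms{O}_X)$ and using $\mathcal{K}(\ms{O}_{X_A})|_U=\mathcal{K}(\ms{O}_{U_A})$ and $\mathcal{K}(\ms{O}_X)|_U=\mathcal{K}(\ms{O}_U)$; the second and third columns are homotopy fibrations because $\mathbb{H}(X,-)$ and $\mathbb{H}(U,-)$, being homotopy limits, preserve the presheaf-level fiber sequence of Definition \ref{defnewoutofoldrel}; and the $3\times 3$ lemma --- equivalently, independence of the total homotopy fiber of a square of spectra on the order in which the two fibers are taken --- then forces the first column to be a homotopy fibration as well. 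The $\mbf{HN}$-diagram is handled in exactly the same way.

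I expect the one genuinely delicate point --- as opposed to the formal bookkeeping above --- to be ensuring that the nine-diagram commutes coherently, so that the homotopy fibers are well defined and the $3\times 3$ lemma genuinely applies. This is taken care of by carrying out all the constructions inside a fixed stable model category of presheaves of spectra in which homotopy fibers and Thomason's hypercohomology functors are modeled strictly, as in \cite{Thomason85} and \cite{WeibelCycliccdh-CohomNegativeK06}; once this is in place the proof is a diagram chase.
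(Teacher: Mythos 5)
Your argument is correct and matches the paper's own (very terse) proof, which simply displays the two nine-diagrams and reads off the first columns; you have filled in exactly the justification the paper leaves implicit, namely that $\mathbb{H}_Y(X,-)$ is a homotopy fiber/limit and hence carries the presheaf-level fibration $\mathcal{K}(\ms{O}_{X_A},\mm)\to\mathcal{K}(\ms{O}_{X_A})\to\mathcal{K}(\ms{O}_X)$ to a fibration, with the $3\times 3$ lemma giving an equivalent route. Both of your variants reproduce the paper's intended ``comparison of diagrams'' argument.
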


\begin{corollary}
Let $K_{n}(X_A \ on \ Y_A, \mm)$ denote the kernel of 
\[
  K_{n}(X_A \ on \ Y_A) \rightarrow K_{n}(X \ on \ Y)
\]
and $HN_{n}(X_A \ on \ Y_A, \mm)$ denote the kernel of 
\[
  HN_{n}(X_A \ on \ Y_A) \rightarrow HN_{n}(X \ on \ Y),
\]
then for all n, we have an isomorphism:
\[
 K_{n}(X_A \ on \ Y_A, \mm) \cong HN_{n}(X_A \ on \ Y_A, \mm).
\]
\end{corollary}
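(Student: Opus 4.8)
The plan is to transport the relative Chern character equivalence of presheaves of spectra through Thomason's hypercohomology with supports and then read off homotopy groups by means of the two nine-diagrams above.

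First I would observe that, since $(A,\mm)$ is Artinian local with residue field $k$, the maximal ideal $\mm$ is nilpotent, so $\ms{O}_X\otimes_k\mm\subset\ms{O}_{X_A}$ is a nilpotent sheaf of ideals on $X_A$. Hence the relative Chern character theorem of Corti\~{n}as, Haesemeyer, and Weibel quoted above applies and furnishes an objectwise homotopy equivalence of presheaves of spectra
\[
\tn{ch}:\mathcal{K}(\ms{O}_{X_A},\mm)\ \xrightarrow{\simeq}\ \mathcal{HN}(\ms{O}_{X_A},\mm)
\]
on the Zariski site of $X$. Applying the functor $\mathbb{H}_Y(X,-)$, which is constructed via a local fibrant replacement and therefore sends objectwise weak equivalences to weak equivalences of spectra, produces a weak equivalence
\[
\mathbb{H}_Y\big(X,\mathcal{K}(\ms{O}_{X_A},\mm)\big)\ \xrightarrow{\simeq}\ \mathbb{H}_Y\big(X,\mathcal{HN}(\ms{O}_{X_A},\mm)\big),
\]
and hence isomorphisms on all homotopy groups.

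It then remains to identify these homotopy groups with the kernels in the statement. By the theorem immediately preceding the corollary, $\mathbb{H}_Y(X,\mathcal{K}(\ms{O}_{X_A},\mm))$ is the homotopy fiber of $\mathbb{H}_Y(X,\mathcal{K}(\ms{O}_{X_A}))\to\mathbb{H}_Y(X,\mathcal{K}(\ms{O}_X))$, which by the Zariski-excision identifications recorded above is the map $\mbf{K}(X_A\tn{ on }Y_A)\to\mbf{K}(X\tn{ on }Y)$. The canonical $k$-algebra maps $k\to A\to k$ split this map, so the long exact homotopy sequence of the fibration degenerates into split short exact sequences; thus $\pi_n\mathbb{H}_Y(X,\mathcal{K}(\ms{O}_{X_A},\mm))\cong\ker\big(K_n(X_A\tn{ on }Y_A)\to K_n(X\tn{ on }Y)\big)=K_n(X_A\tn{ on }Y_A,\mm)$ for every $n$. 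Running the identical argument with the second nine-diagram gives $\pi_n\mathbb{H}_Y(X,\mathcal{HN}(\ms{O}_{X_A},\mm))\cong HN_n(X_A\tn{ on }Y_A,\mm)$. Combining these identifications with the equivalence of the previous paragraph yields $K_n(X_A\tn{ on }Y_A,\mm)\cong HN_n(X_A\tn{ on }Y_A,\mm)$ for all $n$.

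The step I expect to be the main obstacle is this last identification of $\pi_n$ of a homotopy fiber with an honest \emph{kernel}: a priori the fiber term appears only inside a long exact sequence, and it is the splitting coming from $k\to A\to k$ that collapses that sequence and makes the kernel description legitimate. A secondary point needing care is that $\tn{ch}$ must be natural with respect to the whole support structure — compatible with the entire nine-diagram, and not merely with a single map of presheaves — but this follows from the functoriality of $\mathbb{H}_Y(X,-)$ together with the naturality of the relative Chern character in the pair $(\ms{O}_{X_A},\mm)$.
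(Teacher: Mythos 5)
Your proposal is correct and follows exactly the paper's (implicit) argument: objectwise Chern character equivalence of the relative presheaves of spectra, transported through $\mathbb{H}_Y(X,-)$, combined with the preceding fiber-theorem and the splitting $k\to A\to k$ to identify $\pi_n$ of each fiber with the corresponding kernel. The paper leaves the corollary's proof to the reader, and your write-up fills in precisely the intended chain, correctly flagging the splitting as the step that turns the homotopy-fiber description into an honest kernel.
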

We can generalize these results to the Adams eigenspaces.
According to [6], there exists  the following two split fibrations:
\[
 \mathcal{K}^{(i)}(\ms{O}_{X_A}, \mm) \rightarrow \mathcal{K}(\ms{O}_{X_A}, \ee) \rightarrow
  \prod_{j\neq i}\mathcal{K}^{(j)}(\ms{O}_{X_A}, \mm),
\]
and
\[
 \mathcal{HN}^{(i)}(\ms{O}_{X_A}, \mm) \rightarrow \mathcal{HN}(\ms{O}_{X_A}, \mm) \rightarrow 
 \prod_{j\neq i}\mathcal{HN}^{(j)}(\ms{O}_{X_A}, \mm).
\]

Since taking $\mathbb{H}_{Y}(X,-)$ perserves homotopy fibrations, there exists the following two split fibrations:
 \[
  \mathbb{H}_{Y}(X, \mathcal{K}^{(i)}(\ms{O}_{X_A}, \mm)) \to \mathbb{H}_{Y}(X, \mathcal{K}(\ms{O}_{X_A}, \mm))  
  \xrightarrow{\psi^{k}-k^{i}}    \mathbb{H}_{Y}(X ,\prod_{j\neq i}\mathcal{K}^{(j)}(\ms{O}_{X_A}, \mm)),
 \]
\[
 \mathbb{H}_{Y}(X, \mathcal{HN}^{(i)}(\ms{O}_{X_A}, \mm)) \to \mathbb{H}_{Y}(X, \mathcal{HN}(\ms{O}_{X_A}, \mm))  \xrightarrow{\psi^{k}-k^{i+1}}    \mathbb{H}_{Y}(X,\prod_{j\neq i}\mathcal{HN}^{(j)}(\ms{O}_{X_A}, \mm )).
\]

Passing to the group level, we obtain the following results:
\begin{theorem}
\[
 \mathbb{H}^{-n}_{Y}(X, \mathcal{K}^{(i)}(\ms{O}_{X_A}, \mm)) = 
 \{x \in \mathbb{H}^{-n}_{Y}(X, \mathcal{K}(\ms{O}_{X_A}, \mm))| \psi^{k}(x)-k^{i}(x)=0 \}.
\]
\[
 \mathbb{H}^{-n}_{Y}(X, \mathcal{HN}^{(i)}(\ms{O}_{X_A}, \mm)) =
  \{x \in \mathbb{H}^{-n}_{Y}(X, \mathcal{HN}(\ms{O}_{X_A}, \mm))| \psi^{k}(x)-k^{i+1}(x)=0 \}.
\]
\end{theorem}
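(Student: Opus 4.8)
The plan is to deduce both equalities by applying the homotopy-group functor $\pi_{n}$ to the two split homotopy fibrations of hypercohomology spectra displayed immediately above, and observing that these fibrations already package exactly the Adams eigenspace decomposition we need. First I would recall that $\mathcal{K}^{(i)}(\ms{O}_{X_A},\mm)$ is, by construction, the homotopy fibre of $\psi^{k}-k^{i}$ on $\mathcal{K}(\ms{O}_{X_A},\mm)$, so that (rationally) it is the weight-$i$ summand of $\mathcal{K}(\ms{O}_{X_A},\mm)$, on which $\psi^{k}$ acts by the scalar $k^{i}$, whereas $\psi^{k}-k^{i}$ acts invertibly on the complementary summand $\prod_{j\neq i}\mathcal{K}^{(j)}(\ms{O}_{X_A},\mm)$. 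Applying $\mathbb{H}_{Y}(X,-)$, which preserves homotopy fibrations, transports this to the split fibration
\[
 \mathbb{H}_{Y}(X,\mathcal{K}^{(i)}(\ms{O}_{X_A},\mm)) \longrightarrow \mathbb{H}_{Y}(X,\mathcal{K}(\ms{O}_{X_A},\mm)) \xrightarrow{\ \psi^{k}-k^{i}\ } \mathbb{H}_{Y}\big(X,\textstyle\prod_{j\neq i}\mathcal{K}^{(j)}(\ms{O}_{X_A},\mm)\big)
\]
already recorded, and the same discussion with $k^{i}$ replaced by $k^{i+1}$ handles negative cyclic homology; the shift in weight is precisely what the Chern-character identity $\psi^{i}\circ\tn{ch}=i\,\tn{ch}\circ\psi^{i}$ of Theorem \ref{theoremCHWlambda} demands, since it sends the weight-$i$ piece of $K$-theory into the piece of negative cyclic homology on which $\psi^{k}$ acts by $k^{i+1}$.

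Next I would pass to homotopy groups. A split homotopy fibration of spectra induces a \emph{split} short exact sequence on every homotopy group, so using the identification $\pi_{n}\,\mathbb{H}_{Y}(X,\mathcal{E})=\mathbb{H}^{-n}_{Y}(X,\mathcal{E})$ we obtain
\[
 0 \to \mathbb{H}^{-n}_{Y}(X,\mathcal{K}^{(i)}(\ms{O}_{X_A},\mm)) \to \mathbb{H}^{-n}_{Y}(X,\mathcal{K}(\ms{O}_{X_A},\mm)) \xrightarrow{\ \psi^{k}-k^{i}\ } \mathbb{H}^{-n}_{Y}\big(X,\textstyle\prod_{j\neq i}\mathcal{K}^{(j)}(\ms{O}_{X_A},\mm)\big) \to 0 .
\]
The map here is $\psi^{k}-k^{i}$ acting on $\mathbb{H}^{-n}_{Y}(X,\mathcal{K}(\ms{O}_{X_A},\mm))$, followed by the projection onto the complementary (weight $\neq i$) summand; since $\psi^{k}-k^{i}$ already annihilates the weight-$i$ summand $\mathbb{H}^{-n}_{Y}(X,\mathcal{K}^{(i)}(\ms{O}_{X_A},\mm))$ and is injective on the complement, its kernel on the full group coincides with the kernel of this composite. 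Exactness therefore gives
\[
 \mathbb{H}^{-n}_{Y}(X,\mathcal{K}^{(i)}(\ms{O}_{X_A},\mm)) = \{\, x\in\mathbb{H}^{-n}_{Y}(X,\mathcal{K}(\ms{O}_{X_A},\mm)) \mid \psi^{k}(x)-k^{i}x=0\,\},
\]
which is the first assertion; the second follows verbatim from the $\mathcal{HN}$ split fibration, with $k^{i}$ replaced throughout by $k^{i+1}$.

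The step I expect to be the main obstacle is not this formal argument but the verification of its hypotheses in the present setting: one must know that the Adams operations $\psi^{k}$ are genuinely defined on the relative spectra with supports $\mathcal{K}(\ms{O}_{X_A},\mm)$ in \emph{all} degrees --- including the negative degrees that really occur here --- and that the weight splitting is compatible both with passage to hypercohomology with supports and with the relative (split-nilpotent) structure, so that the cited split fibrations are valid. This rests on the descending-induction extension of $\psi^{k}$ to negative $K$-groups via the Bass fundamental sequence discussed above, on Levine's $\lambda$-operations for $K$-theory with supports \cite{LevineLambdaOperationsMotivic97}, and on working rationally so that the eigenspace projectors are idempotent; once these foundations are granted, the theorem is a routine consequence of the split homotopy fibrations and the degeneration of their homotopy long exact sequences.
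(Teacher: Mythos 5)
Your proposal matches the paper's own argument: the paper records the two split homotopy fibrations obtained by applying $\mathbb{H}_{Y}(X,-)$ (which preserves fibrations) to the weight-$i$/complementary-weight splittings of $\mathcal{K}(\ms{O}_{X_A},\mm)$ and $\mathcal{HN}(\ms{O}_{X_A},\mm)$, and then simply says ``passing to the group level.'' Your elaboration---that a split fibration of spectra yields a split short exact sequence on homotopy groups, that $\psi^{k}-k^{i}$ kills the weight-$i$ summand and is injective on the complement, so its kernel is exactly the weight-$i$ piece---is precisely the content the paper leaves implicit, and your flagging of the negative-degree Adams operations and supports issues accords with the preceding subsections.
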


We know that
\[
 \mathbb{H}^{-n}_{Y}(X, \mathcal{K}(\ms{O}_{X_A}, \mm)) = K_{n}(X_A \ on \ Y_A, \mm),
\]
and
\[
 \mathbb{H}^{-n}_{Y}(X, \mathcal{HN}(\ms{O}_{X_A}, \mm)) = HN_{n}(X_A \ on \ Y_A, \mm).
\]
Therefore, the homotopy equivalences
\[ 
  \mathcal{K}(\ms{O}_{X_A}, \mm) \simeq \mathcal{HN}(\ms{O}_{X_A}, \mm)
\]
and
\[
 \mathcal{K}^{(i)}(\ms{O}_{X_A}, \mm) \simeq\mathcal{HN}^{(i)}(\ms{O}_{X_A}, \mm),
\]
give us the following:

\begin{theorem} For all integers i and n, we have an isomorphism induced by the Chern character:
\[
 K_{n}^{(i)}(X_A \ on \ Y_A,\mm) = HN_{n}^{(i)}(X_A\ on \ Y_A,\mm).
\]
\end{theorem}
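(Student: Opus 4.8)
The plan is to obtain the asserted isomorphism by applying Thomason hypercohomology with supports $\mathbb{H}_{Y}(X,-)$ to the space-level relative Chern character of Corti\~{n}as--Haesemeyer--Weibel and then passing to homotopy groups; all of the needed input has been assembled in the preceding subsections, so the proof amounts to chaining these ingredients together in the right order. First I would invoke the homotopy equivalence of presheaves of spectra
\[
\tn{ch}:\mathcal{K}^{(i)}(\ms{O}_{X_A},\mm)\;\simeq\;\mathcal{HN}^{(i)}(\ms{O}_{X_A},\mm),
\]
recorded above as the relative, Adams-graded case of the theorem of Corti\~{n}as, Haesemeyer, and Weibel; the matching of the index $i$ on the two sides already builds in the weight shift implicit in $\psi^{i}\circ\tn{ch}=i\,\tn{ch}\circ\psi^{i}$. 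Since $\mathbb{H}_{Y}(X,-)$ preserves homotopy fibrations, hence weak equivalences between presheaves of spectra, applying it produces a weak equivalence of spectra
\[
\tn{ch}:\mathbb{H}_{Y}\big(X,\mathcal{K}^{(i)}(\ms{O}_{X_A},\mm)\big)\;\simeq\;\mathbb{H}_{Y}\big(X,\mathcal{HN}^{(i)}(\ms{O}_{X_A},\mm)\big),
\]
and taking $\pi_{-n}$ of both sides yields, for every $n\in\mbb{Z}$ and every $i$, a Chern-character-induced isomorphism
\[
\mathbb{H}^{-n}_{Y}\big(X,\mathcal{K}^{(i)}(\ms{O}_{X_A},\mm)\big)\;\cong\;\mathbb{H}^{-n}_{Y}\big(X,\mathcal{HN}^{(i)}(\ms{O}_{X_A},\mm)\big).
\]

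It then remains to identify the two sides with $K_{n}^{(i)}(X_{A}\tn{ on }Y_{A},\mm)$ and $HN_{n}^{(i)}(X_{A}\tn{ on }Y_{A},\mm)$. Here I would combine the Zariski-excision identifications $\mathbb{H}^{-n}_{Y}(X,\mathcal{K}(\ms{O}_{X_A},\mm))=K_{n}(X_{A}\tn{ on }Y_{A},\mm)$ and $\mathbb{H}^{-n}_{Y}(X,\mathcal{HN}(\ms{O}_{X_A},\mm))=HN_{n}(X_{A}\tn{ on }Y_{A},\mm)$ with the split fibrations coming from the Adams decomposition. Applying $\mathbb{H}_{Y}(X,-)$ to those split fibrations — which it preserves, commuting in particular with the finite products appearing in their third terms — exhibits $\mathbb{H}^{-n}_{Y}(X,\mathcal{K}^{(i)}(\ms{O}_{X_A},\mm))$ as the eigenspace $\{\,x:\psi^{k}(x)=k^{i}x\,\}$ in $K_{n}(X_{A}\tn{ on }Y_{A},\mm)$ and $\mathbb{H}^{-n}_{Y}(X,\mathcal{HN}^{(i)}(\ms{O}_{X_A},\mm))$ as the eigenspace $\{\,x:\psi^{k}(x)=k^{i+1}x\,\}$ in $HN_{n}(X_{A}\tn{ on }Y_{A},\mm)$; these eigenspaces are by definition $K_{n}^{(i)}(X_{A}\tn{ on }Y_{A},\mm)$ and $HN_{n}^{(i)}(X_{A}\tn{ on }Y_{A},\mm)$. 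Feeding this into the preceding display completes the argument.

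The genuine technical content is not this final assembly but the earlier step of equipping the negative $K$-groups — and the negative-degree hypercohomology groups — with Adams operations $\psi^{k}$ in a manner compatible with supports, which I expect to be the main obstacle in spirit even though it has been dispatched above. The mechanism is Weibel's descending induction: the Bass fundamental exact sequence realizes $K_{n-1}(X\tn{ on }Y)$ as a retract of $K_{n}(X[t,t^{-1}]\tn{ on }Y[t,t^{-1}])$, the multiplicativity $\psi^{k}(x\cdot t)=\psi^{k}(x)\,\psi^{k}(t)=k\,\psi^{k}(x)\cdot t$ together with rationalization defines $\psi^{k}$ on $K_{-1}$, and iterating produces $\psi^{k}$ on all negative $K$-groups with supports; the identical device applies verbatim after $\mathbb{H}_{Y}(X,-)$. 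Once that is in place, the only remaining subtlety is the weight shift between algebraic $K$-theory and negative cyclic homology, which is already absorbed into the Corti\~{n}as--Haesemeyer--Weibel equivalence and into the indexing convention fixed above for $\mathcal{HN}^{(i)}$.
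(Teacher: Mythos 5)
Your proof is correct and follows essentially the same route as the paper: apply Thomason hypercohomology with supports $\mathbb{H}_{Y}(X,-)$ to the Corti\~{n}as--Haesemeyer--Weibel spectrum-level equivalence $\mathcal{K}^{(i)}(\ms{O}_{X_A},\mfr{m})\simeq\mathcal{HN}^{(i)}(\ms{O}_{X_A},\mfr{m})$, pass to homotopy groups, and identify the resulting hypercohomology groups with the relative $K$-theory and $HN$ eigenspaces with supports via Zariski excision and the Adams split fibrations, keeping track of the weight shift $k^{i}$ versus $k^{i+1}$ in the two eigenspace conditions. Your closing remark correctly flags the descending-induction extension of Adams operations to negative degrees and to groups with supports as the only genuinely new infrastructure, which is also precisely what the paper develops in the preceding subsections.
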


%SSSSSSSSSSSSSSSSSSSSSSSSSSSSSSSSSSSSSSSSSSSSSSSSSSSSSSSSSSSSSSSSSSSSSSSSSSSSSSSSSS
%SSSSSSSSSSSSSSSSSSSSSSSSSSSSSSSSSSSSSSSSSSSSSSSSSSSSSSSSSSSSSSSSSSSSSSSSSSSSSSSSSS
%SSS                      SSS                       SSS                    SS                          SS           SSS                         SSS          SSSSS      SSSS
%SSS      SSSSSSSSS    SSSSSSSSSS     SSSSSSSSSSSS     SSSSSSS     SSSS    SSSSSS     SSS     S     SSSS     SSSSS
%SSS      SSSSSSSSS    SSSSSSSSSS     SSSSSSSSSSSS     SSSSSSS     SSSS    SSSSSS     SSS     SS     SSS     SSSSS
%SSS                      SSS                  SSSSS    SSSSSSSSSSSS     SSSSSSS     SSSS    SSSSSS     SSS     SSS     SS     SSSSS
%SSSSSSSSSS    SSS    SSSSSSSSSS    SSSSSSSSSSSS     SSSSSSS     SSSS    SSSSSS     SSS     SSSS     S     SSSSS
%SSSSSSSSSS    SSS    SSSSSSSSSS    SSSSSSSSSSSS     SSSSSSS     SSSS    SSSSSS     SSS     SSSSS          SSSSS
%SSS                       SSS                      SSS                       SSSSS     SSSSSS          SSS                         SSS     SSSSSS        SSSSS
%SSSSSSSSSSSSSSSSSSSSSSSSSSSSSSSSSSSSSSSSSSSSSSSSSSSSSSSSSSSSSSSSSSSSSSSSSSSSSSSSSS
%SSSSSSSSSSSSSSSSSSSSSSSSSSSSSSSSSSSSSSSSSSSSSSSSSSSSSSSSSSSSSSSSSSSSSSSSSSSSSSSSSS

\section{Special case: the dual numbers}
\label{specialdual}

\begin{theorem}
Let $X$ be a smooth scheme over a field $k$, $chark=0$. we have the following

\begin{equation}
\begin{cases}
 \begin{CD}
 HN_{n}^{(i)}(O_{X}[\ee],\ee)= \Omega^{{2i-n-1}}_{O_{X}/ \mathbb{Q}}, for \  [\frac{n}{2}] < i \leq n.\\
 HN_{n}^{(i)}(O_{X}[\ee],\ee)= 0, else.
 \end{CD}
\end{cases}
\end{equation} 

It follows that 
\[
 HN_{n}(O_{X}[\ee],\ee) = \Omega^{{n-1}}_{O_{X}/ \mathbb{Q}}\oplus \Omega^{{n-3}}_{O_{X}/ \mathbb{Q}} \oplus \dots
\]
the last term is $\Omega^{{1}}_{O_{X}/ \mathbb{Q}}$ or $O_{X}$, depending on $n$ odd or even.
\end{theorem}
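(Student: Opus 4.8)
The plan is to reduce the statement to a functorial computation over smooth affine $k$-algebras, transport it from relative negative cyclic homology to relative $K$-theory by means of the relative Chern character, quote the (classical, and in general due to Hesselholt) computation of the relative $K$-theory of the dual numbers, and track Adams weights throughout.

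First I would note that $\ms{HN}_n^{(i)}(\ms{O}_X[\ee],\ee)$ is the sheaf associated to the presheaf $U\mapsto HN_n^{(i)}\bigl(\ms{O}_X(U)[\ee],(\ee)\bigr)$, and likewise $\Omega^{2i-n-1}_{\ms{O}_X/\QQ}$ is the sheafification of $U\mapsto\Omega^{2i-n-1}_{\ms{O}_X(U)/\QQ}$; since $X$ is smooth over $k$ it therefore suffices to establish the isomorphism for $R$ a smooth $k$-algebra, functorially in $R$ and compatibly with localization, and then sheafify. Next, the relative Chern character of Corti\~{n}as--Haesemeyer--Weibel, which respects the Adams eigenspace decomposition by Theorem \ref{theoremCHWlambda} together with its relative and with-supports refinements from Section \ref{lambdaAdams}, supplies a natural isomorphism $HN_n^{(i)}\bigl(R[\ee],(\ee)\bigr)\cong K_n^{(i)}\bigl(R[\ee],(\ee)\bigr)$. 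Equivalently one may invoke Goodwillie's theorem \cite{Goodwillie86}: relative periodic cyclic homology of the nilpotent ideal $(\ee)\subset R[\ee]$ vanishes, so $HN_n\bigl(R[\ee],(\ee)\bigr)\cong HC_{n-1}\bigl(R[\ee],(\ee)\bigr)$, and one computes the latter.

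The computation itself is then the relative $K$-theory of the square-zero extension. By Hesselholt's description of the $K$-theory of truncated polynomial algebras \cite{HesselholtTruncatedPolyAlgNA}, specialized to $R[x]/(x^2)$ and simplified by the fact that in characteristic zero the big de Rham--Witt complex collapses to the absolute K\"{a}hler--de Rham complex $\Omega^{\bullet}_{R/\QQ}=\Omega^{\bullet}_{R/\ZZ}$, one has
\[
K_n\bigl(R[x]/(x^2),(x)\bigr)\;\cong\;\bigoplus_{j\ge 0,\ n-1-2j\ge 0}\Omega^{\,n-1-2j}_{R/\QQ},
\]
with the $j$-th summand lying in Adams weight $i=n-j$; that is, $K_n^{(i)}\bigl(R[x]/(x^2),(x)\bigr)\cong\Omega^{\,2i-n-1}_{R/\QQ}$, which is nonzero exactly when $0\le 2i-n-1$ and $i\le n$, i.e. for $[n/2]<i\le n$. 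As consistency checks: $n=1$ recovers $K_1\bigl(R[x]/(x^2),(x)\bigr)=1+xR\cong R=\Omega^0_{R/\QQ}$; $n=2$ recovers Van der Kallen's isomorphism \cite{VanderKallenEarlyTK271} with $\Omega^1_{R/\ZZ}$ (weight $2$, the ``Milnor part''); and $n=3$ recovers $\Omega^2_{R/\QQ}\oplus R$, matching the formula $T\ms{K}_3(X)\cong\Omega^2_{X/\QQ}\oplus\ms{O}_X$ recorded earlier. Combined with the reduction above this proves the displayed formula for $HN_n^{(i)}(\ms{O}_X[\ee],\ee)$, and summing over $i$ (a genuine finite direct sum, as we work over a characteristic-zero field) gives
\[
HN_n(\ms{O}_X[\ee],\ee)=\bigoplus_{[n/2]<i\le n}HN_n^{(i)}(\ms{O}_X[\ee],\ee)=\Omega^{n-1}_{\ms{O}_X/\QQ}\oplus\Omega^{n-3}_{\ms{O}_X/\QQ}\oplus\cdots,
\]
terminating in $\Omega^1_{\ms{O}_X/\QQ}$ for $n$ even and in $\ms{O}_X=\Omega^0_{\ms{O}_X/\QQ}$ for $n$ odd.

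\textbf{Main obstacle.} The delicate point is the Adams-weight bookkeeping: Hesselholt's formula is naturally phrased via $TR$ and the de Rham--Witt complex, not via the $\lambda$-graded relative $K$-groups, so one must verify that in characteristic zero the summand $\Omega^{n-1-2j}_{R/\QQ}$ sits precisely in weight $n-j$ --- this is exactly what produces the shift ``$2i-n-1$'' and the cutoff ``$i>[n/2]$''. A self-contained alternative that makes the weights transparent is to compute $\overline{HH}^{\QQ}_{\bullet}\bigl(R[x]/(x^2)\bigr)\cong\bigoplus_{p=0}^{\bullet}\Omega^p_{R/\QQ}$ by the K\"{u}nneth formula (using $\overline{HH}^{\QQ}_q\bigl(\QQ[x]/(x^2)\bigr)\cong\QQ$ for all $q\ge 0$, from the standard $2$-periodic bimodule resolution of $\QQ[x]/(x^2)$), then run Connes' $SBI$ sequence with Connes' operator $B$ identified, on the $\Omega^{\bullet}_{R/\QQ}$ tensor factor, with the de Rham differential $d$: here $B$ raises weight by one and $S$ lowers it by one, so the $\lambda$-graded pieces of $HC$, hence of $HN$ via $HP^{\mathrm{rel}}=0$, can be read off directly. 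The remaining passage back to the sheaf statement is routine, every step being functorial in $R$ and stable under localization.
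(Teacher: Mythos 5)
Your proposal offers two routes, and the paper's proof is essentially your ``alternative'' one, not the primary one. The paper works directly in cyclic homology: it records the shift $HN_n^{(i)}(A,I)=HC_{n-1}^{(i-1)}(A,I)$ for a nilpotent ideal (the degree and weight shift you attribute to $HP^{\mathrm{rel}}=0$), then computes $HC_*^{(\bullet)}(R[\ee],\ee)$ via the SBI sequence and the Hochschild computation of $HH^{(i)}_n(R[\ee],\ee)$ --- precisely the outline you sketch. However, the paper isolates and cites a key input that your sketch elides: the Geller--Weibel theorem \cite{GW94} that the periodicity operator $S$ vanishes on $HC_*(R[\ee],\ee)$. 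This is what splits the SBI sequence into the short exact sequences
\[
0\longrightarrow HC^{(i-1)}_{n-1}(R[\ee],\ee)\xrightarrow{\;B\;} HH^{(i)}_n(R[\ee],\ee)\xrightarrow{\;I\;} HC^{(i)}_n(R[\ee],\ee)\longrightarrow 0
\]
and so lets one compute $HC^{(\bullet)}$ from $HH^{(\bullet)}$ by induction. Your remark that ``$B$ raises weight by one and $S$ lowers it by one, so the $\lambda$-graded pieces can be read off directly'' is not, by itself, a reason for $S$ to die: $S$ has exactly the expected weight shift $(i{+}1)\mapsto i$, so nothing about the grading forces $S=0$, and without that input the SBI sequence does not collapse and you cannot simply ``read off'' $HC^{(i)}_n$ from $HH^{(i)}_n$. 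This is the one genuine gap in your alternative sketch, and it is easily repaired either by citing \cite{GW94} or by completing the explicit bicomplex computation you propose (identifying $B$ with $1\otimes d$ on the K\"unneth model and verifying the resulting vanishing directly).

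Your primary route --- the Adams-equivariant relative Chern character plus Hesselholt's theorem --- is a genuinely different strategy, and you correctly flag its weak point: Hesselholt's result is phrased in terms of $TR$ and the big de Rham--Witt complex and does not intrinsically carry Adams-weight labels, so establishing that the summand $\Omega^{n-1-2j}_{R/\QQ}$ sits in weight $n-j$ would in practice require the same direct cyclic-homology computation that the paper performs. That route also inverts the paper's logical direction: the paper regards the cyclic side as the computable one, computes $HN$ first, and then transports information to $K$ via $\tn{ch}$; your primary route uses the Chern character to go the other way, which is legitimate but would make this elementary cyclic-homology statement depend on a substantially deeper $K$-theoretic input.
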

This is the sheaf version of the following theorem:
\begin{theorem}
Let  $R$ be a regular noetherian ring and also a commutative $\mathbb{Q}$-algebra and $\varepsilon ^2 = 0$. We consider $R[\varepsilon]= R \oplus \varepsilon R $ as a graded $\mathbb{Q}$-algebra.
\[
 HC_{n}(R[\varepsilon],\varepsilon) = \Omega^{{n}}_{R/ \mathbb{Q}}\oplus \Omega^{{n-2}}_{R/ \mathbb{Q}} \oplus \dots
\]
the last term is $\Omega^{{1}}_{R/ \mathbb{Q}}$ or $R$, depending on $n$ odd or even.
\end{theorem}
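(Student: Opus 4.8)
The plan is to carry out the computation entirely at the level of mixed complexes, reducing the general regular case to the smooth case and the smooth case to a single nontrivial input, the reduced mixed complex of $\mathbb{Q}[\varepsilon]$ itself. First I would reduce to $R$ smooth over $\mathbb{Q}$: since $R$ is a regular noetherian ring containing $\mathbb{Q}$, the map $\mathbb{Q}\to R$ is geometrically regular, so by Popescu's theorem $R$ is a filtered colimit of smooth $\mathbb{Q}$-algebras, and Hochschild homology, cyclic homology and the modules $\Omega^{j}_{-/\mathbb{Q}}$ all commute with filtered colimits. For $R$ smooth the Hochschild--Kostant--Rosenberg theorem (\cite{LodayCyclicHomology98}) gives $HH_{j}(R/\mathbb{Q})\cong\Omega^{j}_{R/\mathbb{Q}}$ and moreover identifies the mixed complex of $R$ over $\mathbb{Q}$, up to quasi-isomorphism, with the de Rham complex $(\Omega^{\bullet}_{R/\mathbb{Q}},0,d)$. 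Writing $R[\varepsilon]=R\otimes_{\mathbb{Q}}\mathbb{Q}[\varepsilon]$ and applying the K\"unneth quasi-isomorphism for mixed complexes over the field $\mathbb{Q}$, together with the splitting coming from $\mathbb{Q}\to\mathbb{Q}[\varepsilon]\to\mathbb{Q}$, the relative mixed complex of $R[\varepsilon]$ becomes quasi-isomorphic to $(\Omega^{\bullet}_{R/\mathbb{Q}},0,d)\otimes_{\mathbb{Q}}\widetilde{C}$, where $\widetilde{C}$ is the reduced mixed complex of $\mathbb{Q}[\varepsilon]$.

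The next step is to pin down $\widetilde{C}$. Using the standard two-periodic free bimodule resolution of $\mathbb{Q}[\varepsilon]=\mathbb{Q}[x]/(x^{2})$, with differentials alternately multiplication by $x\otimes 1-1\otimes x$ and $x\otimes 1+1\otimes x$, one finds $HH_{j}(\mathbb{Q}[\varepsilon],\varepsilon)\cong\mathbb{Q}$ for all $j\ge 0$, with internal ($\varepsilon$-)weight equal to $2m+1$ in homological degrees $j=2m$ and $j=2m+1$. Since the Connes operator $B$ preserves weight, the map $B\colon HH_{2m+1}\to HH_{2m+2}$ vanishes (weights $2m+1$ versus $2m+3$), whereas $B\colon HH_{2m}\to HH_{2m+1}$ is an isomorphism: for $m=0$ because $B\colon HH_{0}\to HH_{1}$ is the de Rham map $\varepsilon\mapsto d\varepsilon$, which is nonzero, and for general $m$ because the nilinvariance of periodic cyclic homology in characteristic zero forces $HP_{*}(\mathbb{Q}[\varepsilon],\varepsilon)=0$, so the weight-$(2m+1)$ part of the Tate bicomplex, which degenerates to $\cdots\xrightarrow{0}\mathbb{Q}\xrightarrow{B}\mathbb{Q}\xrightarrow{0}\mathbb{Q}\xrightarrow{B}\cdots$, must be acyclic. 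Hence $\widetilde{C}$ is formal and splits, as a mixed complex, as a direct sum over $m\ge 0$ of two-term blocks $\kappa_{m}$, a copy of $\mathbb{Q}$ in degrees $2m$ and $2m+1$ with $B$ the identity between them; a direct computation of the $(b,B)$-bicomplex of $\kappa_{m}$ gives $HC_{j}(\kappa_{m})=\mathbb{Q}$ for $j=2m$ and $0$ otherwise, recovering $HC_{j}(\mathbb{Q}[\varepsilon],\varepsilon)=\mathbb{Q}$ for even $j$ and $0$ for odd $j$.

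It remains to assemble the two inputs. In internal weight $2m+1$ the relative mixed complex of $R[\varepsilon]$ is quasi-isomorphic to $(\Omega^{\bullet}_{R/\mathbb{Q}},0,d)\otimes_{\mathbb{Q}}\kappa_{m}$, whose underlying complex in homological degree $n$ is $\Omega^{n-2m}_{R/\mathbb{Q}}\oplus\Omega^{n-2m-1}_{R/\mathbb{Q}}$ and whose $B$ is $d\otimes 1$ together with the identity from the first summand onto the second. Running the $SBI$ sequence (equivalently, filtering the $(b,B)$-bicomplex by columns) for this mixed complex collapses it to a single copy of $\Omega^{n-2m}_{R/\mathbb{Q}}$ in cyclic degree $n$. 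Summing over $m\ge 0$ yields $HC_{n}(R[\varepsilon],\varepsilon)\cong\bigoplus_{m\ge 0}\Omega^{n-2m}_{R/\mathbb{Q}}=\Omega^{n}_{R/\mathbb{Q}}\oplus\Omega^{n-2}_{R/\mathbb{Q}}\oplus\cdots$, the last term being $\Omega^{1}_{R/\mathbb{Q}}$ or $R$ according to the parity of $n$; the sheaf statement for $O_{X}[\varepsilon]$ follows by passing to affine opens, and the $HN$ statement follows via the relative identification $HN_{n}\cong HC_{n-1}$, which holds because $HP_{*}$ vanishes on nilpotent ideals in characteristic zero.

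The step I expect to be the main obstacle is the last one: keeping straight the internal weight grading, the K\"unneth sign in the formula $B=d\otimes 1\pm 1\otimes B$ on the tensor product of mixed complexes, and the degeneration of the cyclic bicomplex of $(\Omega^{\bullet}_{R/\mathbb{Q}},0,d)\otimes\kappa_{m}$, so that the cancellations genuinely leave exactly $\Omega^{n-2m}_{R/\mathbb{Q}}$ and no spurious terms. A route that avoids the mixed-complex K\"unneth theorem is to compute $HH_{*}(R[\varepsilon],\varepsilon)\cong\bigoplus_{q\ge 0}\Omega^{\ast-q}_{R/\mathbb{Q}}$ by the ordinary Hochschild K\"unneth theorem, determine $B$ on it as above, and feed the result directly into the $SBI$ long exact sequence by induction on the cyclic degree; the combinatorics is identical either way, and one may alternatively simply invoke the computation of the cyclic homology of truncated polynomial algebras over a smooth base in \cite{LodayCyclicHomology98}.
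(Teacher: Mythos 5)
Your proof is correct in outline, but it proceeds by a genuinely different route from the paper, and the comparison is worth making explicit.

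The paper proves this theorem at the level of Adams eigenspaces from the start. It introduces the $\lambda$-decomposition $HH_n = \bigoplus_i HH_n^{(i)}$, $HC_n = \bigoplus_i HC_n^{(i)}$ via symmetric group actions on the Hochschild complex, cites a theorem of Geller--Weibel \cite{GW94} to the effect that the periodicity operator $S$ vanishes on the relative cyclic homology $HC_*(R[\varepsilon],\varepsilon)$, breaks the Adams-graded $SBI$ sequence into short exact sequences
$0 \to HC^{(i-1)}_{n-1}(R[\varepsilon],\varepsilon) \xrightarrow{B} HH^{(i)}_{n}(R[\varepsilon],\varepsilon) \xrightarrow{I} HC^{(i)}_{n}(R[\varepsilon],\varepsilon) \to 0$,
and then computes $HC^{(i)}_n(R[\varepsilon],\varepsilon) = \Omega^{2i-n}_{R/\mathbb{Q}}$ by induction from the known Adams decomposition of $HH_n(R[\varepsilon],\varepsilon)$, delegating the details to \cite{Yang12}. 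The ungraded statement is then the sum over $i$. The main point of working eigenspace-by-eigenspace is that the paper needs the finer Adams-graded version (the displayed Proposition and the later theorems of the section) in order to isolate the ``Milnor part'' of the coniveau machine.

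You work ungraded and replace the Geller--Weibel input by a mixed-complex reduction. You reduce to $R$ smooth via Popescu, use HKR to replace the mixed complex of $R$ by $(\Omega^\bullet_{R/\mathbb{Q}},0,d)$, and factor out the reduced mixed complex $\widetilde{C}$ of $\mathbb{Q}[\varepsilon]$ via a K\"unneth equivalence of mixed complexes. You then pin down $\widetilde{C}$ by a weight argument on $HH_*(\mathbb{Q}[\varepsilon],\varepsilon)$ together with Goodwillie's nilinvariance of $HP$, which forces $B\colon HH_{2m}\to HH_{2m+1}$ to be an isomorphism and $B\colon HH_{2m+1}\to HH_{2m+2}$ to be zero. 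This gives the decomposition of $\widetilde{C}$ into the two-term blocks $\kappa_m$ and hence the final answer. The nilinvariance-of-$HP$ step is, morally, exactly what makes $S$ vanish in the paper's approach; you are rederiving the Geller--Weibel input from Goodwillie's theorem rather than citing it. What your route buys is a conceptual explanation of \emph{why} the answer is $\bigoplus_m \Omega^{n-2m}_{R/\mathbb{Q}}$: each summand corresponds to one $\kappa_m$. What you lose is the Adams-weight information, which the paper needs later; you would have to rerun the argument weight-by-weight (the weight grading is visible in $\widetilde{C}$, so this is possible, but it is extra work). The one step I would flag is the K\"unneth quasi-isomorphism \emph{of mixed complexes}: the quasi-isomorphism $C(A)\otimes C(B)\to C(A\otimes B)$ via the shuffle map is well known for Hochschild complexes, but upgrading it to a quasi-isomorphism of mixed complexes requires the cyclic shuffle and is not a formal consequence; you are right to be cautious, and the alternative you sketch (ordinary Hochschild K\"unneth, determination of $B$, and $SBI$ by induction, which is essentially the paper's route minus the Adams grading) sidesteps this cleanly.
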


Before proving this, let us note the corollaries. 

%\begin{corollary}
%\begin{equation}
%\begin{cases}
 %\begin{CD}
 %HN_{n}^{(i)}(R[\varepsilon],\varepsilon)= \Omega^{{2i-n-1}}_{R/ \mathbb{Q}}, for \ [\frac{n}{2}] < i \leq n.\\
 %HN_{n}^{(i)}(R[\varepsilon],\varepsilon)= 0, else.
 %\end{CD}
%\end{cases}
%\end{equation} 
 
%\end{corollary}

%\begin{corollary}
%\[
% HN_{n}(R[\varepsilon],\varepsilon) = \Omega^{{n-1}}_{R/ \mathbb{Q}}\oplus \Omega^{{n-3}}_{R/ \mathbb{Q}} \oplus \dots
%\]
%the last term is $\Omega^{{1}}_{R/ \mathbb{Q}}$ or $R$, depending on $n$ odd or even.
%\end{corollary}

Note that for any commutative $k$-algebra  $A$, 
where $k$ is a field of characteristic $0$, and $I$ be an ideal of $A$,
\[
 HN_{n}(A,I)=HC_{n-1}(A,I).
\]
\[
 HN_{n}^{(i)}(A,I)=HC_{n-1}^{(i-1)}(A,I).
\]

We now outline the proof of this theorem. Let $A$ be any commutative $\mathbb{Q}$-algebra and $I$ be an ideal of $A$.
We can associate a  Hochschild complexes $C^{h}_{\ast}(A)$ to $A$ as in \cite{Lodayoper89}, \cite{LodayCyclicHomology98}. 
The action of the symmetric groups on $C^{h}_{\ast}(A)$
gives the lambda operation
\[
 HH_{n}(A)=HH_{n}^{(1)}(A) \oplus \dots \oplus HH_{n}^{(n)}(A),
\]
and similarly
\[
 HC_{n}(A)=HC_{n}^{(1)}(A) \oplus \dots \oplus HC_{n}^{(n)}(A),
\]
\[
 HN_{n}(A)=HN_{n}^{(1)}(A) \oplus \dots \oplus HN_{n}^{(n)}(A).
\]

 There is also a Hochschild complexes  
$C^{h}_{\ast}(A/I)$ associated to  $A/I$. We use $C^{h}_{\ast}(A,I)$ to denote the kernel of the natural map
\[
 C^{h}_{\ast}(A) \rightarrow C^{h}_{\ast}(A/I).
\]
Then the relative Hochschild module $HH_{\ast}(A,I)$ is the homology of the complex $C^{h}_{\ast}(A,I)$. Moreover, the action of the symmetric groups on $C^{h}_{\ast}(A,I)$
gives the lambda operation
\[
 HH_{n}(A,I)=HH_{n}^{(1)}(A,I) \oplus \dots \oplus HH_{n}^{(n)}(A,I)
\]
and similarly
\[
 HC_{n}(A,I)=HC_{n}^{(1)}(A,I) \oplus \dots \oplus HC_{n}^{(n)}(A,I),
\]
\[
 HN_{n}(A,I)=HN_{n}^{(1)}(A,I) \oplus \dots \oplus HN_{n}^{(n)}(A,I).
\]

If $R$ is a regular noetherian ring and also a commutative $\mathbb{Q}$-algebra, and $\varepsilon ^2 = 0$, then we have 
the following SBI sequence is obtained from the corresponding eigenspace of the relative Hochschild complex:
\[
 \rightarrow HC^{(i)}_{n+1}(R[\varepsilon],\varepsilon) \xrightarrow{S} HC^{(i-1)}_{n-1}(R[\varepsilon],\varepsilon) \xrightarrow{B} HH^{(i)}_{n}(R[\varepsilon],\varepsilon) \xrightarrow{I} HC^{(i)}_{n}(R[\varepsilon],\varepsilon) \rightarrow
\]

According to a result of Geller-Weibel, \cite{GW94}, the above S map is $0$ on $HC(R[\varepsilon],\varepsilon)$. This enable us to break the SBI sequence up into a
short exact sequence:
\[
 0 \rightarrow HC^{(i-1)}_{n-1}(R[\varepsilon],\varepsilon) \xrightarrow{B} HH^{(i)}_{n}(R[\varepsilon],\varepsilon) \xrightarrow{I} HC^{(i)}_{n}(R[\varepsilon],\varepsilon) \rightarrow 0.
\]
In the following, we will use this short exact sequence to compute $HC^{(i)}_{n}(R[\varepsilon],\varepsilon)$.

\begin{proposition}
\begin{equation}
\begin{cases}
 \begin{CD}
 HC_{n}^{(i)}(R[\varepsilon],\varepsilon)= \Omega^{{2i-n}}_{R/ \mathbb{Q}}, for \  [\frac{n}{2}] \leq i \leq n.\\
 HC_{n}^{(i)}(R[\varepsilon],\varepsilon)= 0, else.
 \end{CD}
\end{cases}
\end{equation} 

\end{proposition}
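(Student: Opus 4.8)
The plan is to reduce to the case where $R$ is smooth over $\mathbb{Q}$, compute the relative Hochschild homology $HH^{(i)}_n(R[\varepsilon],\varepsilon)$ together with its $\lambda$-decomposition, and then propagate the answer through the short exact sequence
\[0 \rightarrow HC^{(i-1)}_{n-1}(R[\varepsilon],\varepsilon) \xrightarrow{B} HH^{(i)}_{n}(R[\varepsilon],\varepsilon) \xrightarrow{I} HC^{(i)}_{n}(R[\varepsilon],\varepsilon) \rightarrow 0\]
obtained above (whose existence rests on Geller--Weibel's vanishing of $S$ on $HC(R[\varepsilon],\varepsilon)$), by induction on $n$. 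Reducing to smooth $R$ is harmless: Hochschild and cyclic homology, Kähler differentials, and the $\lambda$-decompositions all commute with filtered colimits, and a regular noetherian $\mathbb{Q}$-algebra is a filtered colimit of smooth $\mathbb{Q}$-algebras by Néron--Popescu desingularization. For such $R$ the Hochschild--Kostant--Rosenberg theorem gives $HH_p(R)=\Omega^p_{R/\mathbb{Q}}$, concentrated in pure weight $p$.

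For the Hochschild computation I would write $R[\varepsilon]=R\otimes_{\mathbb{Q}}\mathbb{Q}[\varepsilon]$ and use the Künneth formula for Hochschild homology over the field $\mathbb{Q}$, which respects the $\lambda$-decompositions and is additive in weight; on relative groups it yields
\[HH^{(i)}_n(R[\varepsilon],\varepsilon)\;\cong\;\bigoplus_{q\ge 0}\ \Omega^{\,n-q}_{R/\mathbb{Q}}\otimes_{\mathbb{Q}}HH^{(i-n+q)}_{q}(\mathbb{Q}[\varepsilon],\varepsilon).\]
Here $HH_q(\mathbb{Q}[\varepsilon],\varepsilon)\cong\mathbb{Q}$ for every $q\ge 0$, situated in the single weight $\lceil q/2\rceil$. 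This is seen from the fact that the weight-$i$ part of $HH_n$ is the $(n-i)$-th homology of the derived $i$-th exterior power of the cotangent complex, applied to the complete intersection $\mathbb{Q}[\varepsilon]$: its cotangent complex is the two-term complex $[\,\mathbb{Q}[\varepsilon]\xrightarrow{\cdot\varepsilon}\mathbb{Q}[\varepsilon]\,]$ in degrees $1,0$, so $\Lambda^{i}$ of it is $[\,\mathbb{Q}[\varepsilon]\xrightarrow{\cdot\varepsilon}\mathbb{Q}[\varepsilon]\,]$ placed in degrees $i,i-1$, with homology $\mathbb{Q}$ in each of those two degrees and zero elsewhere. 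Substituting into the Künneth formula, only the summands $q=2(n-i)$ and $q=2(n-i)+1$ survive, giving
\[HH^{(i)}_n(R[\varepsilon],\varepsilon)\;\cong\;\Omega^{\,2i-n}_{R/\mathbb{Q}}\oplus\Omega^{\,2i-n-1}_{R/\mathbb{Q}}\]
for $\lceil n/2\rceil\le i\le n$ (with the convention $\Omega^{<0}_{R/\mathbb{Q}}:=0$), and $HH^{(i)}_n(R[\varepsilon],\varepsilon)=0$ otherwise.

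Now I would induct on $n$. The base case $n=0$ is immediate, since $HC_0(R[\varepsilon],\varepsilon)=(\varepsilon)=\varepsilon R\cong R=\Omega^{0}_{R/\mathbb{Q}}$ in weight $0$. For the inductive step, the inductive hypothesis in degree $n-1$ gives $HC^{(i-1)}_{n-1}(R[\varepsilon],\varepsilon)=\Omega^{\,2i-n-1}_{R/\mathbb{Q}}$; plugging this and the Hochschild computation into the short exact sequence exhibits $HC^{(i)}_n(R[\varepsilon],\varepsilon)$ as the cokernel of an injection $\Omega^{\,2i-n-1}_{R/\mathbb{Q}}\hookrightarrow\Omega^{\,2i-n}_{R/\mathbb{Q}}\oplus\Omega^{\,2i-n-1}_{R/\mathbb{Q}}$. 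To see that the cokernel is $\Omega^{\,2i-n}_{R/\mathbb{Q}}$, one uses that the mixed complex of $R[\varepsilon]$ over $R$ is the tensor product of the mixed complex of $R$ with the reduced mixed complex of $\mathbb{Q}[\varepsilon]$, so that Connes' connecting operator decomposes as $d\otimes 1\pm 1\otimes B_{\mathbb{Q}[\varepsilon]}$, where $d$ is the de Rham differential (the $B$-operator on $HH_\ast(R)\cong\Omega^\ast_{R/\mathbb{Q}}$); since $B_{\mathbb{Q}[\varepsilon]}$ is an isomorphism from even-degree to odd-degree reduced Hochschild homology of $\mathbb{Q}[\varepsilon]$, the map above has the form $\omega\mapsto(\pm d\omega,\,c\omega)$ with $c\ne 0$, whose cokernel is canonically $\Omega^{\,2i-n}_{R/\mathbb{Q}}$. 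For $i$ outside the range $[n/2]\le i\le n$ the two outer terms of the sequence vanish, forcing $HC^{(i)}_n=0$. Summing over $i$ recovers the Proposition, and in particular the companion identity $HC_n(R[\varepsilon],\varepsilon)=\Omega^n_{R/\mathbb{Q}}\oplus\Omega^{n-2}_{R/\mathbb{Q}}\oplus\cdots$.

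The step I expect to be the main obstacle is this last cokernel identification. A priori the term $d\otimes 1$ in the connecting operator could entangle the two Kähler-differential summands of $HH^{(i)}_n(R[\varepsilon],\varepsilon)$, and one must track the connecting map carefully through the tensor decomposition of mixed complexes to see that the extension is the tautological split one; the Geller--Weibel vanishing of $S$ is exactly what forces the component of $B$ onto the $\Omega^{2i-n-1}_{R/\mathbb{Q}}$-summand to be an isomorphism. A secondary point demanding care is the weight bookkeeping for the dual numbers, namely verifying that $HH_q(\mathbb{Q}[\varepsilon],\varepsilon)$ really lies in weight $\lceil q/2\rceil$ and in no other weight; once these two points are settled, the remainder is a routine induction using standard properties of the $\lambda$-operations.
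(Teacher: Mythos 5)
Your proposal is correct and follows essentially the route the paper indicates: compute $HH^{(i)}_n(R[\varepsilon],\varepsilon)$, then induct on $n$ through the SBI sequence that Geller--Weibel's vanishing of $S$ turns into a short exact sequence. The paper's own ``proof'' is a one-line pointer to the reference \cite{Yang12}, so your write-up is a faithful expansion of what that pointer must contain: the Künneth decomposition $HH_*(R[\varepsilon],\varepsilon) \cong HH_*(R) \otimes_{\mathbb{Q}} \widetilde{HH}_*(\mathbb{Q}[\varepsilon])$ with its weight bookkeeping, the cotangent-complex computation for the dual numbers placing $\widetilde{HH}_q(\mathbb{Q}[\varepsilon])$ in weight $\lceil q/2 \rceil$, and the inductive step. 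Reducing to smooth $R$ via N\'eron--Popescu is a legitimate way to invoke HKR, though one could equally cite HKR directly for a regular noetherian $\mathbb{Q}$-algebra.

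You are right that the delicate point is identifying the cokernel of the connecting map as $\Omega^{2i-n}_{R/\mathbb{Q}}$, and your resolution — decomposing the connecting map on the tensor mixed complex as $d \otimes 1 \pm 1 \otimes B_{\mathbb{Q}[\varepsilon]}$, noting that $B_{\mathbb{Q}[\varepsilon]}$ is an isomorphism $\widetilde{HH}_{2m}(\mathbb{Q}[\varepsilon]) \to \widetilde{HH}_{2m+1}(\mathbb{Q}[\varepsilon])$, and concluding that the second component of the connecting map is an isomorphism — is correct. One small imprecision in your last paragraph: it is not really the Geller--Weibel vanishing of $S$ that forces this second component to be an isomorphism. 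The vanishing of $S$ gives only the injectivity of the connecting map $B$ (i.e., the short-exactness of the SBI sequence); the fact that the component landing in $\Omega^{2i-n-1}_{R/\mathbb{Q}}$ is onto (indeed an isomorphism) comes from the explicit structure of $B_{\mathbb{Q}[\varepsilon]}$, which is a separate computational input exactly as you describe in the sentence before. This is a matter of attribution only; the chain of reasoning is sound, and the outcome $HC^{(i)}_n(R[\varepsilon],\varepsilon) \cong \Omega^{2i-n}_{R/\mathbb{Q}}$ follows as you claim.
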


\begin{proof}
 Easy exercise which can be done  by computing $HH^{(i)}_{n}(R[\varepsilon],\varepsilon)$ and then use induction. More details can be found in \cite{Yang12}. \end{proof}

We connect these general results with those of Green and Griffiths.
\begin{theorem}
Suppose $X$ is a $d$-dimensional smooth variety over a field $k$, where $\textrm{char} k=0$ and $y \in X^{(j)}$.  For any integer $m$, we have 
\[
 HN_{m}(O_{X,y}[\ee] \ on \ y[\ee],\ee)= H_{y}^{j}(\Omega^{\bullet}_{O_{X,y}/\mathbb{Q}}),
\]
where $\Omega^{\bullet}_{O_{X,y}/\mathbb{Q}}=\Omega^{m+j-1}_{O_{X,y}/\mathbb{Q}}\oplus \Omega^{m+j-3}_{O_{X,y}/\mathbb{Q}}\oplus \dots$
\end{theorem}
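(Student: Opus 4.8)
The plan is to obtain this statement by combining three ingredients already available: the coniveau machine of Theorem~\ref{maintheorem}, the Bloch--Ogus theorem~\ref{corblochogus} applied to the effaceable substratum $\mbf{HN}$ (established in Section~\ref{effacement}, with effaceability of the relative variant supplied by Lemma~\ref{lemnewoutofold} and Lemma~\ref{lemfourthcolumn}), and the explicit computation of $\ms{HN}_\bullet(\ms{O}_X[\ee],\ee)$ in terms of absolute K\"ahler differentials proved just above. First I would unwind the notation: with $A=k[\ee]/(\ee^2)$ and $\mm=(\ee)$, the group $HN_m(\ms{O}_{X,y}[\ee]\tn{ on }y[\ee],\ee)$ is, by the definition of the punctual invariant in Section~\ref{effacement}, the relative negative cyclic homology with supports at the closed point of $\tn{Spec}(\ms{O}_{X,y})$, i.e.
\[
HN_m(\ms{O}_{X,y}[\ee]\tn{ on }y[\ee],\ee)\;=\;HN_m(X_A,\mm\tn{ on }y)\;=\;\varinjlim_{U\ni y}HN_m(U_A,\mm\tn{ on }\overline{\{y\}}\cap U),
\]
where $\ms{O}_{X,y}$ is a regular local ring of dimension $j$.

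Next I would invoke the coniveau machine. The fourth column of diagram~(4.1.1) in Theorem~\ref{maintheorem} is a flasque resolution of the sheaf $\ms{HN}_p(X_A,\mm)$ whose term in codimension $j$ is $\bigoplus_{x\in X^{(j)}}\underline{HN_{p-j}(X_A,\mm\tn{ on }x)}$; by Theorem~\ref{corblochogus} this resolution is the sheafified Cousin complex of $\ms{HN}_p(X_A,\mm)$ attached to the codimension filtration of $X$, whose term in codimension $j$ is by construction $\bigoplus_{x\in X^{(j)}}(i_x)_\ast H^j_x\big(\ms{HN}_p(X_A,\mm)\big)$. Comparing the two descriptions term by term and writing $p=m+j$ yields the local identification
\[
HN_m(X_A,\mm\tn{ on }y)\;\cong\;H^j_y\big(\ms{HN}_{m+j}(X_A,\mm)\big).
\]
Finally I would substitute the dual-numbers computation in its sheaf form, $\ms{HN}_{m+j}(X_A,\mm)=\ms{HN}_{m+j}(\ms{O}_X[\ee],\ee)=\Omega^{m+j-1}_{\ms{O}_X/\QQ}\oplus\Omega^{m+j-3}_{\ms{O}_X/\QQ}\oplus\cdots$, which is precisely the complex denoted $\Omega^{\bullet}_{\ms{O}_{X,y}/\QQ}$ in the statement; passing to the local ring at $y$ and applying $H^j_y$ (which commutes with the finite direct sum) produces the asserted isomorphism $HN_m(\ms{O}_{X,y}[\ee]\tn{ on }y[\ee],\ee)\cong H^j_y(\Omega^{\bullet}_{\ms{O}_{X,y}/\QQ})$.

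The step I expect to be the main obstacle is the verification that the Cousin/local-cohomology calculus legitimately applies to the sheaves $\Omega^p_{\ms{O}_X/\QQ}$, which are \emph{not} coherent. I would dispatch this exactly as in Section~\ref{ggsurface}: write $X$ as a filtered limit of finite-type smooth models $X_\alpha$ over finitely generated subfields $k_\alpha\subset k$; since $k$ has characteristic zero each $k_\alpha/\QQ$ is separable, so $\Omega^1_{k_\alpha/\QQ}$ is finite-dimensional over $k_\alpha$, and the locally split exact sequence $0\to\ms{O}_{X_\alpha}\otimes_{k_\alpha}\Omega^1_{k_\alpha/\QQ}\to\Omega^1_{X_\alpha/\QQ}\to\Omega^1_{X_\alpha/k_\alpha}\to 0$ shows $\Omega^p_{X_\alpha/\QQ}$ is locally free of finite rank; hence $\Omega^p_{\ms{O}_X/\QQ}$ is a filtered colimit of such sheaves. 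Since local cohomology, the formation of the Cousin complex, and the coniveau spectral sequence all commute with filtered colimits, the identification above persists, and in particular $H^i_y(\Omega^p_{\ms{O}_X/\QQ})$ is concentrated in degree $i=j$ because $\ms{O}_{X,y}$ is Cohen--Macaulay of dimension $j$ (this concentration is what makes the Cousin complex an honest resolution here, consistently with Theorem~\ref{corblochogus}). The only remaining work is bookkeeping: matching the homological grading of $HN_\bullet$ with the cohomological grading of local cohomology (the source of the shift $p=m+j$) and tracking the parity pattern $m+j-1,\,m+j-3,\dots$ of the de Rham terms, which reads off directly from the Adams-graded formula for $HN_n^{(i)}(\ms{O}_X[\ee],\ee)$ recorded above; these are routine and are already displayed in the statement.
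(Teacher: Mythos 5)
Your plan has the right ingredients (the dual-numbers computation, the depth argument, and the relation between cohomology with supports and local cohomology), but the step you treat as a free consequence of the coniveau machine and the Bloch--Ogus theorem is in fact the heart of the proof, and as written it does not follow from the cited results.

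Specifically, you claim that ``comparing the two descriptions term by term'' gives
\[
HN_m(X_A,\mm\tn{ on }y)\;\cong\;H^j_y\big(\ms{HN}_{m+j}(X_A,\mm)\big),
\]
on the grounds that the fourth column of diagram (4.1.1) is simultaneously (i) the sheafified coniveau $E_1$-row, whose codimension-$j$ term is $\bigoplus_{x\in X^{(j)}}\underline{HN_{m}(X_A,\mm\tn{ on }x)}$, and (ii) the classical Cousin complex of the sheaf $\ms{HN}_{m+j}(X_A,\mm)$, whose codimension-$j$ term is $\bigoplus_{x\in X^{(j)}}(i_x)_\ast H^j_x(\ms{HN}_{m+j}(X_A,\mm))$. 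The Bloch--Ogus theorem \ref{corblochogus} only asserts that the complex in (i) is a flasque resolution; it does not identify its terms with local cohomology of the sheaf. Two flasque resolutions of the same sheaf need not agree term by term, and the identification you want requires either an appeal to Hartshorne's uniqueness of Cousin-type complexes (not invoked in the paper) or, more directly, an argument at the point $y$. The paper does the latter: after identifying the group with supports as a Thomason hypercohomology $\mathbb{H}^{-m}_y(\ms{O}_{X,y},HN(\ms{O}_{X,y}[\ee],\ee))$ (using Zariski descent), it runs the local-to-global spectral sequence
\[
H^p_y\big(\ms{O}_{X,y},H^q(HN(\ms{O}_{X,y}[\ee],\ee))\big)\Longrightarrow\mathbb{H}^{p+q}_y(HN(\ms{O}_{X,y}[\ee],\ee)),
\]
computes the $E_2$-entries by the dual-numbers formula $H^q=HN_{-q}(\ms{O}_{X,y}[\ee],\ee)=\Omega^{-q-1}_{\ms{O}_{X,y}/\QQ}\oplus\Omega^{-q-3}_{\ms{O}_{X,y}/\QQ}\oplus\cdots$, and then uses the depth observation (each $\Omega^n_{\ms{O}_{X,y}/\QQ}$ is a filtered colimit of free $\ms{O}_{X,y}$-modules, hence has depth $j$) to collapse the spectral sequence to the single column $p=j$. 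Your depth argument reappears at the end of your proposal, but you deploy it only as a side-verification of non-coherence, not as the degeneration mechanism; in the paper it is the degeneration mechanism, and it is what produces the shift $q=-m-j$. To repair your argument, replace the appeal to Theorem \ref{corblochogus} by this spectral sequence; once that is done, the rest of your proposal matches the paper's proof.

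Two smaller points. First, your reference to Lemma \ref{lemnewoutofold} is for \emph{augmented} theories; effaceability of the \emph{relative} theory must still be extracted from the split exact triangle, as the paper does in Lemma \ref{lemthirdcolumn}/\ref{lemfourthcolumn}, so the citation trail is slightly off. Second, your colimit argument for coherence of $\Omega^p_{X_\alpha/\QQ}$ over finitely generated subfields is fine, but it accomplishes the same thing as the paper's shorter remark that each $\Omega^n_{\ms{O}_{X,y}/\QQ}$ is a direct limit of copies of $\ms{O}_{X,y}$; either suffices, and neither is where the real work lies.
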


\begin{proof}
$O_{X,y}$ is a regular local ring with dimension $j$, so the depth of $O_{X,y}$ is $j$. For each $n \in \mathbb{Z}$,  $\Omega^{n}_{O_{X,y}/\mathbb{Q}}$ can be written as a direct limit of rings
$O_{X,y}$. Therefore, $\Omega^{n}_{O_{X,y}/\mathbb{Q}}$ has depth $j$.

Let $HN_{m}(O_{X,y}[\ee] \ on \ y[\ee],\ee)$ be the kernel of the projection:
\[
HN_{m}(O_{X,y}[\ee] \ on \ y[\ee])  \xrightarrow{\ee =0} HN_{m}(O_{X,y} \ on \ y).
\]
Then $HN_{m}(O_{X,y}[\ee] \ on \ y[\ee],\ee)$ can be identified with $\mathbb{H}_{y}^{-m}(O_{X,y},HN(O_{X,y}[\ee],\ee))$,
where $HN(O_{X,y}[\ee],\ee)$ is the relative negative cyclic complex, that is the kernel of
\[
 HN(O_{X,y}[\ee]) \xrightarrow{\ee=0} HN(O_{X,y}).
\]

There is a spectral sequence :
\[
 H_{y}^{p}(O_{X,y}, H^{q}(HN(O_{X,y}[\ee],\ee))) \Longrightarrow \mathbb{H}_{y}^{-m}(HN(O_{X,y}[\ee],\ee)).
\]

By corollary 4.1.4, we have
 \[
 H^{q}(HN(O_{X,y}[\ee],\ee))= HN_{-q}(O_{X,y}[\ee],\ee)= \Omega^{-q-1}_{O_{X,y}/\mathbb{Q}}\oplus \Omega^{-q-3}_{O_{X,y}/\mathbb{Q}}\oplus \dots
 \]
 As each $\Omega^{n}_{O_{X,y}/\mathbb{Q}}$ has depth $j$, only $H_{y}^{j}(X,H^{q}(HN(O_{X,y}[\ee],\ee)))$ can survive because of the depth condition.
This means $q=-m-j$ and 
\[
 H^{-m-j}(HN(O_{X,y}[\ee],\ee))=HN_{m+j}(O_{X,y}[\ee],\ee)= \Omega^{m+j-1}_{O_{X,y}/\mathbb{Q}}\oplus \Omega^{m+j-3}_{O_{X,y}/\mathbb{Q}}\oplus \dots
\]
Let us denote 
\[
\Omega^{\bullet}_{O_{X,y}/\mathbb{Q}} = \Omega^{m+j-1}_{O_{X,y}/\mathbb{Q}}\oplus \Omega^{m+j-3}_{O_{X,y}/\mathbb{Q}}\oplus \dots
\]
Thus 
\[
 \mathbb{H}_{y}^{-m}(HN(O_{X,y}[\ee],\ee))=H_{y}^{j}(\Omega^{\bullet}_{O_{X,y}/\mathbb{Q}}).
\]
this means
\[
 HN_{m}(O_{X,y}[\ee] \ on \ y_{\ee},\ee)= H_{y}^{j}(\Omega^{\bullet}_{O_{X,y}/\mathbb{Q}}).
\]
\end{proof}

Repeating the above proof and noting corollary 4.1.3, we have the following finer result:
\begin{theorem}
 Suppose $X$ is a $d$-dimensional smooth variety over a field $k$, where $\textrm{char} k=0$ and $y \in X^{(j)}$. For any integer $m$, we have 
\[
 HN^{(i)}_{m}(O_{X,y}[\ee] \ on \ y[\ee],\ee)= H_{y}^{j}(\Omega^{\bullet,(i)}_{O_{X,y}/\mathbb{Q}}),
\]
where 
\begin{equation}
\begin{cases}
 \begin{CD}
 \Omega_{O_{X}/ \mathbb{Q}}^{\bullet,(i)}= \Omega^{{2i-(m+j)-1}}_{O_{X}/ \mathbb{Q}}, for \  \frac{m+j}{2}  < \ i \leq m+j.\\
  \Omega_{O_{X}/ \mathbb{Q}}^{\bullet,(i)}= 0, else.
 \end{CD}
\end{cases}
\end{equation} 
\end{theorem}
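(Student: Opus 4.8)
The plan is to run the argument that proved the ungraded statement above, carrying the Adams weight through every step, and to feed in the weight-graded dual-numbers computation in place of its total. Nothing new is needed beyond what has already been assembled: the weight-$i$ Adams summands $\mathcal{HN}^{(i)}(\ms{O}_X,\ee)$, the graded identification of relative negative cyclic homology with Thomason hypercohomology with supports, and the weight-graded computation of $HN_n^{(i)}(O_X[\ee],\ee)$ in terms of absolute K\"{a}hler differentials.

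First I would pass to the weight-$i$ Adams summand. Exactly as in the ungraded case, $HN^{(i)}_{m}(O_{X,y}[\ee]\tn{ on }y[\ee],\ee)$ is the kernel of the projection induced by $\ee=0$, and by the graded identification recorded above it equals the hypercohomology with supports $\mathbb{H}_{y}^{-m}\big(O_{X,y},\mathcal{HN}^{(i)}(O_{X,y}[\ee],\ee)\big)$, where $\mathcal{HN}^{(i)}(O_{X,y}[\ee],\ee)$ is the weight-$i$ Adams summand of the relative negative cyclic presheaf of spectra. The key point is that this identification respects the eigenspace decomposition: $\mathbb{H}_{y}(O_{X,y},-)$ preserves the split fibrations $\mathcal{HN}^{(i)}(\ms{O},\ee)\to\mathcal{HN}(\ms{O},\ee)\to\prod_{j\neq i}\mathcal{HN}^{(j)}(\ms{O},\ee)$ established earlier, so passing to weight $i$ commutes with taking hypercohomology with supports.

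Next I would insert the graded dual-numbers computation into the local cohomology spectral sequence
\[
H_{y}^{p}\big(O_{X,y},\,H^{q}(\mathcal{HN}^{(i)}(O_{X,y}[\ee],\ee))\big)\Longrightarrow\mathbb{H}_{y}^{-m}\big(\mathcal{HN}^{(i)}(O_{X,y}[\ee],\ee)\big).
\]
By the weight-graded dual-numbers theorem, the cohomology sheaf $H^{q}(\mathcal{HN}^{(i)}(O_{X,y}[\ee],\ee))=HN^{(i)}_{-q}(O_{X,y}[\ee],\ee)$ is the \emph{single} module $\Omega^{2i+q-1}_{O_{X,y}/\mathbb{Q}}$ when $[-q/2]<i\le -q$, and vanishes otherwise; this is precisely where being in a fixed weight simplifies matters, since ungraded one obtains an infinite sum of forms. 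Because $O_{X,y}$ is a regular local ring of dimension $j$ and each $\Omega^{n}_{O_{X,y}/\mathbb{Q}}$ is a filtered colimit of copies of $O_{X,y}$, hence of depth $j$, the local cohomology $H_{y}^{p}$ of every coefficient sheaf is concentrated in degree $p=j$. Thus the spectral sequence degenerates onto the row $p=j$, forcing $q=-m-j$, and
\[
HN^{(i)}_{m}(O_{X,y}[\ee]\tn{ on }y[\ee],\ee)=H_{y}^{j}\big(HN^{(i)}_{m+j}(O_{X,y}[\ee],\ee)\big)=H_{y}^{j}\big(\Omega^{2i-(m+j)-1}_{O_{X,y}/\mathbb{Q}}\big),
\]
the last step by the dual-numbers theorem with $n=m+j$, which applies exactly for $[(m+j)/2]<i\le m+j$; for integral $i$ this is the stated range $(m+j)/2<i\le m+j$, and the group is $0$ outside it. Setting $\Omega^{\bullet,(i)}_{O_{X,y}/\mathbb{Q}}$ equal to this module (or to zero) gives the claim.

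I do not anticipate a serious obstacle. The genuine computational input — that the weight-$i$ component of $HN_{n}(O_{X,y}[\ee],\ee)$ is exactly $\Omega^{2i-n-1}_{O_{X,y}/\mathbb{Q}}$ — has already been extracted via the SBI sequence and the vanishing of the Geller--Weibel $S$-map in the dual-numbers proposition, and the depth argument is identical to the ungraded case. The only thing requiring care is the bookkeeping that the Adams grading is inherited by the identification with hypercohomology with supports and by the local-to-global spectral sequence; both reduce to the fact that $\mathbb{H}_{y}(O_{X,y},-)$ preserves homotopy fibrations, so the eigenspace decomposition is carried along throughout.
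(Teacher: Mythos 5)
Your proof is correct and follows exactly the route the paper intends: the paper's own "proof" of this theorem is the single sentence "Repeating the above proof and noting corollary 4.1.3," i.e.\ rerun the depth/local-cohomology spectral-sequence argument from the ungraded case with the weight-$i$ component of $HN_n(O_{X,y}[\ee],\ee)$ substituted for its total. The one thing you spell out that the paper leaves implicit — that the Adams eigenspace decomposition is compatible with the identification $HN^{(i)}_m(-\tn{ on }y[\ee],\ee)=\mathbb{H}_y^{-m}(O_{X,y},\mathcal{HN}^{(i)})$ and with the spectral sequence, because $\mathbb{H}_y(O_{X,y},-)$ preserves the split fibrations defining the eigenspaces — is exactly the right justification, and is already established in the paper's section on Adams operations with supports.
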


Combining with theorem 4.3.5 and 4.3.7, we have the following corollary
\begin{corollary}
Under the same assumption as above, we have
\[
 K_{m}(O_{X,y}[\ee] \ on \ y[\ee],\ee)= H_{y}^{j}(\Omega^{\bullet}_{O_{X,y}/\mathbb{Q}}),
\]
where $\Omega^{\bullet}_{O_{X,y}/\mathbb{Q}}=\Omega^{m+j-1}_{O_{X,y}/\mathbb{Q}}\oplus \Omega^{m+j-3}_{O_{X,y}/\mathbb{Q}}\oplus \dots$

\[
 K^{(i)}_{m}(O_{X,y}[\ee] \ on \ y[\ee],\ee)= H_{y}^{j}(\Omega^{\bullet,(i)}_{O_{X,y}/\mathbb{Q}}),
\]
where 
\begin{equation}
\begin{cases}
 \begin{CD}
 \Omega_{O_{X}/ \mathbb{Q}}^{\bullet,(i)}= \Omega^{{2i-(m+j)-1}}_{O_{X}/ \mathbb{Q}}, for \  \frac{m+j}{2}  < \ i \leq m+j.\\
  \Omega_{O_{X}/ \mathbb{Q}}^{\bullet,(i)}= 0, else.
 \end{CD}
\end{cases}
\end{equation} 
\end{corollary}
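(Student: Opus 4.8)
The plan is to obtain this corollary by transporting the negative cyclic homology computations of the two preceding theorems across the relative algebraic Chern character; there is no new geometric input required, since the substantive work (the SBI / Geller--Weibel argument and the depth vanishing in the local cohomology spectral sequence) has already been carried out on the $\mbf{HN}$ side. In outline, one applies an isomorphism that is already in hand to the right-hand side of a computation that is already in hand.

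First I would record the punctual form of the relative Chern character isomorphism with supports. Lemma~\ref{lemrelchernisomfunctors}, upgraded in Section~\ref{lambdaAdams} so as to allow supports and to respect the Adams decomposition, gives for a scheme $Y$ essentially of finite type over $k$, a closed subset $W\subseteq Y$, and an Artinian local $k$-algebra $(A,\mm)$ an isomorphism $K_n^{(i)}(Y_A\tn{ on } W_A,\mm)\cong HN_n^{(i)}(Y_A\tn{ on } W_A,\mm)$ induced by $\tn{ch}$, together with its ungraded analogue. Specializing to $A=k[\ee]/(\ee^2)$ and to $(Y,W)=(U,\overline{\{y\}}\cap U)$ for affine neighbourhoods $U$ of $y$, and passing to the direct limit over such $U$, I obtain
\[
K_m^{(i)}\big(O_{X,y}[\ee]\tn{ on } y[\ee],\ee\big)\;\cong\;HN_m^{(i)}\big(O_{X,y}[\ee]\tn{ on } y[\ee],\ee\big),
\]
together with the corresponding ungraded isomorphism. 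The passage to the limit is legitimate because $O_{X,y}$ is a filtered colimit of finite-type $k$-algebras, because $\tn{ch}$ is a natural transformation of functors on $\tn{\textsf{Nil}}$, and because filtered colimits are exact, so they commute both with the kernels defining the relative groups and with the homotopy groups.

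It then remains only to substitute the two preceding theorems on the right-hand side: they identify $HN_m(O_{X,y}[\ee]\tn{ on } y[\ee],\ee)$ with $H_y^j(\Omega^\bullet_{O_{X,y}/\QQ})$, where $\Omega^\bullet_{O_{X,y}/\QQ}=\Omega^{m+j-1}_{O_{X,y}/\QQ}\oplus\Omega^{m+j-3}_{O_{X,y}/\QQ}\oplus\dots$, and likewise identify each eigenspace $HN_m^{(i)}(O_{X,y}[\ee]\tn{ on } y[\ee],\ee)$ with $H_y^j(\Omega^{\bullet,(i)}_{O_{X,y}/\QQ})$ for the graded pieces $\Omega^{\bullet,(i)}$ displayed there. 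Composing the two isomorphisms yields both asserted formulas verbatim.

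The only point requiring care --- and it is bookkeeping rather than mathematics --- is to confirm that the ``punctual invariant with supports'' occurring in the statement (defined through the limit $\varinjlim_{U\ni y}$) is the same group to which the scheme-level Chern character theorem applies after localization at $y$, and to note that the Cohen--Macaulay vanishing used in the $\mbf{HN}$-computation survives unchanged: $O_{X,y}$ is regular local of dimension $j$, each $\Omega^n_{O_{X,y}/\QQ}$ is a filtered colimit of free $O_{X,y}$-modules and hence has depth $j$, so in the local cohomology spectral sequence only $H_y^j$ contributes, exactly as in the proof of the preceding theorem. I would simply cite those earlier results for these facts and for the $\mbf{HN}$-computation itself, so the corollary reduces to a one-line composition of isomorphisms.
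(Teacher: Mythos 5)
Your proposal is correct and takes essentially the same approach as the paper: the corollary is obtained by composing the relative Chern character isomorphism with supports (and its Adams-graded refinement from the preceding section) with the $HN$-computations of the two theorems immediately above, which is precisely what the paper means by "Combining with theorem 4.3.5 and 4.3.7." Your fleshing out of the direct-limit bookkeeping is reasonable but not substantively different from what the paper leaves implicit.
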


\begin{theorem}
\label{T:adamsdual}
 There exists the following commutative diagram, where the rows are split exact sequences and the columns are 
 flasque resolutions of their respective starting terms:
{\footnotesize
\[
  \begin{CD}
     0 @. 0 @. 0\\
     @VVV @VVV @VVV\\
     \Omega_{O_{X}/ \mathbb{Q}}^{\bullet,(i)} @<ch<< K^{(i)}_{m}(O_{X[\ee]}) @<<< K^{(i)}_{m}(O_{X}) \\
     @VVV @VVV @VVV\\
     \Omega_{k(X)/ \mathbb{Q}}^{\bullet,(i)} @<ch<<  K^{(i)}_{m}(k(X)[\ee]) @<<< K^{(i)}_{m}(k(X)) \\
     @VVV @VVV @VVV\\
     \displaystyle{\bigoplus_{d \in X^{(1)}}}\underline{H}_{d}^{1}(\Omega_{O_{X}/\mathbb{Q}}^{\bullet,(i)}) @<ch<<
     \displaystyle{\bigoplus_{d[\ee]\in X[\ee]^{(1)}}}\underline{K}^{(i)}_{m-1}(O_{X,d}[\ee] \ on \ d[\ee]) @<<< \displaystyle{\bigoplus_{d \in X^{(1)}}}\underline{K}^{(i)}_{m-1}(O_{X,d} \ on \ d)\\
     @VVV @VVV @VVV\\
     \displaystyle{\bigoplus_{y \in X^{(2)}}}\underline{H}_{y}^{2}(\Omega_{O_{X}/ \mathbb{Q}}^{\bullet,(i)}) @<ch<<
      \displaystyle{\bigoplus_{y[\ee] \in X[\ee]^{(2)}}}\underline{K}^{(i)}_{m-2}(O_{X,y}[\ee] \ on \ y[\ee]) @<<< 
      \displaystyle{\bigoplus_{y \in X^{(2)}}}\underline{K}^{(i)}_{m-2}(O_{X,y} \ on \ y) \\
     @VVV @VVV @VVV\\
      \dots @<ch<< \dots @<<< \dots \\ 
     @VVV @VVV @VVV\\
     \displaystyle{\bigoplus_{x\in X^{(n)}}}\underline{H}_{x}^{n}(\Omega_{O_{X}/ \mathbb{Q}}^{\bullet,(i)}) @<ch<< 
     \displaystyle{\bigoplus_{x[\ee]\in X[\ee]^{(n)}}}\underline{K}^{(i)}_{m-n}(O_{X,x}[\ee] \ on \ x[\ee]) @<<< 
      \displaystyle{\bigoplus_{x \in X^{(n)}}}\underline{K}^{(i)}_{m-n}(O_{X,x} \ on \ x) \\
     @VVV @VVV @VVV\\
      0 @. 0 @. 0
  \end{CD}
\]
}
where 
\begin{equation}
\begin{cases}
 \begin{CD}
 \Omega_{O_{X}/ \mathbb{Q}}^{\bullet,(i)}= \Omega^{{2i-m+1}}_{O_{X}/ \mathbb{Q}}, for \  \frac{m-1}{2}  < \ i \leq m-1.\\
  \Omega_{O_{X}/ \mathbb{Q}}^{\bullet,(i)}= 0, else.
 \end{CD}
\end{cases}
\end{equation} 
\end{theorem}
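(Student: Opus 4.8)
The plan is to deduce Theorem~\ref{T:adamsdual} by specializing the coniveau machine of Theorem~\ref{maintheorem} to the dual numbers $A=k[\ee]/(\ee^2)$, $\mm=(\ee)$, decomposing the resulting diagram into Adams eigenspaces, telescoping its two right-hand columns by the relative Chern character, and finally computing the telescoped column explicitly in terms of absolute K\"ahler differentials. Concretely, I would first apply Theorem~\ref{maintheorem} with $A=k[\ee]/(\ee^2)$ to obtain the four-column diagram of flasque resolutions on $X$, with the first three columns split exact (the splitting induced by the canonical section $k\to k[\ee]/(\ee^2)$) and the relative Chern character identifying the third and fourth columns term by term, as in Lemma~\ref{lemthirdchernfourth}. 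Telescoping the third and fourth columns then produces a three-column diagram whose rows are split short exact sequences $\ms{K}^{\phantom{(i)}}_m(X)\hookrightarrow\ms{K}^{\phantom{(i)}}_m(X_1)\twoheadrightarrow\ms{HN}^{\phantom{(i)}}_m(X_1,\mm)$, exactly as in the passage from the complex~\eqref{toprow} to the split exact sequence~\eqref{toprow2} in the rudimentary Green--Griffiths case.

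Next I would invoke the machinery of Section~\ref{lambdaAdams}. The Adams operations $\psi^{\ell}$ act on Bass--Thomason $K$-theory and on negative cyclic homology, with supports, and are extended to negative degrees by the descending induction recalled there (Weibel \cite{Weibel91}, with Levine \cite{LevineLambdaOperationsMotivic97} for the with-supports statements); the weight-$i$ eigenspace sub-substrata $\ms{K}^{(i)}$ and $\ms{HN}^{(i)}$ are the homotopy fibres of $\psi^{\ell}-\ell^{i}$ and $\psi^{\ell}-\ell^{i+1}$ respectively. Since homotopy fibres commute with the homotopy-fibre sequences defining groups with supports, with Thomason hypercohomology $\mathbb{H}_{Y}(X,-)$, and with sheafification, and since effaceability (nullhomotopy of the relevant maps of spectra) is inherited by the homotopy fibre of $\psi^{\ell}-\ell^{i}$, the whole three-column machine splits as a sum over $i$, and each weight-$i$ summand is again a commutative diagram whose columns are flasque resolutions of the eigenspace sheaves. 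Theorem~\ref{theoremCHWlambda}, in its relative with-supports form, guarantees that the Chern character carries the weight-$i$ part of the $K$-theoretic columns isomorphically onto the weight-$i$ part of the negative cyclic columns, so the telescoping above is compatible with the Adams grading.

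It then remains to compute the leftmost column of the weight-$i$ summand. At a point $y\in X^{(j)}$ the relevant group is the relative negative cyclic homology with supports $HN^{(i)}_{m-j}\big(\ms{O}_{X,y}[\ee]\ \tn{on}\ y[\ee],\ee\big)$, which by the Chern character agrees with the corresponding relative $K$-group. To evaluate it I would proceed in three steps: (a) using the vanishing of the Geller--Weibel $S$-map \cite{GW94} to split the relative $SBI$ sequence into short exact sequences and computing, by induction from the relative Hochschild groups, that $HC^{(i)}_{n}(R[\ee],\ee)\cong\Omega^{2i-n}_{R/\QQ}$ for $\tfrac{n}{2}\le i\le n$ and $0$ otherwise for a regular noetherian $\QQ$-algebra $R$, then passing to $HN$ via $HN_{n}=HC_{n-1}$, $HN^{(i)}_{n}=HC^{(i-1)}_{n-1}$; (b) running the local-cohomology spectral sequence $H^{p}_{y}\big(\ms{O}_{X,y},H^{q}(\mathrm{HN}(\ms{O}_{X,y}[\ee],\ee))\big)\Rightarrow\mathbb{H}^{-m}_{y}\big(\mathrm{HN}(\ms{O}_{X,y}[\ee],\ee)\big)$ and observing that, since $\ms{O}_{X,y}$ is regular local of dimension $j$, each $\Omega^{n}_{\ms{O}_{X,y}/\QQ}$ is a filtered colimit of copies of $\ms{O}_{X,y}$ and hence has depth $j$, so only the line $p=j$ survives and the spectral sequence degenerates to $H^{j}_{y}\big(\Omega^{\bullet,(i)}_{\ms{O}_{X,y}/\QQ}\big)$ with $\Omega^{\bullet,(i)}$ the single K\"ahler differential determined by the weight and degree; (c) sheafifying over $y\in X^{(j)}$, which yields exactly the Cousin complex of $\Omega^{\bullet,(i)}_{\ms{O}_{X}/\QQ}$, and noting that because $\Omega^{n}_{X/\QQ}$ is a filtered colimit of locally free sheaves of finite rank the Bloch--Ogus theorem (Theorem~\ref{corblochogus}) applies and shows this Cousin complex is a flasque resolution of $\Omega^{\bullet,(i)}_{\ms{O}_{X}/\QQ}$. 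This identifies the leftmost column, and together with the split exactness of the telescoped rows completes the diagram.

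The main obstacle is step (b): the degeneration of the local-cohomology spectral sequence rests on the depth of the (a priori infinite) sum $\Omega^{\bullet,(i)}_{\ms{O}_{X,y}/\QQ}$ being exactly $j=\operatorname{codim}(y)$, and on carrying the Adams weight and the $HN/HC$ re-indexing consistently through the Cousin shift $m\mapsto m-j$ — this bookkeeping is precisely what pins down the range $\tfrac{m-1}{2}<i\le m-1$ and the exponent $2i-m+1$ in the final display, and it is the easiest place for an off-by-one to enter. A secondary point requiring care is verifying that the effacement and Bloch--Ogus formalism of Sections~\ref{effacement} and~\ref{sectionproof} genuinely descends to Adams eigenspaces of theories with supports, including in negative degrees; this is where one leans on \cite{LevineLambdaOperationsMotivic97} and \cite{Weibel91}.
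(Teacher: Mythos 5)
Your proposal reconstructs precisely the chain of reasoning the paper itself uses: specialize the coniveau machine of Theorem~\ref{maintheorem} to $A=k[\ee]/(\ee^2)$, decompose by Adams weight using Section~\ref{lambdaAdams}, telescope the two right-hand columns by the relative Chern character, and identify the resulting left column via the Geller--Weibel vanishing of $S$, the SBI short exact sequences, the local-cohomology spectral sequence with the depth argument, and sheafification. Your flag that the $HN$/$HC$ re-indexing combined with the Cousin shift $m\mapsto m-j$ is the most delicate bookkeeping is well placed; the paper's preceding corollary for $K_m^{(i)}(O_{X,y}[\ee]\tn{ on }y[\ee],\ee)$ is exactly what one chases through to obtain the exponent and range in the final display.
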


%%%%%%%%%%%%%%%%%%%%%%%%%%%%%%%%%%%%%%%%%%%%%%%%%%%%%%%%%%%%%

\newpage

\end{document}